\def\dd{\,{\mathrm d}}
\newcommand{\SL}{\mathrm{SL}}
\newcommand{\Qd}{\mathbb{Q}}
\newcommand{\QQ}{\Qd}
\newcommand{\Qbar}{\overline{\Qd}}
\newcommand{\Vd}{\mathbb{V}}
\newcommand{\Zd}{\mathbb{Z}}
\newcommand{\Fd}{\mathbb{F}}
\newcommand{\Rd}{\mathbb{R}}
\newcommand{\Gd}{\mathbb{G}}
\newcommand{\Pd}{\mathbb{P}}
\newcommand{\Ad}{\mathbb{A}}
\newcommand{\xv}{\mathbf{x}}
\newcommand{\yv}{\mathbf{y}}
\newcommand{\zv}{\mathbf{z}}
\newcommand{\Vm}{\mathscr{V}}
\newcommand{\Rm}{\mathscr{R}}
\newcommand{\Pm}{\mathscr{P}}
\newcommand{\Nm}{\mathscr{N}}
\newcommand{\Mm}{\mathscr{M}}
\newcommand{\Om}{\mathscr{O}}
\newcommand{\Tm}{\mathscr{T}}
\newcommand{\rleft}{\mathopen{}\mathclose\bgroup\left}
\newcommand{\rright}{\aftergroup\egroup\right}
\newcommand{\Xf}{\mathfrak{X}}
\newcommand{\tX}{\widetilde{X}}
\newcommand{\tY}{\widetilde{Y}}
\newcommand{\tZ}{\widetilde{Z}}
\newcommand{\tpi}{\widetilde{\pi}}
\newcommand\Sigmamax{\Sigma_\mathrm{max}}
\newcommand{\Z}{\Zd}
\DeclareMathOperator{\Pic}{Pic}
\DeclareMathOperator{\Div}{div}
\DeclareMathOperator*{\Hom}{Hom}
\DeclareMathOperator*{\Supp}{supp}
\DeclareMathOperator*{\Spec}{Spec}
\DeclareMathOperator*{\Cl}{Cl}
\DeclareMathOperator{\Res}{Res}
\DeclareMathOperator{\rank}{rk}
\theoremstyle{plain}
\newtheorem{theorem}{Theorem}
\numberwithin{theorem}{section}
\newtheorem{lemma}[theorem]{Lemma}
\newtheorem{cor}[theorem]{Corollary}
\newtheorem{prop}[theorem]{Proposition}
\theoremstyle{definition}
\newtheorem{remark}[theorem]{Remark}
\numberwithin{equation}{section}
\begin{document}

\author[Blomer]{Valentin Blomer}

\address{Universit\"at Bonn, Mathematisches Institut, Endenicher Allee 60, 53115 Bonn, Germany}

\email{blomer@math.uni-bonn.de}

\author[Br\"udern]{J\"org Br\"udern}

\address{Universit\"at G\"ottingen, Mathematisches Institut, Bunsenstra{\ss}e 3--5, 37073 G\"ottingen, Germany}

\email{jbruede@gwdg.de}

\author[Derenthal]{Ulrich Derenthal}

\address{Leibniz Universit\"at Hannover, Institut f\"ur Algebra,
  Zahlentheorie und Diskrete Mathematik, Welfengarten 1, 30167
  Hannover, Germany}

\address{School of Mathematics, Institute for Advanced Study,
  1 Einstein Drive, Princeton, New Jersey, 08540, USA}

\email{derenthal@math.uni-hannover.de}

\author[Gagliardi]{Giuliano Gagliardi}

\address{Leibniz Universit\"at Hannover, Institut f\"ur Algebra,
  Zahlentheorie und Diskrete Mathematik, Welfengarten 1, 30167
  Hannover, Germany}

\email{gagliardi@math.uni-hannover.de}

\title[The Manin--Peyre conjecture for smooth spherical Fano threefolds]
{The Manin--Peyre conjecture\\for smooth spherical Fano threefolds}

\thanks{The first author was supported by the Deutsche
  Forschungsgemeinschaft (DFG, German Research Foundation) through
  Germany's Excellence Strategy, project number 390685813. The first
  two authors were supported by the DFG, project number 462335009. The
  third author was supported by the DFG, project number 255083470, and
  by the Charles Simonyi Endowment at the Institute for Advanced
  Study.}
 
\keywords{rational points, spherical varieties, Fano threefolds, Manin--Peyre conjecture, Cox rings, harmonic analysis}

\begin{abstract}
  The Manin--Peyre conjecture is established for smooth spherical Fano
  threefolds of semisimple rank one and type $N$. Together with the previously
  solved case $T$ and the toric cases, this covers all types of smooth spherical Fano
  threefolds. The case $N$ features a number of structural novelties; most
  notably, one may lose regularity of the ambient toric variety, the height
  conditions may contain fractional exponents, and it may be necessary to
  exclude a thin subset with exceptionally many rational points from the
  count, as otherwise Manin's conjecture in its original form would turn out
  to be incorrect.
\end{abstract}

\subjclass[2010]{Primary 11D45; Secondary 14M27, 14G05, 11G35}

\setcounter{tocdepth}{1}

\maketitle

\tableofcontents 

\section{Introduction}\label{sec:intro}

Let $G$ be a connected reductive group over $\Qbar$.  A normal
$G$-variety $X$ is called spherical if a Borel subgroup of $G$ has a
dense orbit in $X$. Spherical varieties are a very large and
interesting class of varieties that admit a combinatorial description
by spherical systems and colored fans \cite{lun01,bp15b, lv83}
generalizing the combinatorial description of toric varieties. Indeed,
if the acting group $G$ has semi-simple rank 0, then $G$ is a torus.
  
  If $G$ has semi-simple rank 1,  we may assume that $G = \SL_2\times \Gd_\mathrm{m}^r$ for some $r \geq 0$. 
 Let $G/H = (\SL_2 \times \Gd_\mathrm{m}^r)/H$ be the open
  orbit in $X$ and define  $H'$ by 
  $H'\times \Gd_\mathrm{m}^r = H\cdot \Gd_\mathrm{m}^r \subseteq \SL_2
  \times \Gd_\mathrm{m}^r$. Then the homogeneous space $\SL_2/H'$ is
  spherical, and there are three possible cases:
  \begin{itemize}
  \item  either $H'$ is a maximal torus (\emph{the case
    $T$}), 
    \item or $H'$ is  the normalizer of a maximal torus in $\SL_2$ (\emph{the
    case $N$}),
   \item or the homogeneous space $\SL_2/H'$ is horospherical, in which
  case $X$ is isomorphic (as an abstract variety) to a toric variety.
 \end{itemize} 
 In a recent paper \cite{BBDG}, the authors initiated a program to
 establish Manin's conjecture for (split models over $\Qd$ of) smooth
 spherical Fano varieties, based on (a) the combinatorial description
 of spherical varieties, (b) the universal torsor method, and (c)
 techniques from analytic number theory including the Hardy-Littlewood
 circle method and multiple Dirichlet series. In particular, in case
 $T$, we confirmed Manin's conjecture for threefolds as well as for
 some higher-dimensional varieties. Here we relied on Hofscheier's
 classification \cite{hofscheier} of smooth spherical Fano threefolds
 over $\overline{\Qd}$, which identifies four such cases having
 natural split models over $\mathbb{Q}$.
  
 In this paper we fully resolve the harder case $N$. This is achieved by a
 further development of the methods employed in \cite{BBDG}, and we proceed to
 describe the major new ingredients.

\medskip

 The first point concerns a general reformulation of Manin's conjecture as a counting problem. Let $X$ be a smooth split projective variety over $\Qd$ with big and
  semiample anticanonical class $\omega_X^\vee$ whose Picard group is free
  of finite rank. Assume that its Cox ring $\Rm(X)$ is finitely generated with
  precisely one relation with integral coefficients.  This  defines a canonical embedding of $X$ into a (not
necessarily complete) ambient toric variety $Y^\circ$; it  can be completed to a projective
toric variety $Y$ such that the natural map $\Cl Y \to \Cl X=\Pic X$ is an
isomorphism and $-K_X$ is big and semiample on $Y$. Under the assumption that
$Y$ is regular, Sections 2 -- 4 in \cite{BBDG} (culminating in
\cite[Propositions 3.7 \&  4.11]{BBDG}) provide a general scheme to
parametrize the rational points on $X$ in terms of the universal torsor and to
express the Manin--Peyre constant in terms of the Cox coordinates. The
corresponding counting problem is in many cases amenable to techniques from
analytic number theory. The assumption that $Y$ is regular holds for smooth
Fano threefolds of type $T$, but may fail in the case of type $N$. Part \ref{part1} of the present paper generalizes the passage to the universal torsor to varieties $X$ for which $Y$ is not necessarily regular. This result is independent of the theory of spherical varieties and should therefore have applications elsewhere. 

\medskip

The second point is of analytic nature. The universal torsor of spherical varieties of semi-simple rank 1 and type $T$ has a defining equation of the form
$$x_{11}x_{12} - x_{21} x_{22} - \text{ some monomial }  = 0,$$
which needs to be analyzed subject to rather complicated height conditions. The fact that we have a decoupled bilinear form in four variables is crucial for the method and allows in particular an auxiliary soft argument based on lattice considerations. For type $N$, the equation takes the form
$$x_{11}x_{12} - x_{21}^2  - \text{ some monomial }  = 0.$$
From an analytic perspective, this may be very delicate. A considerable portion of this paper is devoted to the investigation of the particular equation
\begin{equation}\label{new-eq}
x_{11}x_{12} - x_{21}^2 - x_{31}x_{32} x_{33}^2 = 0
\end{equation}
with variables constrained to dyadic boxes
$$|x_{11}| \asymp |x_{12}| \asymp |x_{21}| \asymp X, \quad |x_{31}| \asymp |x_{32}| \asymp Y, \quad |x_{33}|  \asymp X/Y.$$
Here $X$ is large, and $1 \leq Y \leq X$, and we need an asymptotic
formula for the number of integer solutions where the error term saves a fixed power of $\min(Y, X/Y)$. It is conceivable
that a modern variant of the circle method (like \cite{HB}) can handle this,
but this is not straightforward. There are considerable uniformity issues,
since we need to deal simultaneously with the cases when $Y$ is small, say
$Y = \exp((\log \log X)^2)$ (in which case the equation looks roughly like a
sum of two squares and a product), and $Y$ is large, say
$Y = X/\exp((\log \log X)^2)$ (in which case the equation looks roughly like a
sum of a square and two products). To keep track of uniformity, we will not
use the circle method directly but instead apply Poisson summation to
selected variables depending on the ranges of parameters. This is ultimately
more or less equivalent to the delta-symbol method of
Duke--Friedlander--Iwaniec \cite{DFI}, but it is in this case a more
convenient packaging.

The shape of equation \eqref{new-eq} offers a new feature that was not
present in \cite{BBDG}, and for which in fact very few examples are
known. In the special case where
$-x_{31}x_{32}$ is a square, the equation \eqref{new-eq} describes a
split quaternary quadratic form over $\mathbb{Q}$ of signature $(2, 2)$,
i.e., the sum of two hyperbolic planes. In this case it is well-known
(see e.g.\ \cite{HB}) that the asymptotic formula contains an
additional logarithm. In particular, Manin's conjecture in its
original form turns out to be wrong, and instead we need to prove a
``thin subset version'' of Manin's conjecture: we first remove a
certain portion from the variety with exceptionally
many points and then confirm the conjecture for the remaining
set.
More precisely, there should be a thin subset $T$ of the set of
rational points $X(\QQ)$ such that, for an anticanonical height $H$,
\begin{equation*}
  N_{X(\QQ) \setminus T, H}(B) := \#\{x \in X(\QQ) \setminus T \mid H(x) \le B\} = (1+o(1)) c B(\log B)^{\rank \Pic X-1}
\end{equation*}
as in Manin's conjecture with Peyre's constant $c$.
We refer the reader to the general discussion of this phenomenon in \cite{LST},
and to the (to our knowledge) only example of a smooth Fano variety \cite{BHB}
for which a thin version of Manin's conjecture has appeared in print. Soon after this work has been circulated in manuscript form,  Fano threefolds of Mori--Mukai type II.25 have been discussed in the preprint \cite{BBH}, providing yet another example where  the thin set version of the Manin--Peyre conjecture is true.

We now describe our results in more detail. As explained in \cite[\S 11]{BBDG}, there are three
smooth spherical Fano threefolds over $N$ over $\Qbar$ that are neither
horospherical nor equivariant compactifications of $\Gd_\mathrm{a}^3$; we
construct split models $X_1,X_2,X_3$ over $\Qd$ as in Table~\ref{tab:classification_spherical1}.

\begin{table}[ht]
  \centering
  \begin{tabular}[ht]{ccccccc}
    \hline
    rk Pic & Hofscheier & Mori--Mukai & torsor equation & label\\
    \hline\hline
     2 & $N_1 8$ & II.29 &  {$x_{11}x_{12}-x_{21}^2-x_{31}x_{32}x_{33}^2$} & variety $X_1$\\
      3 & $N_0 3$ & III.22  &  $x_{11}x_{12}-x_{21}^2-x_{31}x_{32}$ & variety $X_2$ \\
    3 & $N_1 9$ & III.19 &  $x_{11}x_{12}-x_{21}^2-x_{31}x_{32}$ & variety $X_3$\\
    \hline
    \end{tabular}
  \caption{Smooth Fano threefolds of type $N$ that are spherical, but not horospherical}
  \label{tab:classification_spherical1}
\end{table}

More precisely, let $X_1$ the blow-up of the quadric
$Q = \Vd(z_{11}z_{12} - z_{21}^2 - z_{31}z_{32}) \subset \Pd^4_{\Qd}$ in the
conic $C_{33} = \Vd(z_{31}, z_{32})$. This is a smooth Fano threefold of type
II.29 in the Mori--Mukai classification. We have a fibration $X_1 \to \Pd^1$
that is defined in Cox coordinates by
$(x_{11}:\dots:x_{33})\mapsto (x_{31}:x_{32})$. As explained above, we
must remove the thin subset
\begin{equation*}
  T_1 = \{(x_{11}:\dots:x_{33}) \in X_1(\Qd) \mid x_{31}x_{32} = -\square\ \ \text{or}\ \ x_{11}x_{12}x_{21}x_{31}x_{32} x_{33} = 0\}.
\end{equation*}

Let $W_2=\Pd^1_\Qd \times \Pd^2_\Qd$ with coordinates $(z_{01}:z_{02})$ and
$(z_{11}:z_{12}:z_{21})$.  Let $C_{32}$ be the curve
$\Vd(z_{02},z_{11}z_{12}-z_{21}^2)$ on $W_2$ and let $X_2$ be the blow-up of
$W_2$ in $C_{32}$. This is a smooth Fano threefold of type III.22. We will see
later that the height conditions (cf.\ \eqref{height1} below) contain fractional exponents
$\alpha^{\nu}_{ij}$; see also \cite{BT}.
Let $T_2 \subset X_2(\QQ)$ be the set of rational points where at least one
Cox coordinate is zero.

Let $X_3$ be the blow-up of the quadric
$Q = \Vd(z_{11}z_{12} - z_{21}^2 - z_{31}z_{32}) \subset \Pd^4_{\Qd}$ in the
points $P_{01} = \Vd(z_{11}, z_{12}, z_{21}, z_{31})$ and
$P_{02} = \Vd(z_{11}, z_{12}, z_{21}, z_{32})$. Its type is III.19.  As
before, let $T_3 \subset X_3(\QQ)$ be the set of rational points where at
least one Cox coordinate is zero.

In Sections~\ref{sec:heights} and \ref{sec:counting_problems}, we will
define natural anticanonical height functions
$H_j : X_j (\mathbb{Q}) \rightarrow \mathbb{R}$, $j=1, 2, 3$, using the
anticanonical monomials in their Cox rings.  We write
$N_j (B) = N_{X_j(\QQ) \setminus T_j, H_j}(B)$.
  
\begin{theorem}\label{thm1}
  The Manin--Peyre conjecture holds for the smooth spherical Fano threefolds
  $X_2,X_3$ of semisimple rank one and type $N$, and a thin version of the
  Manin--Peyre conjecture holds for $X_1$.  More precisely, there exist
  explicit constants $C_1,C_2,C_3$ such that
  $$N_j(B) = (1 + o(1))C_j B (\log B)^{\rank \Pic X_j -1} $$
  for $1 \leq j \leq 3$. 
  The values of $C_j$ are the ones predicted by Peyre. 
\end{theorem}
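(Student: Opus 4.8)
The plan is to treat $X_1$, $X_2$, $X_3$ uniformly via the universal torsor, with the bulk of the work concentrated in the analytic treatment of the torsor equation \eqref{new-eq} for $X_1$. First one checks that each $X_j$ --- a blow-up of a quadric threefold, resp.\ of $\Pd^1\times\Pd^2$, in a smooth curve or in points --- is a smooth split projective $\QQ$-variety with free Picard group, big and semiample anticanonical class, and finitely generated Cox ring $\Rm(X_j)$ with a single integral relation, namely the torsor equation in Table~\ref{tab:classification_spherical1}. Since the completed ambient toric variety need not be regular here, one applies the generalized parametrization of Part~\ref{part1} in place of \cite[Propositions~3.7 \& 4.11]{BBDG}: it writes $N_j(B)$ as a finite sum, over lifts, of the number of integral Cox coordinates on $\Spec\Rm(X_j)$ subject to $H_j\le B$, to coprimality conditions read off from the colored fan, and to the removal of exactly the locus $T_j$. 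Making $H_j$ explicit through the anticanonical monomials of $\Rm(X_j)$ (Sections~\ref{sec:heights}, \ref{sec:counting_problems}) and then performing a dyadic decomposition together with a M\"obius inversion over the coprimality conditions reduces $N_j(B)$ to counting integral solutions of $x_{11}x_{12}-x_{21}^2=(\text{monomial in }x_{3\bullet})$ in boxes, uniformly in the dyadic parameters; the count over a fundamental domain for the Picard torus of rank $\rank\Pic X_j$ accounts, under $H_j\le B$, for the power $(\log B)^{\rank\Pic X_j-1}$. For $X_1$ the relevant regime is $|x_{11}|\asymp|x_{12}|\asymp|x_{21}|\asymp X$, $|x_{31}|\asymp|x_{32}|\asymp Y$, $x_{33}\asymp X/Y$ with $1\le Y\le X$, and for $X_2$ the anticanonical monomials carry fractional exponents $\alpha_{ij,\nu}$.

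The analytic heart is an asymptotic formula, \emph{uniform} in $1\le Y\le X$ with a power saving in $\min(Y,X/Y)$, for the number of points of the above box satisfying $x_{11}x_{12}-x_{21}^2=x_{31}x_{32}x_{33}^2$, after excising the locus on which $-x_{31}x_{32}$ is a perfect square; summing its main term over the dyadic range of $Y$, with $X$ localized by the height condition, then yields $N_1(B)=(1+o(1))C_1B\log B$. Fixing $(x_{31},x_{32},x_{33})$ and writing $m=x_{31}x_{32}x_{33}^2$, one must count solutions of $x_{11}x_{12}-x_{21}^2=m$ in a box --- a shifted divisor/lattice-point problem. Rather than the circle method we use Poisson summation applied to a subset of the variables, chosen according to the size of $Y$: for small $Y$ the equation resembles two squares plus a product, for large $Y$ a square plus two products, with a genuinely mixed intermediate range. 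This is essentially the delta-symbol method of \cite{DFI}, repackaged so as to make the uniformity in $Y$ transparent. The zero frequency produces the main term --- a real density times a $p$-adic series, times $XY$ and, after the ensuing sum over $x_{31},x_{32},x_{33}$, a logarithm --- while the nonzero frequencies are estimated by square-root cancellation in the attendant complete exponential sums of Gauss and Kloosterman type.

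These estimates are uniform in $Y$ \emph{except} on the locus $-x_{31}x_{32}=\square$: there, writing $-x_{31}x_{32}=k^2$, the equation factors as $x_{11}x_{12}=(x_{21}-kx_{33})(x_{21}+kx_{33})$, a split quaternary quadratic form of signature $(2,2)$ --- a sum of two hyperbolic planes --- whose asymptotics carry an additional logarithm (cf.\ \cite{HB}), so that Manin's conjecture in its naive form fails. But this locus is the preimage under the fibration $X_1\to\Pd^1$, $(x_{11}:\dots:x_{33})\mapsto(x_{31}:x_{32})$, of $\{(a:b)\in\Pd^1(\QQ):-ab=\square\}$, hence a thin subset of $X_1(\QQ)$; removing it, together with the boundary $x_{11}x_{12}x_{21}x_{31}x_{32}x_{33}=0$, is precisely the passage to $T_1$, and this is how we arrive at a thin version of the conjecture for $X_1$. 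For $X_2$ and $X_3$ the torsor equation is $x_{11}x_{12}-x_{21}^2=x_{31}x_{32}$; freezing $x_{21}$ turns it into the difference of two decoupled products $x_{11}x_{12}-x_{31}x_{32}=x_{21}^2$, to which the soft lattice-and-hyperbola arguments from the treatment of case $T$ in \cite{BBDG} apply, the main term again assembling into a singular integral times a singular series; for $X_2$ one additionally carries the fractional exponents through, and for both only the coordinate hyperplanes ($T_2$, $T_3$) need be removed. Finally the main terms are collected into $C_jB(\log B)^{\rank\Pic X_j-1}$, and $C_j$ is matched with Peyre's constant by recognising the singular integrals as the archimedean densities, the singular series as the products of $p$-adic densities (with the convergence factors provided by the torsor description), and the leading $\log$-coefficient as $\alpha(X_j)\beta(X_j)$ times the volume of the relevant cone, exactly as in \cite{BBDG}; for $X_1$ the comparison is with Peyre's constant for $X_1(\QQ)\setminus T_1$.

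The decisive obstacle is the analytic core for $X_1$: obtaining the asymptotic for \eqref{new-eq} with a power-saving error term \emph{uniformly} across the whole range $1\le Y\le X$, negotiating the transition between the ``sum of two squares'' and ``sum of two products'' behaviours, and cleanly isolating the degenerate locus $-x_{31}x_{32}=\square$ that forces the thin-set formulation. The remaining ingredients --- the torsor parametrization granted by Part~\ref{part1}, the bookkeeping of fractional exponents for $X_2$, and the identification of the $C_j$ with Peyre's constant --- are by comparison routine, if lengthy.
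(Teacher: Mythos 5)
Your plan follows the paper's own route: the Part~\ref{part1} torsor parametrization and height/measure comparison for a possibly singular small completion of the ambient toric variety, a Poisson-summation (delta-symbol style) analysis of $x_{11}x_{12}-x_{21}^2-x_{31}x_{32}x_{33}^2=0$ in dyadic boxes with a power saving uniform in $\min(Y,X/Y)$, the excision of the locus $x_{31}x_{32}=-\square$ (pulled back from $\Pd^1$, hence thin) where the split $(2,2)$-form produces an extra logarithm, and the identification of the constants with Peyre's via singular series and singular integral. The only divergence is one of packaging: for $X_2,X_3$ the paper applies the general circle-method asymptotic \cite[Proposition~5.1]{BBDG} to $x_{11}x_{12}-x_{21}^2-x_{31}x_{32}=0$ and verifies the counting hypotheses (including the fractional height exponents) via \cite[Proposition~7.6]{BBDG}, rather than your rearrangement into a decoupled bilinear problem with frozen $x_{21}$, but the structural point exploited is the same and no essential idea is missing.
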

 
Together with previous results, this covers all types of smooth spherical Fano
threefolds.

\subsection*{Acknowledgements}

The authors thank the anonymous referee
for useful remarks and suggestions.

\part{Metrics and heights via Cox rings and universal torsors}\label{part1}

Universal torsors and Cox rings were introduced and studied by
Colliot-Th\'el\`ene and Sansuc \cite{CTS,CTS2} and Cox \cite{Cox}.

If a variety $X$ has a finitely generated Cox ring with one relation, this
gives a description of $X$ as a hypersurface in a toric variety $Y$. The
description of height and Tamagawa measures in \cite[Part 1]{BBDG} relies on
the assumption \cite[(2.3)]{BBDG} that $Y$ can be chosen to be regular, which
does not hold in our examples $X_2,X_3$. Here, we describe
one approach how to circumvent
this problem; see also \cite{BT}.

Our constructions of metrizations, heights, and Tamagawa measures on
universal torsors follow the work of Salberger \cite{MR1679841}; see
also \cite{BBS1} and \cite{BBDG}. This should be compared to the
closely related work of Peyre on universal torsors for Manin's
conjecture \cite{Pey2, Pey3, Pey4}.

\section{Heights and parametrization}\label{sec:desing}

We start with a situation similar to (but in several respects more general
than) \cite[Section~2]{BBDG}. Let $Y^\circ$ be a smooth split toric variety
over $\Qd$.  We do not assume that $Y^\circ$ is complete, but we still assume
the weaker property that $Y^\circ$ has only constant regular functions.  Let
\begin{equation}\label{eq:coxamb}
  \Rm(Y^\circ) \cong \Qd[x_1,\dots,x_J]
\end{equation}
be its Cox ring, where $x_1, \dots, x_J$ correspond to the torus
invariant prime divisors in $Y^\circ$. Let $0 \ne \Phi \in \Rm(Y^\circ)$ be
a homogeneous equation, and let $X \subseteq Y^\circ$ be the
corresponding subvariety.

We assume that $X$ is smooth and projective, with big and semiample
anticanonical class $-K_X$. Moreover, we assume that every torus orbit in
$Y^\circ$ meets $X$. We also assume that the pullback map
$\Pic Y^\circ \to \Pic X$ sends a big and semiample divisor class $L^\circ$ to
$-lK_X$ for some $l \in \Zd_{\ge 0}$.  Finally, we assume
$\Phi \in \Zd[x_1,\dots,x_J]$.

\begin{remark}
  \label{rem:smfano1}
  A situation as above can be naturally obtained starting from a
  smooth split projective variety over $\Qd$ with big and semiample
  anticanonical class $-K_X$ and finitely generated Cox ring
\begin{equation*}
  \Rm(X) \cong \Qd[x_1,\dots,x_J]/(\Phi),
\end{equation*}
where $x_1,\dots,x_J$ is a system of pairwise nonassociated $\Pic
X$-prime generators and $\Phi \ne 0$. By \cite[3.2.5]{adhl15}, there
exists a canonical embedding into an ambient toric variety $Y^\circ$,
and it satisfies all the above assumptions. Moreover, the pullback map
$\Pic Y^\circ \to \Pic X$ is an isomorphism, and we may take
$L^\circ = -K_X$ under this identification.
\end{remark}

If $\Sigma$ is the fan of any toric variety, we write $\Sigmamax$ for
the set of maximal cones and $\Sigma(1)$ for the set of rays.

Let $\Sigma^\circ$ be the fan of $Y^\circ$. The generators
$x_1,\dots,x_J \in \Rm(Y^\circ)$ are in bijection to the rays $\rho
\in \Sigma^\circ(1)$; we also write $x_\rho$ for $x_i$ corresponding
to $\rho$.

Let $Y$ be a completion of $Y^\circ$ such that the pullback map
$\Cl Y \to \Cl Y^\circ=\Pic Y^\circ$ is an isomorphism and $L^\circ$ is big
and semiample on $Y$; let $\Sigma$ be the fan of this toric variety $Y$. We
have $\Sigma(1) = \Sigma^\circ(1)$ and $\Rm(Y) = \Rm(Y^\circ)$. For example,
we may choose the unique $Y$ such that $L^\circ$ is ample on $Y$, which exists
by \cite[Proposition~2.4.2.6]{adhl15}; we call this $Y$ the \emph{standard
  small completion} of~$Y^\circ$.

We do not assume that $Y$ is regular (in contrast to
\cite[Section~2]{BBDG}). Let
\begin{equation*}
\rho: Y'' \to Y
\end{equation*}
be a toric desingularization of $Y$ that does not change the smooth locus of
$Y$.  Such a desingularization can be obtained by suitably subdividing the
singular cones in the fan $\Sigma$ of $Y$ into a smooth fan $\Sigma''$.

Let $Y' \subset Y''$ be a toric subvariety with $Y^\circ \subset Y'$.
This means that the fan $\Sigma'$ of $Y'$ is a subfan of $\Sigma''$
and contains $\Sigma^\circ$. If we write $\rho_{J+1}, \dots,
\rho_{J'}$ for the rays in $\Sigma'(1) \setminus \Sigma(1)$, we can
write
\begin{equation*}
\Rm(Y') = \Qd[y_1,\dots,y_J,y_{J+1},\dots,y_{J'}]
\end{equation*}
for the Cox ring of $Y'$, again with the correspondence between rays $\rho_i$
and generators $y_i$.

Since $X$ is smooth, we can identify it with its strict transform under
$\rho: Y'' \to Y$; it is a hypersurface in $Y'$ defined by a homogeneous
equation $\Phi'$ that is obtained from $\Phi$ by homogenizing (therefore,
$y_i \mapsto x_i$ for $i \le J$ and $y_i \mapsto 1$ for $i>J$ turns $\Phi'$
into $\Phi$). We obtain the following commutative diagram:
\[
\begin{tikzcd}
X\ar[r, hook]\ar[d,equal] & Y'\ar[r, hook] & Y''\ar[d, two heads]\\
X\ar[r, hook] & Y^\circ\ar[u, hook]\ar[r, hook] & Y\text{.}
\end{tikzcd}
\]
Below, we will regard $X$ mostly as embedded into $Y'$.

For simplicity, we assume that every cone in $\Sigma'$ is the face of
a maximal cone.

Let $U$ be the open torus in $Y'$. For each $\rho \in \Sigma'(1)$,
we have a $U$-invariant Weil divisor $D_\rho$ defined by $y_\rho$ of class
$[D_\rho]=\deg(y_\rho) \in \Pic Y'$.  Let
$D_0:=\sum_{\rho \in \Sigma'(1)} D_\rho$, which is an effective divisor of
class $[D_0]=-K_{Y'}$.

Let $S \subset \Sigma'(1)$ be such that $\{\deg(y_\rho) \mid \rho \notin S\}$
is a basis of $\Pic Y'$. This is true if and only if $S$ is a basis of $N$.
Therefore, we can write
$\deg(y_{\rho'}) = \sum_{\rho \notin S} a^S_{\rho',\rho} \deg(y_{\rho})$
for each $\rho' \in \Sigma'(1)$ with certain
$a^S_{\rho',\rho} \in \Zd$.
We define the rational section
\[z^S_{\rho'}:=y_{\rho'}/\prod_{\rho \notin S}
y_\rho^{a^S_{\rho',\rho}}\] of degree
$0 \in \Pic Y'$, with $z^S_\rho=1$ for $\rho \notin S$. The
$z^S_\rho$ for $\rho \in S$ define
a chart
\begin{equation*}
  f^S : U^S \to \Ad^{S}_\QQ,\quad P \mapsto (z^S_\rho(P))_{\rho \in S}
\end{equation*}
where $U^S$ is the open subset of $Y'$ where $y_\rho \ne 0$ for all
$\rho \notin S$ (i.e., the complement of $\bigcup_{\rho \notin S} D_\rho$ in
$Y'$). Note that $f^S$ is an isomorphism if and only if $S = \sigma(1)$ for
some $\sigma \in \Sigmamax'$.
For the open subset $X^S:=X \cap U^S$ of $X$, the image
$f^S(X^S) \subset \Ad^S_\QQ$ is defined by
\begin{equation*}
  \Phi^S:=\Phi'(z^S_\rho)=
  \Phi'(y_\rho)/\prod_{\rho\notin S}y_\rho^{b^S_\rho}
\end{equation*}
(with $b^S_\rho \in \Zd$ satisfying
$\deg\Phi'=\sum_{\rho\notin S} b^S_\rho\deg(y_\rho)$)
since $y_\rho \ne 0$ on $U^S$ for $\rho \notin S$.

\subsection{Universal torsors and models}

For universal torsors and Cox rings of toric varieties, see \cite[\S
4]{CTS}, \cite{Cox}, \cite[\S 8]{MR1679841}.

Let $\pi'\colon Y_0' \to Y'$ be the universal torsor as in \cite[\S 8]{MR1679841}.
Then the restriction $\pi'\colon X_0\to X$ to the preimage of $X \subset Y'$
is a torsor, but not necessarily universal since
the acting torus (dual to $\Pic Y'$) may be too large.
The toric varieties
$\Ad_\Qd^{J'} = \Ad_\Qd^{\Sigma'(1)} =: Y'_1 \supset Y'_0 \to Y'$ have the fans
$\Sigma'_1 \supset \Sigma'_0 \to \Sigma'$. Here, the sets of rays
$\Sigma'_1(1)=\Sigma'_0(1)$ are in natural bijection to $\Sigma'(1)$.
The $r'$ irreducible components of $Z'_Y = Y'_1 \setminus Y'_0$ are defined by the vanishing of
$x_\rho$ for all $\rho \in S'_j$, where the \emph{primitive collections}
\begin{equation}\label{eq:pcoll}
  S'_1,\dots,S'_{r'} \subseteq \Sigma'(1)
\end{equation}
are all sets with the following property: $S'_j \not\subseteq \sigma(1)$ for all
$\sigma \in \Sigma'$, but for every proper subset $S_j''$ of $S'_j$, there is
a $\sigma \in \Sigma'$ with $S_j'' \subseteq \sigma(1)$.

From the fans and their maps, we may construct $\Zd$-models
$\tpi'\colon \tY'_1 \setminus \tZ'_Y = \tY'_0 \to \tY'$, again as in \cite[\S 8]{MR1679841}.
By our assumption that $\Phi$ has integral coefficients, we
obtain $\Zd$-models $\tpi' \colon \tX_1
\setminus \tZ_X = \tX_0 \to \tX$ of $\pi'\colon X_1
\setminus Z_X = X_0 \to X$ by restricting everything to
$\Phi' = 0$.

We assume:
\begin{equation}\label{eq:proper_z}
  \text{The toric variety $Y'$ is chosen such that $\tX$ is proper over $\Spec \Zd$.}
\end{equation}

This is always possible since for $Y' = Y''$ the scheme $\tY'$ is projective over $\Spec \Zd$.

\begin{prop}\label{prop:lift_to_torsor}
  We have
  \begin{align*}
    \tX_0(\Zd) &= \{\xv=(x_\rho)_{\rho \in \Sigma'(1)} \in \Zd^{\Sigma'(1)} :
                 \Phi(\xv)=0,\ \gcd\{x_\rho : \rho \in S'_j\}=1  \text{ {\rm
                 for all  }} j=1,\dots,r'\},\\
    \tX_0(\Zd_p) &= \{\xv=(x_\rho)_{\rho \in \Sigma'(1)} \in \Zd_p^{\Sigma'(1)}
                   : \Phi(\xv)=0,\ p \nmid \gcd\{x_\rho : \rho \in S'_j\}
                   \text{ {\rm  for all }}j=1,\dots,r'\}.
  \end{align*}
  The map $\tpi'$ induces a $2^{\rank \Pic Y'} : 1$-map $\tX_0(\Zd) \to \tX(\Zd)=X(\Qd)$.
\end{prop}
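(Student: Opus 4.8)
The plan is to analyze the morphism $\tpi' \colon \tX_0 \to \tX$ by working chart-by-chart over the affine opens $X^S$ of $X$ (for $S = \sigma(1)$ ranging over the maximal cones $\sigma \in \Sigmamax'$), where the universal torsor of the toric variety $Y'$ trivializes, and then to reconcile the local descriptions via the coprimality conditions coming from the primitive collections. Over such a chart $U^S$, the torus $\mathrm{Hom}(\Pic Y', \Gd_\mathrm{m})$ acts freely on the torsor, and the section $z^S_\rho$ defined in the text gives an explicit splitting; restricting to $\Phi' = 0$ and passing to $\Zd$-models, a point of $\tX(\Zd)$ lying in the chart lifts to the tuple $(x_\rho)_{\rho \in \Sigma'(1)}$ with $x_\rho$ a unit for $\rho \notin S$ (after scaling by the torus), and the remaining $x_\rho$ for $\rho \in S$ recording the local coordinates.

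\medskip

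First I would establish the set-theoretic description of $\tX_0(\Zd)$ and $\tX_0(\Zd_p)$. For $\tX_0(\Zd_p)$ this is essentially formal: $\tY'_0 = \tY'_1 \setminus \tZ'_Y$, and the irreducible component of $\tZ'_Y$ indexed by $S'_j$ is cut out by $\{x_\rho = 0 : \rho \in S'_j\}$, so a $\Zd_p$-point of $\tY'_1 = \Ad_{\Zd}^{\Sigma'(1)}$ avoids $\tZ'_Y$ precisely when, for every $j$, not all $x_\rho$ with $\rho \in S'_j$ are divisible by $p$, i.e.\ $p \nmid \gcd\{x_\rho : \rho \in S'_j\}$; intersecting with $\Phi = 0$ gives the stated formula. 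For $\tX_0(\Zd)$ one combines the local conditions over all $p$: a point of $\Zd^{\Sigma'(1)}$ lies in $\tX_0(\Zd)$ iff it does so in $\tX_0(\Zd_p)$ for all $p$ (using properness of $\tX$ from \eqref{eq:proper_z} to control the archimedean place and rule out points escaping to the boundary), which is exactly $\gcd\{x_\rho : \rho \in S'_j\} = 1$ for all $j$. One should check here that the condition $\Phi(\xv) = 0$ in affine coordinates matches $\Phi'(\xv) = 0$ under the substitution $y_i \mapsto x_i$, $y_i \mapsto 1$ described before the diagram.

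\medskip

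Next I would compute the fibers of $\tpi'$. Over the generic point, $\tpi' \colon X_0 \to X$ is a torsor under $T' := \mathrm{Hom}(\Pic Y', \Gd_\mathrm{m})$, so over $\Qd$ the fiber over a rational point is a $T'$-torsor, hence (since $\Pic Y'$ is free and we are over $\Qd$, or rather since everything is split) a trivial $T'$-torsor, giving a $(\Gd_\mathrm{m})^{\rank \Pic Y'}$ worth of lifts to $X_0(\Qd)$. Passing to integral points: among these lifts, the ones landing in $\tX_0(\Zd)$ are those whose coordinates are integral and satisfy the coprimality conditions, and the torus-scaling ambiguity is cut down to the subgroup $\mathrm{Hom}(\Pic Y', \{\pm 1\}) \cong (\Zd/2)^{\rank \Pic Y'}$: any two integral lifts of the same point differ by a $T'(\Qd)$-element preserving integrality and coprimality on all coordinates simultaneously, which forces the scaling character to take values in $\{\pm1\}$, while conversely every sign choice preserves the conditions. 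Hence $\tpi'$ is $2^{\rank \Pic Y'} : 1$, and the identification $\tX(\Zd) = X(\Qd)$ is immediate from properness of $\tX$ over $\Spec\Zd$ and smoothness of $X$.

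\medskip

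The main obstacle I anticipate is the transition between the local (chart-by-chart) picture and the global coprimality conditions: one must verify that the conditions $\gcd\{x_\rho : \rho \in S'_j\} = 1$ attached to the primitive collections are exactly the conditions carving out $\tY'_0$ inside $\Ad_{\Zd}^{\Sigma'(1)}$ in a way compatible with all charts at once, and that no point of $\tX(\Zd)$ is missed (this is where properness is used) and none is double-counted beyond the expected sign ambiguity. Equivalently, one needs that the charts $U^S$ for $S \in \Sigmamax'$ cover $Y'$ and that a tuple satisfying all primitive-collection coprimalities does lie in exactly one $T'(\Qd)$-orbit mapping into some chart. Since $Y'$ is a subvariety of the smooth toric $Y''$ and we have arranged (by the assumption that every cone of $\Sigma'$ is a face of a maximal cone) that the maximal cones cover everything, this is a combinatorial verification in the spirit of \cite{MR1679841}, but the bookkeeping with the extra rays $\rho_{J+1},\dots,\rho_{J'}$ and the integral structure requires care.
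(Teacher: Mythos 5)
Your overall route --- describing $\tX_0(\Zd_p)$ and $\tX_0(\Zd)$ via the primitive collections, using properness for $\tX(\Zd)=X(\Qd)$, and counting lifts through the torus action --- is the same one the paper intends: its proof is a one-line reference to \cite[Proposition~2.2]{BBDG} (ultimately Salberger's torsor formalism over $\Zd$), which packages exactly these steps. Your local descriptions are fine, except that no properness and no archimedean consideration enter there: $\tX_0$ is an open subscheme of the affine scheme $\tX_1$, so a $\Zd$-point is just an integral tuple with $\Phi'(\xv)=0$ whose reduction modulo every prime avoids $\tZ_X$, which is literally the gcd conditions; properness via \eqref{eq:proper_z} is needed only for the identification $\tX(\Zd)=X(\Qd)$.

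The genuine gap is in the fiber count. You argue (with an unproven coordinate claim) that two integral lifts satisfying the coprimality conditions differ only by signs, but you never show that a given point of $\tX(\Zd)$ admits any such lift, and you attribute ``no point is missed'' to properness. Properness only gives $\tX(\Zd)=X(\Qd)$; the lifting is a separate arithmetic input. The correct mechanism is that $\tX_0\to\tX$ is the restriction of a torsor under the split torus $T'=\Hom(\Pic Y',\Gd_\mathrm{m})$ over $\Zd$, so the fiber over a point of $\tX(\Zd)$ is a $T'$-torsor over $\Spec\Zd$; every such torsor is trivial because $H^1(\Spec\Zd,\Gd_\mathrm{m})=\Pic\Zd=0$, and its set of $\Zd$-points is then a principal homogeneous space under $T'(\Zd)=\{\pm1\}^{\rank\Pic Y'}$. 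This one statement gives simultaneously the existence of an integral coprime lift and the exact $2^{\rank\Pic Y'}$ count, and it also replaces your assertion that a $T'(\Qd)$-scaling preserving integrality and coprimality must be a sign vector (which, argued in coordinates, needs that the classes $\deg(y_\rho)$ for $\rho$ outside a maximal cone generate $\Pic Y'$). Equivalently and concretely, one clears denominators and common factors of a rational lift prime by prime, using that $\Zd$ is a PID with $\Zd^\times=\{\pm1\}$; this, not properness or combinatorial bookkeeping, is where the factor $2^{\rank\Pic Y'}$ comes from.
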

\begin{proof}
  The proof is as in \cite[Proposition~2.2]{BBDG} by
  \eqref{eq:proper_z}. The referee kindly pointed out that an argument
  is also contained in the work of Peyre \cite{Pey2,Pey3,Pey4}.
\end{proof}

\subsection{Metrization of $\omega_X^{-1}$ via Poincar\'e residues} \label{subsec:metr}

Let $L$ be any big and semiample divisor class on $Y'$ such that
$L|_{X} = -lK_X$. Then there exists a uniquely determined divisor $E$
with support $\Supp E$ in the exceptional locus of $\rho$ such that
\begin{align*}
 L = -lK_{Y'} - l[X] + [E].
\end{align*}

The following lemma shows that (after possibly enlarging $l$) such an
$L$ can always be found.
\begin{lemma}
  \label{lemma:exL}
  After suitably enlarging $l$, there exists a big and semiample
  divisor class on $Y'$ such that $L|_{X} = -lK_X$.
\end{lemma}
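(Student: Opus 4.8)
The plan is to build $L$ by pulling a suitable class back from $Y$ along the composite $Y' \hookrightarrow Y'' \xrightarrow{\rho} Y$ and then checking that bigness, semiampleness, and the prescribed restriction to $X$ all survive. The key geometric observation I would record first is that $X$ lies in the locus where $\rho$ is an isomorphism: since $Y^\circ$ is smooth, every cone of $\Sigma^\circ$ is regular, hence each point of $Y^\circ$ has regular local ring in $Y$, so $Y^\circ$ is contained in the smooth locus of $Y$, over which $\rho$ is an isomorphism by construction; and $X \subseteq Y^\circ \subseteq Y'$. Thus the embedding $X \hookrightarrow Y'$ is compatible with $\rho|_{Y'}$ and the original embedding $X \hookrightarrow Y^\circ \subseteq Y$. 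Now, because $\Cl Y \to \Cl Y^\circ = \Pic Y^\circ$ is an isomorphism, the big and semiample class $L^\circ$ (with $L^\circ|_X = -lK_X$) corresponds to a class $L_Y \in \Pic Y$ which, by the choice of $Y$, is again big and semiample; I set $L := (\rho|_{Y'})^* L_Y \in \Pic Y'$. By the compatibility just noted, $L|_X = L_Y|_X = L^\circ|_X = -lK_X$.

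It then remains to verify that $L$ is big and semiample on $Y'$. For semiampleness, pick $m_0 \ge 1$ with $m_0 L_Y$ globally generated on $Y$; then $m_0\rho^* L_Y$ is globally generated on $Y''$, and restricting its global sections to the dense open subvariety $Y' \subseteq Y''$ leaves them generating $\mathcal{O}(m_0 L)$ at every point of $Y'$, so $m_0 L$ is globally generated. For bigness, note $\rho\colon Y'' \to Y$ is proper and birational with $Y''$ normal, so $\rho^* L_Y$ is big on $Y''$; since $Y'$ is dense open in the integral variety $Y''$ with $\dim Y' = \dim Y''$, the restriction $H^0(Y'',\mathcal{O}(m\rho^* L_Y)) \hookrightarrow H^0(Y', \mathcal{O}(mL))$ is injective, whence $h^0(Y', mL) \gg m^{\dim Y'}$ and $L$ is big. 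This already yields the conclusion for the original $l$; the clause allowing one to enlarge $l$ is a harmless convenience (one simply replaces $L^\circ$ and $L$ by multiples), useful e.g.\ when one wants $lL^\circ$ itself — rather than merely some multiple — to be basepoint-free on $Y$, or when the input class is only guaranteed to have the required properties after such a replacement.

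The only genuine obstacle is the loss of properness in passing from $Y''$ to the possibly incomplete $Y'$: global generation and bigness cannot be transported along the morphism $\rho|_{Y'} \to Y$ directly but must be moved across the open immersion $Y' \hookrightarrow Y''$, which is precisely why I factor through $Y''$ and invoke injectivity of restriction of sections to a dense open subset of an integral scheme. Everything else is formal: the placement of $X$ inside the isomorphism locus of $\rho$ is forced by smoothness of $Y^\circ$, and combining $L|_X = -lK_X$ with adjunction $K_X = (K_{Y'} + [X])|_X$ then identifies the correction divisor $E$ with support in the exceptional locus exactly as in the discussion preceding the lemma.
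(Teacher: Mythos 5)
There is a genuine gap at the very first step of your construction. The isomorphism provided by the setup is $\Cl Y \to \Cl Y^\circ = \Pic Y^\circ$, at the level of \emph{Weil} divisor class groups; it does not place the class corresponding to $L^\circ$ in $\Pic Y$. In the situations this lemma is designed for (the varieties $X_2$, $X_3$), the completion $Y$ is singular, and a big and semiample Weil divisor class on $Y$ need not be Cartier. Your definition $L := (\rho|_{Y'})^* L_Y$ therefore rests on an unjustified claim: the pullback $\rho^*$ along the birational morphism $\rho: Y'' \to Y$ is simply not defined for a class that is merely Weil (strict transform is not an option here, since it does not preserve semiampleness, and your later arguments about global generation and sections of $\mathcal{O}(mL)$ all presuppose an honest line bundle pulled back from $Y$).

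This is exactly why the statement allows enlarging $l$, and why your closing remark that the enlargement is a ``harmless convenience'' and that ``this already yields the conclusion for the original $l$'' is wrong. The paper's proof runs: $L^\circ$ is ample on $Y$ (for the standard small completion), hence $\Qd$-Cartier there; after replacing $l$ by a positive multiple it becomes Cartier on $Y$; only then is $L'' = \rho^*(L^\circ)$ defined, big and semiample on $Y''$, with $L''|_X = -lK_X$, and one restricts to $Y'$. Your remaining steps --- that $X \subseteq Y^\circ$ lies in the locus where $\rho$ is an isomorphism (so the restriction to $X$ is unchanged), and that bigness and global generation survive restriction to the dense open toric subvariety $Y' \subseteq Y''$ --- are fine and are essentially what the paper uses implicitly; but without the Cartier step obtained by enlarging $l$, the object you restrict does not exist.
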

\begin{proof}
Since $L^\circ$ is ample, it is $\Qd$-Cartier on $Y$. Hence, after
replacing $l$ by a positive multiple, we may assume that $L^\circ$ is
Cartier on $Y$. Let $L''$ be the pullback $\rho^*(L^\circ)$. Then
$L''$ is big and semiample on $Y''$, and moreover $L''|_{X} = -lK_X$.
The same is then true for $L = L''|_{Y'}$.
\end{proof}

As before, let $S \subset \Sigma'(1)$ be such that $\{\deg(y_\rho)
\mid \rho \notin S\}$ is a basis of $\Pic Y'$. Hence there is a
unique (not necessarily effective) Weil divisor
$D(S)=\sum_{\rho \notin S} a^S_\rho D_\rho$ of class $-K_{Y'}-[X]$ whose
support is contained in $\bigcup_{\rho \notin S} D_\rho$. Let
$y^{D(S)}$ be the corresponding rational monomial in $\mathcal{R}(Y')$
of degree $-K_{Y'}-[X]$. The characters defined by $z^S_\rho$ for $\rho \in
S$ form a basis of $M=\Hom(U,\mathbb{G}_\mathrm{m})$.

By \cite[Proposition~8.2.3]{MR2810322}, we have a global nowhere vanishing
section $s_{Y'}$ of $\omega_{Y'}(D_0)$ (defined up to sign, independent of $S$; we have
$s_{Y'} = \Omega_0 / \prod_{\rho \in \Sigma'(1)} y_\rho$ for $\Omega_0$
from \cite[(8.2.3)]{MR2810322}) whose restriction to the open subset $U^S$ is
$\pm \bigwedge_{\rho \in S} \frac{\dd z^S_\rho}{z^S_\rho}$

With $y^{D_0} = \prod_{\rho \in \Sigma'(1)} y_\rho$, for each $S$ as above, 
\begin{equation*}
  \varpi^S := \frac{y^{D_0}}{y^{D(S)}\Phi} s_{Y'} \in \Gamma(Y',\omega_{Y'}(D(S)+X))
\end{equation*}
defines a nowhere vanishing global section. On $U^S$, we have
\begin{equation*}
  \varpi^S  = \frac{\pm 1}{\Phi^S} \bigwedge_{\rho \in S} \dd z^S_\rho \in
  \Gamma(U^S, \omega_{Y'}(X)).
\end{equation*}

Let $\Pm^l$ be a set of polynomials $F
\in \QQ[y_1,\dots,y_{J'}]$ of degree $L$. For each polynomial $F \in
\Pm^l$ of degree $L$, let $D(F)$ be the effective divisor on
$Y'$ of class $\deg F = L$ defined by $F$ (in Cox coordinates). If $X
\subset \Supp D(F)$, then we remove $F$ from our set $\Pm^l$;
clearly, this does not change the results that we want to prove. For
$X \not\subset \Supp D(F)$, we define
\begin{equation*}
  \varpi_F := \frac{y^{lD_0+E}}{F\cdot\Phi^l} s_{Y'}^l \in \Gamma(Y',\omega_{Y'}^l(D(F)+lX-E)).
\end{equation*}

We have the Poincar\'e residue map
$\Res : \omega_{Y'}(X) \to \iota'_*\omega_X$ of $\Om_{Y'}$-modules (where
$\iota': X \to Y'$ is the inclusion). On the smooth open subset $U^S$ of
$Y'$, it maps $\varpi^S \in \Gamma(U^S,\omega_{Y'}(X))$ to
$\Res(\varpi^S) \in \Gamma(U^S,\iota'_*\omega_X) = \Gamma(X^S,\omega_X)$,
which is
\begin{equation}\label{eq:residue}
  \Res(\varpi^S) = \frac{\pm 1}{\partial \Phi^S/\partial
    z^S_{\rho_0}} \bigwedge_{\rho \in S\setminus\{\rho_0\}} \dd z^S_\rho
\end{equation}
on the open subset of $X^S$ where
$\partial \Phi_\rho/\partial z^S_{\rho_0} \ne 0$, for any
$\rho_0 \in S$. Furthermore,
$\Res^l : \omega_{Y'}^l(lX) \to \iota'_*\omega_X^l$ sends $\varpi_F \in
\Gamma(U_F,\omega_{Y'}^l(lX - E))$ to $\Res^l(\varpi_F) \in
\Gamma(U_F,\iota'_*\omega_X^l) = \Gamma(X_F,\omega_X^l)$, where $U_F$ is the
complement of $D(F)$ in $Y'$, and $X_F = X \cap U_F$.

\begin{lemma}
  The section $\Res(\varpi^S)$, $\Res^l(\varpi_F)$ extends uniquely to a nowhere
  vanishing global section of $\omega_X(D(S)\cap X)$, $\omega_X^l(D(F)\cap X)$, respectively.
\end{lemma}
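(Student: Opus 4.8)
The plan is to argue locally on $Y'$, covering $X$ by the charts $X^S = X \cap U^S$ for $S = \sigma(1)$, $\sigma \in \Sigmamax'$, and checking that the rational sections $\Res(\varpi^S)$, $\Res^l(\varpi_F)$ patch together and have the claimed divisors. First I would observe that, by construction, $\varpi^S$ is a nowhere vanishing global section of $\omega_{Y'}(D(S)+X)$, so its Poincar\'e residue $\Res(\varpi^S)$ is a priori a global section of $\omega_X(D(S)\cap X)$; the content of the lemma is that it is \emph{nowhere vanishing}. Since the Poincar\'e residue map is an isomorphism $\omega_{Y'}(X)|_X \xrightarrow{\sim} \omega_X$ away from the ramification locus, and more precisely induces an isomorphism of invertible sheaves $\omega_{Y'}(D(S)+X)|_X \cong \omega_X(D(S)\cap X)$ on the smooth locus of $Y'$ (here one uses that $X$ is smooth and meets every torus orbit, so $X$ is contained in the smooth locus of $Y'$ in a neighborhood of each of its points after the desingularization, or rather: $X$ is itself smooth and the adjunction sequence is exact), a nowhere vanishing section maps to a nowhere vanishing section. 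Concretely, on $U^S$ the formula \eqref{eq:residue} exhibits $\Res(\varpi^S)$ as a unit times a coordinate differential form, hence nowhere vanishing on $X^S$; and since the $X^S$ with $S=\sigma(1)$ cover $X$, this gives nowhere vanishing on all of $X$.

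The next step is to pin down the divisor class and verify compatibility across charts. For two choices $S, \tilde S$ the sections $\varpi^S$ and $\varpi^{\tilde S}$ differ by the monomial $y^{D(S)}/y^{D(\tilde S)}$, a rational function of degree $D(\tilde S) - D(S)$ supported on the $D_\rho$ with $\rho \notin S \cup \tilde S$; taking residues and restricting to $X$, this shows $\Res(\varpi^S)$ and $\Res(\varpi^{\tilde S})$ are sections of the \emph{same} line bundle $\omega_X(D(S)\cap X)$ once one identifies the twisting divisors via linear equivalence on $X$ (recall $[D(S)] = -K_{Y'}-[X]$ for every $S$, so $[D(S)\cap X] = (-K_{Y'}-[X])|_X = -K_X$ by adjunction, independent of $S$). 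The upshot is that the various $\Res(\varpi^S)$ glue to a single nowhere vanishing global section of $\omega_X(D(S)\cap X)$, and uniqueness of the extension is automatic because $X$ is integral and the complement of $\bigcup_{\rho \in S} X^S$ (with $S$ ranging over maximal cones) is empty, so there is nothing to extend across. For the power $\Res^l(\varpi_F)$ one argues identically: $\varpi_F$ is built from $s_{Y'}^l$, the monomial $y^{lD_0+E}$ of degree $lD_0 + E$, and $F \cdot \Phi^l$; restricting to $X$, the divisor $E$ disappears (its support lies in the exceptional locus of $\rho$, which is disjoint from $X$ since $\rho$ does not change the smooth locus and $X$ sits in the smooth locus), $l D_0$ contributes $-lK_X$ by the above, $D(F)$ contributes $D(F)\cap X$, and $lX$ is cancelled by the residue, leaving a nowhere vanishing section of $\omega_X^l(D(F)\cap X)$.

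The main obstacle is the nowhere-vanishing claim at points of $X$ lying over the \emph{singular} cones of $\Sigma$, i.e., where the chosen $Y'$ might still be singular (if $Y' \subsetneq Y''$) or where $X$ meets several boundary divisors $D_\rho$ simultaneously. One has to check that $\Res(\varpi^S)$ has neither a zero nor a pole there beyond what is recorded in $D(S)\cap X$. I expect to handle this by the hypothesis \eqref{eq:proper_z} together with the observation that the construction of $s_{Y'}$ via \cite[Proposition~8.2.3]{MR2810322} already guarantees $s_{Y'}$ is nowhere vanishing as a section of $\omega_{Y'}(D_0)$ on all of $Y'$ (not just the smooth locus), so the only zeros or poles of $\varpi^S = (y^{D_0}/y^{D(S)}\Phi)\,s_{Y'}$ come from the explicit monomial $y^{D_0-D(S)}$ and from $\Phi$; on $X$ we have $\Phi = 0$ to order exactly one (as $X$ is a smooth, hence reduced and irreducible, hypersurface in the smooth-in-a-neighbourhood $Y'$), and the monomial factor is exactly $y^{D(S)^{-1}}$ up to the divisor $X$ itself, which is killed by the residue. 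Thus the divisor of $\Res(\varpi^S)$ on $X$ is precisely $-D(S)\cap X$, i.e., $\Res(\varpi^S)$ is a nowhere vanishing section of $\omega_X(D(S)\cap X)$, as claimed; the same bookkeeping with $l$-th powers and the extra twist $E$ (which restricts trivially to $X$) gives the statement for $\Res^l(\varpi_F)$.
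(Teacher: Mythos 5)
Your proposal is correct and takes essentially the same route as the paper, whose proof simply defers to \cite[Lemma~2.3]{BBDG} (modelled on \cite[Lemma~13]{BBS1}) and records that the case of $\Res^l(\varpi_F)$ uses $X \not\subset \Supp D(F)$ and $E \cap X = \emptyset$ --- precisely the local residue/adjunction computation on the charts $X^S$ and the two facts you invoke. One simplification: $Y'$ is automatically smooth (its fan $\Sigma'$ is a subfan of the regular fan $\Sigma''$), so the ``main obstacle'' in your last paragraph does not actually arise, and the adjunction isomorphism $\omega_{Y'}(D(S)+X)|_X \cong \omega_X(D(S)\cap X)$ applies on all of $Y'$ without further ado.
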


\begin{proof}
  For $\Res(\varpi^S)$, this is as in \cite[Lemma~2.3]{BBDG}, i.e., similar
  to \cite[Lemma~13]{BBS1}. The computation for $\Res^l(\varpi_F)$
  is analogous, using $X \not\subset \Supp D(F)$ and $E \cap X = \emptyset$.
\end{proof}

Therefore, $\tau^S:=\Res(\varpi^S)^{-1}$,
$\tau_F:=\Res^l(\varpi_F)^{-1}$ are global nowhere vanishing sections of
$\omega_X^{-1}(-D(S)\cap X)$, $\omega_X^{-l}(-D(F)\cap X)$, which we can also view as
a global section of $\omega_X^{-1}$, $\omega_X^{-l}$, respectively.

\begin{lemma}\label{lem:tau}
  The sections $\tau^S \in \Gamma(X,\omega_X^{-1})$,
  $\tau_F \in \Gamma(X,\omega_X^{-l})$ do not vanish anywhere on $X^S$,
  $X_F$, respectively.
\end{lemma}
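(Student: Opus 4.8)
The plan is to show that $\tau^S$ and $\tau_F$, which are a priori only sections of the twisted sheaves $\omega_X^{-1}(-D(S)\cap X)$ and $\omega_X^{-l}(-D(F)\cap X)$, are nonvanishing on the open loci $X^S$ and $X_F$ where the respective twisting divisors $D(S)\cap X$ and $D(F)\cap X$ become trivial. The key point is that on $X^S$ we have $z^S_\rho \neq 0$ for all $\rho \notin S$, so the rational monomial $y^{D(S)}$ associated to the divisor $D(S)=\sum_{\rho\notin S}a^S_\rho D_\rho$ is an invertible regular function there; similarly $F$ is nonvanishing on $X_F$ by definition of $U_F$. Thus on $X^S$ the sheaf $\omega_X^{-1}(-D(S)\cap X)$ is canonically isomorphic to $\omega_X^{-1}$ via multiplication by $y^{-D(S)}$, and this isomorphism carries a nowhere-vanishing section to a nowhere-vanishing section.

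Concretely, I would argue as follows. From the previous lemma we know $\Res(\varpi^S)$ extends to a nowhere-vanishing global section of $\omega_X(D(S)\cap X)$, hence $\tau^S = \Res(\varpi^S)^{-1}$ is a nowhere-vanishing global section of $\omega_X^{-1}(-D(S)\cap X)$. The natural map $\omega_X^{-1}(-D(S)\cap X) \to \omega_X^{-1}$ (inclusion of sheaves, since $-D(S)\cap X$ need not be effective one uses the rational section $y^{-D(S)}$ to identify them on the complement of $\Supp(D(S)\cap X)$) restricts on $X^S$ to an isomorphism onto $\omega_X^{-1}$, because $X^S$ is disjoint from $\Supp D_\rho$ for $\rho\notin S$ and hence from $\Supp(D(S)\cap X)$. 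A section that vanishes nowhere, viewed through an isomorphism of line bundles over $X^S$, still vanishes nowhere; therefore the image of $\tau^S$ in $\Gamma(X,\omega_X^{-1})$ is nonzero at every point of $X^S$. The argument for $\tau_F$ is identical, using that $U_F$ is the complement of $D(F)$ in $Y'$, so $X_F = X\cap U_F$ avoids $\Supp(D(F)\cap X)$, and the twisting divisor $-lD(F)\cap X$ (together with the already-noted fact $E\cap X=\emptyset$) becomes trivial there.

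I do not expect a serious obstacle here; the statement is essentially bookkeeping about where the twisting divisors are supported and the observation that twisting by a divisor disjoint from an open set is trivial on that open set. The one point requiring a little care is to be explicit that $X^S = X\cap U^S$ really does avoid all $D_\rho$ with $\rho\notin S$ — this is immediate from the definition of $U^S$ as the complement of $\bigcup_{\rho\notin S}D_\rho$ in $Y'$ — and correspondingly that $D(S)\cap X$, being supported on $\bigcup_{\rho\notin S}(D_\rho\cap X)$, restricts to the zero divisor on $X^S$. Once this is spelled out, nonvanishing of $\tau^S$ on $X^S$ and of $\tau_F$ on $X_F$ follows formally, and the proof is short.
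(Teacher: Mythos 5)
Your argument is correct and is essentially the paper's own proof, just spelled out in more detail: the paper likewise observes that $\Supp(D(S)\cap X)$ lies in $X\cap\bigcup_{\rho\notin S}D_\rho$, the complement of $X^S$, and that $D(F)\cap X$ is the complement of $X_F$, so the nowhere-vanishing twisted sections $\tau^S$, $\tau_F$ cannot vanish there when viewed in $\omega_X^{-1}$, $\omega_X^{-l}$. No substantive difference.
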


\begin{proof}
  The support of $D(S) \cap X$ is contained in $X \cap \bigcup_{\rho
    \notin S} D_\rho$, which is the complement of $X^s$. Moreover,
  $D_F \cap X$ is the complement of $X_F$.
\end{proof}

From now on, we assume:
\begin{equation}\label{eq:monicbpf}
  \parbox{35em}{$\Pm^l$ only contains monic
    monomials (of degree $L$), and for each
    $\sigma \in \Sigmamax'$ there exists $F \in \Pm^l$ such that
    $\Supp \Div F$ does not meet $U_\sigma$.}
\end{equation}
Then the set $\Pm^l$ is in particular basepoint-free.

We define a $v$-adic norm/metric on $\omega_X^{-1}$ by
\begin{equation*}
  \|\tau(P)\|_v := \min_{F \in \Pm^l : P \notin D(F)} \left|\frac{\tau^l}{\tau_F}(P)\right|_v^{1/l}
\end{equation*}
for a local section $\tau$ of $\omega_X^{-1}$ not vanishing in
$P \in X(\QQ_v)$.

\begin{lemma}
  Let $p$ be a prime such that $\tX$ is smooth over $\Zd_p$. On
  $\omega_X^{-l}$, the $p$-adic norm $\|\cdot\|_p$ defined by
  \begin{equation*}
    \|\tau(P)\|_p := \min_{F \in \Pm^l : P \notin D(F)} \left|\frac{\tau}{\tau_F}(P)\right|_p
  \end{equation*}
  for a local section $\tau$ of $\omega_X^{-l}$ not vanishing in
  $P \in X(\QQ_p)$ coincides with the model norm $\|\cdot\|_p^*$
  determined by $\tX$ over $\Zd_p$ as in
  \cite[Definition~2.9]{MR1679841}.
\end{lemma}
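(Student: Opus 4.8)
The plan is to exploit that both $\|\cdot\|_p$ and $\|\cdot\|_p^*$ are metrics on the line bundle $\omega_X^{-l}$: the model norm is a metric by construction, and $\|\cdot\|_p$ is one since $\min_{F}|(\tau/\tau_F)(P)|_p = |(\tau/\tau')(P)|_p\cdot\min_F|(\tau'/\tau_F)(P)|_p$ for any two local sections $\tau,\tau'$ of $\omega_X^{-l}$ not vanishing at $P$. It therefore suffices to check $\|\tau(P)\|_p = \|\tau(P)\|_p^*$ for one conveniently chosen local nowhere-vanishing section near each $P \in X(\QQ_p)$. First I would use the properness assumption \eqref{eq:proper_z} to extend $P$ to $\tilde P \in \tX(\Zd_p)$, observe that $\tilde P$ factors through one of the affine charts of $\tY'$ (as $\Spec\Zd_p$ is local and these charts cover $\tX$); call the corresponding maximal cone $\sigma$ and set $S := \sigma(1)$. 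Then $f^S$ is an isomorphism, so the $z^S_\rho$ ($\rho\in S$) are coordinates identifying the chart $\widetilde{U}_\sigma$ with $\Ad^S_\Zd$ and $\tX\cap\widetilde{U}_\sigma$ with the $\Zd$-hypersurface $\{\Phi^S=0\}$, on which $\tilde P$ becomes $(z^S_\rho(\tilde P))_{\rho\in S}\in\Zd_p^S$. The test section will be $\tau := (\tau^S)^l$, which does not vanish on $X^S\ni P$ by Lemma~\ref{lem:tau}.

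For the model side I would argue that $\tau$ generates $\omega_{\tX/\Zd_p}^{-l}$ near $\tilde P$, so that $\|\tau(P)\|_p^* = 1$. Concretely: since $\tX$ is smooth over $\Zd_p$ there, the Jacobian criterion gives $\rho_0\in S$ with $(\partial\Phi^S/\partial z^S_{\rho_0})(\tilde P)\in\Zd_p^\times$; then the explicit Poincar\'e residue \eqref{eq:residue}, together with the standard computation with the conormal exact sequence of the hypersurface $\{\Phi^S=0\}\subset\Ad^S_{\Zd_p}$, identifies $\Res(\varpi^S)$ with a local generator of $\omega_{\tX\cap\widetilde{U}_\sigma/\Zd_p}$ in a neighbourhood of $\tilde P$, whence $\tau=\Res(\varpi^S)^{-l}$ generates $\omega_{\tX/\Zd_p}^{-l}$ and $\|\tau(P)\|_p^* = 1$ by \cite[Definition~2.9]{MR1679841}. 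This is the same computation as in the regular case.

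For the norm $\|\cdot\|_p$ I would compute the ratio explicitly. By $\mathscr{O}_{Y'}$-linearity of $\Res,\Res^l$ and the compatibility of $\Res^l$ with $l$-th powers one gets $\tau/\tau_F = \Res^l(\varpi_F)/\Res(\varpi^S)^l = \varpi_F/(\varpi^S)^l$, and the definitions of $\varpi_F,\varpi^S$ give the monomial identity $\varpi_F/(\varpi^S)^l = y^{E+lD(S)}/F$. Being of degree $0$, this restricts on $U^S$ to the Laurent monomial $\prod_{\rho\in S}(z^S_\rho)^{e_\rho-d_\rho}$, where $e_\rho := \mathrm{mult}_{D_\rho}E$ and $d_\rho$ is the multiplicity of $D_\rho$ in $D(F)$. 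The hypotheses collapse nicely: $P\notin D(F)$ forces $z^S_\rho(\tilde P)\ne 0$ whenever $d_\rho>0$, so $|(\tau/\tau_F)(P)|_p = \prod_{\rho\in S}|z^S_\rho(\tilde P)|_p^{e_\rho-d_\rho}$; I claim this equals $\prod_{\rho\in S}|z^S_\rho(\tilde P)|_p^{-d_\rho}\ge 1$ for every admissible $F$, and taking $F=m\in\Pm^l$ with $\Supp\Div m$ disjoint from $U_\sigma$ (available by \eqref{eq:monicbpf}) all exponents $d_\rho$ with $\rho\in S$ vanish, giving the value $1$. This yields $\|\tau(P)\|_p = 1 = \|\tau(P)\|_p^*$, as required.

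The genuinely new point — and the step I expect to be the main obstacle — is the claim just above, i.e.\ that $z^S_\rho(\tilde P)\in\Zd_p^\times$ for every $\rho$ with $e_\rho\ne 0$. Over $\QQ$ this is immediate from $E\cap X=\emptyset$ (which forces $D_\rho\cap X=\emptyset$, hence $z^S_\rho$ invertible on $X^S$), but one needs the integral version: the closed subscheme $\overline{\Supp E}\cap\tX$ of $\tX$ has empty generic fibre, so — $\tX$ being proper over $\Spec\Zd$ — it meets only finitely many fibres, and for $p$ where $\tX$ is moreover smooth over $\Zd_p$ one must rule out that it meets the (then integral, of the expected dimension) special fibre over $p$. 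Once this is in place, everything else is the toric and residue bookkeeping already carried out in the regular setting in \cite[Lemma~2.3]{BBDG} (following \cite[Lemma~13]{BBS1}); the appearance of the exceptional divisor $E$ is the only difference.
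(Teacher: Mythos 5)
Your overall strategy --- reduce to one non-vanishing test section, since both $\|\cdot\|_p$ and $\|\cdot\|_p^*$ scale the same way under multiplication by a rational function --- is sound, and your computation of the model norm of $(\tau^S)^l$ via properness, the Jacobian criterion and the residue formula \eqref{eq:residue} is the standard argument and is fine. The problem is exactly the step you single out yourself: to evaluate $\|(\tau^S)^l(P)\|_p=\min_F\,|y^{lD(S)+E}/F\,(P)|_p$ you need $z^S_\rho(\tilde P)\in\Zd_p^\times$ for every $\rho\in S$ with $\mathrm{mult}_{D_\rho}E\neq 0$, i.e.\ that the reduction of $\tilde P$ modulo $p$ avoids the closures of the exceptional divisors occurring in $E$ --- and you do not prove this. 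The remark that $\overline{\Supp E}\cap\tX$ has empty generic fibre, hence lies over finitely many primes, does not identify those primes, and smoothness of $\tX$ over $\Zd_p$ is a condition on $\tX$ alone: it does not control how the special fibre of $\tX$ meets the toric boundary divisors of $\tY'$, so it does not by itself exclude $p$ from the bad set. Without this input both your lower bound $|(\tau/\tau_F)(P)|_p\ge 1$ and the evaluation at the special monomial $m$ with $\Supp\Div m\cap U_\sigma=\emptyset$ collapse (the latter then gives $\prod_{\rho\in S}|z^S_\rho(\tilde P)|_p^{e_\rho}$, which could be $<1$), so $\|(\tau^S)^l(P)\|_p=1$ is unproven. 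In the paper's concrete varieties the avoidance can be checked by hand from the torsor equation and the primitive collections (for $X_2$, for instance, $p\mid z_1$ would force $p\mid x_{31}x_{32}$, contradicting the coprimality conditions), but the lemma is stated in the abstract setting, so this is a genuine gap at the crux of your argument.

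Note that the paper's proof sidesteps this point entirely: it takes as test section $\tau_Q$ for a monomial $Q\in\Pm^l$ maximizing $|(\tau_F/\tau)(P)|_p$ (nonzero by Lemma~\ref{lem:tau} and basepoint-freeness, from \eqref{eq:monicbpf}), so that $\|\tau_Q(P)\|_p=1$ is immediate from the definition of the min-norm, and then obtains $\|\tau_Q(P)\|_p^*=1$ from the statement that all the $\tau_F$ extend to integral sections $\widetilde\tau_F$ generating $\omega^{-l}_{\tX/\Zd_p}$, so that the one of maximal value generates the fibre $P^*(\omega^{-l}_{\tX/\Zd_p})$. If you want to keep your chart computation, you must supply a proof of the integral avoidance statement (or deduce it from such a generation statement); as written, that is precisely what your sketch leaves open.
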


\begin{proof}
  See \cite[Lemma~3.3]{BBDG}. For such a $\tau$ not vanishing in $P$, let $Q
  \in \Pm^l$ be such that
  $|(\tau^Q/\tau)(P)|_p = \max_{F \in \Pm^l}
  |(\tau^F/\tau)(P)|_p$, which is positive by Lemma~\ref{lem:tau} and the
  fact that the set $\Pm^l$ is basepoint-free. Hence $\tau^Q$ does not
  vanish in $P$, and
  \begin{equation*}
    \|\tau^Q(P)\|_p^{-1} = \max_{F \in \Pm^l}
    \left|\frac{\tau^F}{\tau^Q}(P)\right|_p
    = \max_{F \in \Pm^l}
    \frac{|(\tau^F/\tau)(P)|_p}{|(\tau^Q/\tau)(P)|_p} = 1.
  \end{equation*}

  For each $F \in \Pm^l$, the section
  $\tau^F$ extends to a global section $\widetilde\tau^F$ of
  $\omega_{\tX/\Zd_p}^{-l}$, and $\omega_{\tX/\Zd_p}^{-l}$ is
  generated by the set of all these $\widetilde\tau^F$ as an
  $\Om_{\tX}$-module. The reason is that everything required for the
  definition of $\tau^F$ above can also be defined over $\Zd_p$.
  For the existence of the Poincar\'e residue map in this case, we
  refer to \cite[Definition~4.1]{kk}.

  For every $F \in \Pm^l$, we have $\left|\frac{\tau^F}{\tau^Q}(P)\right|_p \le 1$
  as in the computation above. This implies $\tau^F(P) = a_F \tau^\xi(P)$ for some
  $a_F \in \Zd_p$ in the $\Qd_p$-module $\omega_X^{-1}(P)$, and hence also
  $\widetilde\tau^\sigma(P) = a_F \widetilde\tau^Q(P)$ in the $\Zd_p$-module
  $P^*(\omega_{\tX/\Zd_p}^{-1})$, which shows that $P^*(\omega_{\tX/\Zd_p}^{-1})$
  is generated by $\widetilde\tau^Q(P)$. Hence $\|\tau^Q(P)\|_p^*=1$ by
  definition of the model norm. We conclude
  \begin{equation*}
    \|\tau(P)\|_p = |(\tau/\tau^Q)(P)|_p \cdot \|\tau^Q(P)\|_p =
    |(\tau/\tau^Q)(P)|_p \cdot \|\tau^Q(P)\|_p^* = \|\tau(P)\|_p^*. \qedhere
  \end{equation*}
\end{proof}

\subsection{Height functions}\label{sec:heights}

For $P \in X(\QQ)$, we define
\begin{equation}\label{eq:height_definition}
  H(P):=\prod_v \|\tau(P)\|_v^{-1}
\end{equation}
for a local section $\tau$ of $\omega_X^{-1}$ not vanishing in
$P$.

\begin{remark}\label{rem:rational_functions}
  Let $F$, $F_0$ be homogeneous
  elements of the same degree in the Cox ring of $Y'$. If
  $F_0$ does not vanish in $P$, then $F/F_0$ can be regarded as a
  rational function on $X$ that can be evaluated in $P \in X(\QQ)$.
\end{remark}

\begin{lemma}\label{lem:height}
  We have
  \begin{equation*}
    H(P) = \left(\prod_v \max_{F \in \Pm^l} \left|\frac{F}{F_0}(P)\right|_v\right)^{1/l}
  \end{equation*}
  for any polynomial $F_0$ of degree $L$ not vanishing in $P$.
\end{lemma}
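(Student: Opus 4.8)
The plan is to unwind the definition \eqref{eq:height_definition} of $H(P)$ in terms of the $v$-adic norms, substitute the definition of $\|\cdot\|_v$, and reorganize the resulting product over places using the product formula. First I would fix a place $v$ and a local section $\tau$ of $\omega_X^{-1}$ not vanishing at $P$. By definition, $\|\tau(P)\|_v = \min_{F \in \Pm^l : P \notin D(F)} |(\tau^l/\tau_F)(P)|_v^{1/l}$, so that $\|\tau(P)\|_v^{-1} = \max_{F} |(\tau_F/\tau^l)(P)|_v^{1/l}$, the maximum being over those $F \in \Pm^l$ with $P \notin D(F)$. Since $\Pm^l$ is basepoint-free by \eqref{eq:monicbpf}, for each $P$ there is at least one such $F$, so the maximum is over a nonempty set; and for $F$ with $P \in D(F)$ the quotient $\tau_F/\tau^l$ vanishes at $P$, so we may harmlessly take the maximum over all of $\Pm^l$ with the convention that such terms contribute $0$.

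Next I would make the comparison between $\tau_F/\tau^l$ and the rational functions $F/F_0$ explicit. Recall $\tau_F = \Res^l(\varpi_F)^{-1}$ with $\varpi_F = \frac{y^{lD_0+E}}{F\cdot \Phi^l}\, s_{Y'}^l$, and $\tau = \Res(\varpi^S)^{-1}$ on $X^S$ with $\varpi^S = \frac{y^{D_0}}{y^{D(S)}\Phi}\, s_{Y'}$; hence $\tau^l$ is built from $\frac{y^{lD_0}}{y^{lD(S)}\Phi^l}\, s_{Y'}^l$. Taking the ratio, the factors $s_{Y'}^l$, $\Phi^l$, and the residue maps all cancel (the residue is $\Om_{Y'}$-linear, so it commutes with multiplication by the rational function $\tau_F/\tau^l$), leaving
\begin{equation*}
  \frac{\tau_F}{\tau^l}(P) = \frac{y^{lD_0+E}}{F} \cdot \frac{y^{lD(S)}\Phi^l}{y^{lD_0}}(P) = \frac{1}{F}\cdot y^{E + lD(S)}(P)\cdot(\text{factors independent of }F).
\end{equation*}
More cleanly: since $\tau_F$ and $\tau_{F_0}$ are both global sections of $\omega_X^{-l}$ twisted by $D(F)\cap X$, $D(F_0)\cap X$ respectively, their quotient is exactly the rational function $F_0/F$ on $X$ (both are nowhere vanishing away from the respective divisors by Lemma~\ref{lem:tau}), so $\tau_F/\tau^l = (\tau_{F_0}/\tau^l)\cdot(F_0/F)$ and therefore
\begin{equation*}
  \left|\frac{\tau_F}{\tau^l}(P)\right|_v = \left|\frac{\tau_{F_0}}{\tau^l}(P)\right|_v\cdot\left|\frac{F_0}{F}(P)\right|_v.
\end{equation*}
Pulling the $F$-independent factor $|(\tau_{F_0}/\tau^l)(P)|_v$ out of the maximum gives
\begin{equation*}
  \|\tau(P)\|_v^{-1} = \left|\frac{\tau_{F_0}}{\tau^l}(P)\right|_v^{1/l}\cdot\left(\max_{F\in\Pm^l}\left|\frac{F}{F_0}(P)\right|_v\right)^{1/l}.
\end{equation*}

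Finally I would take the product over all places $v$. The left side is $H(P)$ by \eqref{eq:height_definition}. On the right, $\tau_{F_0}/\tau^l$ is a fixed nonzero rational function on $X$ evaluated at the rational point $P$ (well-defined and nonzero there, since $F_0(P)\neq 0$), so $\prod_v |(\tau_{F_0}/\tau^l)(P)|_v = 1$ by the product formula on $\Qd$. This yields
\begin{equation*}
  H(P) = \left(\prod_v \max_{F\in\Pm^l}\left|\frac{F}{F_0}(P)\right|_v\right)^{1/l},
\end{equation*}
as claimed. The main subtlety — the step I expect to require the most care — is justifying that $\tau_F/\tau^l$ really is the rational function $F_0/F$ up to an $F$-independent factor: one must check that the monomial twists $y^{lD_0+E}$, $y^{lD(S)}$ match up so that no spurious monomial in the $y_\rho$ survives, using $E\cap X = \emptyset$ and the fact that $D(S)\cap X$, $D(F)\cap X$ are precisely the complements of $X^S$, $X_F$ (Lemma~\ref{lem:tau}); equivalently, that the $v$-adic quantity $\max_F |(F/F_0)(P)|_v$ is independent of the auxiliary chart $S$, which follows because changing $S$ multiplies every $\tau_F$ by the same nowhere-vanishing monomial unit and hence cancels in the quotient. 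One should also note that the final expression is manifestly independent of the choice of $F_0$ of degree $L$ not vanishing at $P$, again by the product formula, consistent with the statement.
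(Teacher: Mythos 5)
Your proposal is correct and follows essentially the same route as the paper: identify $\tau_F/\tau^l$ with $F$ times an $F$-independent factor via the $\Om_{Y'}$-linearity of the residue map (the paper does this with $\tau=\tau^S$ and $F_0=y^{lD(S)+E}$), pull that factor out of the maximum, and dispose of it — and of the choice of $F_0$ — by the product formula. One bookkeeping slip to fix: since $\tau_F=\Res^l(\varpi_F)^{-1}$ and $\varpi_F=(F_0/F)\,\varpi_{F_0}$, the correct relation is $\tau_F=(F/F_0)\,\tau_{F_0}$, i.e. $\tau_F/\tau^l=(F/F_0)\cdot(\tau_{F_0}/\tau^l)$, not $(F_0/F)$ as in your intermediate displays; your final formula already uses the correct orientation, so the argument stands once this inversion is corrected.
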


\begin{proof}
  We have $P \in X^S(\QQ)$ for some $S$ as above. We can compute $H(P)$ with
  $\tau:=\tau^S$ by Lemma~\ref{lem:tau}. Applying the $\Om_{Y'}$-module
  homomorphism $\Res$ to $\varpi_F = F^{-1}y^{lD(S)+E}(\varpi^S)^l$ shows
  $\tau_F = Fy^{-lD(S)-E}(\tau^S)^l$, hence
  \begin{equation}\label{eq:norm_max}
    \|\tau^S(P)\|_v^{-1}
    = \max_{F \in \Pm^l} \left|\frac{\tau_F}{(\tau^S)^l}(P)\right|_v^{1/l}
    = \max_{F \in \Pm^l}\left|\frac{F}{y^{lD(S)+E}}(P)\right|_v^{1/l},
  \end{equation}
  which is our claim in the case $F_0:=y^{lD(S)+E}$. The general case follows
  using the product formula.
\end{proof}

We lift the height function $H$ to $X_0$ by composing it with $\pi\colon
X_0(\QQ) \to X(\QQ)$, giving
\begin{equation*}
  H_0 : X_0(\QQ) \to \Rd_{>0}.
\end{equation*}

\begin{lemma}\label{lem:height_torsor}
  For $P_0 \in X_0(\QQ)$, we have
  \begin{equation*}
    H_0(P_0) = \left(\prod_v \max_{F \in \Pm^l} |F(P_0)|_v\right)^{1/l}.
  \end{equation*}
\end{lemma}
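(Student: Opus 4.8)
The plan is to deduce the identity directly from Lemma~\ref{lem:height} together with the product formula; the only real work is to rewrite the values of the rational functions $F/F_0$ on $X$ in terms of the Cox coordinates of $P_0$. Let $P_0 \in X_0(\QQ)$ and put $P := \pi(P_0) \in X(\QQ) \subseteq Y'(\QQ)$, where $\pi = \pi'|_{X_0}$ is the (restricted) universal torsor map, so that $H_0(P_0) = H(P)$ by definition. Since every cone of $\Sigma'$ is a face of a maximal cone, $Y' = \bigcup_{\sigma \in \Sigmamax'} U_\sigma$, hence $P \in U_\sigma$ for some $\sigma \in \Sigmamax'$. By the basepoint-freeness hypothesis \eqref{eq:monicbpf} there is a monic monomial $F_0 = m \in \Pm^l$ with $\Supp \Div m$ disjoint from $U_\sigma$; in particular $P \notin \Supp D(F_0)$, so $F_0$ does not vanish at $P$, and Lemma~\ref{lem:height} applies with this $F_0$, giving $H(P) = \bigl(\prod_v \max_{F \in \Pm^l} |(F/F_0)(P)|_v\bigr)^{1/l}$.

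The key step is the identification $(F/F_0)(P) = F(P_0)/F_0(P_0)$ for every $F \in \Pm^l$. Write the image of $P_0$ in $Y'_1 = \Ad_\QQ^{\Sigma'(1)}$ as its tuple of Cox coordinates $(x_\rho)_{\rho \in \Sigma'(1)} \in \QQ^{\Sigma'(1)}$. The universal torsor map $\pi'\colon Y'_0 \to Y'$ is a quotient by the torus dual to $\Pic Y'$, so a quotient $F/F_0$ of homogeneous elements of the same degree, being torus-invariant, descends to a rational function on $Y'$ — this is the content of Remark~\ref{rem:rational_functions} — whose value at $P = \pi'(P_0)$ is precisely $F(P_0)/F_0(P_0)$; passing to the closed subscheme $X$ changes nothing. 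Moreover $F_0(P_0) = \prod_\rho x_\rho^{c_\rho}$ is a \emph{nonzero} rational number: the boundary divisor $D_\rho$ of $Y'$ occurs in $\Supp \Div m$ exactly for those $\rho$ with $c_\rho > 0$, and since $\Supp \Div m$ avoids $U_\sigma \ni P$ we get $P \notin D_\rho$, equivalently $x_\rho(P_0) \ne 0$, for all such $\rho$.

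Substituting, $H(P) = \bigl(\prod_v \max_{F \in \Pm^l} |F(P_0)/F_0(P_0)|_v\bigr)^{1/l} = \bigl(\prod_v |F_0(P_0)|_v^{-1}\bigr)^{1/l}\bigl(\prod_v \max_{F \in \Pm^l} |F(P_0)|_v\bigr)^{1/l}$, and since $F_0(P_0) \in \QQ^\times$ the product formula gives $\prod_v |F_0(P_0)|_v = 1$. Hence $H_0(P_0) = H(P) = \bigl(\prod_v \max_{F \in \Pm^l} |F(P_0)|_v\bigr)^{1/l}$, as required. The one point that is not entirely routine — and thus the main obstacle — is the identification in the second step of the value at $P$ of the rational function $F/F_0 \in \QQ(X)$ with the ratio $F(P_0)/F_0(P_0)$ of evaluations at the Cox coordinates of $P_0$: this is intrinsic to the universal torsor formalism of \cite{MR1679841}, but here it must be used carefully since $Y'$ need not be complete and $X \subset Y'$ need not meet every torus orbit.
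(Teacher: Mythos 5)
Your proposal is correct and follows essentially the same route as the paper: apply Lemma~\ref{lem:height} with an $F_0$ not vanishing at $P=\pi(P_0)$, identify $(F/F_0)(P)$ with $F(P_0)/F_0(P_0)$ via the torsor formalism of Remark~\ref{rem:rational_functions} (the paper cites the proof of Lemma~3.5 of \cite{BBDG} for this step, which you instead spell out), and conclude with the product formula. The only cosmetic difference is that you insist on choosing $F_0 \in \Pm^l$ via \eqref{eq:monicbpf}, while the paper allows any degree-$L$ polynomial not vanishing at $P$; this changes nothing.
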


\begin{proof}
  Let $P = \pi(P_0) \in X(\QQ)$. As in the proof of \cite[Lemma~3.5]{BBDG},
  for $F_0$ of degree $L$ not vanishing in $P$ and $F \in \Pm^l$, we have
  $(F/F_0)(P) = F(P_0)/F_0(P_0)$ if we compute $(F/F_0)(P)$ as in
  Remark~\ref{rem:rational_functions} and also regard $F,F_0$ as regular
  functions on $X_0(\QQ)$ that can be evaluated in $P_0$.
  We apply this to Lemma~\ref{lem:height} to obtain
  \begin{equation*}
    H_0(P_0)=H(P) = \left(\prod_v \max_{F \in \Pm^l} \left|\frac{F}{F_0}(P)\right|_v\right)^{1/l}
    = \left(\prod_v \max_{F \in \Pm^l} \left|\frac{F(P_0)}{F_0(P_0)}\right|\right)^{1/l}.
  \end{equation*}
  Then we use the product formula.
\end{proof}

In its integral model, this simplifies as follows.

\begin{cor}\label{cor:height_torsor_integral}
  For $P_0 \in \tX_0(\Zd)$, we have
  \begin{equation*}
    H_0(P_0) = \max_{F \in \Pm^l} |F(P_0)|_\infty^{1/l}.
  \end{equation*}
\end{cor}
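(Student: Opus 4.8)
The plan is to combine Lemma~\ref{lem:height_torsor} with the integrality of the Cox coordinates and the two combinatorial inputs already at hand: the coprimality conditions of Proposition~\ref{prop:lift_to_torsor} and the basepoint-freeness hypothesis~\eqref{eq:monicbpf}. By Lemma~\ref{lem:height_torsor} we have $H_0(P_0)^l = \prod_v \max_{F \in \Pm^l} |F(P_0)|_v$, so it suffices to show that $\max_{F \in \Pm^l} |F(P_0)|_p = 1$ for every prime $p$; the Euler product over the finite places then collapses to $1$ and the claimed formula drops out.

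First I would observe that, since $P_0 = (x_\rho)_{\rho \in \Sigma'(1)} \in \tX_0(\Zd)$ has integral coordinates and by~\eqref{eq:monicbpf} every $F \in \Pm^l$ is a monic monomial, the value $F(P_0)$ is an integer; hence $|F(P_0)|_p \le 1$ automatically, and what remains is to produce, for each fixed prime $p$, a single $m \in \Pm^l$ with $p \nmid m(P_0)$.

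The key step is a combinatorial localization argument. Fix $p$ and put $I_p := \{\rho \in \Sigma'(1) : p \mid x_\rho\}$. Since $P_0 \in \tX_0(\Zd) \subseteq \tX_0(\Zd_p)$, Proposition~\ref{prop:lift_to_torsor} gives $p \nmid \gcd\{x_\rho : \rho \in S'_j\}$ for every primitive collection $S'_j$, so $I_p$ contains no primitive collection. Because the primitive collections~\eqref{eq:pcoll} are exactly the minimal subsets of $\Sigma'(1)$ not contained in $\sigma(1)$ for any $\sigma \in \Sigma'$, this forces $I_p \subseteq \sigma(1)$ for some cone $\sigma \in \Sigma'$, and after enlarging $\sigma$ (using that every cone is a face of a maximal one) we may take $\sigma \in \Sigmamax'$. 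By~\eqref{eq:monicbpf} there is $m \in \Pm^l$ with $\Supp \Div m$ disjoint from $U_\sigma$; since $D_\rho$ meets $U_\sigma$ precisely when $\rho \in \sigma(1)$, this means the monomial $m$ involves only variables $y_\rho$ with $\rho \notin \sigma(1)$, hence with $\rho \notin I_p$, i.e.\ $p \nmid x_\rho$. Therefore $p \nmid m(P_0)$ and $|m(P_0)|_p = 1$, as required; in particular $m(P_0) \ne 0$, so $\max_{F \in \Pm^l} |F(P_0)|_\infty$ is a positive integer and the right-hand side of the corollary is meaningful.

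The only genuinely delicate point is the combinatorial dictionary invoked above --- that minimal non-faces coincide with the primitive collections, and that $\Supp \Div m$ missing $U_\sigma$ is equivalent to $m$ not involving the rays of $\sigma$ --- but these are standard facts of toric geometry that require no new input. Everything else is bookkeeping with the product formula, along the same lines as the corresponding statement in \cite{BBDG}.
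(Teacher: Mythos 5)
Your proof is correct and is essentially the paper's argument: reduce via Lemma~\ref{lem:height_torsor} and the product formula to showing $\max_{F \in \Pm^l}|F(P_0)|_p = 1$ at each prime, then use \eqref{eq:monicbpf} to pick a monomial supported away from $U_\sigma$ for a maximal cone $\sigma$ whose rays contain all coordinates divisible by $p$. The only difference is cosmetic: where the paper invokes ``$\tX_0$ is defined by the irrelevant ideal in $\tX_1$'' to produce such a $\sigma \in \Sigmamax'$ for $P_0 \bmod p$, you spell out the same fact combinatorially from the coprimality conditions of Proposition~\ref{prop:lift_to_torsor} and the minimality property of the primitive collections \eqref{eq:pcoll}.
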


\begin{proof}
  This is analogous to \cite[Proposition~11.3]{MR1679841} and
  \cite[Corollary~3.6]{BBDG}. For a prime $p$, we have $P_0 \pmod p$ in
  $\tX_0(\Fd_p)$. There is a $\sigma \in \Sigmamax'$ such that
  $y_\rho(P_0 \pmod p) \ne 0 \in \Fd_p$ for all $\rho \notin \sigma(1)$ since
  $\tX_0$ is defined by the irrelevant ideal in $\tX_1$. Choose $Q \in \Pm^l$
  such that $\Supp \Div Q$ does not meet $U_\sigma$. Then we have
  $Q(P_0 \pmod p) \ne 0 \in \Fd_p$, and hence $|Q(P_0)|_p=1$.  Therefore, we
  have $\max_{F \in \Pm^l} |F(P_0)|_p^{1/l}=1$ and only the archimedean factor
  in Lemma~\ref{lem:height_torsor} remains.
\end{proof}

\subsection{Counting problem}\label{sec:abstract_counting_problems}

The following result parametrizes the set
$N_{X(\QQ) \setminus T,H}(B)$ in terms of integral points on the
universal torsor of the ambient toric variety $Y'$ (which is given by
its Cox ring \eqref{eq:coxamb} and the primitive collections
\eqref{eq:pcoll}), the equation $\Phi'$, and the monomials in
$\Pm^l$. The resulting counting problem is amenable to methods
of analytic number theory.

\begin{prop}\label{prop:countingproblem_abstract}
  Let $X$ be a variety as in Section~\ref{sec:desing}, let $Y'$ be a
  toric variety satisfying \eqref{eq:proper_z}, let $L$ be a divisor
  class as in Section~\ref{subsec:metr}, and let $\Pm^l$ be
  a set of monomials satisfying \eqref{eq:monicbpf}.

  Let $T$ be an arbitrary subset of $X(\QQ)$. Then
  \begin{equation*}
    2^{\rank\Pic Y'} N_{X(\QQ) \setminus T,H}(B) =
     \#\left\{\yv \in \Zd^{\Sigma'(1)} :
      \begin{aligned}
        &\Phi'(\yv)=0,\,  \max_{F \in \Pm^l}|F(\yv)|_\infty^{1/l} \le B,\, \pi'(\yv) \notin T, \\
        &\gcd\{y_\rho : \rho \in S'_j\} = 1 \text{ for every $j=1,\dots,r'$}
      \end{aligned}
    \right\}\text{,}
  \end{equation*}
  using the notation \eqref{eq:coxamb} and \eqref{eq:pcoll}.
\end{prop}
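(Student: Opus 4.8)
The plan is to combine the torsor parametrization of Proposition~\ref{prop:lift_to_torsor} with the height formula of Corollary~\ref{cor:height_torsor_integral}, and then pass from integral points on $\tX_0$ to rational points on $X$. First I would observe that, by Proposition~\ref{prop:lift_to_torsor}, the map $\tpi'$ induces a surjection $\tX_0(\Zd) \to X(\QQ)$ that is exactly $2^{\rank \Pic Y'}:1$, and that $\tX_0(\Zd)$ is precisely the set of integral vectors $\yv = (y_\rho)_{\rho \in \Sigma'(1)}$ satisfying $\Phi(\yv) = 0$ together with the coprimality conditions $\gcd\{y_\rho : \rho \in S'_j\} = 1$ for all $j = 1,\dots,r'$. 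Since $X$ is identified with its strict transform in $Y'$, and $\Phi'$ homogenizes $\Phi$ (with $y_i \mapsto x_i$ for $i \le J$ and $y_i \mapsto 1$ for $i > J$), the condition $\Phi(\yv) = 0$ on integral points is the same as $\Phi'(\yv) = 0$; I would make this bookkeeping explicit at the start.

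Next I would translate the height bound. For $P \in X(\QQ)$ with any chosen preimage $P_0 \in \tX_0(\Zd)$, Corollary~\ref{cor:height_torsor_integral} gives $H(P) = H_0(P_0) = \max_{F \in \Pm^l} |F(P_0)|_\infty^{1/l}$, where $F(P_0)$ is the value of the Cox-coordinate monomial $F$ at the integral vector $\yv$ representing $P_0$. Thus the condition $H(P) \le B$ is equivalent to $\max_{F \in \Pm^l} |F(\yv)|_\infty^{1/l} \le B$, a condition depending only on $\yv$ (not on which of the $2^{\rank \Pic Y'}$ preimages is chosen, since the height is well-defined on $X(\QQ)$ and the coprimality-cut integral lift is unique up to the torus action). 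The remaining condition, $P \notin T$, pulls back to $\pi'(\yv) \notin T$ by definition of $\pi'$ on integral points.

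Assembling these, the set $\{x \in X(\QQ)\setminus T : H(x) \le B\}$ is in $2^{\rank \Pic Y'}:1$ correspondence with the set of $\yv \in \Zd^{\Sigma'(1)}$ satisfying $\Phi'(\yv) = 0$, $\max_{F \in \Pm^l}|F(\yv)|_\infty^{1/l} \le B$, $\pi'(\yv) \notin T$, and the coprimality conditions for $j = 1,\dots,r'$. Dividing the cardinality of the latter set by $2^{\rank \Pic Y'}$ yields the claimed formula for $N_{X(\QQ)\setminus T, H}(B)$.

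The only genuinely delicate point is checking that the fibers of $\tX_0(\Zd) \to X(\QQ)$ are constant of size $2^{\rank\Pic Y'}$ also after imposing $P \notin T$ and $H(P) \le B$ — but both of these are conditions on $P \in X(\QQ)$ (equivalently, on the torus-orbit of $\yv$), hence compatible with the fibration, so each fiber is either entirely included or entirely excluded. Everything else is the routine translation already carried out in \cite[Proposition~2.2, Corollary~3.6]{BBDG}; the present statement differs only in allowing the larger torsor coming from the non-regular $Y$ and its desingularization $Y'$, which is why the factor is $2^{\rank\Pic Y'}$ rather than a power tied to $\Pic X$, and in that the $r'$ primitive collections $S'_1,\dots,S'_{r'}$ of $\Sigma'$ replace those of $\Sigma$. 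I would close by noting that the proof is formally identical to \cite[Proposition~4.11]{BBDG} once Proposition~\ref{prop:lift_to_torsor} and Corollary~\ref{cor:height_torsor_integral} are in hand.
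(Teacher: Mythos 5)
Your proposal is correct and follows exactly the paper's route: the paper's own proof is simply the combination of Proposition~\ref{prop:lift_to_torsor} with Corollary~\ref{cor:height_torsor_integral}, which is precisely what you carry out (including the observation that the height bound and the condition $P \notin T$ are well-defined on fibers of the $2^{\rank\Pic Y'}:1$ map, so counting upstairs and dividing is legitimate). Your bookkeeping aside about $\Phi$ versus $\Phi'$ correctly resolves what is only a notational discrepancy, since $\tX_0$ is by construction cut out by $\Phi'=0$.
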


\begin{proof}
  This follows from Proposition~\ref{prop:lift_to_torsor} and Corollary~\ref{cor:height_torsor_integral}.
\end{proof}

\section{Peyre's constant}

We keep the notation and assumptions of Section~\ref{sec:desing}. In
addition, we assume from now on that we are in the situation of
Remark~\ref{rem:smfano1}. In particular, the pullback map
$\Pic Y^\circ \to \Pic X$ is an isomorphism and $X$ is split.

\subsection{Tamagawa measures}

By \cite[(2.2.1)]{MR1340296} and \cite[Theorem~1.10]{MR1679841}, the $v$-adic norm
$\|\cdot\|_v$ on $\omega_X^{-1}$ defined above induces a Tamagawa measure $\mu_v$ on
$X(\QQ_v)$.

\begin{lemma}
  Let $S=\sigma(1)$ for some $\sigma \in \Sigmamax'$. For a Borel subset
  $N_v$ of $X^S(\QQ_v)$, we have
  \begin{equation}\label{eq:local_measure_abstract}
    \mu_v(N_v)
    =\int_{N_v} \frac{|\Res(\varpi^S)|_v}{\max_{F \in \Pm^l} |\tau_F\Res(\varpi^S)^l|_v^{1/l}}
    =\int_{N_v} \frac{|\Res(\varpi^S)|_v}{\max_{F \in \Pm^l} |F/y^{lD(S)+E}|_v^{1/l}},
  \end{equation}
  where $|\Res(\varpi^S)|_v$ is the $v$-adic density on
  $X^S(\QQ_v)$ of the volume form $\Res(\varpi^S)$
  on $X^S$.

  If $N_v$ is contained in a sufficiently small neighborhood of $P$ in
  $X^S(\QQ_v)$ with $\partial \Phi^S/\partial z^S_{\rho_0}(P) \ne 0$, then
  \begin{equation}\label{eq:local_measure_explicit}
    \mu_v(N_v)=\int_{\pi^{S}_{\rho_0}(N_v)} \frac{\bigwedge_{\rho \in S \setminus \{\rho_0\}} \dd z^S_\rho}
    {|\partial \Phi^S/\partial z^S_{\rho_0}(\zv^S)|_v \max_{F
        \in \Pm^l}|F(\zv^S)|_v^{1/l}}
  \end{equation}
  in the affine coordinates
  $\zv^S = (z^S_{\rho})_{\rho \in S}$, where
  $\pi^S_{\rho_0} : U^S(\QQ_v)=\QQ_v^{S} \to
  \QQ_v^{S\setminus\{\rho_0\}}$ is the natural projection and
  $z^S_{\rho_0}$ is expressed in terms of the other coordinates using the
  implicit function for $\Phi^S$.
\end{lemma}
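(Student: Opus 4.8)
The plan is to derive both formulas from the general definition of the Tamagawa measure attached to the $v$-adic metric on $\omega_X^{-1}$, exploiting that on the chart $U^S$ with $S=\sigma(1)$ the map $f^S$ is an \emph{isomorphism} onto $\Ad^S_\QQ$ (as noted after the definition of $f^S$), so that $X^S$ is genuinely a smooth affine hypersurface in affine space with coordinates $\zv^S=(z^S_\rho)_{\rho\in S}$ and defining equation $\Phi^S$. First I would recall that, by \cite[(2.2.1)]{MR1340296} and \cite[Theorem~1.10]{MR1679841}, the norm $\|\cdot\|_v$ on $\omega_X^{-1}$ yields $\mu_v$ via: pick any local nowhere-vanishing section $\tau$ of $\omega_X^{-1}$, view $\tau^{-1}$ as a volume form (a local section of $\omega_X$), and set $\mu_v = \|\tau\|_v^{-1}\,|\tau^{-1}|_v$ where $|\cdot|_v$ is the associated $v$-adic density. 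The natural choice here is $\tau:=\tau^S=\Res(\varpi^S)^{-1}$, which by Lemma~\ref{lem:tau} is nowhere vanishing on $X^S$; then $\tau^{-1}=\Res(\varpi^S)$, and $|\tau^{-1}|_v=|\Res(\varpi^S)|_v$ is exactly the $v$-adic density of that volume form.

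Next I would compute $\|\tau^S(P)\|_v^{-1}$ using \eqref{eq:norm_max} from the proof of Lemma~\ref{lem:height}: it equals $\max_{F\in\Pm^l}|\tau_F/(\tau^S)^l|_v^{1/l} = \max_{F\in\Pm^l}|F/y^{lD(S)+E}(P)|_v^{1/l}$, the two descriptions arising from $\tau_F = F\,y^{-lD(S)-E}(\tau^S)^l$. Substituting into $\mu_v = \|\tau^S\|_v^{-1}|\Res(\varpi^S)|_v$ and integrating over a Borel set $N_v\subseteq X^S(\QQ_v)$ gives \eqref{eq:local_measure_abstract} directly; there is no convergence subtlety because $\Pm^l$ is basepoint-free, so the maximum in the denominator is everywhere positive. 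This handles the first formula essentially by unwinding definitions.

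For \eqref{eq:local_measure_explicit} I would localize near a point $P$ with $\partial\Phi^S/\partial z^S_{\rho_0}(P)\neq0$ and use the implicit function theorem (in its $v$-adic, i.e.\ $p$-adic or archimedean, form) to solve $\Phi^S(\zv^S)=0$ for $z^S_{\rho_0}$ as a function of the remaining coordinates $(z^S_\rho)_{\rho\in S\setminus\{\rho_0\}}$ on a sufficiently small neighborhood, so that $\pi^S_{\rho_0}$ restricted to $N_v$ is a chart of $X^S$. Under this chart, the Poincar\'e residue \eqref{eq:residue} gives $\Res(\varpi^S)=\pm(\partial\Phi^S/\partial z^S_{\rho_0})^{-1}\bigwedge_{\rho\in S\setminus\{\rho_0\}}\dd z^S_\rho$, so its $v$-adic density is $|\partial\Phi^S/\partial z^S_{\rho_0}(\zv^S)|_v^{-1}$ times the standard density of $\bigwedge_{\rho\in S\setminus\{\rho_0\}}\dd z^S_\rho$ on $\QQ_v^{S\setminus\{\rho_0\}}$. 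Pushing \eqref{eq:local_measure_abstract} forward along $\pi^S_{\rho_0}$ and moving the $\Res(\varpi^S)$ in the numerator and the $\Res(\varpi^S)^l$ hidden in $\max_F|\tau_F\Res(\varpi^S)^l|_v^{1/l}$ into the explicit coordinates (so that $\tau_F\Res(\varpi^S)^l = F/y^{lD(S)+E}$ becomes just $F(\zv^S)$ up to the monomial normalization already absorbed in choosing $F_0=y^{lD(S)+E}$, as in Lemma~\ref{lem:height}) yields \eqref{eq:local_measure_explicit}.

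The main obstacle I anticipate is purely bookkeeping rather than conceptual: keeping the various rational monomial twists $y^{lD(S)+E}$, the sections $\tau_F$, and the residue normalizations consistent so that the denominator in \eqref{eq:local_measure_explicit} reads cleanly as $\max_{F\in\Pm^l}|F(\zv^S)|_v^{1/l}$ with $F$ evaluated in the affine coordinates. Here one must be slightly careful that ``evaluating $F$ in $\zv^S$'' means dividing the Cox-coordinate monomial $F$ by $y^{lD(S)+E}$ and then substituting $y_\rho\mapsto z^S_\rho$ for $\rho\in S$, $y_\rho\mapsto 1$ for $\rho\notin S$ — exactly the normalization under which $F_0=y^{lD(S)+E}$ becomes $1$ — and that this is compatible with Remark~\ref{rem:rational_functions} and the computation in Lemma~\ref{lem:height_torsor}. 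Once that identification is pinned down, both displayed equations follow.
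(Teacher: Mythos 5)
Your route is the same as the paper's: unwind Peyre's local formula in the chart $U^S$ (where $f^S$ is an isomorphism), compute $\|\tau^S\|_v$ via \eqref{eq:norm_max} and the identity $\tau_F = F\,y^{-lD(S)-E}(\tau^S)^l$, i.e.\ $\tau_F\Res(\varpi^S)^l = F/y^{lD(S)+E}$, and then pass to the explicit formula with the Poincar\'e residue \eqref{eq:residue}, the implicit function theorem, and the normalization $F_0 = y^{lD(S)+E}$ with $F_0(\zv^S)=1$ in the affine coordinates. These are exactly the ingredients the paper records (its proof is ``analogous to \cite[Proposition~4.1]{BBDG}'' plus precisely these two observations), so the approach is essentially identical.

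One slip should be corrected: the local description of the Tamagawa measure is $\mu_v = \|\tau\|_v\,|\tau^{-1}|_v$ for a nowhere vanishing local section $\tau$ of $\omega_X^{-1}$ (in coordinates, $\|\partial_{x_1}\wedge\cdots\wedge\partial_{x_n}\|_v\,\dd x_1\cdots\dd x_n$), not $\|\tau\|_v^{-1}\,|\tau^{-1}|_v$ as you wrote. Your version is not even well defined, since it scales by $|f|_v^{2}$ under $\tau\mapsto f\tau$, and if you substitute $\|\tau^S\|_v^{-1} = \max_{F\in\Pm^l}|\tau_F\Res(\varpi^S)^l|_v^{1/l}$ into it, the maximum ends up in the numerator, which is not \eqref{eq:local_measure_abstract}. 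With the correct formula one has $\|\tau^S\|_v = \bigl(\max_{F\in\Pm^l}|\tau_F\Res(\varpi^S)^l|_v^{1/l}\bigr)^{-1}$ by \eqref{eq:norm_max}, the maximum lands in the denominator, and the rest of your argument — including the chart computation leading to \eqref{eq:local_measure_explicit} and the bookkeeping identifying $F(\zv^S)$ with $(F/y^{lD(S)+E})$ evaluated at $y_\rho\mapsto z^S_\rho$ for $\rho\in S$, $y_\rho\mapsto 1$ otherwise — goes through exactly as you describe.
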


\begin{proof}
  This is analogous to the proof of \cite[Proposition~4.1]{BBDG}.
  However, we work with $\|\tau_{F_0}(P)\|_v$ for $F_0=x^{lD(S)+E}$
  and use $F_0(\zv^S)=1$ in our affine coordinates on $X^S(\QQ_v)$. At
  the end, comparing the definitions of $\varpi^S$ and $\varpi_F$
  shows $\varpi_F/(\varpi^S)^l = y^{lD(S)+E}/F$, hence
  $\tau_F \Res(\varpi^S)^l = \tau_F/(\tau^S)^l = F/y^{lD(S)+E}$, and
  hence the integrals in \eqref{eq:local_measure_abstract} are equal.
\end{proof}

\begin{remark}
  Since we have assumed that every cone in $\Sigma'$ is the face of a
  maximal cone, the open subvarieties $X^S$ for $S = \sigma(1)$ with $\sigma \in \Sigmamax'$
  cover $X$.
\end{remark}

\begin{remark}
  If we are in the special case where $X$ is covered by open subvarieties
  $X^S$ with $S \subset \Sigma(1)$ for $S = \sigma(1)$ with
  $\sigma \in \Sigmamax'$,
  then \eqref{eq:local_measure_explicit} gives the same formula for the
  $v$-adic density as we would have obtained by working with $\Rm(X)$ directly
  (since the additional coordinates $z_\rho^S$ are $1$ for all
  $\rho \in \Sigma'(1) \setminus \Sigma(1)$; up to the description of the
  height function defined via the monomials
  $F \in \Pm^l \subset \Qd[y_1,\dots, y_{J'}]$, in which the additional
  variables $y_{J+1},\dots,y_{J'}$ corresponding to
  $\rho \in \Sigma'(1) \setminus \Sigma(1)$ are set to $z_\rho^S=1$). For
  example, this is the case if $X$ is covered by the open subvarieties $X^S$
  for $S \in \Sigmamax^\circ$.
\end{remark}

\subsection{Measures on the torsor}

Analogously to \cite[(4.4)]{BBDG}, we obtain a $v$-adic measure $m_v$
on $X_0(\Qd_v)$, which is explicitly (for sufficiently small subsets $M_v$)
\begin{equation*}
  m_v(M_v) = \int_{\pi_{\rho_0}(M_v)} \frac{\bigwedge_{\rho \in \Sigma_0'(1) \setminus \{\rho_0\}} \dd y_\rho}
  {|\partial \Phi'/\partial x_{\rho_0}(\yv)|_v \max_{F \in \Pm^l} |F(\yv)|_v^{1/l}}
\end{equation*}
in the coordinates $\yv=(y_\rho)_{\rho \in \Sigma_0'(1)}$, where $\pi_{\rho_0}$ is the
projection to all coordinates $y_\rho$ with $\rho\ne\rho_0$ and where
$y_{\rho_0}$ is expressed in terms of these coordinates using the
implicit function theorem.

\begin{lemma}\label{lem:measure_variety_torsor_p-adic}
  For any prime $p$, we have $m_p(\tX_0(\Zd_p)) = (1-p^{-1})^{\rank \Pic Y'} \mu_p(X(\Qd_p))$.
\end{lemma}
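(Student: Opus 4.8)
The plan is to reduce the claimed identity $m_p(\tX_0(\Zd_p)) = (1-p^{-1})^{\rank \Pic Y'} \mu_p(X(\Qd_p))$ to a fibration computation: the torsor map $\tpi' \colon \tX_0(\Zd_p) \to \tX(\Zd_p) = X(\Qd_p)$ has fibers that are torsors under the torus $\mathbb{T}(\Zd_p)$ dual to $\Pic Y'$, and $\mathbb{T}(\Zd_p) \cong (\Zd_p^\times)^{\rank \Pic Y'}$ has Haar measure $(1-p^{-1})^{\rank \Pic Y'}$ relative to the measure on $(\Zd_p)^{\rank \Pic Y'}$. So the real content is to show that the measure $m_p$ on $\tX_0(\Zd_p)$ disintegrates along $\tpi'$ as $\mu_p$ on the base times this normalized Haar measure on the fiber.

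First I would fix a maximal cone $\sigma \in \Sigmamax'$ and work on the chart $X^S$ with $S = \sigma(1)$, where $f^S$ is an isomorphism and both $\mu_p$ (via \eqref{eq:local_measure_explicit}) and $m_p$ have explicit Leray-type expressions obtained by solving the equation $\Phi^S = 0$, resp.\ $\Phi' = 0$, for one coordinate via the implicit function theorem. The key point is that the universal torsor $\pi' \colon Y_0' \to Y'$ in Cox coordinates is, over $U^S$, the map sending $(y_\rho)_{\rho \in \Sigma'(1)}$ to $(z^S_\rho)_{\rho \in S} = (y_\rho / \prod_{\rho' \notin S} y_{\rho'}^{a^S_{\rho,\rho'}})_{\rho \in S}$, with the $y_{\rho'}$ for $\rho' \notin S$ (which index a basis of $\Pic Y'$) serving as torus coordinates; over the integral points, these $y_{\rho'}$ range over $\Zd_p^\times$ because of the coprimality conditions defining $\tX_0(\Zd_p)$ in Proposition~\ref{prop:lift_to_torsor}. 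Thus I would change variables in the integral defining $m_p$ from the Cox coordinates $(y_\rho)_{\rho \in \Sigma_0'(1)}$ to $((z^S_\rho)_{\rho \in S}, (y_{\rho'})_{\rho' \notin S})$, compute the Jacobian of this monomial change of variables, and check that after also transforming the factors $|\partial \Phi'/\partial x_{\rho_0}|_p$ and $\max_F |F|_p^{1/l}$ by their scaling behavior under the torus action (using that $\Phi'$, $F$, and $y^{lD(S)+E}$ are homogeneous of the appropriate degrees), the $y_{\rho'}$-dependence factors out cleanly as $\prod_{\rho' \notin S} |y_{\rho'}|_p^{\text{something}} \, dy_{\rho'}$, while the remaining $(z^S_\rho)$-integral is exactly $\mu_p$ on $X^S(\Qd_p)$ from \eqref{eq:local_measure_explicit}. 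The point is that the total exponent of each $y_{\rho'}$ in the combined Jacobian-times-density expression must vanish (homogeneity of degree $0$ of the whole torsor construction), so each $\int_{\Zd_p^\times} dy_{\rho'} = 1 - p^{-1}$ and there are $\rank \Pic Y'$ such integrals.

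The main obstacle I anticipate is the bookkeeping of this monomial change of variables together with the subtlety that $X$ is covered by charts $X^S$ with $S$ potentially containing the new rays $\Sigma'(1) \setminus \Sigma(1)$, so one cannot assume $f^S$ restricts to a chart living entirely in the old Cox coordinates; one must genuinely work on $Y'$ and carefully track the exceptional divisor $E$ and the monomial $y^{lD(S)+E}$ appearing in the height normalization. A second, more quantitative point requiring care is verifying that the exponents indeed cancel — this is forced by the fact that $\varpi^S$, $\tau_F$, and the Tamagawa form are all honestly degree-$0$ objects on the torsor, but making this explicit requires writing $\deg \Phi'$ and $\deg F = L = -lK_{Y'} - l[X] + [E]$ in the basis $\{\deg y_\rho : \rho \notin S\}$ and matching coefficients with the Jacobian exponents. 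Finally, I would invoke the covering by the $X^S$ for $\sigma \in \Sigmamax'$ and additivity of both measures over a partition subordinate to this cover (which works since the boundary $\bigcup_\rho D_\rho \cap X$ is $p$-adically and measure-theoretically negligible on each side by the usual argument), so that the chart-wise identity yields the global one. This is entirely parallel to \cite[(4.4)]{BBDG} and the analogous computation in \cite[Section~4]{BBDG}, the only new feature being the presence of the desingularization rays and the divisor $E$, which enter purely formally.
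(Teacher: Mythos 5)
Your proposal is correct and follows essentially the same route as the paper, which simply defers to the analogous computation in \cite[Lemma~4.2, Lemma~4.3]{BBDG}: a chart-wise change of variables in Cox coordinates identifying the torsor map locally as a product with the N\'eron--Severi torus, homogeneity of degree $0$ forcing the cancellation of the extra exponents, and $\rank\Pic Y'$ integrals $\int_{\Zd_p^\times}\dd y = 1-p^{-1}$. The only new feature here (the desingularization rays and the divisor $E$) enters purely formally, exactly as you observe.
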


\begin{proof}
  This is analogous to \cite[Lemma~4.2, Lemma~4.3]{BBDG}.
\end{proof}

\subsection{Comparison to the number of points modulo $p^\ell$}

As in \cite[\S 4.4]{BBDG}, for any prime $p$ and $l \in \Zd_{>0}$, we
have
\begin{equation*}
  \tX_0(\Zd/p^{\ell}\Zd) = \{\xv \in (\Zd/p^\ell\Zd)^{\Sigma'(1)} : \Phi'(\xv) = 0 \in \Zd/p^\ell\Zd,\ p \nmid \gcd\{x_\rho: \rho \in S'_j\}\text{ for all }j = 1,\dots,r'\}.
\end{equation*}
In particular, the additional variables $x_\rho$ indexed by $\rho \in
\Sigma'(1)\setminus\Sigma(1)$ (obtained via the desingularization of the
ambient toric variety) appear here.

\begin{prop}\label{prop:measure_torsor_mod_p^l}
  For every prime $p$, there is an $\ell_0 \in \Zd_{>0}$ such that for all
  $\ell \ge \ell_0$ we have
  \begin{equation*}
    m_p(\tX_0(\Zd_p)) = \frac{\# \tX_0(\Zd/p^{\ell}\Zd)}{(p^\ell)^{\Sigma'(1)-1}}.
  \end{equation*}
  Furthermore,
  \begin{equation*}
    (1-p^{-1})^{\rank \Pic X} \mu_p(X(\Qd_p)) = (1-p^{-1})^{\#\Sigma(1)-\#\Sigma'(1)} \lim_{\ell \to \infty} \frac{\# \tX_0(\Zd/p^{\ell}\Zd)}{(p^\ell)^{\dim X_0}}.
  \end{equation*}
\end{prop}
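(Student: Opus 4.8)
The plan is to deduce both equalities from the $v$-adic measure formula in the previous subsection together with a point-counting argument modulo $p^\ell$, essentially by adapting \cite[\S 4.4]{BBDG} to the present, more general setup where $\Sigma'(1) \ne \Sigma(1)$. For the first equality, I would start from the explicit description of $m_p$ on $X_0(\Qd_p)$: writing the integral defining $m_p(\tX_0(\Zd_p))$ in the coordinates $\yv=(y_\rho)_{\rho \in \Sigma_0'(1)}$ with one coordinate eliminated via the implicit function theorem, and using Corollary~\ref{cor:height_torsor_integral} (so that the height factor $\max_{F \in \Pm^l}|F(\yv)|_p^{1/l}$ equals $1$ on $\tX_0(\Zd_p)$, since each point reduces into some affine chart $U_\sigma$ on which a suitable $Q \in \Pm^l$ is a unit). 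The integral then reduces to the $p$-adic density of the solution set of $\Phi'(\yv)=0$ subject to the coprimality conditions attached to the primitive collections. Here one invokes a version of Hensel's lemma valid in a neighborhood of $\Spec \Zd_p$: since $\tX$ is proper over $\Spec \Zd$ (assumption \eqref{eq:proper_z}) and $\tX_0 \to \tX$ is smooth, there is an $\ell_0$ such that for $\ell \ge \ell_0$ the reduction map $\tX_0(\Zd_p) \to \tX_0(\Zd/p^\ell\Zd)$ has fibers that are cosets of a lattice of the expected dimension, giving the volume as $\#\tX_0(\Zd/p^\ell\Zd)/(p^\ell)^{\#\Sigma'(1)-1}$; the exponent $\#\Sigma'(1)-1$ is correct because $\tX_0$ is a hypersurface in the $\#\Sigma'(1)$-dimensional affine space cut out by the single equation $\Phi'$.

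For the second equality, I would combine the first one with Lemma~\ref{lem:measure_variety_torsor_p-adic}, which gives $m_p(\tX_0(\Zd_p)) = (1-p^{-1})^{\rank \Pic Y'}\mu_p(X(\Qd_p))$. Since $\dim X_0 = \dim X + \rank \Pic Y'$ and $\tX_0$ is a hypersurface in $\Ad^{\Sigma'(1)}$, we have $\dim X_0 = \#\Sigma'(1)-1$, so $(p^\ell)^{\#\Sigma'(1)-1} = (p^\ell)^{\dim X_0}$ and the limit in the proposition exists and equals $m_p(\tX_0(\Zd_p))$. Thus
\[
(1-p^{-1})^{\rank \Pic Y'}\mu_p(X(\Qd_p)) = \lim_{\ell \to \infty} \frac{\#\tX_0(\Zd/p^\ell\Zd)}{(p^\ell)^{\dim X_0}}.
\]
It remains only to account for the discrepancy between $\rank \Pic Y' = \rank \Pic X$ (recall $\Pic Y^\circ \to \Pic X$ is an isomorphism, and $\Pic Y' = \Pic Y^\circ \oplus \Zd^{\#\Sigma'(1)-\#\Sigma(1)}$ once we add the rays from the desingularization) and $\rank \Pic X$; multiplying both sides by $(1-p^{-1})^{\#\Sigma(1)-\#\Sigma'(1)}$ converts the exponent $\rank \Pic Y'$ into $\rank \Pic X$ and yields the stated identity.

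The main obstacle is the first equality, specifically the uniformity in $\ell$ of the Hensel-type lifting argument over $\Spec \Zd$: one must be careful that the bookkeeping of the coprimality conditions $p \nmid \gcd\{x_\rho : \rho \in S'_j\}$ interacts correctly with the reduction mod $p^\ell$, and that the chart-dependent choice of $Q \in \Pm^l$ used to trivialize the height contribution is compatible with the partition of $\tX_0(\Zd/p^\ell\Zd)$ into pieces lying over the various $\sigma \in \Sigmamax'$. The subtlety that is genuinely new relative to \cite{BBDG} is that $\Sigma'(1)$ strictly contains $\Sigma(1)$, so the additional variables $x_\rho$ with $\rho \in \Sigma'(1)\setminus\Sigma(1)$ appear both in the mod-$p^\ell$ count and (set to $1$) in the comparison with $\Rm(X)$; keeping track of the correct power-of-$(1-p^{-1})$ normalization is exactly what the second formula encodes, and verifying the dimension count $\dim X_0 = \#\Sigma'(1)-1 = \dim X + \rank \Pic Y'$ closes the argument. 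None of these points is deep, but each requires care; modulo them, the proof is a routine adaptation of \cite[Lemmas~4.2--4.4 and \S 4.4]{BBDG}.
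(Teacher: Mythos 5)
Your proposal is correct and follows essentially the same route as the paper, whose proof simply notes that the statement relies on the regularity of $Y'$ and is otherwise analogous to \cite[Lemma~4.4, Proposition~4.5, Proposition~4.6]{BBDG}: the Hensel-type stabilization of $\#\tX_0(\Zd/p^\ell\Zd)/(p^\ell)^{\#\Sigma'(1)-1}$ for the first identity, then Lemma~\ref{lem:measure_variety_torsor_p-adic} together with the rank bookkeeping $\rank\Pic Y'-\rank\Pic X=\#\Sigma'(1)-\#\Sigma(1)$ and $\dim X_0=\#\Sigma'(1)-1$ for the second. Only a small remark: the uniform $\ell_0$ comes from compactness of $\tX_0(\Zd_p)$ (it is a bounded, clopen-conditioned subset of $\Zd_p^{\Sigma'(1)}$) plus nonvanishing of $\nabla\Phi'$ on the smooth generic fiber, rather than from properness of $\tX$, which plays no role here.
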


\begin{proof}
  This relies on the regularity of $Y'$ and is otherwise analogous to
  \cite[Lemma~4.4, Proposition~4.5, Proposition~4.6]{BBDG}. The
  referee kindly pointed out that an argument is also contained in the
  work of Peyre \cite{Pey2,Pey3,Pey4}.
\end{proof}

\subsection{The real density}

In this section, we compute the real density and Peyre's
$\alpha$-constant as in \cite[\S 4.5]{BBDG}. Those results can be
applied with minor modifications, which we now discuss.

We choose $\sigma \in \Sigmamax'$, $\rho_0 \in \sigma(1)$, and
$\rho_1 \in \Sigma(1) \setminus \sigma(1)$ subject to the following
conditions analogous to \cite[(4.7)]{BBDG}:
\begin{equation}\label{eq:assumption_real_density}
  \begin{aligned}
    &\text{$\sigma \in \Sigmamax'$ also appears in $\Sigmamax$.}\\
    &\text{Every variable $x_\rho$ for $\rho \in
      \Sigma(1)$ appears in at most one monomial of $\Phi$.}\\
    &\text{Writing  $-K_X = \sum_{\rho \in \Sigma(1) \setminus \sigma(1)} \alpha^\sigma_\rho
      \deg(x_\rho)$ in $\Pic X$, we have $\alpha^\sigma_{\rho_1} \ne 0$.}\\
    &\text{The variable $x_{\rho_0}$ appears with exponent $1$ in $\Phi$.}\\
    &\text{No $x_\rho$ with $\rho \in \sigma(1) \cup \{\rho_1\} \setminus
    \{\rho_0\}$ appears in the same monomial of $\Phi$ as $x_{\rho_0}$.}
  \end{aligned}
\end{equation}

We define the numbers $b_{\rho,\rho'}$ and $b_{\rho'}$ as in \cite[\S
4.5]{BBDG}, computed in $\Pic X$, for
$\rho' \in \sigma(1)' = \sigma(1) \cup \{\rho_1\}$ and
$\rho \in \Sigma(1) \setminus \sigma(1)'$. Then we define $c^*$ as in
\cite[(4.9)]{BBDG}.

We now define $c_{\infty}$ as in \cite[(4.11)]{BBDG}. We work without
the additional coordinates indexed by
$\Sigma'(1) \setminus \Sigma(1)$. Therefore, we can use the results
from \cite[(4.11)]{BBDG}, considering $X$ to be embedded into the
possibly singular toric variety $Y$. Since we are working with a more
general height function, we change the definition of $H_\infty$ to
  \[
    H_\infty(\mathbf{x}) = \max_{F \in \Pm^l}|F(\mathbf{x})|^{1/l},
  \]
where the additional coordinates indexed by
$\Sigma'(1) \setminus \Sigma(1)$ are set to $1$ on the right-hand
side.

In order to proceed as in \cite[(4.11)]{BBDG}, it remains to show
that the monomials in $\Pm^l$ with the additional coordinates set to
$1$ are homogeneous of degree $-lK_X \in \Pic(X) = \Cl(Y')$.

\begin{lemma}
  \label{lemma:pic-alpha-b}
  If we write
  \[l^{-1}\cdot L = \sum_{\rho \in \Sigma'(1) \setminus \sigma(1)} (\alpha')^\sigma_\rho
    \deg(y_\rho)\] in $\Pic Y'$, then we have
  $(\alpha')^\sigma_\rho = \alpha^\sigma_\rho$ for all
  $\rho \in \Sigma(1)$. In other words, it does not matter whether we
  compute the numbers $\alpha^\sigma_\rho$ in $\Pic X$ or $\Pic Y'$.
  The same is true for the numbers $b_{\rho,\rho'}$ and $b_{\rho'}$.
\end{lemma}
\begin{proof}
  For every $\rho\in \Sigma'(1)$, we have
  $D_\rho = \deg y_{\rho} \in \Pic Y'$ for some prime divisor $D_\rho$
  in $Y'$ with $X \nsubseteq D_\rho$. We can therefore directly
  compute the pullbacks to $X$ of all $D_\rho$.

  For every $\rho \in \Sigma(1)$, the pullback of $\deg(y_\rho)$ is
  $\deg(x_\rho) \in \Pic X$. On the other hand, for every
  $\rho \in \Sigma'(1) \setminus \Sigma(1)$ we have
  $D_\rho|_X = \emptyset$; therefore the pullback to $X$ is
  $0 \in \Pic X$.

  Finally, the pullback of $l^{-1}\cdot L$ is $-K_X$. By pulling back
  the defining equations for $(\alpha')^\sigma$, the result follows.
  The argument for $b_{\rho,\rho'}$ and $b_{\rho'}$ is the same.
\end{proof}

\begin{lemma}
  \label{lemma:monomialsPicX}
  Let $F \in \Pm^l$, and let $F_Y$ be the corresponding monomial where
  the additional coordinates indexed by
  $\Sigma'(1) \setminus \Sigma(1)$ are set to $1$. Then $F_Y$ is
  homogeneous of degree $-lK_X \in \Pic(X) = \Cl(Y')$.
\end{lemma}
\begin{proof}
  We write
  \[
    F = \prod_{\rho \in \Sigma'(1)} y_\rho^{k_\rho}
  \]
  and, since the monomial $F$ is of degree $L$, we obtain
  \[
    L = \sum_{\rho \in \Sigma'(1)} k_\rho\deg(y_\rho).
  \]
  The proof now proceeds exactly as in the proof of
  Lemma~\ref{lemma:pic-alpha-b}.
\end{proof}

\begin{prop}\label{prop:real_density}
  In the notation above and assuming \eqref{eq:assumption_real_density}, we have
  \begin{equation*}
    \alpha(X)\mu_\infty(X(\Rd)) = \frac{1}{2^{\rank\Pic X}} c^*c_\infty.
  \end{equation*}
\end{prop}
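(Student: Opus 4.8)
The plan is to follow the strategy of \cite[\S 4.5]{BBDG} essentially verbatim, using the explicit form of the local density at the archimedean place given by \eqref{eq:local_measure_explicit}, but keeping careful track of which rays lie in $\Sigma(1)$ versus $\Sigma'(1) \setminus \Sigma(1)$. The left-hand side $\alpha(X)\mu_\infty(X(\Rd))$ is the product of Peyre's $\alpha$-invariant (a rational-polytope volume on the dual of the effective cone in $\Pic X \otimes \Rd$) with the real Tamagawa volume of $X(\Rd)$ in the metric fixed in Section~\ref{subsec:metr}. The key point is that both factors can be computed from the fan $\Sigma'$ together with the equation $\Phi'$, and the additional coordinates indexed by $\Sigma'(1) \setminus \Sigma(1)$ are set to $1$ throughout (as already noted in the remark following \eqref{eq:local_measure_explicit} and in the definition of $c^*$, $c_\infty$), so the combinatorics is genuinely governed by $\Sigma(1)$.

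First I would cover $X(\Rd)$ by the charts $X^S(\Rd)$ with $S = \sigma(1)$ for $\sigma \in \Sigmamax'$, apply \eqref{eq:local_measure_explicit} on each chart, and use a partition of unity to write $\mu_\infty(X(\Rd))$ as a sum of integrals of the form $\int \bigwedge_{\rho \in S \setminus \{\rho_0\}} \dd z^S_\rho \, / \, (|\partial \Phi^S / \partial z^S_{\rho_0}|_\infty \max_F |F|_\infty^{1/l})$. On the chart singled out by \eqref{eq:assumption_real_density} — the one whose $\sigma$ also appears in $\Sigmamax$, with $x_{\rho_0}$ appearing linearly in $\Phi$ and not interacting with $x_\rho$ for $\rho \in \sigma(1) \cup \{\rho_1\} \setminus \{\rho_0\}$ — the partial derivative $\partial \Phi^S / \partial z^S_{\rho_0}$ is a monomial (up to sign), so the implicit function theorem solves for $z^S_{\rho_0}$ cleanly and the Jacobian factor becomes an explicit monomial in the remaining coordinates. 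This is exactly the structural input that makes the change of variables in \cite[(4.9)]{BBDG} go through.

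Next I would perform the same change of variables as in \cite{BBDG}: introduce the coordinates $t_{\rho'}$ for $\rho' \in \sigma(1)' = \sigma(1) \cup \{\rho_1\}$ (using that the $z^S_\rho$ give characters of $U$ and the $b_{\rho,\rho'}$, $b_{\rho'}$ record the degrees), rescale so that the height constraint $\max_{F \in \Pm^l} |F|_\infty^{1/l} \le 1$ becomes the standard region, and integrate out the scaling parameter to produce the factor $c_\infty$ of \cite[(4.11)]{BBDG}; the leftover integral over the dual cone directions produces the $\alpha$-volume, matching $c^*$ of \cite[(4.9)]{BBDG} after identifying $\Pic Y' \cong \Pic X$ via Remark~\ref{rem:smfano1} and using that the $b$-coefficients are computed in $\Pic X$. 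The factor $1/2^{\rank \Pic X}$ appears exactly as in \cite{BBDG}: it is the ratio between the naive integral over $\Rd^{\sigma(1)'}$ and the integral over a fundamental domain for the $2$-torsion coming from the sign choices (equivalently, it reflects the $2^{\rank \Pic Y'}:1$ map of Proposition~\ref{prop:lift_to_torsor} restricted to the real place), and since $\rank \Pic Y' = \rank \Pic X$ under our identification the constant is as claimed. The contributions from charts not of the distinguished type, and from the boundary strata where some $z^S_\rho = 0$, are lower-dimensional and contribute zero to the $\Rd$-density, exactly as in \cite{BBDG}.

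The main obstacle is bookkeeping rather than a new idea: one must check that every place in the argument of \cite[\S 4.5]{BBDG} where regularity of the ambient toric variety (or the hypothesis $\Sigma' = \Sigma^\circ$, or $L^\circ = -K_X$) was implicitly used is still fine here. Concretely, the two things to verify are (i) that the distinguished chart required by \eqref{eq:assumption_real_density} actually exists for the varieties in question — but this is exactly what \eqref{eq:assumption_real_density} postulates, so here it is an assumption — and (ii) that the extra rays in $\Sigma'(1) \setminus \Sigma(1)$ genuinely decouple, i.e.\ the monomials $F \in \Pm^l$ and the equation $\Phi'$ restricted to the distinguished chart really do not see $z^S_\rho$ for those $\rho$, so that setting them to $1$ is harmless. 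This is ensured because $\rho_0, \rho_1$ and the $b$-coefficients are all chosen within $\Sigma(1)$ and computed in $\Pic X \cong \Pic Y^\circ$; once that is confirmed, the rest is the same computation as in \cite{BBDG}, and the proof is complete.
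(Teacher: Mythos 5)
Your overall route is the paper's route: reduce to the computation of \cite[\S 4.5]{BBDG}, note that on the distinguished chart the extra coordinates $z^S_\rho$ for $\rho \in \Sigma'(1)\setminus\Sigma(1)$ are equal to $1$ in \eqref{eq:local_measure_explicit}, and redo the change of variables with the $b$-coefficients computed in $\Pic X$. That is exactly how the paper argues (it checks that \cite[Lemmas~4.7, 4.8 and Proposition~4.10]{BBDG} go through without the regularity assumption \cite[(2.3)]{BBDG}, with $c^*$ computed with respect to $\Pic X$).

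However, your justification of the constant $2^{-\rank \Pic X}$ contains a genuine error. You identify ``$\Pic Y' \cong \Pic X$ via Remark~\ref{rem:smfano1}'' and assert ``$\rank \Pic Y' = \rank \Pic X$''; Remark~\ref{rem:smfano1} only gives $\Pic Y^\circ \cong \Pic X$, and $\rank \Pic Y' = \rank \Pic X + \#\bigl(\Sigma'(1)\setminus\Sigma(1)\bigr)$, which is strictly larger precisely in the new cases this section was written for (for $X_2$ one has $\rank\Pic X_2=3$ but $\rank\Pic Y_2'=4$; for $X_3$, $3$ versus $5$). Consequently the factor cannot be explained, as you do, by the $2^{\rank\Pic Y'}:1$ map of Proposition~\ref{prop:lift_to_torsor} ``restricted to the real place'' --- that reasoning would produce $2^{-\rank\Pic Y'}$ and contradict the statement. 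The correct source of the factor is the computation of $\alpha(X)$ and $c^*$ inside $\Pic X\otimes\Rd$ (the cone/polytope bookkeeping of \cite[Lemmas~4.7, 4.8]{BBDG}, which are valid here with everything expressed in $\Pic X$, not $\Pic Y'$); the torsor over $Y'$ and its larger Picard rank enter elsewhere (in Proposition~\ref{prop:countingproblem_abstract} and via the compensating factor $(1-p^{-1})^{\#\Sigma(1)-\#\Sigma'(1)}$ in Proposition~\ref{prop:measure_torsor_mod_p^l}), not in this archimedean identity. With that step corrected --- i.e.\ keeping the two Picard groups distinct and deriving the constant from the $\Pic X$-computation --- the rest of your argument matches the paper's proof.
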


\begin{proof}
  First, we note that \cite[Lemma~4.8]{BBDG} is still valid. Moreover,
  we observe that the additional variables $z_\rho^S$ for
  $\rho \in \Sigma'(1)\setminus\Sigma(1)$ are $1$ in the expression
  \eqref{eq:local_measure_explicit} for $\mu_\infty(X(\Rd))$ for
  $S = \sigma(1)$. Therefore, the expected real density
  $\omega_\infty = \mu_\infty(X(\Rd))$ has the same description as in
  \cite[(4.10)]{BBDG}.
  Taking into account Lemma~\ref{lemma:monomialsPicX}, the statement
  and proof of \cite[Proposition~4.10]{BBDG} stay the same.
\end{proof}

\part{Analysis of a diophantine equation}

The counting problem in Corollary~\ref{cor:counting_problems}(a) corresponding to the variety $X_1$ is rather delicate and not covered by the general method of \cite{BBDG}. This part of the paper is devoted to a detailed investigation. 

\section{Elementary bounds}\label{sec-elem}

For $\xi \in \mathbb{Z}\setminus \{0\}$ we consider the equation
\begin{equation}\label{eq0}
 x_{11} x_{12} + x_{21}^2 + \xi^2x_{31}x_{32}x_{33}^2 = 0, \quad x_{31}x_{32} \not= -\square
 \end{equation}
 where all variables are non-zero integers. For $\textbf{X} = (X_{ij})$ with
 $X_{ij} \geq 1$ let $N_{\xi}(\textbf{X})$ be the number of solutions to
 \eqref{eq0} in boxes $\frac{1}{2}X_{ij} \leq |x_{ij}| \leq X_{ij}$.  In many
 cases it will improve the readability considerably to relabel the variables,
 and it will be convenient to refer to \eqref{eq0} in the form
 \begin{equation*}
 ab+ c^2 + \xi^2ywz^2 = 0, \quad yw \not= \square. 
\end{equation*}
 Consequently, we will write $\textbf{X} = (A, B, C, Y, W, Z)$.   We generally write
 $$\| \textbf{X} \| = \max(A, B, C, Y, W, Z)$$
 Using this notation, we start with some elementary bounds. 
 \begin{prop}\label{elem}
 We have
 \begin{displaymath}
   \begin{split}
     N_{\xi}(\emph{\textbf{X}}) &\ll  \| \emph{\textbf{X}} \|^{\varepsilon}
     (ABC)^{1/2}(YW)^{3/4} Z^{1/2}.  
   \end{split}
 \end{displaymath}
\end{prop}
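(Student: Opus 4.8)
The plan is to fix the pair $(y,w)$, turn \eqref{eq0} into a lattice point count on a pencil of conics in the two ``large'' variables $c,z$, and then sum over $(y,w)$; the exponent $\tfrac34$ on $YW$ will emerge from the convexity bound $\min(s,t)\le\sqrt{st}$ together with $\sum_{|y|\asymp Y}|y|^{-1/4}\asymp Y^{3/4}$. Concretely, fix $y,w$ with $|y|\asymp Y$, $|w|\asymp W$ and set $D:=\xi^2 yw$, so that \eqref{eq0} reads $ab=-(c^2+Dz^2)$. For an admissible solution the right-hand side is a nonzero integer (it vanishes only if $-yw$ is a perfect square, and anyway $ab\neq 0$) with $|c^2+Dz^2|\le AB$, hence with $\ll\|\textbf{X}\|^\varepsilon$ divisors; so each pair $(c,z)$ contributes $\ll\|\textbf{X}\|^\varepsilon$ pairs $(a,b)$, and
\[
 N_\xi(\textbf{X})\ll\|\textbf{X}\|^\varepsilon\!\!\sum_{|y|\asymp Y,\,|w|\asymp W}\!\! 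M(y,w),\qquad
 M(y,w):=\#\big\{(c,z): |c|\asymp C,\ |z|\asymp Z,\ \tfrac14 AB\le|c^2+Dz^2|\le AB\big\}.
\]

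The heart of the matter is a uniform estimate for $M(y,w)$. Fixing $z$, the admissible $c$ satisfy $c^2\in\big(|D|z^2+[\tfrac14 AB,AB]\big)\cup\big(|D|z^2-[\tfrac14 AB,AB]\big)$ (only the first set if $D>0$), so $c$ runs through $O(1)$ intervals, each of length $\ll\sqrt{AB}$ in general and $\ll AB/(\sqrt{|D|}\,|z|)$ when $|D|z^2\gg AB$. Summing $\#\{c\}\ll\min\big(\sqrt{AB},\,AB/(\sqrt{|D|}\,|z|)\big)+O(1)$ over the $\ll Z$ relevant $z$, and comparing with the trivial bound $M(y,w)\ll CZ$, I expect
\[
 M(y,w)\ll\min\big(CZ,\ AB/(\xi\sqrt{|yw|})\big)+Z.
\]

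Summing over $(y,w)$ and using $\min(s,t)\le\sqrt{st}$,
\[
 \sum_{|y|\asymp Y,\,|w|\asymp W}\min\big(CZ,\,AB/(\xi\sqrt{|yw|})\big)\le\frac{(ABCZ)^{1/2}}{\xi^{1/2}}\sum_{|y|\asymp Y,\,|w|\asymp W}|yw|^{-1/4}\ll (ABC)^{1/2}Z^{1/2}(YW)^{3/4},
\]
since $|y|,|w|$ lie in dyadic ranges. The leftover $\sum_{y,w}Z\asymp YWZ$ is absorbed using the a priori bound, valid whenever $N_\xi(\textbf{X})>0$, that $\xi^2YWZ^2\ll\xi^2|yw|z^2\le|c^2+Dz^2|+c^2=|ab|+c^2\le AB+C^2$, which gives $(YW)^{1/2}Z=(YWZ^2)^{1/2}\ll (AB+C^2)^{1/2}\le(AB)^{1/2}+C\ll ABC$, hence $YWZ\ll(ABC)^{1/2}(YW)^{3/4}Z^{1/2}$. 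Putting the pieces together yields the proposition, in fact with a spare factor $\xi^{-1/2}$.

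The step that requires real care — and which I expect to be the main obstacle — is the lattice point bound for $M(y,w)$, in particular its uniformity when $D<0$ and the box $|c|\asymp C$, $|z|\asymp Z$ sits close to the asymptotes $c=\pm\sqrt{|D|}\,z$ of the conic $c^2+Dz^2=0$: there the annulus $|c^2+Dz^2|\asymp AB$ is thin, of width $\ll AB/(\sqrt{|D|}\,|z|)$, and harvesting exactly this saving (rather than the crude $\sqrt{AB}$) is what makes the bound work. I note that the hypothesis $yw\neq-\square$ is not actually needed for the upper bound in this proposition; it enters only the finer asymptotic analysis, cf.\ the split-quadric phenomenon discussed in the introduction.
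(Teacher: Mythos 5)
Your argument is correct and follows essentially the same route as the paper: a divisor bound on $ab$ combined with the short-interval count for $c$ (for fixed $y,w,z$ the admissible $c$ lie in $O(1)$ intervals of total length $\ll AB/(|\xi|\sqrt{|yw|}\,|z|)$), interpolated by a geometric mean — you merely take the minimum pointwise in $(y,w)$, producing $(YW)^{3/4}$ via $\sum|yw|^{-1/4}$, where the paper takes it globally via $\min(AB\sqrt{YW},CYWZ)\le(ABC)^{1/2}(YW)^{3/4}Z^{1/2}$, and you absorb the leftover $YWZ$ through the constraint $YWZ^2\ll AB+C^2$ where the paper uses $\min(ABC,YWZ)\le\sqrt{ABCYWZ}$. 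One cosmetic slip: when $D>0$ the admissible $c^2$ lie in $[\tfrac14 AB-Dz^2,\,AB-Dz^2]$ rather than in $Dz^2+[\tfrac14 AB,AB]$, but your per-$z$ bound $\#\{c\}\ll\min\bigl(\sqrt{AB},\,AB/(\sqrt{|D|}\,|z|)\bigr)+O(1)$ remains valid in all cases, so nothing in the proof is affected.
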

   
\begin{proof} The bound 
\begin{equation}\label{div1}
   N_{\xi}(\textbf{X}) \ll \|\textbf{X} \|^{\varepsilon}\min(  ABC, CYWZ)
   \end{equation}
    follows from a simple divisor argument. Alternatively, we can fix $y, w, z$ so that
    $$c^2 = -ywz^2\xi^2  + O(AB),$$
which then determines $a, b$ up to a divisor function. The   equation for $c$ defines an interval of length $$\ll \frac{AB}{\sqrt{YW} Z|\xi|}$$
 so that
\begin{equation}\label{div2}
N_{\xi}(\textbf{X}) \ll  \|\textbf{X} \|^{\varepsilon}YWZ \Big(\frac{AB}{Z\sqrt{YW}|\xi|} + 1\Big) =  \|\textbf{X} \|^{\varepsilon} \Big( \frac{AB\sqrt{YW}}{|\xi|} + YWZ\Big).
\end{equation}
The claim follows now from  \eqref{div1} and \eqref{div2} after taking suitable geometric means, namely 
$$\min( AB\sqrt{YW}, CYWZ) \leq  (ABC)^{1/2}(YW)^{3/4} Z^{1/2}, \quad \min(ABC, YWZ) \leq \sqrt{ABCYWZ}.$$
\end{proof}

We refine this argument a bit as follows. For $0 < \Delta \leq 1$ and $\textbf{X} = (X_{ij})$ let $N^{\ast}_{\xi}(\textbf{X}, \Delta)$ be the set of solutions to \eqref{eq0} with the same size conditions as $N_{\xi}(\textbf{X})$ except that for one index $$(ij) \in \{(21), (31), (32), (33)\}$$ the condition $\frac{1}{2}X_{ij} \leq |x_{ij}| \leq  X_{ij}$ is replaced with $X_{ij} \leq |x_{ij}| \leq  X_{ij}(1 + \Delta)$.  We also  define $N_{\xi}(\textbf{X}, \Delta)$ to the be the same size conditions as $N_{\xi}(\textbf{X})$ except that  $\frac{1}{2}X_{11} \leq |x_{11}| \leq  X_{11}$ is replaced with $X_{11} \leq |x_{11}| \leq  X_{11}(1 + \Delta)$. The following proposition investigates $N^{\ast}_{\xi}(\textbf{X}, \Delta) $, while $N_{\xi}(\textbf{X}, \Delta)$ comes up in Proposition \ref{prop3}.

\begin{prop}\label{elem1}
We have
 \begin{displaymath}
 \begin{split}
 N^{\ast}_{\xi}(\emph{\textbf{X}}, \Delta) \ll \|\emph{\textbf{X}} \|^{\varepsilon} \Delta^{1/2} (ABC)^{1/2}(YW)^{3/4} Z^{1/2}.
 \end{split}
 \end{displaymath}
 \end{prop}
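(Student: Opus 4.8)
The plan is to run through exactly the same two arguments that proved Proposition~\ref{elem}, but now keeping track of the extra savings coming from the fact that one of the variables $x_{21},x_{31},x_{32},x_{33}$ is constrained to a \emph{short} interval of relative length $\Delta$ rather than a full dyadic interval. Concretely, let $(ij)$ be the distinguished index and denote by $X_{ij}^\Delta$ the short range $X_{ij}\le|x_{ij}|\le X_{ij}(1+\Delta)$.

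\textbf{Step 1: the divisor bound with a short variable.} First I would redo the bound \eqref{div2}. Fix $y,w,z$ (one of which now ranges over a short interval); the equation $c^2=-\xi^2ywz^2+O(AB)$ confines $c$ to an interval of length $\ll AB/(\sqrt{YW}Z|\xi|)$, and then $a,b$ are determined up to a divisor function once $c$ is fixed. The number of admissible triples $(y,w,z)$ is now $\ll \Delta\,YWZ$ rather than $YWZ$, because exactly one of the three factors is restricted to a $\Delta$-short range. Hence the analogue of \eqref{div2} becomes
\begin{equation*}
  N^{\ast}_\xi(\textbf{X},\Delta)\ll \|\textbf{X}\|^\varepsilon\,\Delta\,YWZ\Big(\frac{AB}{Z\sqrt{YW}|\xi|}+1\Big)
  =\|\textbf{X}\|^\varepsilon\Big(\frac{\Delta AB\sqrt{YW}}{|\xi|}+\Delta\, YWZ\Big).
\end{equation*}

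\textbf{Step 2: the other divisor bound.} Next I would redo \eqref{div1}. The bound $N\ll ABC$ (fix $a,b,c$, solve for the monomial $ywz^2$ up to divisors) still applies, but now I pick up a factor $\Delta$ whenever the short variable is among $c,y,w,z$: if the short variable is $c$, shrinking its range gives $\ll\Delta ABC$; if it is $y$, $w$ or $z$, then after fixing the two ``long'' variables among $\{y,w,z\}$ the third is forced into a short interval, and summing over $a,b$ and the long variables gives $\ll \Delta\, ABC$ as well — actually it is cleaner to note directly that the count of $(y,w,z)$ is $\ll\Delta\,YWZ^{?}$; in any case the symmetric outcome is $N^{\ast}_\xi(\textbf{X},\Delta)\ll\|\textbf{X}\|^\varepsilon\min(\Delta ABC,\ \Delta CYWZ)$. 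Since $\Delta\le1$, this is no worse than multiplying the original \eqref{div1} by $\Delta$.

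\textbf{Step 3: take the geometric mean.} Finally I would combine the $\Delta$-weighted versions of \eqref{div1} and \eqref{div2} exactly as in the proof of Proposition~\ref{elem}. The bound from Step~2 gives $\ll\|\textbf{X}\|^\varepsilon\,\Delta\min(ABC,CYWZ)$ and from Step~1 gives (keeping the first term, which dominates the geometric mean there) $\ll\|\textbf{X}\|^\varepsilon\,\Delta\, AB\sqrt{YW}/|\xi|$ together with $\ll\|\textbf{X}\|^\varepsilon\,\Delta\,YWZ$. Taking the same geometric means as before,
\begin{equation*}
  \min\bigl(\Delta AB\sqrt{YW},\ \Delta CYWZ\bigr)\le \Delta\,(ABC)^{1/2}(YW)^{3/4}Z^{1/2},
\end{equation*}
but since both competing quantities carry a full factor $\Delta$ we may instead interpolate one factor of $\Delta$ out as $\Delta^{1/2}$: using $\Delta M\cdot\Delta M'\ge(\Delta^{1/2})^2 MM'$ is wasteful, so better to use that one of the two bounds in fact only needs a single $\Delta$. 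The efficient route is to balance the Step~2 bound $\Delta\cdot CYWZ$ against the Step~1 bound $\Delta\cdot AB\sqrt{YW}/|\xi|$: their geometric mean is $\Delta\,(ABC)^{1/2}(YW)^{3/4}Z^{1/2}/|\xi|^{1/2}\le\Delta\,(ABC)^{1/2}(YW)^{3/4}Z^{1/2}$, which already beats the claim (it has $\Delta$ in place of $\Delta^{1/2}$). Alternatively, balancing a $\Delta$-saving bound against a non-$\Delta$ bound (e.g.\ the trivial $ABC$ when the short variable is among $y,w,z$) yields exactly the stated $\Delta^{1/2}$; I would present whichever pairing is uniformly valid across all four choices of $(ij)$ and all ranges, and that robust pairing produces the claimed $\Delta^{1/2}(ABC)^{1/2}(YW)^{3/4}Z^{1/2}$.

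\textbf{Main obstacle.} The only real subtlety is \emph{uniformity in which index $(ij)$ is the short one}: the bound $N\ll ABC$ only gains a factor $\Delta$ for free when the short variable is $c$, whereas for a short $y$, $w$, or $z$ one must argue that fixing the two long variables among $\{y,w,z\}$ and then $a,b$ leaves the remaining factor in a short interval. Handling the case of short $z$ requires a little care because $z$ enters quadratically and multiplies $yw$, so one should fix $y,w$ first and observe $z$ is then confined to $\ll\Delta\,Z$ values; for short $y$ (or $w$) one fixes $w,z$ (or $y,z$). Once that bookkeeping is done the geometric-mean step is identical to Proposition~\ref{elem}, and the extra $\Delta^{1/2}$ drops out.
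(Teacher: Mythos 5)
Your overall strategy is the paper's: rerun the two elementary arguments behind Proposition \ref{elem} with the short range inserted, and then interpolate. Step 1 is fine and matches the paper (for a short $y$, $w$ or $z$ the bound \eqref{div2} becomes $\Delta(AB\sqrt{YW}/|\xi|+YWZ)$), but Step 2 contains a false claim that derails the combination. The divisor bound $N\ll\|\textbf{X}\|^{\varepsilon}ABC$ does \emph{not} acquire a factor $\Delta$ when the short variable is $y$, $w$ or $z$. Your justification (fix $a,b$ and the two long variables among $\{y,w,z\}$; the remaining factor is ``forced into a short interval'') overlooks that two free variables remain, $c$ and the short one, tied by a single equation, so the number of admissible pairs per choice is governed by the length of the resulting $c$-range and is not $O(\Delta)$ times anything useful. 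Concretely, take $\xi=1$, $Y=Z=1$, $W\asymp AB\asymp C^2$, with $y$ the short variable (so $y=1$ is admissible for every $\Delta>0$): each triple $(a,b,c)$ in its dyadic box with $\tfrac12 W\le ab+c^2\le W$ and $ab+c^2$ not a perfect square gives a solution with $w=-(ab+c^2)$, $z=1$, and there are $\gg ABC$ such triples, independently of $\Delta$. Hence your ``symmetric outcome'' $\min(\Delta ABC,\Delta CYWZ)$ is wrong in three of the four cases, and with it the ``efficient route'' of Step 3, which purports to prove the stronger bound with $\Delta$ in place of $\Delta^{1/2}$.

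The correct bookkeeping, which is what the paper does, is a genuine case distinction in which exactly one of the three available bounds carries no $\Delta$: for a short $y,w,z$ one has $\min\bigl(ABC,\ \Delta CYWZ,\ \Delta(AB\sqrt{YW}+YWZ)\bigr)$, and for a short $c$ one has $\min\bigl(\Delta ABC,\ \Delta CYWZ,\ AB\sqrt{YW}+YWZ\bigr)$ (note that in the latter case your Step 1 gives no saving at all, a case your write-up never addresses). One then pairs the $AB\sqrt{YW}$ piece with $CYWZ$ and the $YWZ$ piece with $ABC$; each pair contains exactly one factor $\Delta$, giving $\Delta^{1/2}(ABC)^{1/2}(YW)^{3/4}Z^{1/2}+\Delta^{1/2}(ABCYWZ)^{1/2}$, and the second term is absorbed into the first since $(YW)^{1/2}\le(YW)^{3/4}$. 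Your argument never validly disposes of the $\Delta YWZ$ (resp.\ $YWZ$) term: it is not dominated by the target on its own and must be played against $ABC$ (resp.\ $\Delta ABC$). The closing hedge (``whichever pairing is uniformly valid'') points toward this, but as written the proposal rests on an invalid intermediate bound and leaves the decisive pairing unexecuted, so there is a genuine gap.
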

 
 \begin{proof} We distinguish two cases. If it is not the $c$-variable that is restricted, then by the same argument as above we have
 \begin{displaymath}
 \begin{split}
 N^{\ast}_{\xi}(\textbf{X}, \Delta) &\ll \|\textbf{X} \|^{\varepsilon}\min\Big(ABC, 
 \Delta CYWZ, \Delta(AB\sqrt{YW} + YWZ)\Big)\\
 &\ll \|\textbf{X} \|^{\varepsilon}\big(\Delta (ABC)^{1/2}(YW)^{3/4}Z^{1/2} + \Delta^{1/2}(ABCYWZ)^{1/2}\big) \end{split}
 \end{displaymath}
 and the claim follows. 
 
 If the restricted variable is $c$, then we have similarly
 \begin{displaymath}
 \begin{split}
 N^{\ast}_{\xi}(\textbf{X}, \Delta) &\ll \|\textbf{X} \|^{\varepsilon}\min\Big(\Delta ABC, 
 \Delta CYWZ,  AB\sqrt{YW} + YWZ\Big)\\
 &\ll \|\textbf{X} \|^{\varepsilon}\big(\Delta^{1/2} (ABC)^{1/2}(YW)^{3/4}Z^{1/2} + \Delta^{1/2}(ABCYWZ)^{1/2}\big).  
   \end{split}
 \end{displaymath}
 This completes the proof.  \end{proof}
 
\emph{Remarks:} 1) It will be convenient to introduce the following short-hand notation for expressions like those in Proposition \ref{elem1}.  For ${\bm \zeta} = (\zeta_1, \zeta_2, \zeta_3) \in \mathbb{R}^3$ and $\textbf{X} = (A, B, C, Y, W, Z)$ we define
\begin{equation}\label{exp-not}
\textbf{X}^{(\zeta)} = (AB)^{1 - \zeta_1} C^{1 - 2\zeta_2} (YW)^{1-\zeta_3} Z^{1-2\zeta_3}.
\end{equation}
With this notation, the bounds in Propositions  \ref{elem} and \ref{elem1}  involve $\textbf{X}^{(\frac{1}{2}, \frac{1}{4}, \frac{1}{4})}$. 

2)  In order for $N_{\xi}(\textbf{X})$ to be non-zero, we must have
\begin{equation}\label{trick}
C \ll (AB)^{1/2} + |\xi|Z\sqrt{YW}.
\end{equation}
 
\section{Character sums }

We consider  the following two character sums. For $a, c, z, \xi \in \mathbb{N}$, $h_1, h_2\in \mathbb{Z}$ let 
\begin{equation}\label{defS}
S_{\xi}(h_1, h_2, a, c, z) = \sum_{\substack{y, w\, (\text{mod } a)\\a \mid c^2 + \xi^2 ywz^2}} e\Big( \frac{h_1y + h_2w}{a}\Big).
\end{equation}
For $x, a \in \mathbb{N}$, $k_1, k_2 \in \mathbb{Z}$ let
 \begin{equation}\label{defT}
  T(k_1, k_2, x, a)= \sum_{\gamma^2 + x\xi^2 \equiv 0 \, (\text{mod } a)}e\left(\frac{h_1\gamma + h_2\xi}{a}\right). 
  \end{equation}
  Let $\tau$ denote the divisor function.

\begin{lemma}\label{lem1} We have
$$S_{\xi}(0, 0, a, c, z)  = \sum_{\substack{a_1a_2a_3 = a\\ a_3 \mid c^2}} a_1(\xi^2z^2, a_2a_3)a_3 \mu(a_2)$$
and
$$|S_{\xi}(h_1, h_2, a, c, z)|\leq \tau(a)  (a, h_1, h_2)a^{1/2} (a, c^2)^{1/2} .$$
\end{lemma}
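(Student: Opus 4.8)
The plan is to evaluate and bound the two character sums in Lemma~\ref{lem1} by reducing everything to prime powers via multiplicativity and then handling each prime-power case by elementary counting of solutions to the underlying congruences.

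\textbf{The sum $S_\xi(0,0,a,c,z)$.} First I would observe that this sum simply counts pairs $(y,w) \bmod a$ with $a \mid c^2 + \xi^2 ywz^2$. Fixing $w$ (a unit or not), the congruence $\xi^2 z^2 y w \equiv -c^2 \pmod a$ is linear in $y$, so the count of admissible $y$ is either $0$ or equals $(\xi^2 z^2 w, a)$ provided $(\xi^2 z^2 w, a) \mid c^2$; summing over $w$ then groups the $w$ according to the value $d = (w, a)$ (combined with the fixed contribution of $\xi^2 z^2$). Writing $a = a_1 a_2 a_3$ in the usual way — $a_3$ the part dividing $c^2$ that is ``absorbed'', $a_2$ the squarefree obstruction part carrying the $\mu(a_2)$, $a_1$ the free part — one recovers the stated closed form. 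The cleanest route is to check the identity for $a = p^k$ a prime power by direct enumeration (the number of solutions of $y w \equiv \beta \pmod{p^k}$ with a prescribed power of $p$ dividing $\xi^2 z^2$ is a short computation), verify both sides are multiplicative in $a$, and conclude. The factor $(\xi^2 z^2, a_2 a_3)$ arises precisely because the $p$-adic valuation of $\xi^2 z^2$ competes with that of $c^2$ in deciding whether the linear congruence is solvable and with what multiplicity.

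\textbf{The sum $S_\xi(h_1,h_2,a,c,z)$.} For the bound, again reduce to $a = p^k$ by multiplicativity (the sum factors as a product over prime powers once $h_1, h_2$ and the modulus split accordingly, with the gcd $(a, h_1, h_2)$ distributing multiplicatively). On a prime power, I would fix $y \bmod p^k$, solve the linear congruence for $w$ as above, and insert the resulting additive character in $w$; what remains is a complete exponential sum in $y$ over the admissible residue classes, which is a Gauss-sum / Ramanujan-sum type object. Bounding it by counting the number of $y$ for which the congruence is solvable (giving the factor $(a,c^2)^{1/2}$ after Cauchy--Schwarz or a direct Weil-type estimate for the quadratic-in-$y$ structure hidden in $y w z^2$) and extracting the standard gain $(a, h_1, h_2)$ from the additive twist, together with a divisor factor $\tau(a)$ to absorb the bookkeeping of the splitting $a = a_1 a_2 a_3$, yields the claimed $\tau(a)(a,h_1,h_2) a^{1/2}(a,c^2)^{1/2}$. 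The key inequality is that the number of $(y,w) \bmod p^k$ on the variety $yw \equiv \beta$ is $O(p^k \tau)$ with square-root cancellation available in the generic direction.

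\textbf{Main obstacle.} The delicate point is the interplay between the prime powers dividing $\xi^2 z^2$ and those dividing $c^2$: when $p \mid (\xi z, c)$ the congruence $\xi^2 z^2 y w \equiv -c^2 \pmod{p^k}$ degenerates, the number of solutions jumps, and one must check that the extra solutions are still compatible with the stated gcd-type factors — in particular that $(a, c^2)^{1/2}$ (rather than a larger power) suffices and that no spurious factor of $z$ or $\xi$ survives in the final bound. Handling this uniformly in $z$ and $\xi$, and making sure the $\mu(a_2)$ in the exact formula matches the inclusion-exclusion over which part of the modulus is ``obstructed'', is where the care is needed; everything else is routine prime-power arithmetic and multiplicativity.
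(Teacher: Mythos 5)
Your treatment of the zero-frequency case is workable: $S_\xi(0,0,a,c,z)$ is simply the number of pairs $(y,w)$ modulo $a$ with $a \mid c^2+\xi^2ywz^2$, both sides of the claimed identity are multiplicative in $a$, and a prime-power computation would recover the closed form. (The paper instead detects the divisibility with additive characters, executes the $y$- and $w$-sums, and lands on a Ramanujan sum $S(c^2,0,a_2)=\sum_{d\mid(a_2,c^2)}d\,\mu(a_2/d)$; your direct count is a legitimate alternative for this part.)

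The genuine gap is in the bound for general $(h_1,h_2)$. Fix $y$ and put $g=(\xi^2z^2y,a)$; the admissible $w$ form a progression $w\equiv -\overline{(\xi^2z^2y/g)}\,(c^2/g)\pmod{a/g}$, so after the complete $w$-sum the surviving $y$-sum carries a phase of the shape $e\bigl((h_1y+h_2\cdot(\mathrm{const})\cdot\overline{y})/a'\bigr)$. This is a Kloosterman sum, not a ``Gauss-sum / Ramanujan-sum type object,'' and your proposed bound by counting the $y$ for which the congruence is solvable, or by Cauchy--Schwarz, cannot produce the factor $a^{1/2}$: already for $a=p$ prime with $p\nmid h_1h_2c\xi z$ the trivial count gives only $O(p)$, and there is no quadratic-in-$y$ structure to exploit, since $z$ is a fixed parameter and $ywz^2$ is bilinear in $(y,w)$. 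The square-root saving $a^{1/2}(a,c^2)^{1/2}$ in the lemma is precisely Weil's bound $|S(c^2,\ast,a_2)|\le\tau(a_2)\,a_2^{1/2}(c^2,a_2)^{1/2}$ applied to the Kloosterman sum that the paper isolates after solving for $w$ (for prime-power moduli $p^k$ with $k\ge 2$ one can argue elementarily, but at $k=1$ the Weil bound, or an equivalent deep input, is unavoidable). The factors $(a,h_1,h_2)$ and $\tau(a)$ are indeed routine bookkeeping, as you say, but the central $a^{1/2}$ is not, and your sketch does not supply a mechanism for it.
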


\begin{proof} We have
\begin{displaymath}
\begin{split}
S_{\xi}(h_1, h_2, a, c, z)& = \frac{1}{a} \sum_{\alpha,y, w\, (\text{mod } a) } e\Big( \frac{\alpha(c^2 + yw z^2\xi^2) + h_1y + h_2w}{a}\Big)\\
& = \sum_{\substack{\alpha, w\, (\text{mod }a)\\ \alpha w z^2 \xi^2+ h_1 \equiv 0 \, (\text{mod }a)}} e\Big( \frac{\alpha c^2 + h_2w}{a}\Big)\\
& = \sum_{\substack{a_1a_2 = a\\ a_1 \mid h_1}}\underset{\alpha \, (\text{mod }a_2)} {\left.\sum\right.^{\ast}}  \sum_{\substack{w\, (\text{mod }a)\\   w z^2\xi^2 \equiv - \bar{\alpha} \frac{h_1}{a}  \, (\text{mod }a_2)}}  e\Big( \frac{\alpha a_1 c^2 + h_2w}{a}\Big)\\
& =  \sum_{\substack{a_1a_2 =| a|\\ a_1(z^2\xi^2, a_2) \mid h_1}}\underset{\alpha \, (\text{mod }a_2)} {\left.\sum\right.^{\ast}} \hspace{-0.3cm} \sum_{\substack{w\, (\text{mod }a)\\   w\equiv - \overline{\alpha} \overline{\frac{z^2\xi^2}{(z^2, a_2)} }\frac{h_1}{a_1(z^2\xi^2, a_2)}  \, (\text{mod } \frac{a_2}{(z^2\xi^2, a_2)})}}  \hspace{-1cm}e\Big( \frac{\alpha a_1 c^2 + h_2w}{a}\Big). 
\end{split}
\end{displaymath}
The $w$-sum vanishes unless $\frac{a(z^2\xi^2, a_2)}{a_2} \mid h_2$, so we obtain
\begin{displaymath}
\begin{split}
  &  \sum_{\substack{a_1a_2 = a\\ a_1(z^2\xi^2, a_2) \mid (h_1, h_2)}}\underset{\alpha \, (\text{mod }a_2)} {\left.\sum\right.^{\ast}}   e\Big(\frac{ bc^2}{a_2}\Big)a_1(z^2\xi^2, a_2)e\Big(- \frac{h_2\overline{\alpha} \overline{\frac{z^2\xi^2}{(z^2\xi^2, a_2)} }\frac{h_1}{a_1(z^2\xi^2, a_2)} }{a}\Big)\\
   & =  \sum_{\substack{a_1a_2 = a\\ a_1(z^2\xi^2, a_2) \mid (h_1, h_2)}} a_1(z^2\xi^2, a_2) S\Big(c^2, -\frac{h_2}{a_1}\frac{h_1}{a_1(z^2\xi^2, a_2)}\overline{\frac{z^2\xi^2}{(z^2\xi^2, a_2)} }\ , a_2\Big)
\end{split}
\end{displaymath}
where $S(., ., .)$ is the  Kloosterman sum. If $h_1 = h_2 = 0$, then the claim follows by the formula $$S(c^2, 0, a_2) = \sum_{d \mid (a_2, c^2)}d\mu\Big( \frac{a_2}{d}\Big). $$ 
In general we use Weil's bound $|S(c^2, \ast, a_2)|  \leq \tau(a_2) a_2^{1/2 }(c^2, a_2)^{1/2}$ to complete the proof of the lemma. \end{proof}

A number $D \in \mathbb{Z} \setminus \{0\}$ is a discriminant if $D \equiv 0, 1$ (mod 4). For each discriminant we denote by $\chi_D = (D/.)$ the corresponding quadratic character. It is primitive if and only if $D$ is a fundamental discriminant. If $d\in \mathbb{N}$ is odd we write $d^{\ast}$ for the unique discriminant with $|d^{\ast}| = d$. 
For an odd number $d$ we write $\epsilon_d = \sqrt{\chi_{-4}(d)} \in \{1, i\}$. 

\begin{lemma}\label{lem3} We have
$$T(0, 0, x, a)  = \sum_{ \substack{d_1|d_2| = a\\ (x, d_2) = \square} } d_1 \phi(d_2)  \chi_{\frac{d_2}{(x, d_2)}}\Big(\frac{-x}{(x, d_2)}\Big) (x, d_2)^{1/2}$$
where $d_2$ runs over all discriminants (positive or negative). If $a$ is odd, then
$$|T(k_1, k_2, x, a)| \leq \tau(a)(a, k_1^2 x + k_2^2) (a, x)^{1/2}. $$
\end{lemma}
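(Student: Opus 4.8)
The plan is to treat both assertions by the same device used for Lemma~\ref{lem1}: detect the condition $\gamma^2+x\xi^2\equiv 0$ modulo $a$ by an additive character in an auxiliary variable $\alpha$, which turns $T$ into a bilinear average of two one–dimensional quadratic Gauss sums,
\begin{equation*}
T(k_1,k_2,x,a)=\frac1a\sum_{\alpha\ (\mathrm{mod}\ a)}G_{k_1}(\alpha,a)\,G_{k_2}(\alpha x,a),\qquad G_k(\beta,q):=\sum_{t\ (\mathrm{mod}\ q)}e\!\left(\frac{\beta t^2+kt}{q}\right).
\end{equation*}
Because $\gamma$ and $\xi$ both run over complete residue systems, $T$ is twisted–multiplicative in $a$: for $(a_1,a_2)=1$ one has $T(k_1,k_2,x,a_1a_2)=T(\overline{a_2}k_1,\overline{a_2}k_2,x,a_1)\,T(\overline{a_1}k_1,\overline{a_1}k_2,x,a_2)$, and for $k_1=k_2=0$ it is genuinely multiplicative. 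The right–hand sides of both claimed identities/bounds are multiplicative as well — using $m_1^\ast m_2^\ast=(m_1m_2)^\ast$ together with multiplicativity of $\chi$, $\phi$, $(\,\cdot\,,x)$ and $(\,\cdot\,,k_1^2x+k_2^2)$ — and the unit twists $\overline{a_i}k_j$ do not change any of the gcds that occur. Hence it suffices to prove everything for a prime power $a=p^e$, and for the bound only for odd $p$.

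For $a=p^e$ with $p$ odd I would split the $\alpha$–sum according to the exact power of $p$ dividing $\alpha$, write $m$ for the exact power of $p$ dividing $x$, and use $G_k(p^j\beta,p^e)=p^jG_{k/p^j}(\beta,p^{e-j})$ when $p^j\mid k$ (and $0$ otherwise), followed by completing the square in the remaining non-degenerate Gauss sums (legitimate since $p$ is odd). The two main Gauss sums then evaluate to $\epsilon_q\sqrt q$ times a Jacobi symbol and multiply to a quantity that is independent of $\alpha$ up to a sign $(\alpha'/p)^{m'}$ (with $\alpha=p^j\alpha'$ and $m'$ an explicit exponent of the same parity as $m$), while the linear parts combine into a single exponential $e(-\overline{4\alpha'}\ell/p^{e-j})$ whose argument $\ell$, multiplied by the accumulated powers of $p$, has $p$–valuation equal to that of $k_1^2x+k_2^2$. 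When $m$ is even this leaves a Ramanujan sum in $\alpha'$, of modulus $\le(p^e,k_1^2x+k_2^2)$; when $m$ is odd it leaves a Salié-type sum $\sum^{\ast}_{\alpha'}(\alpha'/p)\,e(\overline{\alpha'}n/p^{e-j})$, which is $\ll p^{(e-j)/2}$ and vanishes unless $p^{e-j-1}$ exactly divides $n$. In every range of $j$ the $j$-th contribution is $\le(p^e,k_1^2x+k_2^2)(p^e,x)^{1/2}$, the factor $(p^e,x)^{1/2}=p^{\min(m,e)/2}$ being exactly the degeneracy of $G_{k_2}(\alpha x,p^e)$ forced by $p\mid x$; summing over the $\le e+1$ values of $j$ and using twisted multiplicativity yields $|T(k_1,k_2,x,a)|\le\tau(a)(a,k_1^2x+k_2^2)(a,x)^{1/2}$.

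For $T(0,0,x,a)$ the same computation applies verbatim with $k_1=k_2=0$, no completion of a square being needed. When $p\nmid x$ it gives $T(0,0,x,p^e)=\sum_{i=0}^e\phi(p^i)p^{e-i}(-x/p)^i$, which is exactly $\sum_{d_1|d_2|=p^e}d_1\phi(d_2)\chi_{d_2}(-x)$ since $(p^i)^\ast=(p^\ast)^i$ gives $\chi_{(p^i)^\ast}(-x)=(-x/p)^i$. When $p^m$ exactly divides $x$ with $m\ge 1$, the residual sum $\sum^{\ast}_{\alpha'}(\alpha'/p)^{m'}$ vanishes precisely when $m'$ is odd, i.e.\ precisely when $(x,d_2)=p^{\min(m,e-j)}$ fails to be a square; the surviving terms — after keeping track of the $\epsilon_{p^{e-j}}$, which for the parities that survive reduce to the signs encoded by $\chi_{d_2/(x,d_2)}(-x/(x,d_2))$, and of the half-integer power $p^{\min(m,e-j)/2}=(x,d_2)^{1/2}$ — reassemble into $\sum_{d_1|d_2|=p^e,\ (x,d_2)=\square}d_1\phi(d_2)\chi_{d_2/(x,d_2)}(-x/(x,d_2))(x,d_2)^{1/2}$. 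Multiplying over all primes, with $p=2$ settled by the analogous but more laborious direct evaluation of $2$-adic Gauss sums (where squares cannot be completed), finishes the proof of the first identity.

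The main obstacle is the prime-power bookkeeping: one must evaluate the product $G_{k_1}(\alpha,p^e)G_{k_2}(\alpha x,p^e)$ correctly in all ranges of the $p$-adic valuations of $\alpha$ and of $x$, including the degenerate ranges where $G(0,p^e)=p^e$; show that the residual unit average is always a Ramanujan or Salié sum of size compatible with $(p^e,k_1^2x+k_2^2)(p^e,x)^{1/2}$; and, for the exact formula, check both that the condition $(x,d_2)=\square$ is produced exactly by the vanishing of an odd-power Legendre-symbol sum and that the $\epsilon$-factors recombine into the character $\chi_{d_2/(x,d_2)}$ with the correct conductor. The prime $p=2$ requires a separate computation throughout.
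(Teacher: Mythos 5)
Your proposal is correct and follows essentially the same route as the paper: detect the divisibility condition by an additive character in an auxiliary variable $\alpha$, evaluate the two resulting quadratic Gauss sums (completing the square for odd moduli, treating the $2$-part by a separate direct case analysis), and bound the leftover $\alpha$-sum as a Ramanujan/Sali\'e-type sum whose size is controlled by $(a,k_1^2x+k_2^2)$ and $(a,x)^{1/2}$. The only substantive difference is organizational: you first reduce to prime powers via twisted multiplicativity and stratify $\alpha$ by its $p$-adic valuation, whereas the paper keeps the composite modulus, extracts the decomposition $d_1d_2=a$ directly, splits $d_2$ into its odd and $2$-parts for the exact formula, and uses a conductor decomposition of $(x,d_2)$ for the bound; this is equivalent, though your route additionally needs the right-hand side of the exact formula to factor over the $2$-part as well (it does, since every discriminant is uniquely an odd discriminant times one of absolute value a power of $2$), a point your appeal to $m_1^{\ast}m_2^{\ast}=(m_1m_2)^{\ast}$ for odd numbers leaves implicit.
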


\begin{proof} We have 
\begin{displaymath}
\begin{split}
T(k_1, k_2, x, a) &= \frac{1}{a} \sum_{\alpha, c, z\, (\text{mod }a)} e\Big( \frac{\alpha(c^2 + xz^2) + k_1c + k_2 z}{a}\Big) \\
&= \frac{1}{a} \sum_{\substack{d_1d_2 = a\\ d_1\mid (k_1, k_2)}} d_1^2\underset{\alpha \,(\text{mod } d_2)}{\left.\sum\right.^{\ast} }  \sum_{c, z\, (\text{mod }d_2)}e\Big( \frac{\alpha(c^2 + xz^2) + \frac{k_1}{d_1}c + \frac{k_2}{d_1} z}{d_2}\Big).
  \end{split}\end{displaymath}
Let us first consider the case $k_1 = k_2 = 0$. We split the modulus $d_2 = u2^{\rho}$ into an odd part and a power of two. The $\alpha, c, z$-sum becomes 
\begin{displaymath}
\begin{split}
&   \underset{\alpha \,(\text{mod } u)}{\left.\sum\right.^{\ast} }  \sum_{c, z\, (\text{mod }u)} e\Big( \frac{\overline{2^{\rho}}\alpha(  c^2 +    x z^2)}{u  }\Big)  \underset{\alpha \,(\text{mod } 2^{\rho})}{\left.\sum\right.^{\ast} }  \sum_{c, z\, (\text{mod }2^{\rho})}  e\Big( \frac{\overline{u}\alpha(  c^2 +    x z^2)}{2^{\rho}  }\Big). \\
 \end{split}
\end{displaymath}
By the well-known evaluation of the Gau{\ss} sum, the first $c, z$-sum equals
$$d_2\epsilon_{u}\epsilon_{\frac{u}{(x, u)}} (x, u)^{1/2} \chi_{u^{\ast}}(\alpha)\chi_{(\frac{u}{(x, u)})^{\ast}}\Big(\frac{x}{(x, u)}\alpha\Big).$$
Summing this over $\alpha$, we see that only the contribution of $(x, u) = \square$ survives, and the first $\alpha, c, z$-sum equals
$$\delta_{(x, u) = \square} u \phi(u) (x, u)^{1/2} \chi_{\frac{u^{\ast}}{(x, u)}}\Big(- \frac{x}{(x, u)}\Big).$$
For the second $\alpha, c, z$-sum modulo powers of 2 we argue similarly, but we have to distinguish a few cases. Recall first that for odd $\alpha$ we have 
$$\sum_{d\, (\text{mod } 2^{\rho})} e\Big( \frac{\alpha d^2}{2^{\rho}}\Big) = \begin{cases} 1, & \rho = 0,\\ 0, & \rho = 1,\\ 2^{\rho/2}(\chi_{2^{\rho}}(\alpha) + i\chi_{-2^{\rho}}(\alpha)), &   \rho \geq 2.
\end{cases}$$
If $4\mid x/(2^{\rho}, x)$ and $4 \mid 2^{\rho}$, we obtain
$$ \underset{\alpha \,(\text{mod } 2^{\rho})}{\left.\sum\right.^{\ast} }  2^{\rho}(2^{\rho}, x)^{1/2} (\chi_{2^{\rho}}(\alpha) + i\chi_{-2^{\rho}}(\alpha))\Big(\chi_{\frac{2^{\rho}}{(x, 2^{\rho})}}\Big(\frac{x}{(x, 2^{\rho})}\alpha\Big) + i\chi_{-\frac{2^{\rho}}{(x, 2^{\rho})}}\Big(\frac{x}{(x, 2^{\rho})}\alpha\Big)\Big).$$
This vanishes, unless $(x, 2^{\rho}) = \square$ in which case it equals
$$ 2^{\rho} \phi(2^{\rho}) (x, 2^{\rho})^{1/2} \sum_{\pm} \chi_{\pm \frac{2^{\rho}}{(x, 2^{\rho})}} \Big( - \frac{x}{(x, 2^{\rho})}\Big).$$
The remaining cases are simple: if $\rho = 1$, the sum vanishes and the case $\rho = 0$ is trivial. 
If $2^{\rho} \mid x$, the $z$-sum is equals $2^{\rho} = 2^{\rho/2}(x, 2^{\rho})^{1/2}$, and evaluating the $c$-sum, we see that the $\alpha$-sum vanishes unless $\rho$ is even, i.e.\ $2^{\rho} = (x, 2^{\rho}) = \square$. If $x/(2^{\alpha}, x) = 2$, the $z$-sum vanishes. In this way we confirm in all cases that the second $\alpha, c, z$-sum equals
$$ \delta_{(x, 2^{\rho}) = \square} 2^{\rho} \phi(2^{\rho}) (x, 2^{\rho})^{1/2} \Big(\delta_{4(x, 2^{\rho}) \mid 2^{\rho}}\sum_{\pm} \chi_{\pm \frac{2^{\rho}}{(x, 2^{\rho})}} \Big( - \frac{x}{(x, 2^{\rho})}\Big) +\delta_{2^{\rho} \mid x} \Big).$$
Combining this with the evaluation for odd moduli, we confirm that $T(0, 0, x, a)$ equals
\begin{displaymath}
\begin{split}
  \frac{1}{a}& \sum_{\substack{d_1d_2 = a\\  (d_2, x) = \square}} d_1^2 d_2\phi(d_2) (x, d_2)^{1/2} \Bigg(\delta_{4(x, d_2) \mid d_2}\sum_{\pm} \chi_{\pm \frac{d_2}{(x, d_2)}} \Big( - \frac{x}{(x, d_2)}\Big) +\delta_{2(x, d_2) \nmid d_2 } \chi_{\frac{d_2^{\ast}}{(x, d_2)}} \Big( - \frac{x}{(x, d_2)}\Big)\Bigg).
\end{split}
\end{displaymath}
This is equivalent to the formula given in the lemma.  

We now turn to the estimation for general $k_1, k_2$ and for simplicity restrict ourselves to odd $a$, as in the statement of the lemma.  In this case we can evaluate the two Gau{\ss} sums in $c, z$ simply by completing the square, and we obtain
$$\frac{1}{a}  \sum_{\substack{d_1d_2 = a\\ d_1\mid k_1\\ d_1(x, d_2) \mid k_2}}  \underset{\alpha \,(\text{mod } d_2)}{\left.\sum\right.^{\ast} }  d_1^2d_2   \epsilon_{d_2}\chi_{(x, d_2)^{\ast}}(\alpha) \chi_{(\frac{d_2}{(x, d_2)})^{\ast}}\Big(\frac{x}{(x, d_2)}\Big)  e\Big(\frac{-\overline{4\alpha}(\frac{k_1^2 x + k_2^2}{d_1^2 (x, d_2)})}{d_2}\Big) (x, d_2)^{1/2} \epsilon_{\frac{d_2}{(x, d_2)}}. $$
 Let $\delta$ denote the conductor of $\chi_{(x, d_2)^{\ast}}$ and write $d_2 = \delta\delta_1\delta_2$ with $(\delta_2, \delta) = 1$, $\delta_1 \mid \delta^{\infty}$. Then by the well-known evaluation of quadratic character sums, the $\alpha$-sum is bounded by
$$\delta_1\delta^{1/2}\Big(\delta_2, \frac{k_1^2x+ k_2^2}{d_1^2 (x, d_2)}\Big).$$
We decompose uniquely $(x, d_2)= t_1t_2$ such that $t_1$ is the largest square  coprime to $t_2$. Then $\delta = \text{rad}(t_2)$, $\delta_1 = t_2/\text{rad}(t_2)$, $\delta_2 = t_1$, so that we obtain the upper bound
\begin{displaymath}
\begin{split}
 & \sum_{\substack{d_1d_2 = a\\ d_1\mid k_1\\ d_1(x, d_2) \mid k_2 }}   d_1     (x, d_2)^{1/2} \frac{t_2}{\text{rad}(t_2)^{1/2}}\Big(t_1, \frac{k_1^2x+ k_2^2}{d_1^2 (x, d_2)}\Big) \\
 & = \sum_{\substack{d_1d_2 = a\\ d_1\mid k_1\\ d_1(x, d_2) \mid k_2 }}      \frac{ (x, d_2)^{1/2} }{\text{rad}(t_2)^{1/2}}\Big(t_1d_1t_2, \frac{k_1^2x+ k_2^2}{d_1 t_1}\Big) \leq \sum_{d_1d_2 = a} (x, d_2)^{1/2} (a, k_1^2x + k_2^2), \\
  \end{split}
  \end{displaymath}
and the claim follows. \end{proof}

We also recall the following standard estimate.
\begin{lemma}\label{lem5} Let $V$ be a smooth function with compact support in $[-2, -1/3] \cup[1/3, 2]$ such that $V^{(j)} \ll_j \Omega^j$ for some $\Omega \geq 1$.  If $\Delta \not= \square$ is a discriminant, $g \in \mathbb{N}$ and $N \geq 1$, then
$$\sum_{(n, g) = 1} \chi_{\Delta}(n) V\Big( \frac{n}{N}\Big) \ll \tau(g) N^{1/2} (\Omega |\Delta|)^{1/4+\varepsilon}. $$
In addition, if $n \in \mathbb{Z} \setminus \{0\}$ is not a square and $D \geq 1$, then
$$\sum_{(\Delta, g) = 1} \chi_{\Delta}(n) V\Big( \frac{\Delta}{D}\Big) \ll \tau(g) D^{1/2} (\Omega |n|)^{1/4+\varepsilon} $$
where the sum runs over all discriminants. The implied constants depend only on $\varepsilon$. 
\end{lemma}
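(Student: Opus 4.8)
The plan is to reduce both estimates to the familiar pattern in which one combines the trivial bound with a smooth P\'olya--Vinogradov bound and then takes a geometric mean. The common input is the standard smooth P\'olya--Vinogradov bound: if $\psi$ is a primitive Dirichlet character modulo $q$ and $W$ is smooth, supported in $[-2,-1/3]\cup[1/3,2]$ with $W^{(j)}\ll_j\Omega^j$, then
\begin{equation*}
  \sum_n\psi(n)\,W(n/N)\ll q^{1/2}\Omega^{1/2}(q\Omega)^{\varepsilon}.
\end{equation*}
This follows from Poisson summation modulo $q$: the frequency $h=0$ drops out because $\psi$ is non-principal, the complete exponential sum that appears has modulus $q^{1/2}$, the dual variable is effectively confined to $|h|\ll\Omega q/N$ since $\widehat W$ decays past $\Omega$, and a Cauchy--Schwarz in $h$ (using the Parseval estimates $\sum_h|\widehat W(hN/q)|^2\ll 1+q/N$ and $\sum_h|\widehat{W'}(hN/q)|^2\ll(1+q/N)\Omega^2$, the latter to handle the tail $|h|\gg\Omega q/N$) produces the factor $\Omega^{1/2}$. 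Together with the trivial bound $\sum_n|W(n/N)|\ll N$, a geometric mean gives $\sum_n\psi(n)W(n/N)\ll N^{1/2}(q\Omega)^{1/4+\varepsilon}$.

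For the first estimate I would bound $\sum_{(n,g)=1}\chi_\Delta(n)V(n/N)$ trivially by $\ll N$ on the one hand, and on the other hand proceed as follows. Write $\Delta=\Delta_0 f^2$ with $\Delta_0$ the underlying fundamental discriminant; then $\chi_\Delta(n)=\chi_{\Delta_0}(n)\mathbf{1}_{(n,f)=1}$, and $\chi_{\Delta_0}$ is primitive of conductor $|\Delta_0|\le|\Delta|$ and non-principal because $\Delta\ne\square$. Removing the coprimality conditions $(n,f)=1$ and $(n,g)=1$ by M\"obius inversion writes the sum as a combination of $O(\tau(f)\tau(g))$ clean character sums $\chi_{\Delta_0}(d)\sum_k\chi_{\Delta_0}(k)\,V(dk/N)$ with $d\mid fg$; bounding each by the displayed estimate (uniformly in the scale $N/d$) and using $\tau(f)\ll|\Delta|^{\varepsilon}$ gives $\ll\tau(g)\,|\Delta|^{1/2+\varepsilon}\Omega^{1/2+\varepsilon}$. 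The geometric mean of the two bounds is $\ll\tau(g)\,N^{1/2}(\Omega|\Delta|)^{1/4+\varepsilon}$, which is the first claim.

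For the second estimate the roles of $n$ and $\Delta$ are exchanged. First reduce to $n>0$, using $\chi_\Delta(-1)=\operatorname{sgn}(\Delta)$ and splitting the sum over discriminants according to the sign of $\Delta$. By quadratic reciprocity for the Kronecker symbol, on each fixed residue class of $\Delta$ modulo $8$ the map $\Delta\mapsto\chi_\Delta(n)$ agrees, up to a constant, with a fixed real character $\psi$ in the variable $\Delta$ of conductor dividing $4|n|$; since $n$ is not a square, $\psi$ is non-principal. Incorporating the discriminant condition $\Delta\equiv 0,1\pmod 4$, the chosen sign class, and the condition $(\Delta,g)=1$ (the last removed by M\"obius inversion) turns the sum into a combination of $O(\tau(g))$ sums of a non-principal character of modulus $O(|n|)$ tested against $V(\Delta/D)$ on suitable scales, to which the smooth P\'olya--Vinogradov bound applies; this gives $\ll\tau(g)\,|n|^{1/2+\varepsilon}\Omega^{1/2+\varepsilon}$, and a geometric mean with the trivial bound $\ll D$ yields $\ll\tau(g)\,D^{1/2}(\Omega|n|)^{1/4+\varepsilon}$.

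The step requiring the most care is the reciprocity transport in the second estimate: one must organize the local data modulo $8$, the sign of $\Delta$, and the constraint $\Delta\equiv 0,1\pmod 4$ so that $\chi_\Delta(n)$ genuinely becomes a non-principal character in $\Delta$ of conductor $O(|n|)$; after that the character-sum estimates apply as in the first part. A lesser point is running the smooth P\'olya--Vinogradov argument with uniform control in the oscillation parameter $\Omega$ (for $\Omega$ large one either passes to the short dual sum or subdivides $W$ dyadically on scale $q$).
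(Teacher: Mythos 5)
Your proof of the first bound is correct, and it takes a (mildly) different route from the paper: the paper argues by Mellin inversion and the convexity bound for $L(s,\chi_{\Delta})$ with the Euler factors at $p\mid g$ removed, while you interpolate between the trivial bound and a smooth P\'olya--Vinogradov estimate proved by Poisson summation; these are essentially equivalent in strength, and your reduction to the primitive character $\chi_{\Delta_0}$ together with the M\"obius removal of the coprimality conditions is sound (including the uniformity in the scale $N/d$ and in $\Omega$, which you handle correctly).

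The second bound, however, contains a genuine gap. You condition on a residue class of $\Delta$ modulo $8$ and assert that on each such class $\chi_\Delta(n)$ equals a constant times a real character $\psi$ of conductor dividing $4|n|$ which is non-principal ``since $n$ is not a square''. This inference fails whenever the odd part of $|n|$ is a perfect square while $n$ itself is not, e.g.\ $n=2$, $n=2k^2$, $n=-k^2$, $n=-2k^2$: writing $|n|=2^am$ with $m$ odd, on odd $\Delta$ one has $\chi_\Delta(n)=\operatorname{sgn}(\Delta)^{\epsilon}\,(\Delta/2)^a\,\chi_{m^{*}}(\Delta)$, and if $m=\square$ the only oscillating factors are the sign and the character modulo $8$, both of which you have frozen by fixing the sign and the class modulo $8$. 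Within such a class there is then no cancellation at all (each class contributes a quantity of order $D$), and the claimed bound holds only because the contributions of the different classes modulo $8$ (and, when $n=-\square$, of the two signs, which in addition uses symmetry of the weight) cancel against each other; so the step ``apply smooth P\'olya--Vinogradov to a non-principal character on each class'' breaks down precisely there. These $n$ do occur in the paper's application, where $n$ is essentially $-yw$ with $yw\neq-\square$ but possibly $yw=\square$ or $yw=-2\square$. The repair is to keep the decomposition multiplicative instead of conditioning: treat even $\Delta$ separately via $\Delta=4\Delta'$, detect $\Delta\equiv1\pmod 4$ on odd $\Delta$ by $\tfrac12(1+\chi_{-4}(\Delta))$, and verify that each resulting product character is non-principal for non-square $n>0$ (it is, since $\Delta\mapsto\chi_\Delta(n)$ can equal neither the principal character nor $\chi_{-4}$ in that case); the remaining case $n<0$ with $|n|=\square$ genuinely needs the evenness/symmetry of the weight and is not covered by character cancellation alone. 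This multiplicative organization is what the paper's argument does (reciprocity $\chi_\Delta(n)=\chi_{\tilde n}(\Delta)$, then detection of $\Delta\equiv0,1\pmod 4$ by characters), reducing the second bound to the first.
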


\begin{proof} This is standard by Mellin inversion and the convexity bound for Dirichlet $L$-functions (with Euler factors at primes dividing $g$ removed) $$L^{(g)}(s, \chi_{\Delta}) \ll \tau(g) (|\Delta| (1 + |s|))^{1/4 + \varepsilon}, \quad \Re s = 1/2.$$ Suffice it to say that the (two-sided) Mellin transform of $V$ is entire and satisfies $$\widehat{V}(s) \ll_{\Re s, A} \Big(1 + \frac{ |\Im s|}{\Omega}\Big)^{-A}$$
for all  $A > 0$. The second bound follows from quadratic reciprocity as follows: By quadratic reciprocity we have $\chi_{\Delta}(n) = \chi_{\tilde{n}}(\Delta)$ for $n > 0$  where $\tilde{n}$ is the discriminant computed as follows: write $n = 2^a y$ with $y$ odd, and let $y^{\ast}$ denote the discriminant satisfying $|y^{\ast}| = y$. Let $a' = a+2$ if $a=2$ and $a' = a$ otherwise. Then $\tilde{n} = 2^{a'} y^{\ast}$. If $n < 0$ then $\chi_{\Delta}(n) = \chi_{\tilde{n}}(\Delta) \chi_{\Delta}(-1)$. In this way, the second bound follows from the first by detecting the condition $\Delta \equiv 0, 1$ (mod 4) by characters. \end{proof}

\emph{Remark:} Using the strongest available uniform subconvexity bounds, the exponents $1/4$ can be replaced with $1/6$.

\section{Upper bound estimates}

For $\textbf{X} = (A, B, C, Y, W, Z)$ and $\xi \in \mathbb{Z} \setminus \{0\}$ we recall the definition of $N_{\xi}(\textbf{X})$ and $N_{\xi}(\textbf{X}, \Delta)$ from Section \ref{sec-elem}, and we now define a smooth version of the former.   Let $\Omega \geq 3$ be a parameter. We choose an even,  smooth, non-negative  test function $V$ with support in $[-1-\Omega^{-1}, -1/2 + \Omega^{-1}] \cup [1/2-\Omega^{-1}, 1+\Omega]$ and $V(x) = 1$ on $[-1, -1/2] \cup [1/2, 1]$ satisfying $V^{(j)}(x) \ll_j \Omega^j$ for  all $j \in \mathbb{N}_0$. We define 

\begin{equation*}
\tilde{N}_{\xi}(\textbf{X}, \Omega) = \sum_{\substack{ab + c^2 + \xi^2 ywz^2 = 0\\ yw \not= -\square}} V\Big(\frac{a}{A}\Big) V\Big(\frac{b}{B} \Big) V\Big(\frac{c}{C}\Big) V\Big(\frac{y}{Y}\Big) V\Big(\frac{w}{W}\Big)V\Big(\frac{z}{Z}\Big). 
\end{equation*}
(Strictly speaking, $\tilde{N}_{\xi}(\textbf{X}, \Omega)$ depends on $V$ and not only on $\Omega$, but this small abuse of notation is convenient.)  Fix a small constant  
$\lambda \leq 10^{-6}$ and consider  $\textbf{X}, \xi$ satisfying
\begin{equation}\label{good}
\begin{split}
&\min(A, B, C) \geq \max(A, B, C)^{1-\lambda}, \\
& \min(Y, W) \geq \max(Y, W) \|\textbf{X}\|^{-\lambda}, \\
&\min(AB, C^2, YWZ^2) \geq \max(AB, C^2, YWZ^2)^{1-\lambda}, \\
& |\xi| \leq \| \textbf{X}\|^{\lambda}. 
\end{split}
  \end{equation}
We will see later that these are the critical size conditions. \\

 Our aim in this section is to establish the following three estimates that we will prove simultaneously. Recall the notation \eqref{exp-not}. 
  
\begin{prop}\label{prop2} For $\emph{\textbf{X}}, \xi$ satisfying \eqref{good} we have
$$N_{\xi}(\emph{\textbf{X}}) \ll  \emph{\textbf{X}}^{(\frac{3}{4}, 0, \frac{1}{4})} +  \emph{\textbf{X}}^{(\frac{1}{2}, \frac{1}{4}, \frac{1}{4})}.$$
\end{prop}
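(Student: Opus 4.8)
The plan is to dominate $N_\xi(\textbf{X})$ by a smooth count and to evaluate the latter by Poisson summation in a carefully chosen pair of the six variables, the choice depending on the size regime, so that the resulting complete exponential sums are exactly the sums $S_\xi$ of \eqref{defS} (when one sums over $y,w$) or the sums $T$ of \eqref{defT} (when one sums over $c,z$). The contribution of the zero frequency will be read off from the closed forms in Lemmas~\ref{lem1} and~\ref{lem3} and shown to be $\ll \textbf{X}^{(\frac34,0,\frac14)}+\textbf{X}^{(\frac12,\frac14,\frac14)}$; the nonzero frequencies will be handled via the Weil-type bounds in those lemmas together with the cancellation in quadratic character sums of Lemma~\ref{lem5}, the last being precisely what lets one dispense with the $\|\textbf{X}\|^\varepsilon$ of Proposition~\ref{elem}.

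Concretely, I would first replace $N_\xi(\textbf{X})$ by the smooth majorant $\tilde N_\xi(\textbf{X},\Omega)$ for a suitable small power $\Omega$ of $\|\textbf{X}\|$ (legitimate since $V\ge 1$ on $[1/2,1]\cup[-1,-1/2]$), retaining the condition $yw\neq-\square$, which is essential later. Writing the equation as $ab=-(c^2+\xi^2 ywz^2)$, one fixes the pair complementary to the Poisson variables and detects the divisibility the equation forces on the fixed modulus (e.g.\ $a\mid c^2+\xi^2 ywz^2$ if $a$ is kept and $b$ is let free through $y,w$); one then arrives at a complete sum modulo that modulus. In each regime one picks the Poisson variables so that the \emph{long} variables are summed — under \eqref{good} this means $y,w$ when $Y\gtrsim Z$ and $c,z$ when $Z\gtrsim Y$ — which guarantees that the Fourier transform of the smooth weight decays rapidly once the dual variable exceeds the modulus divided by the relevant box side, so the sum effectively localizes to finitely many frequencies.

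For the zero frequency, $S_\xi(0,0,a,c,z)$ and $T(0,0,x,a)$ carry enough multiplicative cancellation — the Möbius factor, respectively the Kloosterman/Jacobi-symbol structure — that after a Mellin inversion in the fixed modulus against $V(\cdot/A)$ the arithmetic sum converges absolutely, to $\asymp A^{-1}$ times a bounded Euler product; assembling this with the archimedean volume yields a main term that a direct comparison of exponents (using \eqref{trick} and \eqref{good}) bounds by $\textbf{X}^{(\frac34,0,\frac14)}+\textbf{X}^{(\frac12,\frac14,\frac14)}$ with an absolute constant. For the nonzero frequencies I would insert $|S_\xi(h_1,h_2,a,c,z)|\le\tau(a)(a,h_1,h_2)a^{1/2}(a,c^2)^{1/2}$ and its analogue for $T$. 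The dangerous piece is the ensuing sum of $(a,c^2)^{1/2}$ — the number of roots of a quadratic congruence — over the fixed modulus, which by itself only reproduces the $\|\textbf{X}\|^\varepsilon$-strength bound. One gains by performing the remaining summation over the moving variable ($y$, or $c$, or $z$, depending on the regime) first: expanding the root count in Jacobi symbols $\left(\tfrac{\cdot}{d}\right)$, the terms with $d$ a perfect square recombine into part of the main term already estimated, while for the terms with $d$ non-square — here the hypothesis $yw\neq-\square$ guarantees the relevant character is non-principal — Lemma~\ref{lem5} supplies the power saving that removes the $\varepsilon$.

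I expect the main obstacle to be uniformity. The regime \eqref{good} pins $A\asymp B\asymp C$, $Y\asymp W$, $Z\asymp A/Y$ only up to $\|\textbf{X}\|^{O(\lambda)}$, whereas the target has no slack, so the main term must be extracted honestly in every case and the off-diagonal estimate must stay below $\textbf{X}^{(\frac34,0,\frac14)}$ uniformly in the one essentially free parameter $Y\in[1,\|\textbf{X}\|]$ and in $\xi\le\|\textbf{X}\|^\lambda$. Deciding which variables to apply Poisson to, and through which variable to route the extra cancellation of Lemma~\ref{lem5}, so that both requirements hold simultaneously and the main terms match across the boundary $Y\asymp Z$, is the delicate part — and it is also why one genuinely needs the subconvexity-strength input of Lemma~\ref{lem5} rather than a trivial character-sum bound.
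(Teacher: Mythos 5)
Your overall architecture (smooth majorant, Poisson in $y,w$ or in $c,z$ according to whether $Y$ or $Z$ dominates, zero frequency as main term, Weil bounds plus Lemma~\ref{lem5} for the rest, and \eqref{trick}, \eqref{good} to compare exponents) is the paper's strategy, but you have put the key cancellation mechanism in the wrong place, and as written the argument does not deliver the $\varepsilon$-free bound of Proposition~\ref{prop2}. The genuine difficulty is not in the nonzero frequencies: there, the Weil-type bounds of Lemmas~\ref{lem1} and \ref{lem3} alone suffice, and the divisor-function and $\|\textbf{X}\|^{O(\lambda)}$ losses they produce are harmless, because the off-diagonal contributions come with the relative saving $A^{1/2}/Y$ (for the $y,w$-route, \eqref{off}) respectively $1/Z$ (for the $c,z$-route, \eqref{tildem1}), and $\min(A^{1/2}/Y,\,1/Z)\ll\|\textbf{X}\|^{-1/5-15\lambda}$ by \eqref{combinederror}; your proposed Jacobi-symbol expansion of the root-count factor $(a,c^2)^{1/2}$ is both unnecessary and not really meaningful, since that factor concerns $c$ and has nothing to do with the hypothesis $yw\neq-\square$, and nonzero frequencies do not ``recombine into the main term.''

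The real obstruction sits in the zero frequency of the $c,z$-route, which you dismiss as an absolutely convergent, bounded Euler product. It is not: by Lemma~\ref{lem3}, $T(0,0,\xi^2yw,a)$ is a sum over factorizations $a=d_1|d_2|$ with $d_2$ ranging over \emph{all} discriminants, and estimating \eqref{again} by absolute values leaves $\sum_{|d_2|\ll A}\phi(|d_2|)/d_2^2\asymp\log A$, i.e.\ exactly the logarithmic loss that Proposition~\ref{prop2} forbids (and which, for $-yw=\square$, is genuinely present — this is the source of the thin set for $X_1$). The paper removes it by splitting the diagonal according to whether $d_2$ is a square: the square part $\tilde M_3$ converges absolutely and yields the term $\textbf{X}^{(\frac12,\frac14,\frac14)}$ in \eqref{tildem3}, while the non-square part $\tilde M_4$ is where the hypothesis $yw\neq-\square$ and Lemma~\ref{lem5} are actually used, applied either to the $y,w$-sum (for fixed non-square $d_2$, bound \eqref{e1}) or to the $d_2$-sum (for fixed $yw\neq-\square$, bound \eqref{e2}), and the two bounds are interpolated to gain a power of $YW$. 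Unless you reorganize your zero-frequency treatment along these lines, your argument yields at best $\textbf{X}^{(\frac12,\frac14,\frac14)}\log\|\textbf{X}\|$, which falls short of the statement. (A minor further remark: it is cleaner, and what the paper does, to prove both Poisson bounds for every $\textbf{X}$ satisfying \eqref{good} and take the minimum of the two error terms, rather than to select the route by regime; and $\Omega=3$ already suffices here, a growing $\Omega$ being needed only later for the asymptotic formula.)
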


\begin{prop}\label{prop3} For $\emph{\textbf{X}}, \xi$ satisfying \eqref{good} and $0 < \Delta < 1$ we have
$$N_{\xi}(\emph{\textbf{X}}, \Delta) \ll  \big(\emph{\textbf{X}}^{(\frac{3}{4}, \frac{1}{8}, \frac{1}{8})} + \emph{\textbf{X}}^{(\frac{1}{2}, \frac{1}{4}, \frac{1}{4})}\big) 
\big(\Delta \|\emph{\textbf{X}} \|^{\varepsilon} + \|\emph{\textbf{X}} \|^{-1/5}\big) .  $$
\end{prop}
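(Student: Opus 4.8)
The plan is to prove Propositions~\ref{prop2} and \ref{prop3} simultaneously by a single harmonic‑analytic argument; I describe it for the restricted count $N_\xi(\textbf{X},\Delta)$, Proposition~\ref{prop2} being the variant in which $a=x_{11}$ is not restricted (the case $\Delta\asymp1$). The starting point is that the variable $b$ in \eqref{eq0} is redundant: once $c,y,w,z$ are fixed and $a\mid c^2+\xi^2 ywz^2$, one has $b=-(c^2+\xi^2 ywz^2)/a$, which by \eqref{good} automatically has size $|b|\asymp B$ provided $c^2+\xi^2 ywz^2$ lies in a certain interval $I_a$ of length $\asymp AB$. Hence
\[
  N_\xi(\textbf{X},\Delta)=\sum_{A\le a\le A(1+\Delta)}\ \#\bigl\{(c,y,w,z)\ \text{in the box},\ yw\neq-\square:\ a\mid c^2+\xi^2 ywz^2,\ c^2+\xi^2 ywz^2\in I_a\bigr\}.
\]
In this normalisation $a$ enters only as the short outer modulus, which makes the gain of a factor $\Delta$ structurally visible.

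I would then detect the congruence $c^2+\xi^2 ywz^2\equiv0\pmod a$, together with the smoothed box conditions and the constraint $c^2+\xi^2 ywz^2\in I_a$, by Poisson summation in a suitable subset of $\{c,y,w,z\}$, the choice depending on the sizes of the variables relative to the modulus $a\asymp A$. Poisson in the pair $(c,z)$ produces exactly the complete sum $T(h_1,h_2,\xi^2 yw,a)$ of \eqref{defT}, while Poisson in the pair $(y,w)$ produces $S_\xi(h_1,h_2,a,c,z)$ of \eqref{defS}; in either case the zero frequency is evaluated by the exact formulas in Lemmas~\ref{lem1} and \ref{lem3}. Summed over $a\in[A,A(1+\Delta)]$ and over the remaining variables, the zero‑frequency part is a smooth main term that a density computation bounds by $\ll\Delta\|\textbf{X}\|^{\varepsilon}\textbf{X}^{(3/4,1/8,1/8)}$; by \eqref{good} this is $\asymp\Delta\|\textbf{X}\|^{\varepsilon}\bigl(\textbf{X}^{(3/4,1/8,1/8)}+\textbf{X}^{(1/2,1/4,1/4)}\bigr)$, the first term of the claimed bound (and its $\Delta\asymp1$ specialisation, with the arithmetic factors convergent Euler products, gives the $\varepsilon$‑free main term of Proposition~\ref{prop2}). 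In the residual parameter ranges in which no choice of Poisson variables is efficient, one falls back on the nonvanishing constraint \eqref{trick} and the elementary estimates of Propositions~\ref{elem} and \ref{elem1}, which already deliver a bound of the required shape.

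The non‑zero frequencies $(h_1,h_2)\neq(0,0)$ are the analytic core. One inserts the Weil‑type bounds $|S_\xi(h_1,h_2,a,c,z)|\ll\tau(a)(a,h_1,h_2)a^{1/2}(a,c^2)^{1/2}$ and $|T(h_1,h_2,x,a)|\ll\tau(a)(a,h_1^2x+h_2^2)(a,x)^{1/2}$ from Lemmas~\ref{lem1} and \ref{lem3} and sums the resulting rapidly convergent dual series. Square‑root cancellation in the complete sums alone does not suffice; the decisive additional saving comes from averaging over $y$ and $w$ the quadratic characters $\chi_{-yw}$ (and the allied characters appearing in the formula for $T(0,0,\cdot)$ and in the Gauss‑sum evaluations), which are well defined and, crucially, \emph{non‑principal precisely because $yw\neq-\square$} --- this is exactly where removal of the thin set enters --- so that Lemma~\ref{lem5} converts an a priori over‑large error into a genuine power saving and produces the second term $\|\textbf{X}\|^{-1/5}\bigl(\textbf{X}^{(3/4,1/8,1/8)}+\textbf{X}^{(1/2,1/4,1/4)}\bigr)$; the exponent $1/5$ is deliberately conservative and could be improved via the subconvexity input noted after Lemma~\ref{lem5}. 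The principal difficulty throughout is uniformity: the argument must be carried out at once in all regimes permitted by \eqref{good} --- in particular when $Y$ is as small as $\exp\bigl((\log\log\|\textbf{X}\|)^2\bigr)$, where $c^2+\xi^2 ywz^2$ resembles a sum of two squares and a product, and when $Y$ is nearly $\|\textbf{X}\|$, where it resembles a square plus two products --- with every implied constant independent of the relative sizes of the $X_{ij}$ and of $\xi$, and with the interplay of the modulus $a$, the interval $I_a$, and the dual variables kept under control in each case.
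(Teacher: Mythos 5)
Your opening moves coincide with the paper's: eliminate $b$ via $a \mid c^2+\xi^2ywz^2$ with the weight $V\bigl(\tfrac{-\xi^2ywz^2-c^2}{aB}\bigr)$, apply Poisson summation in $(y,w)$ and in $(c,z)$, and observe that the zero frequencies, summed over the short interval $A\le |a|\le A(1+\Delta)$, produce the factor $\Delta\|\textbf{X}\|^{\varepsilon}$ (the paper does exactly this, bounding the diagonal terms crudely by divisor estimates, and then reshapes the result with \eqref{trick}). The genuine gap is in your account of the second term $\|\textbf{X}\|^{-1/5}$. In the paper this term has nothing to do with character cancellation: the nonzero frequencies are bounded with the absolute-value (Weil-type) estimates of Lemmas~\ref{lem1} and~\ref{lem3}, giving \eqref{off} with the relative saving $A^{1/2}/Y$ from the $(y,w)$-route and \eqref{tildem1} with the saving $1/Z$ from the $(c,z)$-route, \emph{without} any factor $\Delta$; the decisive step is to run \emph{both} routes, take the minimum, and use \eqref{combinederror}, namely $\min\bigl(A^{1/2}/Y,\,1/Z\bigr)\le A^{1/4}/(YZ)^{1/2}\ll\|\textbf{X}\|^{-1/5-15\lambda}$, which rests on the near-equalities $AB\approx C^2\approx YWZ^2$ and $A\approx B$ in \eqref{good}. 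Your proposed mechanism — keeping the quadratic characters $\chi_{-yw}$ and averaging over $y,w$ with Lemma~\ref{lem5}, with non-principality coming from $yw\ne-\square$ — cannot replace this: after inserting the absolute Weil bounds no character structure survives in the dual terms (Lemma~\ref{lem3} gives no exact formula for $(k_1,k_2)\ne(0,0)$), and even where characters do appear (the zero-frequency piece $\tilde M_4$ with $d_2\ne\square$) Lemma~\ref{lem5} only saves powers of $YW$ or $D_2$, which is no saving at all in the admissible regime where $Y,W$ are tiny and $Z$ is huge. That input is what the paper needs for the asymptotic evaluation in Proposition~\ref{prop4} and Proposition~\ref{asymp-prop}, not for Proposition~\ref{prop3}; in Proposition~\ref{prop3} the condition $yw\ne-\square$ plays only an incidental role.

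Your fallback for the "residual ranges" also fails: Proposition~\ref{elem} carries no saving beyond $\textbf{X}^{(\frac12,\frac14,\frac14)}\|\textbf{X}\|^{\varepsilon}$, and Proposition~\ref{elem1} explicitly excludes the index $(11)$, i.e.\ it does not cover a restriction of the variable $a$, which is precisely what $N_{\xi}(\textbf{X},\Delta)$ imposes; so neither delivers the factor $\Delta\|\textbf{X}\|^{\varepsilon}+\|\textbf{X}\|^{-1/5}$, which is the entire content of the statement. To close the argument you must (i) bound the nonzero-frequency contributions for the sharp-cutoff $a$-sum exactly as for the smooth one (the paper notes that \eqref{off} and \eqref{tildem1} use nothing about the $a$-weight beyond $a\ll A$), and (ii) combine the two resulting error terms through \eqref{trick} and \eqref{combinederror} to obtain the uniform power saving; choosing one Poisson route per range plus character averaging, as you propose, leaves the small-$YW$/large-$Z$ and small-$Z$/large-$Y$ regimes uncovered unless you rediscover this min argument.
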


\begin{prop}\label{prop4} For $\emph{\textbf{X}}, \xi$ satisfying \eqref{good} and $\Omega \geq 3$ we have
$$\tilde{N}_{\xi}(\emph{\textbf{X}}, \Omega) = M_2 + O\Big( \big(\emph{\textbf{X}}^{(\frac{5}{8}, \frac{1}{8}, \frac{1}{4})} + \emph{\textbf{X}}^{(\frac{3}{4}, \frac{1}{8}, \frac{1}{8})}\big)
\Omega \| \emph{\textbf{X}}\|^{11\lambda}  \frac{ A^{1/2}}{Y}   \Big)$$
where $M_2$ is given by \eqref{diag} below. We also have 
$$\tilde{N}_{\xi}(\emph{\textbf{X}}, \Omega) =\tilde{M}_3 + O\Big( \emph{\textbf{X}}^{(\frac{1}{2}, \frac{1}{4}, \frac{1}{4})}
 \Omega^4  
\Big(  \frac{\|\emph{\textbf{X}}\|^{10\lambda}}{Z} + \frac{1}{(YW)^{1/21}}\Big)\Big)    $$
where $\tilde{M}_3$ is defined in \eqref{main} below. 
\end{prop}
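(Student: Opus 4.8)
\emph{Strategy.} The two asymptotic formulas are proved by Poisson summation, the variables that are dualised being chosen according to the ratio $Y/Z$. Observe first that \eqref{good} forces $AB$, $C^2$ and $YWZ^2$ to have comparable size (up to $\|\mathbf{X}\|^{O(\lambda)}$) and $A\asymp B\asymp C$; hence $YZ\asymp\|\mathbf{X}\|$ up to $\|\mathbf{X}\|^{O(\lambda)}$, and exactly one of $Y,Z$ is $\gg\|\mathbf{X}\|^{1/2}$ (up to $\|\mathbf{X}\|^{O(\lambda)}$). In either case the first step is the $b$-summation: since $ab=-(c^2+\xi^2ywz^2)$, the variable $b$ runs over integers of the prescribed size exactly when $a\mid c^2+\xi^2ywz^2$ and $-(c^2+\xi^2ywz^2)/a$ lies in the support of $V(\cdot/B)$. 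Thus $\tilde{N}_{\xi}(\mathbf{X},\Omega)$ becomes a sum over $a,c,y,w,z$ carrying the divisibility $a\mid c^2+\xi^2ywz^2$, weighted by a smooth function whose logarithmic derivatives in each variable are $\ll\Omega/(\text{its scale})$.

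\emph{The first formula} (needed when $Y$ is large, so $Z$ small). Here we Poisson-sum the long variables $y,w$ modulo $a$. Splitting $y=y_0+a\ell_1$, $w=w_0+a\ell_2$ and dualising $(\ell_1,\ell_2)$, the residue sum over $(y_0,w_0)$ with $a\mid c^2+\xi^2y_0w_0z^2$ is precisely $S_\xi(h_1,h_2,a,c,z)$ from \eqref{defS}. The zero frequency $h_1=h_2=0$, with $S_\xi(0,0,a,c,z)$ evaluated by the exact formula of Lemma~\ref{lem1}, is the main term $M_2$. For the nonzero frequencies one uses the rapid decay of the dual weight for $\max(|h_1|,|h_2|)\gg a\Omega/Y$ (so the dual box has length $\asymp\Omega Z$ in each coordinate), the Weil-type bound $|S_\xi(h_1,h_2,a,c,z)|\le\tau(a)(a,h_1,h_2)a^{1/2}(a,c^2)^{1/2}$ of Lemma~\ref{lem1}, and additional averaging in $c$ (for which a secondary Poisson step in $c$ is available, its dual being of length $\ll\Omega$ because $C\asymp A$) together with a careful treatment of the $z$-sum; Propositions~\ref{elem}--\ref{elem1} dispose of degenerate sub-sums (small $a_2$ in the decomposition $a=a_1a_2a_3$, or $(a,c^2)$ atypically large). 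Since Poisson in $y,w$ does not respect $yw\neq-\square$, one drops this condition from the outset: the pairs with $yw=-\square$ contribute $O(\|\mathbf{X}\|^{2+\varepsilon})$, negligible against the claimed error of order $\|\mathbf{X}\|^{5/2}\Omega\|\mathbf{X}\|^{O(\lambda)}$.

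\emph{The second formula} (needed when $Z$ is large, so $Y$ small). Now we Poisson-sum the long variables $c,z$ modulo $a$; after isolating the power of $2$ in $a$, the residue count is the character sum $T$ of \eqref{defT} with $x$ a fixed constant times $yw$. The zero frequency contributes $\sum_{a}a^{-2}(\text{smooth integral})\sum_{y,w}T(0,0,x,a)$; inserting the exact evaluation of $T(0,0,x,a)$ from Lemma~\ref{lem3} produces a sum of quadratic symbols $\chi_{d_2/(x,d_2)}\bigl(-x/(x,d_2)\bigr)$ over $d_2\mid a$ and over $y,w$. We split according to whether $-x/(x,d_2)$ is a perfect square: the square terms assemble into $\tilde{M}_3$, while in the remaining terms the character is \emph{non-principal precisely because $yw\neq-\square$}, so that Lemma~\ref{lem5} (summed over $\Delta=d_2/(x,d_2)$, with the non-square $n=-x/(x,d_2)$ of size $\asymp YW$) yields a saving of a small positive power of $YW$ — this is the source of the $(YW)^{-1/21}$ term. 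The nonzero frequencies, whose dual box is short because $C\asymp\|\mathbf{X}\|$ and $Z$ is large, are estimated by the bound $|T(k_1,k_2,x,a)|\le\tau(a)(a,k_1^2x+k_2^2)(a,x)^{1/2}$ of Lemma~\ref{lem3} (for even $a$ by the analogous reasoning from its exact formula), giving the $Z^{-1}$ factor; here $\mathbf{X}^{(1/2,1/4,1/4)}$ from Proposition~\ref{elem} is the benchmark size. The condition $yw\neq-\square$ is kept throughout, so no $yw=-\square$ term ever arises.

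\emph{Main obstacle.} The analytic core is the uniform control of the error terms over all of \eqref{good}. In the first formula the difficulty is to push the off-diagonal down to order $A^{1/2}/Y$: Weil's bound for $S_\xi$ alone is too lossy by roughly a factor $Z$, and one must genuinely exploit cancellation in the $c$-sum — keeping careful track of the gcd factors $(a,h_1,h_2)$ and $(a,c^2)$ and of the splitting $a=a_1a_2a_3$. In the second formula the delicate part is the clean separation of the genuine main term $\tilde{M}_3$ from the quadratic-character sums: the bookkeeping of fundamental discriminants, $2$-adic contributions and the divisors $(x,d_2)$ occurring in Lemma~\ref{lem3}, married with the subconvexity-type input of Lemma~\ref{lem5}. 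This is also where the hypothesis $yw\neq-\square$ — hence the exclusion of the thin subset in Theorem~\ref{thm1} — plays its essential role, since it is exactly what guarantees non-principality of the relevant characters.
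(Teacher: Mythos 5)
Your skeleton coincides with the paper's (after the $b$-summation, Poisson in $(y,w)$ mod $a$ for the first formula and in $(c,z)$ mod $a$ for the second, zero frequencies giving the main terms, Weil-type bounds from Lemmas~\ref{lem1} and~\ref{lem3} for the rest), but both halves have genuine gaps. In the second formula, the paper splits the zero-frequency contribution according to whether the discriminant $d_2$ from Lemma~\ref{lem3} is a square (giving exactly $\tilde M_3$ of \eqref{main}) or not (giving $\tilde M_4$); your split according to whether $-x/(x,d_2)$ is a square is a different decomposition and does not produce the quantity \eqref{main} that the statement requires, and your phrase that the character is ``non-principal precisely because $yw\neq-\square$'' conflates non-principality of $\chi_{d_2/(x,d_2)}$ (which depends on $d_2$, not on $yw$) with the non-squareness of its argument. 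More seriously, your bound for the remainder uses only the second estimate of Lemma~\ref{lem5} (sum over discriminants for fixed non-square $-yw$), which on a dyadic block $|d_2|\asymp D_2$ saves only a factor of order $(\Omega YW)^{1/4}/D_2^{1/2}$ and hence gives nothing when $D_2\ll (YW)^{1/2}$; those blocks contribute at full size $\textbf{X}^{(\frac{1}{2},\frac{1}{4},\frac{1}{4})}$, i.e.\ at main-term level. The paper handles them by the complementary estimate: for fixed non-square $d_2$ it sums over $y,w$ with the first part of Lemma~\ref{lem5}, which requires completing the $y,w$-sum, i.e.\ re-inserting the pairs $yw=-\square$ at an admissible cost, and then takes the minimum of the two bounds over dyadic $D_2$ to reach $(YW)^{-1/21}$. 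By insisting that ``no $yw=-\square$ term ever arises'' you block exactly this step, so your error analysis cannot deliver the claimed saving.

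In the first formula, your treatment of the nonzero frequencies is not carried out: you assert that the Weil bound for $S_\xi$ is too lossy by a factor of about $Z$ and that one must run a secondary Poisson step in $c$ and exploit cancellation in the $c$-sum, aided by Propositions~\ref{elem}--\ref{elem1}, but you give no estimate of the resulting sums. The paper's proof contains no Poisson step in $c$ and no appeal to Propositions~\ref{elem}--\ref{elem1} at this point: after Lemma~\ref{lem2} restricts the frequencies to $|h_i|\lesssim H_i$, the bound \eqref{off} is obtained from the Weil bound of Lemma~\ref{lem1} together with trivial summation over $c$, $z$, $a$ and the frequencies, using \eqref{good} in the form $(1+H_1+H_2)/A^{1/2-\varepsilon}\ll\Omega\|\textbf{X}\|^{10\lambda}A^{1/2}/Y$, and the proposition then follows from \eqref{trick}. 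Whether or not your worry about an extra factor $Z$ in this estimation is justified, your proposed remedy is only announced, not executed, so the off-diagonal bound — the analytic core of the first asymptotic — is not established in your write-up; as it stands neither formula of the proposition is proved.
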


The rest of this section is devoted to the proof. We will occasionally use the following notation: for a positive integer $n$  let $\sqrt{n}^+$ denote the smallest integer whose square is a multiple of $n$. 

We have trivially $N_{\xi}(\textbf{X})  \leq \tilde{N}_{\xi}(\textbf{X}, \Omega)$  for every $\Omega \geq 3$ and 
\begin{equation}\label{start}
\tilde{N}_{\xi}(\textbf{X}, \Omega) = \sum_{\substack{a, c, y, w, z\\ a \mid c^2 + \xi^2 ywz^2\\ yw \not= -\square}} V\Big(\frac{a}{A}\Big) V\Big(\frac{-\xi^2ywz^2 - c^2}{aB} \Big) V\Big(\frac{c}{C}\Big) V\Big(\frac{y}{Y}\Big) V\Big(\frac{w}{W}\Big)V\Big(\frac{z}{Z}\Big). 
\end{equation}
We apply two different strategies. We first apply Poisson summation in $y, w$ and then Poisson summation in $c, z$. In order to obtain Proposition \ref{prop3}, we will occasionally replace $V(a/A)$ with the characteristic function on $A \leq |a| \leq A(1+\Delta)$. In the interest of a reasonably compact presentation, we will not introduce extra notation for this.

\subsection{Poisson summation in $y, w$}

We first add the contribution of $wy = -\square$ to \eqref{start}. As in the proof of Proposition \ref{elem}, we see by a divisor argument that this infers an error of at most $O(C(YW)^{1/2}Z \| \textbf{X}\|^{\varepsilon})$ with an implied constant depending only on $\varepsilon$. Then we apply Poisson summation $y, w$ to obtain
\begin{equation}\label{poi1}
\begin{split}
\tilde{N}_{\xi}(\textbf{X}, \Omega)  =  &\sum_{a, c, z} V\Big(\frac{a}{A}\Big) V\Big(\frac{c}{C}\Big) V\Big(\frac{z}{Z}\Big) \frac{1}{a^2} \sum_{h_1, h_2} S_{\xi}(h_1, h_2, a, c, z)\Phi_{\xi}(h_1, h_2, a, c, z) \\
&+ O\big(C(YW)^{1/2}Z \| \textbf{X}\|^{\varepsilon}\big)
\end{split}
\end{equation}
with $S_{\xi}$ as in \eqref{defS} and 
$$\Phi_{\xi}(h_1, h_2, a, c, z)= \int_{\mathbb{R}}\int_{\mathbb{R}} V\Big(\frac{y}{Y}\Big) V\Big(\frac{w}{W}\Big)V\Big(\frac{-\xi^2ywz^2 - c^2}{aB} \Big) e\Big(- \frac{h_1y + h_2w}{|a|}\Big) dy\, dw.$$
Let
\begin{equation*}
H_1 = \Omega\Big( \frac{A}{Y} + \frac{\xi^2WZ^2}{B}\Big), \quad H_2 =\Omega\Big( \frac{A}{W} + \frac{\xi^2YZ^2}{B}\Big).
\end{equation*}

\begin{lemma}\label{lem2} For $|a|\asymp A$, $|z| \asymp Z$ we have 
$$\Phi_{\xi}(h_1, h_2, a, c, z)  \ll_N 
\frac{(AB)^{1/4}(YW)^{3/4}}{(Z|\xi|)^{1/2}}
\Big(1 + \frac{|h_1|}{H_1} + \frac{|h_2|}{H_2}\Big)^{-N}$$
for arbitrary $N > 0$ and 
$$(a \partial_a)^{j_1} (c\partial_c)^{j_2} (z\partial_z)^{j_3}   \Phi_{\xi}(0, 0, a, c, z)  \ll_{\textbf{j}}
\frac{(AB)^{1/4}(YW)^{3/4}}{(Z|\xi|)^{1/2}} \Big( 1+\frac{C^2}{AB}\Big)^{j_2} \Big( 1+\frac{\xi^2YWZ^2}{AB}\Big)^{j_1} \Omega^{j_1+j_2+j_3}$$
for arbitrary $\textbf{j} \in \mathbb{N}_0^3$. 
\end{lemma}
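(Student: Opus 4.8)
The plan is to analyze the double integral
\[
\Phi_{\xi}(h_1, h_2, a, c, z) = \int_{\Rd}\int_{\Rd} V\Big(\frac{y}{Y}\Big) V\Big(\frac{w}{W}\Big) V\Big(\frac{-\xi^2 y w z^2 - c^2}{aB}\Big) e\Big(-\frac{h_1 y + h_2 w}{|a|}\Big)\dd y\,\dd w
\]
by first making the linear substitutions $y = Y u$, $w = W v$, so that $\dd y\,\dd w = YW\dd u\,\dd v$ and the phase becomes $e(-(h_1 Y u + h_2 W v)/|a|)$, while the third cutoff becomes $V((-\xi^2 Y W Z^2 u v (z/Z)^2 - c^2)/(aB))$ with $u, v \asymp 1$. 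Writing $K = \xi^2 Y W Z^2/(AB)$ (so $K$ measures the relative size of the monomial term against $AB$), one sees the third factor constrains $uv$ to lie in an interval; the key point is that since $yw \ne -\square$ and all variables are comparable in size, the relevant $c^2$-shift sits inside the admissible range, so after a further change of variables $t = uv$ the inner integral in, say, $v$ (with $u$ fixed) has length $\asymp \min(1, 1/K)$ in the $t$-variable. A stationary-phase / non-stationary-phase analysis then gives the bound. I expect the clean way is to estimate the $v$-integral for fixed $u$, then the $u$-integral.

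For the first (oscillatory) bound, I would proceed as follows. Fix $u \asymp 1$ and consider the $v$-integral
\[
I(u) = W \int_{\Rd} V(v) V\Big(\frac{-\xi^2 Y W Z^2 (z/Z)^2 u v - c^2}{aB}\Big) e\Big(-\frac{h_2 W v}{|a|}\Big)\dd v .
\]
Here the amplitude is smooth with $j$-th derivative $\ll_j \Omega^j (1 + K)^j$ — the factor $1+K$ coming from differentiating the third cutoff, whose argument has $v$-derivative of size $\asymp \xi^2 Y W Z^2/(AB) = K \asymp 1 + K$ (using \eqref{good} which forces $K \asymp 1$ or smaller, or at least bounded by $\|\textbf{X}\|^{O(\lambda)}$ — in fact the factor is exactly the $1 + \xi^2 Y W Z^2/(AB)$ appearing in the statement), while the phase has derivative $2\pi h_2 W/|a| \asymp |h_2|/H_2$. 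By repeated integration by parts (non-stationary phase), $I(u) \ll_N W (1 + |h_2|/H_2)^{-N}$ times the length of the $v$-support, which after accounting for the third cutoff is $\asymp \min(1, AB/(\xi^2 Y W Z^2))$. Doing the same for the $u$-integral with $H_1$ in place of $H_2$, and combining, yields a bound of the shape $YW \cdot \min(1, AB/(\xi^2 Y W Z^2)) \cdot (1 + |h_1|/H_1 + |h_2|/H_2)^{-N}$. One checks $YW \min(1, AB/(\xi^2 YWZ^2)) \le (YW)^{1/2}(AB/(\xi^2 Z^2))^{1/2} = (AB)^{1/2}(YW)^{1/2}/(Z|\xi|)$ by the AM–GM-type inequality $\min(P,Q) \le (PQ)^{1/2}$; comparing with the claimed $(AB)^{1/4}(YW)^{3/4}/(Z|\xi|)^{1/2}$ one sees these agree up to the constraint \eqref{good} which forces $AB$, $C^2$, $YWZ^2$ all comparable, hence $AB \asymp YWZ^2$ up to $\|\textbf{X}\|^{O(\lambda)}$, making $(AB)^{1/2}(YW)^{1/2}Z^{-1}|\xi|^{-1} \asymp (AB)^{1/4}(YW)^{3/4}Z^{-1/2}|\xi|^{-1/2}$; the $\varepsilon$/$\lambda$-slack absorbs the discrepancy. (Alternatively one simply carries out the $u,v$ integral without the comparability reduction and arrives directly at the stated power $(AB)^{1/4}(YW)^{3/4}(Z|\xi|)^{-1/2}$ by doing the $t=uv$ substitution and noting the Jacobian.)

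For the second bound, with $h_1 = h_2 = 0$ there is no oscillation, so $\Phi_\xi(0,0,a,c,z)$ is just the measure of the region cut out by the three $V$-factors, of size $\asymp YW \cdot \min(1, AB/(\xi^2 YWZ^2)) \ll (AB)^{1/4}(YW)^{3/4}(Z|\xi|)^{-1/2}$ as before. For the derivatives, I differentiate under the integral sign: each application of $a\partial_a$ hits the third cutoff (its argument scales like $1/a$, contributing a bounded factor) but also can hit the $1/a$ normalization and, crucially, when we differentiate the constraint region the boundary terms are controlled by $\Omega$ times the derivative of the cutoff's argument; since $a\partial_a$ of $(-\xi^2 y w z^2 - c^2)/(aB)$ equals $(\xi^2 y w z^2 + c^2)/(aB) \asymp (\xi^2 YWZ^2 + C^2)/(AB) \ll (1 + \xi^2 YWZ^2/(AB))$ using $C^2 \ll AB$ from \eqref{trick} combined with \eqref{good}, we pick up the factor $(1 + \xi^2 YWZ^2/(AB))$ per $a$-derivative. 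Similarly $c\partial_c$ of that argument is $-2c^2/(aB) \asymp C^2/(AB) \ll 1 + C^2/(AB)$, giving the factor $(1 + C^2/(AB))$ per $c$-derivative, and $z\partial_z$ of the argument is $-2\xi^2 y w z^2/(aB)$, which is $\ll 1$ under \eqref{good} (absorbed into $\Omega^{j_3}$, or more precisely into the implied comparability), while each derivative also contributes a factor $\Omega$ from the chain rule on $V$. Iterating $j_1 + j_2 + j_3$ times and multiplying by the base size gives exactly the claimed bound.

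The main obstacle I anticipate is the bookkeeping around the third cutoff $V((-\xi^2 ywz^2 - c^2)/(aB))$: it is this factor that both (i) shrinks the effective domain of the $(y,w)$-integration (producing the essential saving that turns the trivial bound $YW$ into $(AB)^{1/4}(YW)^{3/4}(Z|\xi|)^{-1/2}$) and (ii) contributes the anisotropic derivative-loss factors $(1 + C^2/(AB))$ and $(1 + \xi^2 YWZ^2/(AB))$ attached specifically to $c$- and $a$-derivatives. Getting the interplay right — in particular verifying that the domain-shrinking estimate $YW\min(1, AB/(\xi^2 YWZ^2)) \ll (AB)^{1/4}(YW)^{3/4}(Z|\xi|)^{-1/2}$ genuinely holds (it does, using only $\min(P,Q)\le\sqrt{PQ}$ together with the comparability $AB\asymp YWZ^2$ from \eqref{good}) — and checking that differentiating the boundary of a non-smooth-looking but actually smooth region really does only cost $\Omega$ times the listed factors, is where care is needed. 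Everything else is routine integration by parts and change of variables.
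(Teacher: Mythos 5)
Your overall strategy (bound the measure of the $(y,w)$-support, integrate by parts for the decay in $h_1,h_2$, differentiate under the integral sign for the second estimate) is the same as the paper's, but two steps do not deliver the bounds as stated. The smaller issue is the size of the main factor: from $\min\bigl(YW,\,AB/(\xi^2z^2)\bigr)$ you take the unweighted geometric mean, obtain $(AB)^{1/2}(YW)^{1/2}/(Z|\xi|)$, and then argue this ``agrees up to $\|\textbf{X}\|^{O(\lambda)}$'' with the stated $(AB)^{1/4}(YW)^{3/4}/(Z|\xi|)^{1/2}$. The lemma carries no $\lambda$- or $\varepsilon$-slack, so this does not prove the inequality as written; the correct (and simpler) step is the weighted mean $\min(P,Q)\le P^{3/4}Q^{1/4}$ with $P=YW$, $Q=AB/(\xi^2z^2)$, which gives exactly the claimed exponents. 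This is easily repaired; your parenthetical ``$t=uv$ Jacobian'' alternative is too vague to substitute for it.

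The genuine gap is in the $z$-derivatives. You assert that $z\partial_z$ applied to the argument of the third cutoff produces $-2\xi^2ywz^2/(aB)$, ``which is $\ll 1$ under \eqref{good}'' and can be absorbed into $\Omega^{j_3}$. That is false: \eqref{good} only yields $\xi^2YWZ^2/(AB)\ll\|\textbf{X}\|^{O(\lambda)}$, which is unbounded (e.g.\ $YWZ^2\asymp AB\cdot\|\textbf{X}\|^{\lambda/4}$ with $|\xi|$ a small power of $\|\textbf{X}\|$ is admissible), and it cannot be hidden in $\Omega^{j_3}$, since $\Omega$ is an independent parameter. Naive differentiation in $z$ therefore only proves the weaker bound carrying an extra factor $\bigl(1+\xi^2YWZ^2/(AB)\bigr)^{j_3}$, which is precisely what the lemma forbids: in the later Mellin analysis of $M_2$ the decay scale in the $z$-aspect must be $V=\Omega$ with no $\|\textbf{X}\|^{\lambda}$-loss (the parameter $Z$ may be small, so a loss there cannot be traded against $Z^{-1/8}$), whereas losses of this type are only tolerated in the $a$- and $c$-aspects, which is why the statement attaches $\bigl(1+\xi^2YWZ^2/(AB)\bigr)^{j_1}$ and $\bigl(1+C^2/(AB)\bigr)^{j_2}$ to those derivatives only. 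The paper's proof flags exactly this as the key subtlety: since the cutoff's argument depends on $y,z$ only through $yz^2$, one has $z\partial_z V\bigl((-\xi^2ywz^2-c^2)/(aB)\bigr)=2\,y\partial_y V\bigl((-\xi^2ywz^2-c^2)/(aB)\bigr)$, and each application of $z\partial_z$ is coupled with an integration by parts in $y$, moving the derivative onto $yV(y/Y)$; this costs only $O(\Omega)$ per $z$-derivative, uniformly in the sizes. Without this device the second estimate of the lemma is not proved.
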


\begin{proof}
We observe that the  volume of the $(y, w)$-region defined by $$|y| \asymp Y, \quad |w| \asymp W, \quad |c^2 + \xi^2ywz^2| \asymp AB$$ is trivially bounded by $O(YW)$, but also by $O(AB/z^2\xi^2)$ and so by $O((AB)^{1/4}(YW)^{3/4}/|z\xi|^{1/2})$.  The first claim  follows now  by repeated  integration by parts, the second by differentiating under the integral sign.   There is an important subtlety: we combine each application of the operator $z\partial_z$ with a partial integration in $y$, i.e.\
\begin{displaymath}
\begin{split}
& \int_{\mathbb{R}} z\partial_z \Big[ V\Big(\frac{y}{Y}\Big) V\Big(\frac{-\xi^2ywz^2 - c^2}{aB} \Big)\Big] dy
 = - 2 \int_{\mathbb{R}}\partial_y  \Big(y V\Big(\frac{y}{Y}\Big) \Big)V\Big(\frac{-\xi^2ywz^2 - c^2}{aB} \Big)   dy. 
\end{split}
\end{displaymath}
This completes the proof. \end{proof}

We now write  the right hand side of \eqref{poi1} as $M_1+M_2$ 
where $M_1$ is the off-diagonal  contribution $(h_1, h_2) \not= (0, 0)$ and $M_2$ is the diagonal contribution $h_1 = h_2 = 0$. By Lemma \ref{lem1} we have 
\begin{equation}\label{diag}
M_2 = \sum_{a, c, z} V\Big(\frac{a}{A}\Big) V\Big(\frac{c}{C}\Big) V\Big(\frac{z}{Z}\Big) \frac{1}{a^2} \sum_{\substack{a_1a_2a_3 = |a|\\ a_3 \mid c^2}} a_1(\xi^2z^2, a_2a_3)a_3 \mu(a_2) \Phi_{\xi}(0, 0, a, c, z). 
\end{equation}
We postpone the analysis of $M_2$ and investigate first $M_1$. By Lemma \ref{lem1}  and Lemma \ref{lem2} we have 
\begin{equation*}
\begin{split}
M_1 &\ll \sum_{a, c, z} V\Big(\frac{a}{A}\Big) V\Big(\frac{c}{C}\Big) V\Big(\frac{z}{Z}\Big) \frac{(AB)^{1/4}(YW)^{3/4}}{A^2(Z|\xi|)^{1/2}}\\
& \qquad\qquad \times\sum_{(h_1, h_2)\not= (0, 0)}(a, h_1, h_2) \tau(a) |a|^{1/2}(a, c) \Big(1 + \frac{|h_1|}{H_1} + \frac{|h_2|}{H_2}\Big)^{-10}\\
& \ll \sum_{a } V\Big(\frac{a}{A}\Big)   \frac{CZ(AB)^{1/4}(YW)^{3/4}}{A^{3/2}(Z|\xi|)^{1/2}}
\sum_{(h_1, h_2)\not= (0, 0)}(a, h_1, h_2) \tau(a)^2   \Big(1 + \frac{|h_1|}{H_1} + \frac{|h_2|}{H_2}\Big)^{-10}\\
&  \ll \sum_{a } V\Big(\frac{a}{A}\Big)  \frac{CZ(AB)^{1/4}(YW)^{3/4}}{A^{3/2}(Z|\xi|)^{1/2}}
\tau(a)^3(1 + H_1 + H_2) .
 \end{split}
\end{equation*}
By several applications of \eqref{good}, we see that
$$\frac{1+ H_1 + H_2}{A^{1/2 - \varepsilon}} \ll   \Omega \| \textbf{X}\|^{10\lambda} \frac{A^{1/2 }}{Y},$$
so that 
\begin{equation}\label{off}
\begin{split}
M_1 &  \ll  \frac{C(AB)^{1/4}(YW)^{3/4} Z^{1/2}}{|\xi|^{1/2}}  \Omega  \|\textbf{X}\|^{10\lambda}  \frac{A^{1/2 }}{Y} .
 \end{split}
\end{equation}
Note that up until now we have not used any property of the weight $V(a/A)$ except that it restricts $a\ll A$. In particular, \eqref{off} continues to hold with any $\Omega$ if $V(a/A)$ is replaced by the characteristic function on $A \leq |a| \leq A(1+\Delta)$.

We can now complete the proof of the first half of Proposition \ref{prop4} by recalling \eqref{poi1} and noting that \eqref{good} implies
\begin{displaymath}
\begin{split}
&CZ\sqrt{YW}(ABYW)^{\varepsilon} \ll (AB)^{1/4}C (YW)^{3/4} Z^{1/2}  \cdot \frac{\|\textbf{X}\|^{10\lambda}}{Y},
\end{split}
\end{displaymath}
so that 
$$N_{\xi}(\textbf{X}, \Omega) = M_2 + O\Big((AB)^{1/4}C (YW)^{3/4} Z^{1/2}  \cdot \Omega \frac{\|\textbf{X}\|^{10\lambda}}{Y}\Big).$$
As in the proof of Proposition \ref{elem1} we reduce the power of $C$ using \eqref{trick}, which completes the proof of the first half of Proposition \ref{prop4}. 

We now turn to $M_2$ as given in \eqref{diag}. We will evaluate this asymptotically later, but for now content ourselves with an upper bound given by 
\begin{equation}\label{m2start}
\begin{split}
M_2&\ll  \frac{C}{A^2} \cdot \frac{(AB)^{1/4}(YW)^{3/4}}{(Z|\xi|)^{1/2}} \sum_{a_1, a_2, a_3, z} V\Big( \frac{a_1a_2a_3}{A}\Big) V\Big(\frac{z}{Z}\Big) a_1a_3^{1/2} (z^2\xi^2, a_2a_3).
\end{split}
\end{equation}
If we replace $V(a/A)$ with the characteristic function on $A\leq |a| \leq A(1+\Delta)$, a very soft bound is given by 
\begin{displaymath}
\begin{split}
&\ll  \frac{C}{A^2} \cdot \frac{(AB)^{1/4}(YW)^{3/4}}{(Z|\xi|)^{1/2}} \sum_{z} \sum_{A\leq a_1a_2a_3 \leq A(1+\Delta)}  V\Big(\frac{z}{Z}\Big) a_1a_2a_3  (z\xi,  a_3)\\
& \ll \frac{C}{A^2} \cdot \frac{(AB)^{1/4}(YW)^{3/4}}{(Z|\xi|)^{1/2}} \Delta A^{2+\varepsilon} Z \tau(\xi) \ll \Delta (AB)^{1/4} C(YW)^{3/4}Z^{1/2} A^{\varepsilon}.
\end{split}
\end{displaymath}
Together with \eqref{off} for $\Omega = 3$ we obtain
\begin{equation}\label{delta1}
N_{\xi}(\textbf{X}, \Delta) \ll   (AB)^{1/4} C(YW)^{3/4}Z^{1/2} \Big(\Delta \|\textbf{X}\|^{\varepsilon} + \|\textbf{X}\|^{10\lambda} \frac{A^{1/2 }}{Y}\Big).
\end{equation}
After this interlude we now return to \eqref{m2start} and estimate the right hand side by
\begin{displaymath}
\begin{split}
&\leq \frac{C}{A^2} \frac{(AB)^{1/4}(YW)^{3/4}}{(Z|\xi|)^{1/2}} \sum_d \sum_{\substack{a_1, a_2, a_3\\ d \mid a_2a_3}}\sum_{\frac{d}{(d, \xi^2)}\mid z^2} V\Big( \frac{a_1a_2a_3}{A}\Big) V\Big(\frac{z}{Z}\Big) a_1a_3^{1/2} d.
\end{split}
\end{displaymath}
For notational simplicity let us write $d_{\xi} = d/(d, \xi^2)$.
Then we can continue to estimate
\begin{displaymath}
\begin{split}
M_2& \ll \frac{C}{A^2} \frac{(AB)^{1/4}(YW)^{3/4}}{(Z|\xi|)^{1/2}}  \sum_d \sum_{d_2 d_3 = d} \sum_{\substack{a_1, a_2, a_3\\ d_2 \mid a_2, d_3\mid a_3}} V\Big( \frac{a_1a_2a_3}{A}\Big) \frac{Z}{\sqrt{d_{\xi}}^+}a_1a_3^{1/2} d\\
&\ll \frac{C}{A^2}  \frac{(AB)^{1/4}(YW)^{3/4}}{(Z|\xi|)^{1/2}} \sum_d d\sum_{d_2 d_3 = d}   \frac{A^2}{d_2^2d_3^{3/2}} \frac{Z}{\sqrt{d_{\xi}}^+} .
\end{split}
\end{displaymath}
The $d$-sum is 
\begin{equation*}
\leq \sum_d \frac{\tau(d)}{d^{1/2} \sqrt{d_{\xi}}^+} \leq \sum_{\eta \mid \xi^2} \frac{\tau(\eta)}{\eta^{1/2}} \sum_d \frac{\tau(d)}{d^{1/2} \sqrt{d}^+} \ll \tau(\xi^2). 
\end{equation*}
Combining this with the off-diagonal contribution \eqref{off} and choosing $\Omega = 3$, we have shown
our first important bound
\begin{equation}\label{first}
N_{\xi}(\textbf{X}) \leq \tilde{N}_{\xi}(\textbf{X}, 3)  \ll  \frac{C(AB)^{1/4}(YW)^{3/4} Z^{1/2}}{|\xi|^{1/2}} \Big(\tau(\xi^2) +    \|\textbf{X}\|^{10\lambda} \frac{A^{1/2 }}{Y}\Big). 
\end{equation}

\subsection{Poisson summation in $c, z$}

We now return to \eqref{start}  and apply Poisson summation in $c, z$ getting \begin{equation*}
N_{\xi}(\textbf{X}, \Omega) = \sum_{wy \not= -\square} V\Big(\frac{y}{Y}\Big) V\Big(\frac{w}{W}\Big) \sum_a V\left(\frac{a}{A}\right) \frac{CZ}{a^2} \sum_{k_1, k_2} T(k_1, k_2, yw\xi^2, a)   \Psi(k_1, k_2, yw\xi^2, a)
\end{equation*}
 with $T$ as in \eqref{defT} and
 $$\Psi(k_1, k_2, x, a) = \int_{\mathbb{R}} \int_{\mathbb{R}} V\Big(\frac{c}{C}\Big)V\Big(\frac{z}{Z}\Big)V\Big(-\frac{c^2 + xz^2}{aB}\Big)  e\left(\frac{-k_1c - k_2z}{|a|}\right) dc\, dz.$$

 Let 
 \begin{equation*}
   K_1 = \Omega\Big(\frac{A}{C} + \frac{C}{B}\Big), \quad K_2 = \Omega\Big(\frac{A}{Z} + \frac{\xi^2 YWZ}{B}\Big). 
 \end{equation*}
 
 \begin{lemma}\label{lem4} For $|a| \asymp A$  we have
 $$\Psi(k_1, k_2, x, a)\ll_N 
  \frac{(CZAB)^{1/2}}{|x|^{1/4}}
 \Big(1 + \frac{|k_1|}{K_1} + \frac{|k_2|}{K_2}\Big)^{-N}$$
 and
 $$ (x \partial_x)^{j_1} (a\partial_a)^{j_2} \Psi(0,0, x, a)\ll_{j_1, j_2} 
 \frac{(CZAB)^{1/2}}{|x|^{1/4}}
  \Omega^{j_1+j_2}  $$
 for arbitrary $N, j_1, j_2 \in \mathbb{N}_0$. 
 \end{lemma}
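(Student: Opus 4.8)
The plan is to adapt the proof of Lemma~\ref{lem2}, replacing the bilinear integral in $(y,w)$ by the quadratic one in $(c,z)$; throughout, $x=yw\xi^2$ with $|y|\asymp Y$, $|w|\asymp W$, so $|x|\asymp\xi^2YW$.

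\emph{Volume bound.} Since $|a|\asymp A$ and $V$ is supported in $\{|t|\asymp 1\}$, the integrand in $\Psi$ is supported on the region $R=\{(c,z):|c|\asymp C,\ |z|\asymp Z,\ |c^2+xz^2|\asymp AB\}$. I would bound $\operatorname{vol}(R)$ in two complementary ways. Fixing $z$ with $|z|\asymp Z$, the condition $|c^2+xz^2|\asymp AB$ confines $c^2$ to a set of measure $\ll AB$, hence (as $|c|\asymp C$, where $c\mapsto c^2$ has derivative of size $\asymp C$) confines $c$ to a set of measure $\ll\min(C,AB/C)\le (AB)^{1/2}$; thus $\operatorname{vol}(R)\ll Z(AB)^{1/2}$. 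Symmetrically, fixing $c$ confines $z^2$ to a set of measure $\ll AB/|x|$ (here $x\ne 0$), hence $z$ to a set of measure $\ll\min(Z,AB/(|x|Z))\le (AB/|x|)^{1/2}$, so $\operatorname{vol}(R)\ll C(AB)^{1/2}/|x|^{1/2}$. Taking the geometric mean of the two bounds (via $\min(p,q)\le (pq)^{1/2}$) yields $\operatorname{vol}(R)\ll (CZAB)^{1/2}/|x|^{1/4}$. This already gives both asserted estimates in the cases $k_1=k_2=0$ and $j_1=j_2=0$.

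\emph{Decay in $k_1,k_2$.} Write the amplitude as $G(c,z)=V(c/C)V(z/Z)V(-(c^2+xz^2)/(aB))$. Since $-(c^2+xz^2)/(aB)$ is additively separable in $c$ and $z$, the chain rule gives $\partial_c^{M_1}\partial_z^{M_2}G\ll_{M_1,M_2}\Omega^{M_1+M_2}(1/C+C/(AB))^{M_1}(1/Z+|x|Z/(AB))^{M_2}$ on $R$. Performing $M_1$ integrations by parts in $c$ and $M_2$ in $z$ against $e(-(k_1c+k_2z)/|a|)$ costs $|a|/(2\pi|k_1|)$ per step in $c$ and $|a|/(2\pi|k_2|)$ per step in $z$; since $|a|\asymp A$ and $|x|\asymp\xi^2YW$, one has $\Omega|a|(1/C+C/(AB))\asymp K_1$ and $\Omega|a|(1/Z+|x|Z/(AB))\asymp K_2$, so altogether $\Psi\ll (K_1/|k_1|)^{M_1}(K_2/|k_2|)^{M_2}\operatorname{vol}(R)$. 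Combining this with the trivial bound $\Psi\ll\operatorname{vol}(R)$ and using $(1+u)(1+v)\ge 1+u+v$, I obtain $\Psi\ll (CZAB)^{1/2}|x|^{-1/4}(1+|k_1|/K_1+|k_2|/K_2)^{-N}$ for every $N$.

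\emph{Derivative bound, and the main point.} For the second estimate I differentiate under the integral sign. The operator $a\partial_a$ applied to $V(-(c^2+xz^2)/(aB))$ produces $-uV'(u)$ with $u=-(c^2+xz^2)/(aB)\asymp 1$ on $R$, hence costs only $\ll\Omega$ and leaves $V(c/C),V(z/Z)$ untouched. The subtlety is $x\partial_x$: naively it costs $\ll\Omega\,|xz^2|/(AB)\ll\Omega(1+C^2/(AB))$, which would ruin the claimed pure power of $\Omega$. Instead I would exploit the homogeneity identity $x\partial_x[-(c^2+xz^2)/(aB)]=\tfrac12\,z\partial_z[-(c^2+xz^2)/(aB)]$, so that $x\partial_xV(-(c^2+xz^2)/(aB))=\tfrac12\,z\partial_zV(-(c^2+xz^2)/(aB))$, and then integrate by parts in $z$: since $\int_{\Rd}z\partial_z[V(z/Z)W(\cdot)]\dd z=-\int_{\Rd}V(z/Z)W(\cdot)\dd z$, we get $\int_{\Rd}V(z/Z)\,z\partial_zW(\cdot)\dd z=-\int_{\Rd}\bigl(V(z/Z)+(z/Z)V'(z/Z)\bigr)W(\cdot)\dd z$, and $t\mapsto V(t)+tV'(t)$ is again a bump function of the same type with sup-norm $\ll\Omega$. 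Thus $x\partial_x$ merely replaces $V(z/Z)$ by another such bump function and costs $\ll\Omega$; iterating this, and interleaving with the clean $a\partial_a$, gives $(x\partial_x)^{j_1}(a\partial_a)^{j_2}\Psi(0,0,x,a)\ll_{j_1,j_2}\Omega^{j_1+j_2}\operatorname{vol}(R)\ll_{j_1,j_2}\Omega^{j_1+j_2}(CZAB)^{1/2}|x|^{-1/4}$. This integration-by-parts device --- precisely the ``important subtlety'' noted in the proof of Lemma~\ref{lem2} --- is the only non-routine ingredient; everything else is a direct analogue of the bilinear case.
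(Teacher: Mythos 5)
Your proposal is correct and follows essentially the same route as the paper: the volume of the $(c,z)$-region is bounded by interpolating a trivial estimate with one exploiting $|c^2+xz^2|\asymp AB$ (geometric mean yielding $(CZAB)^{1/2}|x|^{-1/4}$), the decay in $k_1,k_2$ comes from repeated partial integration giving exactly the scales $K_1,K_2$, and the derivative bound uses differentiation under the integral sign with each $x\partial_x$ converted via $x\partial_x V(u)=\tfrac12 z\partial_z V(u)$ and an integration by parts in $z$ — precisely the coupling the paper invokes by analogy with Lemma~\ref{lem2}. The only (harmless) difference is that you derive the volume bound by slicing in each variable rather than by the paper's ellipse/hyperbola case distinction, which gives the same result.
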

 
 \begin{proof}
   As in Lemma \ref{lem2} we observe that the volume of the $(c, z)$-region
   defined by $|c| \asymp C$, $|z| \asymp Z$, $|c^2 + xz^2| \asymp AB$ is
   trivially bounded by $O(CZ)$, but also by $O(AB/|x|^{1/2})$.  Indeed, if
   $x > 0$, this is the volume of the ellipse $c^2 + xz^2 \ll AB$, while for
   $x < 0$ we have $c = \sqrt{|x|z^2 + O(AB)}$ for fixed $|z| \asymp Z$, which
   has volume $\ll AB/(|x|^{1/2}Z)$. Taking the geometric mean, we bound the
   $(c, z)$-volume by $O((CZAB)^{1/2}/|x|^{1/4})$.  The claims follow now by
   repeated partial integration and differentiation under the integral sign. As
   in the proof of Lemma \ref{lem2}, each application of $x\partial_x$ is
   coupled with an integration by parts in $z$.\end{proof}
 
As before we decompose
$$N_{\xi}(\textbf{X}, \Omega)  = \tilde{M}_1 + \tilde{M}_2$$
where $\tilde{M}_1$ is the off-diagonal contribution $(k_1, k_2) \not= 0$ and
$\tilde{M}_2$ is the diagonal contribution $k_1 = k_2 = 0$.  For notational
simplicity let 
$$ \Xi(k_1, k_2) = 
\frac{\sqrt{CZAB}}{|\xi|^{1/2} (YW)^{1/4}} \Big(1 + \frac{|k_1|}{K_1} +
\frac{|k_2|}{K_2}\Big)^{-10}.$$ 
In the following we write $(a, b^{\infty}) := \max_n (a, b^n)$ and $[d_1, d_2]$ for the least common multiple of $d_1$ and $d_2$. 
Using Lemma \ref{lem3} and Lemma \ref{lem4} we
obtain
 \begin{displaymath}
 \begin{split}
\tilde{M}_1\ll &  \sum_{wy \not= -\square} V\Big(\frac{y}{Y}\Big) V\Big(\frac{w}{W}\Big) \sum_a V\left(\frac{a}{A}\right) \frac{1}{A^2} \sum_{(k_1, k_2) \not= (0, 0)}(a, \xi^2yw)^{1/2}(a, 2^{\infty}(k_1^2 \xi^2 yw + k_2^2)) \Xi(k_1, k_2)\\
  \ll &  \sum_{d_1, d_2 \ll A}   \sum_{\substack{wy \not= -\square\\ d_1 \mid \xi^2 yw}} V\Big(\frac{y}{Y}\Big) V\Big(\frac{w}{W}\Big) \sum_{[d_1, d_2] \mid a} V\left(\frac{a}{A}\right) \frac{1}{A^2} \sum_{\substack{(k_1, k_2) \not= (0, 0)\\ d_2 \mid 2^{\infty}(k_1^2 \xi^2 yw + k_2^2)}} d_1^{1/2} d_2  \Xi(k_1, k_2)\\
   \ll & \sum_{d_1, d_2 \ll A}   \sum_{\substack{wy \not= -\square\\ d_1 \mid \xi^2 yw}} V\Big(\frac{y}{Y}\Big) V\Big(\frac{w}{W}\Big)    \frac{  \log A}{A} \sum_{\substack{(k_1, k_2) \not= (0, 0)\\ d_2 \mid  k_1^2 \xi^2 yw + k_2^2}} \frac{d_1^{1/2} d_2}{[d_1, d_2]} \Xi(k_1, k_2).
 \end{split}
 \end{displaymath} 
We write $k_1^2\xi^2 yw = -k_2^2 + \alpha d_2$. Since $yw \not= -\square$ and $(k_1, k_2) \not= (0, 0)$, we have $\alpha\not= 0$, and moreover $\alpha \equiv - k_2^2 \, (\text{mod } d_1)$. Once $\alpha$ and $k_2$ are chosen, the variables $y, w, k_1$ are determined up to a divisor function. We conclude the upper bound
 \begin{displaymath}
 \begin{split}
\tilde{M}_1  \ll &  \sum_{d_1, d_2 \ll A }      \frac{  \log A}{A} \Big(1 + \frac{K_2}{\sqrt{d_1}^+}\Big)   \frac{K_2^2+ K_1^2 \xi^2 YW}{d_2 }  \frac{d_1^{1/2} d_2}{[d_1, d_2]}
\frac{\sqrt{CZAB}}{|\xi|^{1/2} (YW)^{1/4}}
\|\textbf{X}\|^{\varepsilon}\\
  \end{split}
 \end{displaymath} 
and so
\begin{equation*}
\tilde{M}_1 \ll   \frac{ 1}{A}  (A^{1/2} + K_2)   ( K_2^2+ K_1^2 \xi^2 YW) \frac{\sqrt{CZAB}}{|\xi|^{1/2} (YW)^{1/4}}
\|\textbf{X}\|^{\varepsilon}. 
\end{equation*}
We now invoke \eqref{good} several times to conclude that
 $$ \frac{  (A^{1/2} + K_2) }{A}   \Big( \frac{K_2^2}{YW}+ K_1^2 \xi^2 \Big )  \|\textbf{X}\|^{\varepsilon} \ll \Omega^{4} \frac{\|\textbf{X}\|^{10\lambda}}{Z}.$$
 Thus we obtain the simplified bound
\begin{equation}\label{tildem1}
\tilde{M}_1 \ll \frac{\sqrt{CZAB}(YW)^{3/4}}{|\xi|^{1/2} }
 \Omega^{4} \frac{\|\textbf{X}\|^{10\lambda}}{Z}. 
\end{equation}
Note that in order to derive this bound we did not use any property of the weight $V(a/A)$, except that it bounds $a \ll A$. In particular, \eqref{tildem1} continues to hold for all $\Omega$, if $V(a/A)$ is replaced with the characteristic function on $A  \leq |a| \leq A(1+\Delta)$.

On the other hand, again by Lemma \ref{lem3} we have
\begin{equation}\label{again}
\begin{split}
\tilde{M}_2 = 2\sum_{y w\not= -\square}&V\Big( \frac{y}{Y}\Big) V\Big(\frac{w}{W}\Big)  \sum_{\substack{d_1, d_2\\ (\xi^2yw, d_2) = \square}} V\Big(\frac{  d_1d_2}{A}\Big)  \frac{\phi(d_2)}{d_1d_2^2}  \\
&\times \chi_{\frac{d_2}{(\xi^2yw, d_2)}}\Big( -\frac{\xi^2yw}{(\xi^2yw, d_2) }\Big)(\xi^2yw, d_2)^{1/2} \Psi(0, 0,  \xi^2yw, d_1d_2)
\end{split}
\end{equation}
where $d_2$ runs over all (positive or negative) discriminants and the factor 2 comes from the fact that $V$ is even and we have used the decomposition $|a| = d_1|d_2|$ from Lemma \ref{lem3}. 

Before we manipulate this further, we complete the proof of Proposition \ref{prop3}. Replacing $V(d_1d_2/A)$ in the previous display by the characteristic function on $A \leq |d_1d_2| \leq A(1+\Delta)$, we obtain by a simple divisor estimate the upper bound
$$\ll \tau(\xi^2) YW \Delta \|\textbf{X}\|^{\varepsilon}\frac{\sqrt{CZAB}}{|\xi|^{1/2} (YW)^{1/4}}$$
for the right hand side of \eqref{again}. 
By \eqref{good}, the factor $\tau(\xi^2)$ can be absorbed into $\|\textbf{X}\|^{\varepsilon}$. Combining this with \eqref{tildem1}, 
we obtain 
$$N_{\xi}(\textbf{X}, \Delta) \ll (ABC)^{1/2}(YW)^{3/4} Z^{1/2} \Big(\frac{\Delta \|\textbf{X}\|^{\varepsilon}}{|\xi|^{1/2}} +  \frac{\|\textbf{X}\|^{10\lambda}}{Z}\Big).$$
 Together with \eqref{delta1} we obtain
 $$N_{\xi}(\textbf{X}, \Delta) \ll (AB)^{1/4}C^{1/2}( (AB)^{1/4} + C^{1/2})(YW)^{3/4} Z^{1/2} \Big(\frac{\Delta \|\textbf{X}\|^{\varepsilon} }{|\xi|^{1/2}}+ \|\textbf{X}\|^{10\lambda} \min\Big(\frac{A^{1/2}}{Y}, \frac{1}{Z}\Big)\Big).$$
 Using \eqref{trick}, we replace the second appearance of $C^{1/2}$ with $(AB)^{1/4} + C^{1/4} (|\xi| Z)^{1/4}(YW)^{1/8}$.  By several applications of \eqref{good} we have
\begin{equation}\label{combinederror}
 \min\Big(   \frac{A^{1/2  }}{Y} , \frac{1 }{Z}\Big) \leq \frac{A^{1/4}}{(YZ)^{1/2}} \leq \frac{Y^{\lambda}}{B^{1/4}}\frac{(AB)^{1/4}}{(YWZ^2)^{1/4}} \leq \frac{(YAB)^{\lambda}}{B^{1/4}} \ll  \| \textbf{X} \|^{-1/5-15\lambda}
 \end{equation}
which completes the proof of Proposition \ref{prop3}.

After this interlude we return to \eqref{again}.  With $\delta^2 = (\xi^2yw, d_2)$ we rewrite this as
$$\tilde{M}_2 =2\sum_{\delta} \sum_{ \substack{\xi^2yw\equiv 0 \, (\delta^2)\\ yw \not= - \square}} V\Big( \frac{y}{Y}\Big) V\Big(\frac{w}{W}\Big)  \sum_{ d_1, d_2} V\Big(\frac{d_1d_2\delta^2}{A}\Big)  \frac{\phi(d_2 \delta^2)}{d_1d_2^2 \delta^3} \chi_{d_2} \Big( \frac{-\xi^2yw}{\delta^2} \Big) \Psi(0, 0,  \xi^2yw, d_1d_2 \delta^2)$$
where the condition $(\xi^2yw/\delta^2, d_2) = 1$ is automatic from the Jacobi symbol. Write $\delta' = \delta/(\delta, \xi)$, so $\delta'^2 \mid yw$.  We write $(\delta'^2, y) = f$, so that $g = \frac{\delta'^2}{f} \mid w$, and we obtain
$$  2\sum_{\delta} \sum_{fg =\delta'^2} \sum_{\substack{wy \not= -\square\\ (g, y) = 1}} V\Big( \frac{fy}{Y}\Big) V\Big(\frac{g w}{W}\Big)  \sum_{ d_1, d_2} V\Big(\frac{d_1d_2\delta^2}{A}\Big)   \frac{\phi(d_2 \delta^2)}{d_1d_2^2 \delta^3} \chi_{d_2}\Big(-\frac{\xi^2yw}{(\delta, \xi)^2}\Big) \Psi(0, 0,\xi^2\delta'^2yw,  d_1d_2 \delta^2 ).$$
We   write $\delta = \delta' \xi_1$ with $\xi_1\xi_2 =  \xi$ and $(\xi_2, \delta') = 1$ getting
\begin{displaymath}
\begin{split}
\tilde{M}_2 = 2\underset{(\xi_2, fg) = 1}{\sum_{\xi_1\xi_2 = \xi} \sum_{fg =\square}} &\sum_{\substack{wy \not= -\square\\ (g, y) = 1}} V\Big( \frac{fy}{Y}\Big) V\Big(\frac{g w}{W}\Big)  \sum_{ d_1, d_2} V\Big(\frac{d_1d_2fg\xi_1^2}{A}\Big)   \frac{\phi(d_2 fg \xi_1^2)}{d_1d_2^2 (fg)^{3/2}\xi_1^3} \\
&\times \chi_{d_2}( -\xi_2^2yw ) \Psi(0, 0, \xi^2fgyw, d_1d_2 fg\xi_1^2).
\end{split}
\end{displaymath}
We further decompose
$$\tilde{M}_2 = \tilde{M}_3 + \tilde{M}_4$$
where $\tilde{M}_3$ is the contribution $d_2 = \square$ and $\tilde{M}_4$ is the contribution $d_2\not= \square$. 
We have \begin{equation}\label{main}
\begin{split}
\tilde{M}_3 = 2\underset{(\xi_2, fg) = 1}{\sum_{\xi_1\xi_2 = \xi} \sum_{fg =\square}} &\sum_{\substack{wy \not= -\square\\ (g, y) = 1}} V\Big( \frac{fy}{Y}\Big) V\Big(\frac{g w}{W}\Big)  \sum_{ \substack{d_1, d_2 \\ (d_2,\xi_2yw) =1} } V\Big(\frac{d_1d_2^2fgF_1^2}{A}\Big)   \frac{\phi(d_2^2 fg \xi_1^2)}{d_1d_2^4 (fg)^{3/2}\xi_1^3} \\
&\times  \Psi(0, 0,  \xi^2fgyw, d_1d_2^2 fg\xi_1^2).
\end{split}
\end{equation}
We will later evaluate this asymptotically, but for now we content ourselves with the upper bound
\begin{equation}\label{tildem3}
\begin{split}
\tilde{M}_3& \ll \sum_{\xi_1\xi_2 = \xi} \sum_{fg =\square} \sum_{w, y} V\Big( \frac{fy}{Y}\Big) V\Big(\frac{g w}{W}\Big) \sum_{d_1, d_2 } V\Big(\frac{d_1d_2^2fg\xi_1^2}{A}\Big)   \frac{1}{d_1d_2^2 (fg)^{1/2}\xi_1} \frac{\sqrt{CZAB}}{|\xi|^{1/2} (YW)^{1/4}}\\
& \ll  \tau(\xi)\frac{\sqrt{CZAB}(YW)^{3/4}}{|\xi|^{1/2}}.
\end{split}
\end{equation}
To bound $\tilde{M}_4$, we insert  a smooth partition of unity localizing  $|d_2| \asymp D_2$, say, so that  
$$\tilde{M}_4 =  \sum_{ D_2} \tilde{M}_4(D_2)$$
where $D_2 \ll A$ runs over powers of 2 and
\begin{displaymath}
\begin{split}
\tilde{M}_4(D_2) =  \underset{(\xi_2, fg) = 1}{\sum_{\xi_1\xi_2 = \xi} \sum_{fg =\square}} &\sum_{\substack{wy \not= -\square\\ (g, y) = 1}} V\Big( \frac{fy}{Y}\Big) V\Big(\frac{g w}{W}\Big)  \sum_{ d_1} \sum_{\substack{d_2 \not= \square\\ (d_2, \xi) = 1}} V\Big(\frac{d_1d_2fg\xi_1^2}{A}\Big) W\Big(\frac{d_2}{D_2}\Big)  \\  &\times  \frac{\phi(d_2 fg \xi_1^2)}{d_1d_2^2 (fg)^{3/2}\xi_1^3} \chi_{d_2}( -yw ) \Psi(0, 0, \xi^2fgyw, d_1d_2 fg\xi_1^2)
\end{split}
\end{displaymath}
for  a suitable smooth compactly supported  function $W$. We estimate $\tilde{M}_4(D_2)$ in two ways. First we re-insert the contribution of $yw = -\square$ at the cost of an error 
$$\ll \tau(\xi)(YW)^{1/2} \frac{\sqrt{CZAB}}{|\xi|^{1/2} (YW)^{1/4}}$$
and then sum over $y, w$ with Lemma \ref{lem5} and \ref{lem4} getting a bound
\begin{equation}\label{e1}
\ll \tau(\xi) (YW)^{1/2} \frac{\sqrt{CZAB}}{|\xi|^{1/2} (YW)^{1/4}} 
 (\Omega D_2)^{1/2+\varepsilon}.
\end{equation}
Obviously this majorizes the previous error term. 

Alternatively, we can also re-insert the contribution of $d_2 = \square$ at the cost of an error
$$\ll \tau(\xi) \frac{YW}{D_2^{1/2}} \frac{\sqrt{CZAB}}{|\xi|^{1/2} (YW)^{1/4}}$$
and then sum over $d_2$ with Lemma \ref{lem5} and \ref{lem4} getting a bound
\begin{equation}\label{e2}
\ll \tau(\xi)^2 \frac{YW}{D_2^{1/2}}\frac{\sqrt{CZAB}}{|\xi|^{1/2} (YW)^{1/4}} 
 (\Omega YW)^{1/4+\varepsilon},
\end{equation}
which again majorizes the previous error. 

Combining \eqref{e1} and \eqref{e2}, we bound $\tilde{M}_4(D_2) $ by 
\begin{displaymath}
\begin{split}
& \ll \tau(\xi)^2 \frac{\sqrt{CZAB} (YW)^{3/4}}{|\xi|^{1/2}} \min\Big( \frac{(\Omega D_2)^{1/2 + \varepsilon}}{(YW)^{1/2}}, 
 \frac{(\Omega YW)^{1/4+\varepsilon}}{D_2^{1/2}}\Big)\\
&\ll \tau(\xi)^2  \frac{\sqrt{CZAB} (YW)^{3/4}}{|\xi|^{1/2}}  \frac{\Omega^{1/2+\varepsilon}}{(YW)^{1/20 - \varepsilon} D_2^{1/10- \varepsilon}}. 
\end{split}
\end{displaymath}
Summing over $D_2$, we finally obtain
\begin{equation*}
  \tilde{M}_4 \ll\sqrt{CZAB} (YW)^{3/4}\frac{\Omega^{1/2 + \varepsilon}}{(YW)^{1/20 - \varepsilon}}. 
\end{equation*}
Combining this with \eqref{tildem1} we have
\begin{displaymath}
\begin{split}
\tilde{M}_1 + \tilde{M}_4 & \ll  \sqrt{CZAB} (YW)^{3/4}\Big(  \frac{\Omega^{2/3}}{(YW)^{1/21}}
 + \Omega^4\frac{\|\textbf{X}\|^{10\lambda}}{Z}\Big)\\
 &\ll\sqrt{CZAB} (YW)^{3/4} \Omega^4  \Big(  \frac{\|\textbf{X}\|^{10\lambda}}{Z} + \frac{1}{(YW)^{1/21}}\Big). 
\end{split}
\end{displaymath}
Since $N_{\xi}(\textbf{X}, \Omega) = \tilde{M}_1 + \tilde{M}_3 + \tilde{M}_4$, this completes the proof of the second half of Proposition \ref{prop4}. 

On the other hand, choosing $\Omega  = 3$, we also invoke  \eqref{tildem3} to obtain our second important bound
\begin{equation*}
  N_{\xi}(\textbf{X}) \leq  N_{\xi}(\textbf{X}, 3) = \tilde{M}_1 + \tilde{M}_3 + \tilde{M}_4  \ll  \frac{(ABC)^{1/2}  (YW)^{3/4} Z^{1/2}}{|\xi|^{1/2}} \Big(\tau(\xi)^2 +\frac{\| \textbf{X}\|^{10\lambda}}{ Z} \Big) . \end{equation*}
We now combine this with \eqref{first}   to conclude
$$N_{\xi}(\textbf{X}) \ll \Big( (AB)^{1/4}C^{1/2}(C^{1/2} + (AB)^{1/4}))(YW)^{3/4} Z^{1/2}
\Big)\Big(1+\| \textbf{X}\|^{10\lambda}  \min\Big(   \frac{A^{1/2  }}{Y} , \frac{1 }{Z}\Big)\Big). 
$$
The proof of   Proposition \ref{prop2} is now completed by \eqref{combinederror}.  

\section{An asymptotic formula}

We now upgrade the previous upper bound to an asymptotic formula for $N_{\xi}(\textbf{X})$. The main term features the singular series and the singular integral that would follow from a formal application of the circle method. With this in mind we define
\begin{equation}\label{defsing}
\mathcal{E}_{\xi} =  \sum_{q} \frac{1}{q^6}\underset{d \, (\text{mod } q)}{\left.\sum\right.^{\ast} }  \sum_{a, b, c, y,w, z\, (\text{mod }q)} e\Big( \frac{d(ab + c^2 + \xi^2wyz^2)}{q}\Big).
\end{equation}
For later purposes we compute this as an Euler product. We have
\begin{displaymath}
\begin{split}
  \mathcal{E}_{\xi} & =   \sum_{q}\underset{d \, (\text{mod } q)}{\left.\sum\right.^{\ast} }  \frac{1}{q^5}\sum_{  c, y,w, z\, (\text{mod }q)} e\Big( \frac{d(  c^2 + \xi^2wyz^2}{q}\Big)  = \sum_{q} \frac{1}{q^4}\underset{d \, (\text{mod } q)}{\left.\sum\right.^{\ast} }  \sum_{  c \, (\text{mod } q) }\sum_{\xi^2yz^2 \equiv 0 \, (\text{mod }q)} e\Big( \frac{d c^2  }{q}\Big).\\
  \end{split}
\end{displaymath}
We have
$$\underset{d \, (\text{mod } q)}{\left.\sum\right.^{\ast} }  \sum_{  c \, (\text{mod } q) }e\Big( \frac{d c^2  }{q}\Big) = \begin{cases} q^{1/2}\phi(q), & q = \square,\\
0, &q \not= \square,\end{cases}$$
 so we   conclude 
  \begin{displaymath}
\begin{split}
  \mathcal{E}_{\xi}     = \sum_{q }  \frac{\phi(q^2)}{q^7} \sum_{\xi^2yz^2 \equiv 0 \, (\text{mod }q^2)} 1 =  \sum_{q} \frac{\phi(q) (\xi, q)^4}{q^6}   \sum_{ yz^2 \equiv 0 \, (\text{mod } (\frac{q}{(\xi, q)})^2)}1. 
  \end{split}
\end{displaymath}
If $p$ is a prime and $n\geq 1$, then a simple combinatorial argument shows that the number of pairs $y, z$ (mod $p^{2n}$) with $p^{2n} \mid yz^2$ equals
$$ p^{3n} + p^{3n-1} - p^{2n-1}.$$
For a prime $p$ let $r_p = v_p(\xi)$ denote the $p$-adic valuation of $\xi$. 
 Evaluating geometric sums, we finally conclude
   \begin{displaymath}
\begin{split}
  \mathcal{E}_{\xi}  &  =  \prod_p \Big(1 + \sum_{n=1}^{r_p} \frac{(p-1)p^{n-1+4n}}{p^{6n}} + \sum_{n=r_p+1}^{\infty}   \frac{(p-1)p^{n-1 + 4r_p}}{p^{6n}} \big(p^{3(n-r_p)} + p^{3(n-r_p)-1} - p^{2(n-r_p)-1}\big)\Big)\\
  & = \prod_p \Big(1 + \frac{1+p + p^2 - p^{2-r_p}}{p(1+p+p^2)}\Big). 
   \end{split}
\end{displaymath}
In particular, the Euler product is absolutely convergent and satisfies
\begin{equation}\label{sing-bound}
\mathscr{E}_{\xi} \ll \xi^{\varepsilon}. 
\end{equation}

We also define the singular integral for a tuple $\textbf{X} = (X_{11}, X_{12}, X_{22}, X_{31}, X_{32}, X_{33})$ with $X_{ij} \geq 1$ as
$$\mathcal{I}_{\xi}(\textbf{X}) = \int_{\mathbb{R}} \int_{\frac{1}{2}X_{ij} \leq |x_{ij}| \leq X_{ij}} e\big((x_{11}x_{12} + x_{21}^2 + \xi^2 x_{31}x_{32}x_{33}^2)\alpha\big) d\textbf{x}\, d\alpha .$$
That the $\alpha$-integral is absolutely convergent follows from the estimates
\begin{equation}\label{estimates}
\begin{split}
&  \iint_{\substack{\frac{1}{2}A \leq |a| \leq A\\ \frac{1}{2} B \leq |b| \leq B}}
e(ab\alpha)da\, db  \ll \min(AB, |\alpha|^{-1}) \ll  \frac{\|\textbf{X}\|^{\varepsilon} (AB)^{1/2}}{|\alpha|^{1/2}(|\alpha|^{\varepsilon} + |\alpha|^{-\varepsilon} )},\\
& \int_{\frac{1}{2}C \leq |c| \leq C}  e(c^2\alpha )dc \ll \min(C, |C\alpha|^{-1}) \ll \frac{\|\textbf{X}\|^{\varepsilon} C^{1/2}}{|\alpha|^{1/4}(|\alpha|^{\varepsilon} + |\alpha|^{-\varepsilon} )} ,\\
& \iiint_{\substack{\frac{1}{2}Y \leq |y| \leq Y\\ \frac{1}{2}W \leq |w| \leq W\\\frac{1}{2}Z \leq |z| \leq Z }} 
 e(\xi^2ywz^2\alpha )dy\, dw\, dz \ll \min(YWZ, |\xi^2Z\alpha|^{-1})
  \ll   \frac{\|\textbf{X}\|^{\varepsilon} (YW)^{3/4} X^{1/2}}{|\alpha|^{1/4}(|\alpha|^{\varepsilon} + |\alpha|^{-\varepsilon} )}. 
\end{split}
\end{equation}
For future reference we state the similar bounds
\begin{equation}\label{estimates1}
\begin{split}
& \iint_{\substack{\frac{1}{2}A \leq |a| \leq A\\ \frac{1}{2} B \leq |b| \leq B}} e(ab\alpha)da\, db-    \iint_{\mathbb{R}^2} V\Big(\frac{a}{A}\Big)V\Big(\frac{b}{B}\Big)
e(ab\alpha)da\, db \\
&\quad\quad\quad\quad \quad\quad 
\ll \min(\Omega^{-1}AB, |\alpha|^{-1})\ll \frac{\Omega^{-1/2} \|\textbf{X}\|^{\varepsilon} (AB)^{1/2}}{|\alpha|^{1/2}(|\alpha|^{\varepsilon} + |\alpha|^{-\varepsilon} )},\\
& \int_{\frac{1}{2}C \leq |c| \leq C}  e(c^2\alpha )dc -     \int_{\mathbb{R}} V\Big(\frac{c}{C}\Big) e(c^2\alpha )dc \ll \min(\Omega^{-1}C, |C\alpha|^{-1}) \ll \frac{\Omega^{-3/4}\|\textbf{X}\|^{\varepsilon} C^{1/2}}{|\alpha|^{1/4}(|\alpha|^{\varepsilon} + |\alpha|^{-\varepsilon} )} ,\\
& \iiint_{\substack{\frac{1}{2}Y \leq |y| \leq Y\\ \frac{1}{2}W \leq |w| \leq W\\\frac{1}{2}Z \leq |z| \leq Z }} 
 e(\xi^2ywz^2\alpha )dy\, dw\, dz  -   \iiint_{\mathbb{R}^3} V\Big(\frac{y}{Y}\Big)V\Big(\frac{w}{W}\Big)V\Big(\frac{z}{Z}\Big) e(\xi^2ywz^2\alpha )dy\, dw\, dz \\
 &\quad\quad \quad\quad\quad\quad \ll \min(\Omega^{-1}YWZ, |\xi^2Z\alpha|^{-1}) \ll  \frac{\Omega^{-3/4}\|\textbf{X}\|^{\varepsilon} (YW)^{3/4} Z^{1/2}}{|\alpha|^{1/4}(|\alpha|^{\varepsilon} + |\alpha|^{-\varepsilon} )}. 
\end{split}
\end{equation}

Our aim in this section is to prove the following asymptotic formula. Let 
\begin{equation}\label{zeta}
\begin{split}
& \bm \zeta^{(1)} = (1/2, 1/4, 1/4), \quad \bm \zeta^{(2)} = (3/4, 1/8, 1/8), \quad  \bm \zeta^{(3)} = (5/8, 1/8, 1/4). 
\end{split}
\end{equation}

\begin{prop}\label{asymp-prop} The exists $\delta > 0$ with the following property. For  $\emph{\textbf{X}} = (X_{ij}), \xi$ satisfying \eqref{good} and any $\varepsilon > 0$ we have
$$N_{\xi}(\emph{\textbf{X}}) - \mathcal{E}_{\xi} \mathcal{I}_{\xi}(\emph{\textbf{X}}) \ll_{\varepsilon} (\min_{ij} X_{ij})^{-\delta}\| \emph{\textbf{X}}\|^{\varepsilon}\sum_{i=1}^3\emph{\textbf{X}}^{(\bm\zeta^{(i)})}. $$
\end{prop}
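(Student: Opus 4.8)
The plan is to reduce Proposition~\ref{asymp-prop} to the two asymptotic expansions of the \emph{smoothed} count in Proposition~\ref{prop4}, to match the diagonal terms $M_2$ and $\tilde M_3$ appearing there with $\mathcal{E}_\xi$ times a smoothed singular integral, and then, for each admissible $\textbf{X}$, to use whichever of the two expansions carries a power saving. Throughout we may assume $\min_{ij} X_{ij}$ is large, since otherwise the claim follows from Proposition~\ref{prop2} together with the trivial bound $\mathcal{E}_\xi \mathcal{I}_\xi(\textbf{X}) \ll \|\textbf{X}\|^\varepsilon \textbf{X}^{(\bm{\zeta}^{(1)})}$ (a consequence of \eqref{sing-bound} and \eqref{estimates}). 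We fix $\Omega = (\min_{ij} X_{ij})^\eta$ for a small constant $\eta>0$; in the end $\delta$ will be a small multiple of $\eta$.

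\emph{Step 1: smoothing.} Writing each sharp indicator $\mathbf 1_{[X_{ij}/2,\,X_{ij}]}$ as the chosen weight $V(\cdot/X_{ij})$ minus a function supported on two annuli of relative width $\ll\Omega^{-1}$, and telescoping over the six variables, we obtain $N_\xi(\textbf{X})=\tilde N_\xi(\textbf{X},\Omega)+O(\mathcal B)$ where $\mathcal B$ is a sum of boundary counts with one variable confined to such an annulus. For $x_{21},x_{31},x_{32},x_{33}$ these have the shape $N^{\ast}_\xi(\textbf{X},\Omega^{-1})$ and are controlled by Proposition~\ref{elem1}; for $x_{11}$, and by the symmetry $x_{11}\leftrightarrow x_{12}$ of \eqref{eq0} also for $x_{12}$, they have the shape $N_\xi(\textbf{X},\Omega^{-1})$ and are controlled by Proposition~\ref{prop3}. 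This gives $\mathcal B\ll\|\textbf{X}\|^\varepsilon(\Omega^{-1/2}+\|\textbf{X}\|^{-1/5})\sum_{i=1}^3\textbf{X}^{(\bm{\zeta}^{(i)})}$, which is acceptable once $\delta<\eta/2$ (using $\min_{ij}X_{ij}\le\|\textbf{X}\|^{1/2}$, valid under \eqref{good}, for the $\|\textbf{X}\|^{-1/5}$ term).

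\emph{Step 2: the main term.} The core of the argument is to show that $M_2=\mathcal{E}_\xi\,\tilde{\mathcal I}_\xi(\textbf{X})+O(\mathrm{err})$ and $\tilde M_3=\mathcal{E}_\xi\,\tilde{\mathcal I}_\xi(\textbf{X})+O(\mathrm{err})$, where $\tilde{\mathcal I}_\xi(\textbf{X})$ denotes $\mathcal{I}_\xi(\textbf{X})$ with the sharp cut-offs replaced by the weights $V$, and $\mathrm{err}$ has the form $(\min_{ij}X_{ij})^{-\delta}\|\textbf{X}\|^\varepsilon\sum_i\textbf{X}^{(\bm{\zeta}^{(i)})}$. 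For $M_2$ one substitutes $\tfrac1{a^2}S_\xi(0,0,a,c,z)=\tfrac1{a^2}\#\{y,w\ (\mathrm{mod}\ a):a\mid c^2+\xi^2ywz^2\}$ from Lemma~\ref{lem1}, detects this congruence by additive characters, and identifies the resulting sum over $a$ — carrying the smooth weight of length $\Omega$ — together with the sum over $c,z$ as a partial sum of the Euler product computed for $\mathcal{E}_\xi$ in \eqref{defsing}; the tail of the Euler product is estimated using \eqref{sing-bound} and the rapid decay of $\Phi_\xi(0,0,\cdot,\cdot,\cdot)$ and its logarithmic derivatives in Lemma~\ref{lem2}, and the remaining smooth summations are turned into the integral $\tilde{\mathcal I}_\xi(\textbf{X})$ by Poisson/Euler--Maclaurin with the same derivative bounds — here one uses that integrating $\tfrac1{|a|}V(a/A)V(c/C)\Phi_\xi(0,0,a,c,z)$ over $a,c,z$ reproduces $\tilde{\mathcal I}_\xi(\textbf{X})$ (carry out the $b$-integral in the latter first). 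For $\tilde M_3$ one argues in the same way starting from \eqref{main}: the square restrictions $fg=\square$, $d_2=\square$ and the triviality of the Jacobi symbol occurring there are precisely what the Gauss-sum evaluation in the proof of Lemma~\ref{lem3} attaches to $T(0,0,x,a)$, so the arithmetic part reassembles the same Euler product $\mathcal{E}_\xi$ and the smooth parts (with Lemma~\ref{lem4} in place of Lemma~\ref{lem2}) reassemble $\tilde{\mathcal I}_\xi(\textbf{X})$; the discrepancy $\tilde M_4$ between $\tilde M_3$ and the full diagonal contribution is already an admissible error term in Proposition~\ref{prop4}. Finally $\tilde{\mathcal I}_\xi(\textbf{X})$ is replaced by $\mathcal{I}_\xi(\textbf{X})$ at the cost of $\ll\Omega^{-1/2}\|\textbf{X}\|^\varepsilon\textbf{X}^{(\bm{\zeta}^{(1)})}$ via \eqref{estimates1}. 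I expect Step~2 to be the main obstacle, both because the quadratic-character arithmetic produced by the $c,z$-Poisson summation must be reconciled with the naive singular series $\mathcal{E}_\xi$ (and both Poisson routes shown to yield the same integral), and because the Euler-product tails and sum-to-integral comparisons must be kept power-saving uniformly throughout the range \eqref{good}.

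\emph{Step 3: combination.} Under \eqref{good} one has $A\asymp B\asymp C\asymp\|\textbf{X}\|$, $Y\asymp W$ and $YZ\asymp\|\textbf{X}\|$ up to factors $\|\textbf{X}\|^{O(\lambda)}$, so $\min_{ij}X_{ij}\asymp\min(Y,Z)\le\|\textbf{X}\|^{1/2}$ and each $\textbf{X}^{(\bm{\zeta}^{(i)})}\asymp\|\textbf{X}\|^2Y\asymp\mathcal{E}_\xi\mathcal{I}_\xi(\textbf{X})$. Combining Steps~1 and~2 with Proposition~\ref{prop4}, we obtain, for either choice, $N_\xi(\textbf{X})=\mathcal{E}_\xi\mathcal{I}_\xi(\textbf{X})+O\bigl(\mathcal R_i\,\textbf{X}^{(\bm{\zeta}^{(i)})}\bigr)+O\bigl((\min_{ij}X_{ij})^{-\delta}\|\textbf{X}\|^\varepsilon\textstyle\sum_i\textbf{X}^{(\bm{\zeta}^{(i)})}\bigr)$, with $\mathcal R_1\ll\Omega\|\textbf{X}\|^{11\lambda}A^{1/2}/Y$ or $\mathcal R_2\ll\Omega^4\bigl(\|\textbf{X}\|^{10\lambda}/Z+(YW)^{-1/21}\bigr)$. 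Since $Y\ge\min(Y,Z)$, the term $(YW)^{-1/21}=Y^{-2/21}$ is $\ll(\min_{ij}X_{ij})^{-2/21}$, while $\min\bigl(\Omega\|\textbf{X}\|^{11\lambda}A^{1/2}/Y,\ \Omega^4\|\textbf{X}\|^{10\lambda}/Z\bigr)\le\Omega^4\|\textbf{X}\|^{11\lambda}\min(A^{1/2}/Y,1/Z)\ll\Omega^4\|\textbf{X}\|^{-1/5-4\lambda}$ by \eqref{combinederror}. Hence, for $\lambda=10^{-6}$ and $\eta,\delta$ chosen small enough (e.g.\ $\eta<1/100$ and $\delta=\eta/3$), the more favourable of the two expansions produces an error $\ll(\min_{ij}X_{ij})^{-\delta}\|\textbf{X}\|^\varepsilon\sum_{i=1}^3\textbf{X}^{(\bm{\zeta}^{(i)})}$, which is Proposition~\ref{asymp-prop}.
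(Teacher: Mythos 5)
Your outer frame matches the paper's proof: the desmoothing in your Step 1 (via Propositions \ref{elem1} and \ref{prop3}, with the $x_{11},x_{12}$ boundary handled by symmetry) is exactly \eqref{desmooth}, and your Step 3, playing the two Poisson routes of Proposition \ref{prop4} against each other and using a $\min(A^{1/2}/Y,\,1/Z)$-type inequality as in \eqref{combinederror}, is how the paper concludes. The problem is that the entire analytic content of the proposition sits in your Step 2, and there you only state what has to be proved. The assertions $M_2=\mathcal{E}_\xi\,\tilde{\mathcal I}_\xi(\textbf{X})+O(\mathrm{err})$ and $\tilde M_3=\mathcal{E}_\xi\,\tilde{\mathcal I}_\xi(\textbf{X})+O(\mathrm{err})$, with errors that save a power of $\min_{ij}X_{ij}$ uniformly in the range \eqref{good} and in $\xi$, are the proposition; your sketched route (read the $a,c,z$-sum as a ``partial sum of the Euler product'' \eqref{defsing}, estimate a tail, then Poisson/Euler--Maclaurin) does not engage with the actual difficulty. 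In \eqref{diag} the arithmetic weight $\sum_{a_1a_2a_3=|a|,\,a_3\mid c^2}a_1(\xi^2z^2,a_2a_3)a_3\mu(a_2)$ couples $a$, $c$ and $z$, so the sum is not a truncated Euler product times a smooth integral, and the error is not an ``Euler-product tail''; getting a power saving requires isolating the secondary polar contributions of this coupled sum. The paper does this by taking a triple Mellin transform, computing the Euler product of $L_\xi(s,u,v)$ explicitly, identifying the factorization $\zeta(s+1)\zeta(u)\zeta(v)\zeta(2+2s+u)\zeta(2s+u+v)H_\xi(s,u,v)$, checking that the triple residue at $s=0$, $u=1$, $v=1$ equals $\mathcal{E}_\xi$, and shifting contours to obtain the $\min(A,C,Z)^{-1/8}$-type saving in \eqref{errorterm}.

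The same remark applies, with more force, to $\tilde M_3$: reconciling the conditions $fg=\square$, $d_2=\square$ and the Jacobi symbols in \eqref{main} with the naive singular series is exactly the content of the paper's computation $\tilde L_\xi(s,u,v)=\zeta(u)\zeta(v)H_\xi(s,u,v)$ with $H_\xi(0,1,1)=\mathcal{E}_\xi$, and the fact that both Poisson routes produce the same main term is the (nontrivial, but verifiable) identity $\widehat G_\xi(0,1,1)=\widehat F_\xi(0,1,1)$, after which \eqref{estimates1} removes the smoothing at cost $\Omega^{-1/2}$. You flag precisely these points as ``the main obstacle'' and leave them unproved, so as written the proposal is a correct reduction plan rather than a proof: the decisive step --- the asymptotic evaluation of the diagonal terms $M_2$ and $\tilde M_3$ with uniform power-saving errors --- is missing, and the heuristic you offer in its place would not, without the multiple-Dirichlet-series (or an equivalent) analysis, yield the required savings. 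Your Steps 1 and 3 are sound (a single choice $\Omega=(\min_{ij}X_{ij})^{\eta}$ is workable, even though the paper optimizes $\Omega$ separately on each route), but they are the easy part.
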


The rest of this section is devoted to the proof. As a first step we estimate the effect of smoothing. To this end we recall the definition of $N_{\xi}(\textbf{X}, \Delta)$ and $N^{\ast}_{\xi}(\textbf{X}, \Delta)$: the former restricts one of the variables $c, y, w, z$ to a small interval, the latter the variable $a$. By symmetry and \eqref{good}, the bound in Proposition \ref{prop3} holds also when $b$ is restricted to a small interval.  Combining Propositions \ref{elem1} and \ref{prop3}, we obtain
\begin{equation}\label{desmooth}
N_{\xi}(\textbf{X})  = N_{\xi}(\textbf{X}, \Omega) + O\Big( (\| \textbf{X}\|^{\varepsilon} \Omega^{-1/2}  + \| \textbf{X}\|^{-1/5} )\sum_{i=1}^3\textbf{X}^{(\bm\zeta^{(i)})}\Big). 
\end{equation}

An evaluation of $N_{\xi}(\textbf{X}, \Omega)$ is given in Proposition \ref{prop4}, and we proceed to evaluate the main terms $M_2$ and $\tilde{M}_3$ defined in \eqref{diag} and \eqref{main}. 

\subsection{Computation of $M_2$}

Recall that 
$$M_2 = \sum_{a, c, z}V\Big(\frac{a}{A}\Big) V\Big(\frac{c}{C}\Big) V\Big(\frac{z}{Z}\Big)
 \frac{1}{a^2} \sum_{\substack{a_1a_2a_3 = |a|\\ a_3 \mid c^2}} a_1(\xi^2z^2, a_2a_3)a_3 \mu(a_2)  \Phi_{\xi}(0, 0, a, c, z) $$
where
$$\Phi_{\xi}(0, 0, a, c, z) =   \int_{\mathbb{R}^2}   V\Big(\frac{y}{Y}\Big) V\Big(\frac{w}{W}\Big) V\left(\frac{-(c^2 + \xi^2yw z^2)}{aB}\right) dy\, dw.$$
Let $$F_{\xi}(a, c, z) = V\Big( \frac{a}{A}\Big)V\Big( \frac{z}{Z}\Big)V\Big( \frac{c}{C}\Big) \Phi_{\xi}(0, 0, a, c, z).$$
By symmetry we can restrict $a, c, z$ to be positive at the cost of a factor $8$. We denote
by
$$\widehat{F}_{\xi}(s, u, v) =8 \int_{[0, \infty)^3}F_{\xi}(a, c, z) a^{s-1}  c^{u-1} z^{v-1} da\, dc\, dz$$
the   Mellin transform of  $F_{\xi}$. It is holomorphic in all three variables (since $V$ has compact support), and by
Lemma \ref{lem2} and partial integration we have
$$\widehat{F}_{\xi}(s, u, v)\ll_N  \frac{(AB)^{1/4}(YW)^{3/4}}{(Z|\xi|)^{1/2}}  A^{\Re s } C^{\Re u } Z^{\Re v} \Big(1 + \frac{|\Im s|}{S} + \frac{|\Im u|}{U} + \frac{|\Im  v|}{V}\Big) ^{-N}$$
 for all $N > 0$ where
 $$S =  \Omega\Big(1+ \frac{|\xi|^2 YWZ^2}{AB}\Big), \quad  U = \Omega\Big(1 + \frac{C^2}{AB}\Big), \quad V = \Omega. $$
 By \eqref{good} we have $S, U \ll \Omega  \| \textbf{X}\|^{10\lambda}$. Let us also define
$$L_{\xi}(s, u, v) =  \sum_{a_1, a_2, a_3}\sum_{z} \sum_{a_3 \mid c^2} \frac{ (\xi^2z^2, a_2a_3)  \mu(a_2)}{a_1^{1+s}a_2^{2+s} a^{1+s}_3c^u    z^v } , \quad \Re s > 0, \quad \Re u > 1, \quad \Re v > 1.    $$
By Mellin inversion we have
\begin{equation}\label{mellin}
M_2 =   \int_{(2)}\int_{(2)}\int_{(2)}  L_{\xi}(s, u, v)  \widehat{F}_{\xi}(s, u, v) \frac{ds\, du\, dv}{(2\pi i)^3}. 
\end{equation}
By a long, cumbersome and uninspiring, but completely straightforward computation based on geometric series we can compute $ L_{\xi}(s, u, v) $ as an Euler product. If $v_p(\xi) = r_p$, then $$L_{\xi}(s, u, v) = \zeta(s+1)\zeta(u) \zeta(v) \zeta(2+2s+u)\zeta(2s+u+v) H_{\xi}(s, u, v)$$ where the $p$-Euler factor of $H_{\xi}(s, u, v)$ is given by

{\tiny \begin{displaymath}
\begin{split}
&  \Big( 1 - p^{-3 - 6 s - 3 u - v} - p^{-2 - s - (2 s + u) r_p}+ p^{-1 - 
  s - (2 s + u) r_p} + p^{-2 - 3 s - 2 u - (2 s + u) r_p} - p^{-1 - 3 s - 
  2 u - (2 s + u) r_p} \\
  &- p^{-3 - 3 s - u - (2 s + u) r_p} + p^{-2 - 3 s - 
  u - (2 s + u) r_p} - p^{-3 - 2 s - u - (2 s + u) r_p} + p^{-2 - 2 s - 
  u - (2 s + u) r_p} + p^{-1 - 2 s - u - (2 s + u) r_p}\\
  & - p^{-2 s - 
  u - (2 s + u) r_p} + p^{-1 - s - u - (2 s + u) r_p} - p^{-s - 
  u - (2 s + u) r_p} + p^{-2 - s - v - (2 s + u) r_p} - p^{-1 - s - 
  v - (2 s + u) r_p} \\
  &- p^{-2 - 3 s - 2 u - v - (2 s + u) r_p}+ p^{-1 - 
  3 s - 2 u - v - (2 s + u) r_p} + p^{-3 - 3 s - u - 
  v - (2 s + u) r_p} - p^{-2 - 3 s - u - v - (2 s + u) r_p}\\
  & - p^{-2 - 
  2 s - u - v - (2 s + u) r_p} - p^{-1 - 2 s - u - 
  v - (2 s + u) r_p} + p^{-2 s - u - v - (2 s + u) r_p} - p^{-1 - s - u - 
  v - (2 s + u) r_p} + p^{-s - u - v - (2 s + u) r_p} \\
  &+ p^{-3 - 2 s - u - 
  v - (2 s + u) r_p} - p^{-2 - 3 s - 2 u } + p^{-2 - 5 s - 3 u - 
  v } - p^{-3 - 5 s - 2 u - v} - p^{-3 s - 2 u - v} + p^{-3 - 4 s - 
  2 u} + p^{-2 - 4 s - 2 u - v} \\
  &+ p^{-1 - 4 s - 2 u - v} - p^{-2 - 
  2 s - u} - p^{-1 - 2 s - u} - p^{-2 s - u - v} + p^{-3 - 3 s - 
  u} + p^{-1 - 3 s - u - v} + p^{-s - u} - p^{-1 - s}\Big)\Big(1 - \frac{1}{p^{2s+u}}\Big)^{-1}.
\end{split}
\end{displaymath}}
For $p \nmid \xi$ (i.e.\ $r_p = 0$) this simplifies considerably as
\begin{displaymath}
\begin{split}
&1 - p^{-2 - s} - p^{-3 - 2 s - u} + p^{-1 - s - u} + p^{-2 - s - 
  v} - p^{-1 - s - v} + p^{-3 - 4 s - 2 u - v} - p^{-2 - 3 s - 2 u - 
  v} \\
  &+ p^{-3 - 3 s - u - v}  + p^{-3 - 2 s - u - v} - p^{-2 - 2 s - 
  u - v} - p^{-1 - 2 s - u - v} - p^{-1 - s - u - v} + p^{-s - u - v}.
  \end{split}
\end{displaymath}
In particular, we see that $L_{\xi}(s, u, v)$ is holomorphic in 
\begin{equation}\label{region}
 \Re s \geq -1/5, \quad \Re v \geq 4/5, \quad \Re u \geq 4/5
 \end{equation}
except for polar divisors at $s=0$, $u=1$, $v=1$; away from the polar divisors it is (crudely) bounded by $\tau(\xi) ((1+|s|)(1 + |u|)(1 + |v|))^{1/8}$ in this region. Another computation shows that
$$\underset{s=0}{\text{res}}\, \underset{u=1}{\text{res}}\, \underset{v=1}{\text{res}} L_{\xi}(s, u, v) =  \prod_p \Big(1 + \frac{1+p + p^2 - p^{2-r_p}}{p(1+p+p^2)}\Big) = \mathcal{E}_{\xi}.$$
Shifting contours to the left in \eqref{mellin} we conclude that
$$M_2 =  \mathcal{E}_{\xi} \widehat{F}_{\xi}(0, 1, 1)  + O\Big(\Omega^{3}  |\xi|^{\varepsilon}(AB)^{1/4} C(YW)^{3/4} Z^{1/2}\Big( \| \textbf{X}\|^{30\lambda}(A^{-1/8} + C^{-1/8} )+ Z^{-1/8}\Big)\Big).$$
Using again  \eqref{good} and the now familiar device based on \eqref{trick}, we can write the error term as 
\begin{equation}\label{errorterm}
\| \textbf{X}\|^{\varepsilon} \Omega^{3}  \big( \textbf{X}^{(\frac{5}{8}, \frac{1}{8}, \frac{1}{4})} + \textbf{X}^{(\frac{3}{4}, \frac{1}{8}, \frac{1}{8})}\big)\min(A, C, Z)^{-1/10}.
\end{equation}
By definition and using symmetry again to remove the factor 8, we have 
\begin{equation*}
\begin{split}
&\widehat{F}_{\xi}(0, 1, 1) 
 =   \int_{\mathbb{R}^5}V\Big(\frac{a}{A}\Big)V\Big(\frac{z}{Z}\Big)V\Big(\frac{c}{C}\Big)    V\Big(\frac{y}{Y}\Big) V\Big(\frac{w}{W}\Big) V\left(\frac{-(c^2 + \xi^2ywz^2 )}{aB}\right) dy\, dw \, \frac{da}{|a|} \, dz\, dc. \\
\end{split}
\end{equation*}
By Fourier inversion we have
\begin{displaymath}
\begin{split}
V\left(\frac{-(c^2 + \xi^2ywz^2) }{a B}\right)& = \int_{\mathbb{R}}\int_{\mathbb{R}}V(b) e(b\alpha) db\, e\Big(  \alpha \frac{c^2 + \xi^2ywz^2 }{a B}\Big) d\alpha \\
&= |a|  \int_{\mathbb{R}}\int_{\mathbb{R}} V\Big(\frac{b}{B}\Big) e(ba \alpha) db\, e\big(  \alpha (c^2 + \xi^2ywz^2 )\big) d\alpha,
\end{split}
\end{displaymath}
so that $\widehat{F}_f(0, 1, 1)$ equals 
\begin{displaymath}
\begin{split}
  \int_{\mathbb{R} }\int_{\mathbb{R}^6} V\Big(\frac{a}{A}\Big)V\Big(\frac{z}{Z}\Big)V\Big(\frac{c}{C}\Big)    V\Big(\frac{y}{Y}\Big) V\Big(\frac{w}{W}\Big) V\Big(\frac{b}{B}\Big) e\big(  \alpha (ab+ c^2 + \xi^2ywz^2)\big)    db\,  dy\, dw \, da \, dz\, dc\, d\alpha.  \\
\end{split}
\end{displaymath}
It remains to remove the smoothing and quantify the error from replacing $V$ with the characteristic function on $[1/2, 1]$. By \eqref{estimates} and \eqref{estimates1}, we see that
\begin{equation}\label{final-desmooth}
\widehat{F}_f(0, 1, 1) - \mathcal{I}_{\xi}(\textbf{X}) \ll \Omega^{-1/2} \textbf{X}^{(\frac{1}{2}, \frac{1}{4}, \frac{1}{4})} \| \textbf{X}\|^{\varepsilon}.
\end{equation}
Combining this with \eqref{errorterm}, \eqref{desmooth}, the first part of Proposition \ref{prop4} and choosing $\Omega = \min(A,  C,  Z)^{1/50}$, we have shown
\begin{equation}\label{final1}
N_{\xi}(\textbf{X})- \mathcal{E}_{\xi} \mathcal{I}_{\xi} \ll \| \textbf{X} \|^{\varepsilon}\sum_{i=1}^3\textbf{X}^{(\bm\zeta^{(i)})}\Big( \min(\textbf{X}_{ij})^{-1/100} + \| \textbf{X} \|^{11\lambda} \frac{A^{1/2+1/50}}{Y}\Big) .
\end{equation}

\subsection{Computation of $\tilde{M}_3$}

The argument for $\tilde{M}_3$ is similar. Recall from \eqref{main} that
\begin{equation*}
\begin{split}
\tilde{M}_3 = 2\underset{(\xi_2, fg) = 1}{\sum_{\xi_1\xi_2 = \xi} \sum_{fg =\square}} &\sum_{\substack{wy \not= -\square\\ (g, y) = 1}} V\Big( \frac{fy}{Y}\Big) V\Big(\frac{g w}{W}\Big)  \sum_{ \substack{d_1, d_2 \\ (d_2,\xi_2yw) =1} } V\Big(\frac{d_1d_2^2fg\xi_1^2}{A}\Big)   \frac{\phi(d_2^2 fg \xi_1^2)}{d_1d_2^4 (fg)^{3/2}\xi_1^3} \\
&\times  \Psi(0, 0,  \xi^2fgyw, d_1d_2^2 fg\xi_1^2)
\end{split}
\end{equation*}
where $\Psi$ satisfies the bounds of Lemma \ref{lem4}. Recall that all variables run over positive integers, except for $y, w$ that run over positive and negative integers. 
We first add back the contribution $wy = -\square$ at the cost of an error
\begin{equation}\label{squares}
\ll \tau(\xi) \sqrt{YW}  \frac{\sqrt{CZAB} }{|\xi|^{1/2} (YW)^{1/4}}
\end{equation}
by estimating trivially the contribution of all variables. 

Since $V$ is even, we can rewrite $\tilde{M}_3$, up to the error \eqref{squares}, as 
\begin{equation*}
\begin{split}
4\sum_{\pm}\underset{(\xi_2, fg) = 1}{\sum_{\xi_1\xi_2 = \xi} \sum_{fg =\square}} &\sum_{\substack{wy \not= -\square\\ (g, y) = 1}} V\Big( \frac{fy}{Y}\Big) V\Big(\frac{g w}{W}\Big)  \sum_{ \substack{d_1, d_2 \\ (d_2,\xi_2yw) =1} } V\Big(\frac{d_1d_2^2fg\xi_1^2}{A}\Big)   \frac{\phi(d_2^2 fg \xi_1^2)}{d_1d_2^4 (fg)^{3/2}\xi_1^3} \\
& \times \Psi(0, 0, \pm \xi^2fgyw, d_1d_2^2 fg\xi_1^2)
\end{split}
\end{equation*}
where now all variables run over positive integers. 

Let
$$G_{\xi}(a, y, w) = 4\sum_{\pm} V\Big(\frac{a}{A}\Big)V\Big(\frac{y}{Y}\Big) V\Big(\frac{w}{W}\Big) \Psi (0, 0, \pm \xi^2yw, a).$$
Then
\begin{equation}\label{then}
\begin{split}
\tilde{M}_3 = &\underset{(\xi_2, fg) = 1}{\sum_{\xi_1\xi_2 = \xi} \sum_{fg =\square}} \sum_{  (g, y) = 1} \sum_{ \substack{d_1, d_2 \\ (d_2,\xi_2yw) =1} }    \frac{\phi(d_2^2 fg \xi_1^2)}{d_1d_2^4 (fg)^{3/2}\xi_1^3} G(fy, gw, d_1d_2^2 fg \xi_1^2) \\
&+ O\big((CZAB)^{1/2}(YW)^{1/4}\big). 
\end{split}
\end{equation}
 As before, we denote by $\widehat{G}_{\xi}(s, u, v)$ the Mellin transform of $G_{\xi}(a, y, w) $; it is entire in all three variables and by Lemma \ref{lem4} satisfies
 $$\widehat{G}_{\xi}(s, u, v)\ll_N \frac{\sqrt{CZAB} }{|\xi|^{1/2} (YW)^{1/4}}
 A^{\Re s-1} Y^{\Re u - 1} W^{\Re v - 1} (1 + |s| + |u| + |v|)^{-N}.$$
 Next we define
 $$\tilde{L}_{\xi}(s, u, v) = \underset{(\xi_2, fg) = 1}{\sum_{\xi_1\xi_2 = \xi} \sum_{fg =\square}} \sum_{  (g, y) = 1} \sum_{  (d_2,\xi_2yw) =1 } \frac{\phi(d_2^2 fg \xi_1^2)}{ d_2^4 (fg)^{3/2}\xi_1^3 (d_2^2fg\xi_1^2)^s (fy)^u(gw)^v} $$
which is absolutely convergent in $\Re u, \Re v > 1$, $\Re s > -1/2$. Then by Mellin inversion we have 
 \begin{equation*} 
\begin{split}
\tilde{M}_3 = \int_{(2)}  \int_{(2)}  \int_{(2)}  \tilde{L}_{\xi}(s, u, v) \zeta(s+1) \widehat{G}_{\xi}(s, u, v) \frac{ds\, du \, dv}{(2\pi i)^3} .    \end{split}
\end{equation*}
As before, we analyze $\tilde{L}_{\xi}(s, u, v) $ by computing its Euler product expansion. A similarly long and cumbersome computation yields
$$\tilde{L}_{\xi}(s, u, v) = \zeta(u)\zeta(v) H_{\xi}(s, u, v)$$
where $H_{\xi}(s, u, v) \ll \tau(\xi)$ is holomorphic in the same region \eqref{region} and $H_{\xi}(0, 1, 1) = \mathcal{E}_{\xi}$. Shifting contours, we conclude as before
\begin{displaymath}
\begin{split}
\tilde{M}_3 - \mathcal{E}_{\xi} \widehat{G}_{\xi}(0, 1, 1) \ll 
\| \textbf{X}\|^{\varepsilon} \Omega^3 (CZAB)^{1/2}(YW)^{3/4} \min(Y, W, A)^{-1/10} 
\end{split}
\end{displaymath}
which also contains the error term from \eqref{then}. Unraveling the definitions and using symmetry to remove the factor $4$ and the $\pm$-sign, we see that $\widehat{G}_{\xi}(0, 1, 1) = \widehat{F}_{\xi}(0, 1, 1)$, so that by \eqref{final-desmooth} we get
\begin{displaymath}
\begin{split}
\tilde{M}_3 - \mathcal{E}_{\xi}\mathcal{I}_{\xi} \ll 
\| \textbf{X}\|^{\varepsilon} \textbf{X}^{(\frac{1}{2}, \frac{1}{4}, \frac{1}{4})}\big( \Omega^3 \min(Y, W, A)^{-1/10} + \Omega^{-1/2}\big). 
\end{split}
\end{displaymath}
 Combining this with  \eqref{desmooth} and the second part of Proposition \ref{prop4} and choosing $\Omega = \min(Y, W, A)^{1/50}$, we have shown
\begin{equation}\label{final2}
N_{\xi}(\textbf{X})- \mathcal{E}_{\xi} \mathcal{I}_{\xi} \ll \| \textbf{X} \|^{\varepsilon}\sum_{i=1}^3\textbf{X}^{(\bm\zeta^{(i)})}\Big( \min(\textbf{X}_{ij})^{-1/100} + \| \textbf{X} \|^{10\lambda} \frac{Y^{4/50}}{Z}\Big) .
\end{equation}
It remains to combine \eqref{final1} and \eqref{final2}. As in \eqref{combinederror} we conclude
 \begin{equation*}
 \min\Big(   \frac{A^{1/2 +1/50 }}{Y} , \frac{Y^{4/50} }{Z}\Big)  \leq \frac{(YAB)^{\lambda}A^{1/100} Y^{2/50}}{B^{1/4}} \ll  \| \textbf{X} \|^{-1/100-15\lambda}.
 \end{equation*}
This completes the proof of Proposition \ref{asymp-prop}.

\section{Introducing the height conditions}
 
Let $P > 1$ be a large  parameter. Let as before $\textbf{X} = (A, B, C, Y, W, Z)$, where all entries are restricted to powers of $2$. The condition that the entries of $\textbf{X}$ are powers of 2 will remain in force throughout this section. 
Let $\textbf{x} = (a, b, c, y, w, z) \in \mathbb{N}^6$, $\textbf{g} = (\eta, \xi) \in \mathbb{N}^2$.   For given $\textbf{x}$, $\textbf{g}$ let $\mathcal{X} = \mathcal{X}(P, \textbf{g}, \textbf{x})$ be the set of tuples $\textbf{X}$ satisfying
\begin{equation}\label{height}
\begin{split}
 \max(aA, bB, cC)^2 \max(yY, wW) \leq \frac{P}{\eta^2\xi}, \quad \max(yY, wW)^3 (zZ)^2 \leq \frac{P}{\eta^2 \xi^3}.
\end{split}
\end{equation} 
Fix some sufficiently small $\lambda > 0$. We call a pair $(\textbf{X}, \textbf{g} = (\eta, \xi))$ \emph{bad} if \eqref{good} is violated (i.e., one of these inequalities does not hold), otherwise we call it \emph{good}. We call it \emph{very good} if the following stronger version of \eqref{good} holds:
\begin{equation}\label{verygood}
\begin{split}
&\min(A, B, C) \geq \max(A, B, C) (\log P)^{-100}, \\
& \min(Y, W) \geq \max(Y, W) (\log P)^{-100}, \\
&\min(AB, C^2, YWZ^2) \geq \max(AB, C^2, YWZ^2) (\log P)^{-100}, \\
& |\xi| \leq (\log P)^{100},\\
& P(\log P)^{-100} \leq CYWZ  \leq P.
\end{split}
  \end{equation}
Clearly there are at most $O((\log P)^6)$ tuples $\textbf{X}$  (the entries being powers of 2) satisfying \eqref{height}. Moreover, it is easy to see that there are at most 
\begin{equation}\label{atmost}
\ll \log H (\log\log P)^6
\end{equation}
such tuples satisfying \eqref{height}, \eqref{verygood} and 
\begin{equation}\label{H}
\min(A, B, C, Y, W, Z) \leq H.
\end{equation}
Let $\mathcal{X}_{\text{bad}}(P, \textbf{g}, \textbf{x})$ be the set of $\textbf{X} \in \mathcal{X}(P, \textbf{g}, \textbf{x})$ such that $(\textbf{X}, \textbf{g})$ is bad. Let $\mathcal{X}_{H}(P, \textbf{g}, \textbf{x})$ be the set of $\textbf{X} \in \mathcal{X}(P, \textbf{g}, \textbf{x})$ such that \eqref{H} holds. Finally let 
$$\mathcal{X}^{\ast}(P, \textbf{g}, \textbf{x}) =  \mathcal{X}_{\text{bad}}(P, \textbf{g}, \textbf{x}) \cup \mathcal{X}_{H}(P, \textbf{g}, \textbf{x}). $$
We write $$N(P, \textbf{g}, \textbf{x}) = \sum_{\textbf{X} \in \mathcal{X}^{\ast}(P, \textbf{g}, \textbf{x}) } N_{\xi}(\textbf{X}).$$
Both $\mathcal{X}^{\ast}(P, \textbf{g}, \textbf{x})$ and $N(P, \textbf{g}, \textbf{x})$ depend on $H$, but this is not displayed in the notation. Our main result in this section is
\begin{prop}\label{prop-hyp} For $1 \ll H \leq P$ and $0 < \lambda < 1$ we have
\begin{equation}\label{prop7}
N(P, \textbf{g}, \textbf{x}) \ll \frac{P(1 + \log H)(\log P)^{\varepsilon}}{(\xi^{3/2} \eta^{2})^{99/100}(abcywz)^{1/4} }. 
\end{equation}
\end{prop}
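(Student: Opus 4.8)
The plan is to bound each summand $N_\xi(\textbf{X})$ by one of the elementary or upper-bound estimates available (Propositions~\ref{elem}, \ref{elem1}, \ref{prop2} and the intermediate bounds \eqref{div1}, \eqref{div2}, \eqref{trick}), to feed the height constraints \eqref{height} into the resulting monomial in the $X_{ij}$ so that it acquires the shape $P\cdot(\eta^2\xi^{3/2})^{-1}$ times negative powers of $a,b,c,y,w,z$ and of ``slack'' parameters measuring how far $\textbf{X}$ lies from the boundary of the region \eqref{height}, and finally to sum over the admissible dyadic tuples $\textbf{X}\in\mathcal{X}^{\ast}(P,\textbf{g},\textbf{x})$, the slack parameters producing convergent geometric series.

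For the per-box reduction, write $M_1=\max(aA,bB,cC)$ and $M_2=\max(yY,wW)$, so that \eqref{height} reads $M_1^2M_2\le P/(\eta^2\xi)$ and $M_2^3(zZ)^2\le P/(\eta^2\xi^3)$. Using $A\le M_1/a$, $B\le M_1/b$, $C\le M_1/c$, $Y\le M_2/y$, $W\le M_2/w$ and $(zZ)^2\le P/(\eta^2\xi^3M_2^3)$ in the bound of Proposition~\ref{elem}, together with the identity $M_1^{3/2}M_2^{3/4}=(M_1^2M_2)^{3/4}$ and the arithmetic fact $(P/(\eta^2\xi))^{3/4}(P/(\eta^2\xi^3))^{1/4}=P/(\eta^2\xi^{3/2})$, one obtains
\begin{equation*}
  N_\xi(\textbf{X})\ll\|\textbf{X}\|^{\varepsilon}\,\textbf{X}^{(1/2,1/4,1/4)}\ll\|\textbf{X}\|^{\varepsilon}\,\frac{P}{\eta^2\xi^{3/2}(abc)^{1/2}(yw)^{3/4}z^{1/2}}\cdot\frac{1}{(s_1s_2s_3)^{1/2}(t_1t_2)^{3/4}u^{1/2}v^{3/4}},
\end{equation*}
where $s_1,s_2,s_3\ge1$ (resp.\ $t_1,t_2\ge1$) are the gaps $M_1/(aA),M_1/(bB),M_1/(cC)$ (resp.\ $M_2/(yY),M_2/(wW)$), with $\min_is_i=\min_jt_j=1$, and $u=(P/(\eta^2\xi^3M_2^3(zZ)^2))^{1/2}\ge1$, $v=P/(\eta^2\xi M_1^2M_2)\ge1$ are the slacks in the two inequalities \eqref{height}; the same computation applied to the term $\textbf{X}^{(3/4,0,1/4)}$ of Proposition~\ref{prop2} gives an analogous bound. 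Since $\eta,\xi,a,b,c,y,w,z\ge1$, the prefactor is already of the desired shape, $P/(\eta^2\xi^{3/2}(abc)^{1/2}(yw)^{3/4}z^{1/2})\le P/((\eta^2\xi^{3/2})^{99/100}(abcywz)^{1/4})$, with a surplus of $(\eta^2\xi^{3/2})^{1/100}(abc)^{1/4}(yw)^{1/2}z^{1/4}$ in the denominator available to absorb auxiliary losses.

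Next I would sum over $\textbf{X}\in\mathcal{X}^{\ast}=\mathcal{X}_{\mathrm{bad}}\sqcup(\mathcal{X}_H\cap\{\textbf{X}:(\textbf{X},\textbf{g})\text{ good}\})$. For the good boxes one invokes Proposition~\ref{prop2} in place of Proposition~\ref{elem}, so there is no divisor loss $\|\textbf{X}\|^{\varepsilon}$; parametrising $\textbf{X}$ by $(M_1,s_1,s_2,s_3;M_2,t_1,t_2;zZ)$ and trading $M_1$ for $v$ and $zZ$ for $u$ with $M_2$ fixed, the sums over $s_i,t_j,u,v$ are geometric and contribute $O(1)$, leaving the sum over the single dyadic parameter $M_2\ll(P/(\eta^2\xi^3))^{1/3}$, which in the unrestricted case costs one $\log P$. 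The restriction \eqref{H} confines the minimal coordinate to $[1,H]$; splitting into the (at most six) cases according to which of $A,B,C,Y,W,Z$ is minimal and, within each, according to which of $aA,bB,cC$ (resp.\ $yY,wW$) realises $M_1$ (resp.\ $M_2$), the smallness constraint either directly restricts one slack parameter to a dyadic range producing a saving that cancels the growing $M_2$-sum, or restricts $M_2$ itself to a range of length $\ll1+\log H$ up to a logarithm of the relevant fixed parameter, which is absorbed by the surplus. This yields the factor $1+\log H$, and $(\log P)^{\varepsilon}$ is a safety net for the remaining crude steps. For the bad boxes one uses the alternatives in \eqref{good}: if $|\xi|>\|\textbf{X}\|^{\lambda}$ then $\|\textbf{X}\|\le|\xi|^{1/\lambda}$, so there are only $\ll_{\varepsilon}|\xi|^{\varepsilon}$ such $\textbf{X}$ and $\|\textbf{X}\|^{\varepsilon}\le|\xi|^{\varepsilon/\lambda}$, both swallowed by the surplus $(\eta^2\xi^{3/2})^{1/100}$; if instead one of the imbalance conditions in \eqref{good} fails, then one of the sharper estimates \eqref{div1}, \eqref{div2} (or a geometric-mean combination thereof, as in the proof of Proposition~\ref{elem}) beats $\textbf{X}^{(1/2,1/4,1/4)}$ by a fixed power $\|\textbf{X}\|^{c\lambda}$, which dominates $\|\textbf{X}\|^{\varepsilon}$ and the $O((\log P)^{6})$ cardinality of $\mathcal{X}_{\mathrm{bad}}$ once $\varepsilon<c\lambda$.

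The main obstacle is the bookkeeping of this last step: for every way in which a box can be bad or can have a small coordinate one must select the elementary bound and the grouping of slack variables that makes the multidimensional dyadic sum collapse to the claimed size, and one must check in each case that the surplus $(\eta^2\xi^{3/2})^{1/100}(abc)^{1/4}(yw)^{1/2}z^{1/4}$ in the denominator suffices to absorb the divisor factors, the logarithms of the fixed parameters $a,b,c,y,w,z$, and any residual powers of $\xi$. The clean point that makes all of this go through is that the two height inequalities in \eqref{height} enter with exponents $3/4$ and $1/4$, and $(1/\xi)^{3/4}(1/\xi^3)^{1/4}=\xi^{-3/2}$ is precisely the $\xi$-power demanded by Peyre's constant, so the only room one needs in $\xi$ is the $1/100$ deliberately built into the statement.
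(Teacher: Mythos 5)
Your per-box reduction is exactly the computation the paper performs: feeding $A\le M_1/a$, \dots, $(zZ)^2\le P/(\eta^2\xi^3M_2^3)$ into Proposition~\ref{elem} (resp.\ Proposition~\ref{prop2}) and using $(P/(\eta^2\xi))^{3/4}(P/(\eta^2\xi^3))^{1/4}=P/(\eta^2\xi^{3/2})$ reproduces the two displays in the paper's proof, and your identification of the surplus $(\eta^2\xi^{3/2})^{1/100}(abc)^{1/4}(yw)^{1/2}z^{1/4}$ is the right bookkeeping device. Your treatment of the good boxes in $\mathcal{X}_{H}$ is, however, organized quite differently from the paper: you sum directly over boxes using the slack parameters and show that the surviving $M_2$-range has dyadic length $O(1+\log H)$, whereas the paper uses the uniform bound $P/(\eta^2\xi^{3/2}(abcywz)^{1/4})$ per box together with the counting estimate \eqref{atmost} for very good boxes and a $(\log P)^{-10}$ per-box saving for good-but-not-very-good boxes. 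Your route can be made to work, with one caveat: for the term $\textbf{X}^{(\frac34,0,\frac14)}$ there is no surplus in $a,b$, so when $A$ or $B$ is the small coordinate a stray $\log a$ cannot be absorbed; one must instead invoke the existence constraint $M_1\ge aA\ge a$, after which the relevant $M_2$-range again has length $O(1+\log H)$.

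The genuine gap is in your treatment of $\mathcal{X}_{\mathrm{bad}}$. The claimed mechanism --- that failure of an imbalance condition in \eqref{good} forces one of \eqref{div1}, \eqref{div2} (or a geometric mean of them) to beat $\textbf{X}^{(\frac12,\frac14,\frac14)}$ by a fixed power of $\|\textbf{X}\|$ --- is false: for $(A,B,C,Y,W,Z)=(T^{1/2},T^{3/2},T,T^{3/4},T^{3/4},T^{1/4})$ and $\xi=1$ the first condition of \eqref{good} fails badly, yet $\min(ABC,CYWZ)$, $AB\sqrt{YW}+YWZ$ and $\textbf{X}^{(\frac12,\frac14,\frac14)}$ are all $\asymp T^{11/4}$. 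A power saving is indispensable here, since the factor $\|\textbf{X}\|^{\varepsilon}$ in Proposition~\ref{elem} is super-logarithmic while \eqref{prop7} only allows $(\log P)^{\varepsilon}$; but it cannot come from the box alone --- it comes from the interaction with the height conditions \eqref{height}, i.e.\ from badness forcing either a large slack ($s_i$, $t_j$, $u$, $v$: in the example above $s_1\approx T$) or a large arithmetic parameter, which is then paid for by the surplus. This is precisely what the paper proves, in contrapositive form: if $N_{\xi}(\textbf{X})\ge P^{1-\lambda/100+\varepsilon}$, every step of the chain $N_\xi(\textbf{X})\ll P^{\varepsilon}(ABC)^{1/2}(YW)^{3/4}Z^{1/2}\ll P^{1+\varepsilon}/(\eta^2\xi^{3/2})$ must be essentially tight, which yields \eqref{cond1a} and hence \eqref{good}; consequently every bad box satisfies $N_\xi(\textbf{X})\ll P^{1-\lambda/100+\varepsilon}$, and this is interpolated with weights $1/100$, $99/100$ against the second bound $P^{1+\varepsilon}/(\xi^{3/2}\eta^2(abcz)^{1/2}(yw)^{3/4})$ --- which is exactly where the exponents $99/100$ in \eqref{prop7} originate. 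Your slack framework could be repaired along the same lines (badness together with all slacks and parameters of size $\|\textbf{X}\|^{o(\lambda)}$ implies \eqref{good} once $\|\textbf{X}\|$ is large, while bad boxes with $\|\textbf{X}\|$ of size $(\log P)^{O(1)}$ number only $O((\log\log P)^{6})$), but as written the justification of the decisive power saving for bad boxes is incorrect.
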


\begin{proof} We first claim that 
\begin{equation}\label{claim1}
\sum_{\textbf{X} \in \mathcal{X}_{\text{bad}}(P, \textbf{g}, \textbf{x}) } N_{\xi}(\textbf{X}) \ll P^{1- \lambda/100+2\varepsilon}.
\end{equation}
 Let $\delta = \lambda/100$ and  suppose that 
\begin{equation}\label{suppose}
N_{\xi}(\textbf{X}) \geq P^{1-\delta + \varepsilon}.
\end{equation}
 We show that this implies that \eqref{good} holds, so $\textbf{X}$ is good, and since there are only $O(P^{\varepsilon})$ tuples $\textbf{X}$, this implies the claim. 

From Proposition \ref{elem}  and \eqref{height} we have 
\begin{displaymath}
\begin{split}
N_{\xi}(\textbf{X}) & \ll P^{\varepsilon} (ABC)^{1/2}(YW)^{3/4}Z^{1/2}\\
& \leq  P^{\varepsilon}\max(A, B, C)^{3/2} \max(Y, W)^{3/4} \cdot \max(Y, W)^{3/4} Z^{1/2} \ll 
\frac{P^{1+\varepsilon}}{\eta^2\xi^{3/2}}. 
\end{split}
\end{displaymath}
Contemplating this sequence of inequalities, we conclude from \eqref{suppose} that
\begin{equation}\label{cond1a}
\begin{split}
& \xi \ll P^{5\delta},  \quad  \min(Y, W) \gg \max(Y, W) P^{-5\delta},\\
& \min(A, B, C) \gg \max(A, B, C)P^{-5\delta},\\
& C^2Y  \gg P^{1-5\delta},   \quad Y^3Z^2 \gg  P^{1-5\delta},
\end{split}
\end{equation}
which also implies that all of 
the  three blocks $AB$, $C^2$, $YWZ^2$ must be within $B^{20\delta}$. This  shows our claim that $\textbf{X}$ must satisfy \eqref{good} with $\lambda  = \delta/100$. This establishes \eqref{claim1}. We complement this with a second bound. From Proposition \ref{elem} and \eqref{height} we have \begin{displaymath}
\begin{split}
N_{\xi}(\textbf{X})& \ll P^{\varepsilon}   (ABC)^{1/2}Y^{3/4} \cdot W^{3/4} Z^{1/2}   \ll P^{\varepsilon}   
 \frac{P}{\xi^{3/2}\eta^2 (abcz)^{1/2} (yw)^{3/4}}.  
\end{split}
\end{displaymath}
Combining this with \eqref{claim1}, we conclude
$$\sum_{\textbf{X} \in \mathcal{X}_{\text{bad}}(P, \textbf{g}, \textbf{x}) } N_{\xi}(\textbf{X})  \ll P \min\Big( P^{-\lambda/200}, \frac{P^{\varepsilon}}{\xi^{3/2}\eta^2 (abcywz)^{1/2}}\Big) \ll \frac{P}{(\xi^{3/2} \eta^{2}(abcywz)^{1/2})^{99/100}}.$$
This is acceptable for   \eqref{prop7}.  

For the contribution of $\textbf{X} \in  \mathcal{X}_{H}(P, \textbf{g}, \textbf{x}) \setminus  \mathcal{X}_{\text{bad}}(P, \textbf{g}, \textbf{x}) $ we observe that Proposition \ref{prop2} is available. We note that 
$$\textbf{X}^{(\frac{3}{4}, 0, \frac{1}{4})} +  \textbf{X}^{(\frac{1}{2}, \frac{1}{4}, \frac{1}{4})} = (AB)^{1/4} C (YW)^{3/4} Z^{1/2} + (ABC)^{1/2}  (YW)^{3/4} Z^{1/2} \ll \frac{P}{\eta^2 \xi^{3/2} (abcywz)^{1/4}}$$
upon using \eqref{height}. By \eqref{atmost}, this is acceptable for the very good tuples. For tuples that are good, but not very good, the previous inequality along with the same argument as leading to \eqref{cond1a}  shows 
$$N_{\xi}(\textbf{X}) \ll \frac{P (\log P)^{-10}}{\eta^2 \xi^{3/2} (abcywz)^{1/4}}$$
for such $\textbf{X}$. Since there are at most $O((\log P)^6)$ such tuples, the proof is complete. \end{proof}

\part{Proof of Theorem \ref{thm1}}

\section{Geometry}\label{sec:geometry}

Table~\ref{tab:classification_spherical} contains the
nine smooth spherical Fano threefolds over $\Qbar$ that are not
horospherical (since the horospherical smooth Fano threefolds are all
either toric or flag varieties; see \cite[\S 6.3]{hofscheier}). The
notation $T$ and $N$ in \cite[Table~6.5]{hofscheier} and in our
Table~\ref{tab:classification_spherical} refers to the cases described
at the beginning of the introduction (Section~\ref{sec:intro}) and in
\cite[\S 10.2]{BBDG}.

\begin{table}[ht]
  \centering
  \begin{tabular}[ht]{ccccccc}
    \hline
    rk Pic & Hofscheier & Mori--Mukai & torsor equation & remark\\
    \hline\hline
    2 & $T_1 12$ & II.31 &  {$x_{11}x_{12}-x_{21}x_{22}-x_{31}x_{32}^2$} & eq. $\Gd_\mathrm{a}^3$-cpct.\\
    2 & $N_1 6, N_1 7$ & II.30 & {$x_{11}x_{12}-x_{21}^2-x_{31}x_{32}$} & eq. $\Gd_\mathrm{a}^3$-cpct.\\
    2 & $N_1 8$ & II.29 &  {$x_{11}x_{12}-x_{21}^2-x_{31}x_{32}x_{33}^2$} & variety $X_1$\\
    \hline
    3 & $T_1 18$ & III.24 & {$x_{11}x_{12}-x_{21}x_{22}-x_{31}x_{32}$} & \cite{BBDG} \\
    3 & $T_1 21$ & III.20  &
                             {$x_{11}x_{12}-x_{21}x_{22}-x_{31}x_{32}x_{33}^2$} &  \cite{BBDG} \\
    3 & $N_0 3$ & III.22  &  $x_{11}x_{12}-x_{21}^2-x_{31}x_{32}$ & variety $X_2$ \\
    3 & $N_1 9$ & III.19 &  $x_{11}x_{12}-x_{21}^2-x_{31}x_{32}$ & variety $X_3$\\
    \hline
    4 & $T_0 3$ &  IV.8  &  $x_{11}x_{12}-x_{21}x_{22}-x_{31}x_{32}$ & \cite{BBDG} \\
    4 & $T_1 22$ & IV.7  & $x_{11}x_{12}-x_{21}x_{22}-x_{31}x_{32}$ & \cite{BBDG} \\
    \hline
  \end{tabular}
  \caption{Smooth Fano threefolds that are spherical, but not horospherical}
  \label{tab:classification_spherical}
\end{table}

We proceed to describe the three $N$ cases $X_1, X_2, X_3$ in
Table~\ref{tab:classification_spherical} that are not equivariant
$\Gd_\mathrm{a}^3$-compactifications \cite{arXiv:1802.08090} in more
detail. From the description in the Mori--Mukai classification, we can
construct a split form over $\Qd$ in each case. We then recall from
Hofscheier's list the description using the Luna--Vust theory of spherical
embeddings.

The three varieties will be equipped with an action of
$G=\SL_2\times \Gd_\mathrm{m}$. Let $\varepsilon_1 \in \Xf(B)$ always
be a primitive character of $\Gd_\mathrm{m}$ composed with the natural
inclusion $\Xf(\Gd_\mathrm{m}) \to \Xf(B)$.

\subsection{$X_1$ of type II.29}\label{sec:geometry_X1}

Consider $\Pd^4_{\Qd}$ with coordinates $(z_{11} : z_{12} : z_{21} : z_{31} : z_{32})$
and the hypersurface $Q = \Vd(z_{11}z_{12} - z_{21}^2 - z_{31}z_{32}) \subset \Pd^4_{\Qd}$.
It contains the conic
\begin{align*}
  C_{33} = \Vd(z_{31}, z_{32}).
\end{align*}
Let $X_1$ be the blow-up of $Q$ in $C_{33}$. This is a smooth Fano threefold of type II.29.
 We may define an
  action of $G = \SL_2 \times \Gd_\mathrm{m}$ on $Q$ by
 \begin{equation*}
     (A,t)\cdot \rleft(
      \begin{pmatrix}
        z_{11} & z_{21} \\
        z_{21} & z_{12}
      \end{pmatrix}, z_{31}, z_{32}
      \right) = \left(A\cdot \begin{pmatrix}
        z_{11} & z_{21} \\
        z_{21} & z_{12}
      \end{pmatrix}\cdot A^\top, t\cdot z_{31}, t^{-1}\cdot z_{32}\rright)\text{,}
  \end{equation*}
 which turns $Q$ into a spherical variety.
The following description using
the Luna--Vust theory of spherical embeddings can be easily verified.  The
lattice $\Mm$ has basis
$(\alpha + \varepsilon_1, \alpha - \varepsilon_1)$. We
denote the corresponding dual basis of the lattice $\Nm$ by $(d_1, d_2)$. Then
there is one color with valuation $d = d_1 + d_2$, and the valuation cone
is given by $\Vm = \{v \in \Nm_\Qd : \langle v, \alpha \rangle \le 0\}$. Since $C_{32}$ is $G$-invariant, the variety $X_1$ is a spherical
  $G$-variety and the blow-up morphism
$X_1 \to Q$ can be described by a map of colored fans. The following
figure illustrates this.
\begin{align*}
  \begin{tikzpicture}[scale=0.7]
    \clip (-2.24, -2.24) -- (2.24, -2.24) -- (2.24, 2.24) -- (-2.24, 2.24) -- cycle;
    \fill[color=gray!30] (-3, 3) -- (3, -3) -- (-3, -3) -- cycle;
    \foreach \x in {-3,...,3} \foreach \y in {-3,...,3} \fill (\x, \y) circle (1pt);
    \draw (1, 1) circle (2pt);
    \draw (-1, 0) circle (2pt);
    \draw (0, -1) circle (2pt);
    \draw (-1, -1) circle (2pt);
    \draw (0,0) -- (3,3);
    \draw (0,0) -- (-3, 0);
    \draw (0,0) -- (0, -3);
    \draw (0,0) -- (-3,-3);
    \path (-1, 0) node[anchor=north] {{\tiny{$u_{31}$}}};
    \path (0, -1) node[anchor=west] {{\tiny{$u_{32}$}}};
    \path (-1, -1) node[anchor=north west] {{\tiny{$u_{33}$}}};
    \path (1, 1) node[anchor=south east] {{\tiny{$d$}}};
    \begin{scope}
      \clip (0,0) -- (1,1) -- (-1,0) -- cycle; \draw (0,0) circle (9pt);
    \end{scope}
    \begin{scope}
      \clip (0,0) -- (-1, 0) -- (-1, -1) -- cycle; \draw (0,0) circle (13pt);
    \end{scope}
    \begin{scope}
      \clip (0,0) -- (-1, -1) -- (0, -1) -- cycle; \draw (0,0) circle (9pt);
    \end{scope}
    \begin{scope}
      \clip (0,0) -- (0,-2) -- (2,2) -- cycle; \draw (0,0) circle (13pt);
    \end{scope}
  \end{tikzpicture}
  &&
  \begin{tikzpicture}[scale=0.7]
    \clip (-0.5, -2.24) -- (0.5, -2.24) -- (0.5, 2.24) -- (-0.5, 2.24) -- cycle;
    \path (0,0) node {{$\longrightarrow$}};
  \end{tikzpicture}
  &&
  \begin{tikzpicture}[scale=0.7]
    \clip (-2.24, -2.24) -- (2.24, -2.24) -- (2.24, 2.24) -- (-2.24, 2.24) -- cycle;
    \fill[color=gray!30] (-3, 3) -- (3, -3) -- (-3, -3) -- cycle;
    \foreach \x in {-3,...,3} \foreach \y in {-3,...,3} \fill (\x, \y) circle (1pt);
    \draw (1, 1) circle (2pt);
    \draw (-1, 0) circle (2pt);
    \draw (0, -1) circle (2pt);
    \draw (0,0) -- (3,3);
    \draw (0,0) -- (-3, 0);
    \draw (0,0) -- (0, -3);
    \path (-1, 0) node[anchor=north] {{\tiny{$u_{31}$}}};
    \path (0, -1) node[anchor=west] {{\tiny{$u_{32}$}}};
    \path (1, 1) node[anchor=south east] {{\tiny{$d$}}};
    \begin{scope}
      \clip (0,0) -- (1,1) -- (-1,0) -- cycle; \draw (0,0) circle (9pt);
    \end{scope}
    \begin{scope}
      \clip (0,0) -- (-1, 0) -- (0, -1) -- cycle; \draw (0,0) circle (13pt);
    \end{scope}
    \begin{scope}
      \clip (0,0) -- (0,-2) -- (2,2) -- cycle; \draw (0,0) circle (17pt);
    \end{scope}
  \end{tikzpicture}
\end{align*}

Here, $u_{31} = -d_1$ and $u_{32} = -d_2$ are the
valuations of the $G$-invariant prime divisors $\Vd(z_{31})$ and
$\Vd(z_{32})$, respectively, and $u_{33} = -d$
is the valuation of the exceptional divisor
$E_{33}$ over $C_{33}$.

We obtain a projective ambient toric variety $Y_1$. From the
description of $\Sigmamax$ in \cite[\S 10.3]{BBDG}, we deduce that
$Y_1$ is smooth and that $-K_{X_1}$ is ample on $Y_1$. Hence
assumption~\cite[(2.3)]{BBDG} holds, and we work with
$Y_1^\circ = Y_1' = Y_1'' = Y_1$.

Now consider $\Pd^5_{\Qd}$ with an additional variable $q$, and let
$Q' = \Vd(z_{11}z_{12} - z_{21}^2 - z_{31}z_{32}, q^2 - z_{31}z_{32})
\subset \Pd^5_{\Qd}$. The covering map $Q' \to Q$ given by forgetting
$q$ induces a covering map of blow-ups $X'_1 \to X_1$. The image of
the last map is the set
\begin{equation*}
  \{(x_{11}:\dots:x_{33}) \in X_1(\Qd) \mid x_{31}x_{32} = -\square\},
\end{equation*}
which is therefore thin; in particular the set $T_1$ from the
introduction is also thin.

\subsection{$X_2$ of type III.22}

   Let $W=\Pd^1_\Qd \times \Pd^2_\Qd$ with
  coordinates $(z_{01}:z_{02})$ and $(z_{11}:z_{12}:z_{21})$.
    Let $C_{32}$ be the curve $\Vd(z_{02},z_{11}z_{12}-z_{21}^2)$
  on $W$. Let $X_2$ be the blow-up of $W$ in
  $C_{32}$. This is a smooth Fano threefold of type III.22.
  We may define an action of $G = \SL_2 \times \Gd_\mathrm{m}$ on $W$ by
  \begin{equation*}
     (A,t)\cdot \rleft(z_{01},z_{02},
      \begin{pmatrix}
        z_{11} & z_{21} \\
        z_{21} & z_{12}
      \end{pmatrix}
      \right) = \left(t\cdot z_{01},z_{02},A\cdot \begin{pmatrix}
        z_{11} & z_{21} \\
        z_{21} & z_{12}
      \end{pmatrix}\cdot A^\top\rright)\text{,}
  \end{equation*}
  which turns $W$ into a spherical variety with the following
  Luna--Vust description.
  The lattice $\Mm$ has basis $(2\alpha, \varepsilon)$. We denote the
  corresponding dual basis of the lattice $\Nm$ by $(d,
  \varepsilon^*)$. Then there is one color with valuation $2d =
  \frac{1}{2}\alpha^\vee$, and the valuation cone is given by $\Vm =
  \{v \in \Nm_\Qd : \langle v, \alpha \rangle \le 0\}$. Since the
  curve $C_{32}$ is $G$-invariant, the variety $X_2$ is a spherical
  $G$-variety, and the blow-up morphism $X_2 \to W$ can be
  described by a map of colored fans.
  The right-hand arrow in the following figure illustrates this.
  
\begin{align*}
  \begin{tikzpicture}[scale=0.7]
    \clip (-2.24, -2.24) -- (2.24, -2.24) -- (2.24, 2.24) -- (-2.24, 2.24) -- cycle;
    \fill[color=gray!30] (0, 3) -- (0, -3) -- (-3, -3) -- (-3, 3) -- cycle;
    \foreach \x in {-3,...,3} \foreach \y in {-3,...,3} \fill (\x, \y) circle (1pt);
    \draw (2, 0) circle (2pt);
    \draw[densely dotted] (1, 0) circle (3pt);
    \draw (0, 1) circle (2pt);
    \draw (-1, 0) circle (2pt);
    \draw (0, -1) circle (2pt);
    \draw (-1, 1) circle (2pt);
    \draw[densely dotted] (0,0) -- (3,0);
    \draw (0,0) -- (0,3);
    \draw (0,0) -- (-4,4);
    \draw (0,0) -- (-3, 0);
    \draw (0,0) -- (0, -3);
    \path (-1, 0) node[anchor=north] {{\tiny{$u_{31}$}}};
    \path (0, -1) node[anchor=west] {{\tiny{$u_{01}$}}};
    \path (0, 1) node[anchor=west] {{\tiny{$u_{02}$}}};
    \path (2, 0) node[anchor=north] {{\tiny{$2d$}}};
    \path (1, 0) node[anchor=north] {{\tiny{$d$}}};
    \path (-1, 1) node[anchor=east] {{\tiny{$u_{32}$}}};
    \begin{scope}
      \clip (0,0) -- (0,1) -- (-1,1) -- cycle; \draw (0,0) circle (9pt);
    \end{scope}
    \begin{scope}
      \clip (0,0) -- (-1,1) -- (-1,0) -- cycle; \draw (0,0) circle (13pt);
    \end{scope}
    \begin{scope}
      \clip (0,0) -- (-1, 0) -- (0, -1) -- cycle; \draw (0,0) circle (9pt);
    \end{scope}
    \begin{scope}
      \clip (0,0) -- (1, 0) -- (0, 1) -- cycle; \draw[densely dotted] (0,0) circle (13pt);
    \end{scope}
    \begin{scope}
      \clip (0,0) -- (1, 0) -- (0, -1) -- cycle; \draw[densely dotted] (0,0) circle (17pt);
    \end{scope}
  \end{tikzpicture}
  &&
  \begin{tikzpicture}[scale=0.7]
    \clip (-0.5, -2.24) -- (0.5, -2.24) -- (0.5, 2.24) -- (-0.5, 2.24) -- cycle;
    \path (0,0) node {{$\longrightarrow$}};
  \end{tikzpicture}
  &&
  \begin{tikzpicture}[scale=0.7]
    \clip (-2.24, -2.24) -- (2.24, -2.24) -- (2.24, 2.24) -- (-2.24, 2.24) -- cycle;
    \fill[color=gray!30] (0, 3) -- (0, -3) -- (-3, -3) -- (-3, 3) -- cycle;
    \foreach \x in {-3,...,3} \foreach \y in {-3,...,3} \fill (\x, \y) circle (1pt);
    \draw (2, 0) circle (2pt);
    \draw (0, 1) circle (2pt);
    \draw (-1, 0) circle (2pt);
    \draw (0, -1) circle (2pt);
    \draw (-1, 1) circle (2pt);
    \draw[densely dotted] (0,0) -- (3,0);
    \draw (0,0) -- (0,3);
    \draw (0,0) -- (-4,4);
    \draw (0,0) -- (-3, 0);
    \draw (0,0) -- (0, -3);
    \path (-1, 0) node[anchor=north] {{\tiny{$u_{31}$}}};
    \path (0, -1) node[anchor=west] {{\tiny{$u_{01}$}}};
    \path (0, 1) node[anchor=west] {{\tiny{$u_{02}$}}};
    \path (2, 0) node[anchor=north] {{\tiny{$2d$}}};
    \path (-1, 1) node[anchor=east] {{\tiny{$u_{32}$}}};
    \begin{scope}
      \clip (0,0) -- (0,1) -- (-1,1) -- cycle; \draw (0,0) circle (9pt);
    \end{scope}
    \begin{scope}
      \clip (0,0) -- (-1,1) -- (-1,0) -- cycle; \draw (0,0) circle (13pt);
    \end{scope}
    \begin{scope}
      \clip (0,0) -- (-1, 0) -- (0, -1) -- cycle; \draw (0,0) circle (9pt);
    \end{scope}
    \begin{scope}
      \clip (0,0) -- (1, 0) -- (0, 1) -- cycle; \draw[densely dotted] (0,0) circle (13pt);
    \end{scope}
    \begin{scope}
      \clip (0,0) -- (1, 0) -- (0, -1) -- cycle; \draw[densely dotted] (0,0) circle (17pt);
    \end{scope}
  \end{tikzpicture}
  &&
  \begin{tikzpicture}[scale=0.7]
    \clip (-0.5, -2.24) -- (0.5, -2.24) -- (0.5, 2.24) -- (-0.5, 2.24) -- cycle;
    \path (0,0) node {{$\longrightarrow$}};
  \end{tikzpicture}
  &&
    \begin{tikzpicture}[scale=0.7]
    \clip (-2.24, -2.24) -- (2.24, -2.24) -- (2.24, 2.24) -- (-2.24, 2.24) -- cycle;
    \fill[color=gray!30] (0, 3) -- (0, -3) -- (-3, -3) -- (-3, 3) -- cycle;
    \foreach \x in {-3,...,3} \foreach \y in {-3,...,3} \fill (\x, \y) circle (1pt);
    \draw (2, 0) circle (2pt);
    \draw (0, 1) circle (2pt);
    \draw (-1, 0) circle (2pt);
    \draw (0, -1) circle (2pt);
    \draw (0,0) -- (0,3);
    \draw (0,0) -- (-3, 0);
    \draw (0,0) -- (0, -3);
    \path (-1, 0) node[anchor=north] {{\tiny{$u_{31}$}}};
    \path (0, -1) node[anchor=west] {{\tiny{$u_{01}$}}};
    \path (0, 1) node[anchor=west] {{\tiny{$u_{02}$}}};
    \path (2, 0) node[anchor=north] {{\tiny{$2d$}}};
    \begin{scope}
      \clip (0,0) -- (0,1) -- (-1,0) -- cycle; \draw (0,0) circle (9pt);
    \end{scope}
    \begin{scope}
      \clip (0,0) -- (-1, 0) -- (0, -1) -- cycle; \draw (0,0) circle (13pt);
    \end{scope}
  \end{tikzpicture}
\end{align*}
Here, $u_{01} = -\varepsilon^*$, $u_{02} = \varepsilon^*$, $u_{31} = -d$ are
the valuations of $\Vd(z_{01})$, $\Vd(z_{02})$, $\Vd(z_{11}z_{12}-z_{21}^2)$,
respectively, and $u_{32} = -d + \varepsilon^*$ is the valuation of the
exceptional divisor $E_{32}$ over $C_{32}$.

The dotted circles in the colored fan of $X_2$ specify
the standard small completion $Y_2$ of the ambient toric variety $Y_2^\circ$.
We note that $Y_2$ is singular and that it is not
possible to obtain a smooth small completion with the
construction from \cite[\S 10.3]{BBDG} in this case. We may, however,
construct a resolution of singularities $Y_2'' \to Y_2$ which
does not affect $X_2$. This is illustrated by the left-hand arrow
in the figure above.

\subsection{$X_3$ of type III.19}

Consider $\Pd^4_{\Qd}$ with coordinates $(z_{11} : z_{12} : z_{21} : z_{31} : z_{32})$
and the hypersurface $Q = \Vd(z_{11}z_{12} - z_{21}^2 - z_{31}z_{32}) \subset \Pd^4_{\Qd}$.
It contains the points
\begin{align*}
  P_{01} = \Vd(z_{11}, z_{12}, z_{21}, z_{31}), && P_{02} = \Vd(z_{11}, z_{12}, z_{21}, z_{32}).
\end{align*}
Let $X_3$ be the blow-up of $Q$ in $P_{01}$ and $P_{02}$. This is a smooth Fano threefold of type III.19.
Since $P_{01}$ and $P_{02}$ are $G$-invariant, the variety $X_3$ is a spherical
  $G$-variety and the blow-up morphism
$X_3 \to Q$ can be described by a map of colored fans. The right-hand arrow in the following
figure illustrates this.
\begin{align*}
    \begin{tikzpicture}[scale=0.7]
    \clip (-2.24, -2.24) -- (2.24, -2.24) -- (2.24, 2.24) -- (-2.24, 2.24) -- cycle;
    \fill[color=gray!30] (-3, 3) -- (3, -3) -- (-3, -3) -- cycle;
    \foreach \x in {-3,...,3} \foreach \y in {-3,...,3} \fill (\x, \y) circle (1pt);
    \draw (1, 1) circle (2pt);
    \draw (-1, 0) circle (2pt);
    \draw (0, -1) circle (2pt);
    \draw (1, -1) circle (2pt);
    \draw (-1, 1) circle (2pt);
    \draw[densely dotted] (1, 0) circle (3pt);
    \draw[densely dotted] (0, 1) circle (3pt);
    \draw[densely dotted] (0,0) -- (3,3);
    \draw (0,0) -- (-3,3);
    \draw (0,0) -- (3,-3);
    \draw (0,0) -- (-3, 0);
    \draw (0,0) -- (0, -3);
    \draw[densely dotted] (0,0) -- (0, 3);
    \draw[densely dotted] (0,0) -- (3, 0);
    \path (-1, 1) node[anchor=north east] {{\tiny{$u_{01}$}}};
    \path (1, -1) node[anchor=south west] {{\tiny{$u_{02}$}}};
    \path (-1, 0) node[anchor=north] {{\tiny{$u_{31}$}}};
    \path (0, -1) node[anchor=west] {{\tiny{$u_{32}$}}};
    \path (1, 1) node[anchor=south east] {{\tiny{$d$}}};
    \begin{scope}
      \clip (0,0) -- (-1,1) -- (-1,0) -- cycle; \draw (0,0) circle (9pt);
    \end{scope}
    \begin{scope}
      \clip (0,0) -- (-1, 0) -- (0, -1) -- cycle; \draw (0,0) circle (13pt);
    \end{scope}
    \begin{scope}
      \clip (0,0) -- (0,-2) -- (2,-2) -- cycle; \draw (0,0) circle (9pt);
    \end{scope}
    \begin{scope}
      \clip (0,0) -- (-2,2) -- (0,2) -- cycle; \draw[densely dotted] (0,0) circle (13pt);
    \end{scope}
    \begin{scope}
      \clip (0,0) -- (0,2) -- (2,2) -- cycle; \draw[densely dotted] (0,0) circle (9pt);
    \end{scope}
    \begin{scope}
      \clip (0,0) -- (2,2) -- (2,0) -- cycle; \draw[densely dotted] (0,0) circle (17pt);
    \end{scope}
    \begin{scope}
      \clip (0,0) -- (2,0) -- (2,-2) -- cycle; \draw[densely dotted] (0,0) circle (13pt);
    \end{scope}
  \end{tikzpicture}
      &&
  \begin{tikzpicture}[scale=0.7]
    \clip (-0.5, -2.24) -- (0.5, -2.24) -- (0.5, 2.24) -- (-0.5, 2.24) -- cycle;
    \path (0,0) node {{$\longrightarrow$}};
  \end{tikzpicture}
  &&
  \begin{tikzpicture}[scale=0.7]
    \clip (-2.24, -2.24) -- (2.24, -2.24) -- (2.24, 2.24) -- (-2.24, 2.24) -- cycle;
    \fill[color=gray!30] (-3, 3) -- (3, -3) -- (-3, -3) -- cycle;
    \foreach \x in {-3,...,3} \foreach \y in {-3,...,3} \fill (\x, \y) circle (1pt);
    \draw (1, 1) circle (2pt);
    \draw (-1, 0) circle (2pt);
    \draw (0, -1) circle (2pt);
    \draw (1, -1) circle (2pt);
    \draw (-1, 1) circle (2pt);
    \draw[densely dotted] (0,0) -- (3,3);
    \draw (0,0) -- (-3,3);
    \draw (0,0) -- (3,-3);
    \draw (0,0) -- (-3, 0);
    \draw (0,0) -- (0, -3);
    \path (-1, 1) node[anchor=north east] {{\tiny{$u_{01}$}}};
    \path (1, -1) node[anchor=south west] {{\tiny{$u_{02}$}}};
    \path (-1, 0) node[anchor=north] {{\tiny{$u_{31}$}}};
    \path (0, -1) node[anchor=west] {{\tiny{$u_{32}$}}};
    \path (1, 1) node[anchor=south east] {{\tiny{$d$}}};
    \begin{scope}
      \clip (0,0) -- (-1,1) -- (-1,0) -- cycle; \draw (0,0) circle (9pt);
    \end{scope}
    \begin{scope}
      \clip (0,0) -- (-1, 0) -- (0, -1) -- cycle; \draw (0,0) circle (13pt);
    \end{scope}
    \begin{scope}
      \clip (0,0) -- (0,-2) -- (2,-2) -- cycle; \draw (0,0) circle (9pt);
    \end{scope}
    \begin{scope}
      \clip (0,0) -- (-2,2) -- (2,2) -- cycle; \draw[densely dotted] (0,0) circle (13pt);
    \end{scope}
    \begin{scope}
      \clip (0,0) -- (2,-2) -- (2,2) -- cycle; \draw[densely dotted] (0,0) circle (17pt);
    \end{scope}
  \end{tikzpicture}
      &&
  \begin{tikzpicture}[scale=0.7]
    \clip (-0.5, -2.24) -- (0.5, -2.24) -- (0.5, 2.24) -- (-0.5, 2.24) -- cycle;
    \path (0,0) node {{$\longrightarrow$}};
  \end{tikzpicture}
  &&
  \begin{tikzpicture}[scale=0.7]
    \clip (-2.24, -2.24) -- (2.24, -2.24) -- (2.24, 2.24) -- (-2.24, 2.24) -- cycle;
    \fill[color=gray!30] (-3, 3) -- (3, -3) -- (-3, -3) -- cycle;
    \foreach \x in {-3,...,3} \foreach \y in {-3,...,3} \fill (\x, \y) circle (1pt);
    \draw (1, 1) circle (2pt);
    \draw (-1, 0) circle (2pt);
    \draw (0, -1) circle (2pt);
    \draw (0,0) -- (3,3);
    \draw (0,0) -- (-3, 0);
    \draw (0,0) -- (0, -3);
    \path (-1, 0) node[anchor=north] {{\tiny{$u_{31}$}}};
    \path (0, -1) node[anchor=west] {{\tiny{$u_{32}$}}};
    \path (1, 1) node[anchor=south east] {{\tiny{$d$}}};
    \begin{scope}
      \clip (0,0) -- (1,1) -- (-1,0) -- cycle; \draw (0,0) circle (9pt);
    \end{scope}
    \begin{scope}
      \clip (0,0) -- (-1, 0) -- (0, -1) -- cycle; \draw (0,0) circle (13pt);
    \end{scope}
    \begin{scope}
      \clip (0,0) -- (0,-2) -- (2,2) -- cycle; \draw (0,0) circle (17pt);
    \end{scope}
  \end{tikzpicture}
\end{align*}

Here, $u_{31} = -d_1$ and $u_{32} = -d_2$ are the valuations of $\Vd(z_{31})$
and $\Vd(z_{32})$, respectively, and $u_{01}$ and $u_{02}$ are the valuations
of the exceptional divisors $E_{01}$ over $P_{01}$ and $E_{02}$ over $P_{02}$,
respectively.

The dotted circles in the colored fan of $X_3$ specify the singular standard
small completion $Y_3$ of the ambient toric variety $Y_3^\circ$. Again, it is
not possible to obtain a smooth small completion with the construction from
\cite[\S 10.3]{BBDG}. The left-hand arrow in the figure above describes a
resolution of singularities $Y_3'' \to Y_3$ which does not affect $X_3$.

\section{Cox rings and torsors}

\subsection{Type II.29}

A Cox ring for $X_1$ is given by
  \begin{align*}
  \Rm(X_1) = \Qd[x_{11},x_{12},x_{21},x_{31},x_{32},x_{33}]/(x_{11}x_{12}-x_{21}^2-x_{31}x_{32}x_{33}^2)
\end{align*}
with $\Pic X \cong \Z^2$ where
\begin{align*}
& \deg(x_{11}) = \deg(x_{12}) = \deg(x_{22}) = (1,0),\\
&\deg(x_{31}) = \deg(x_{32}) = (0,1),\quad \deg(x_{33}) = (1,-1).
\end{align*}
The anticanonical class is $-K_{X} = (2,1)$.
A universal torsor over $X_1$ is
\begin{equation*}
  \Tm_1 = \Spec\Rm(X_1) \setminus Z_{X_1}\text{,}
\end{equation*}
where
\begin{align*}
  Z_{X_1} = \Vd(x_{11},x_{12},x_{21},x_{33}) \cup \Vd(x_{31},x_{32})\text{.}
\end{align*}
  
\subsection{Type III.22}

A Cox ring for $X_2$ is given by
\begin{align*}
  \Rm(X_2) = \Qd[x_{01},x_{02},x_{11},x_{12},x_{21},x_{31},x_{32}]/(x_{11}x_{12}-x_{21}^2-x_{31}x_{32})
\end{align*}
with $\Pic X_2 \cong \Z^3$ where
\begin{align*}
 & \deg(x_{01})=(1,0,0), \quad  \deg(x_{02})=(1,0,-1)\text{,}\\
 & \deg(x_{11})=\deg(x_{12})=\deg(x_{21})=(0,1,0)\text{,}\\
 & \deg(x_{31})=(0,2,-1), \quad \deg(x_{32})=(0,0,1)\text{.}
\end{align*}
The anticanonical class is $-K_{X_2}=(2,3,-1)$. A universal torsor over $X_2$ is
\begin{equation*}
  \Tm_2 = \Spec\Rm(X_2) \setminus Z_{X_2}\text{,}
\end{equation*}
where
\begin{align*}
  Z_{X_2} &= \Vd(x_{11},x_{12},x_{21}) \cup \Vd(x_{01},x_{32}) \cup
  \Vd(x_{02},x_{31}) \cup \Vd(x_{01},x_{02})\text{.}
\end{align*}

Let $Y_2^\circ$ be the ambient toric variety of $X_2$; its rays are
\begin{equation*}
  (0, 0, 1, -1),\ 
  (0, 0, -1, 1),\ 
  (-1, -1, -2, 0),\ 
  (1, 0, 0, 0),\ 
  (0, 1, 0, 0),\ 
  (0, 0, 1, 0),\ 
  (0, 0, 0, 1),
\end{equation*}
corresponding to $x_{01},\dots,x_{32}$, respectively, and each of its nine
maximal cones is generated by the four rays corresponding to two of
$x_{11},x_{12},x_{21}$ and one of the pairs $x_{02},x_{32}$ or $x_{31},x_{32}$
or $x_{01},x_{31}$ from the maximal cones of the spherical fan. We have
\begin{equation*}
  Z_{Y_2^\circ}=\Vd(x_{11},x_{12},x_{21}) \cup \Vd(x_{01},x_{32}) \cup
  \Vd(x_{02},x_{31}) \cup \Vd(x_{01},x_{02})\text{.}
\end{equation*}

Let $Y_2$ be the standard small completion of $Y_2^\circ$. It has the same
rays as $Y_2^\circ$, and the two additional maximal cones generated by the
four rays corresponding to $x_{11},x_{12},x_{21}$ and $x_{01}$ or $x_{02}$
(see the spherical fan), which are both singular; the singular locus of
$Y_2^\circ$ corresponds to their intersection, the cone generated by
$(-1, -1, -2, 0),\ (1, 0, 0, 0),\ (0, 1, 0, 0)$. We have
\begin{equation*}
  Z_{Y_2} = \Vd(x_{11},x_{12},x_{21},x_{31}) \cup  \Vd(x_{11},x_{12},x_{21},x_{32}) \cup \Vd(x_{01}, x_{02}) \cup \Vd(x_{01}, x_{32}) \cup \Vd(x_{02}, x_{31}) \text{.}
\end{equation*}

A toric desingularization $\rho : Y_2'' \to Y_2$ is obtained by adding the ray
$(0, 0, -1, 0)$ (the primitive multiple of the sum of the rays of the singular
cone). Its Cox ring is
  \begin{equation*}
    \Rm(Y_2'')=\Qd[x_{01},\dots,x_{32},z_1]
  \end{equation*}
  with
  \begin{align*}
    & \deg(x_{01})=(0,0,1,1), \quad  \deg(x_{02})=(0,0,0,1)\text{,}\\
    & \deg(x_{11})=\deg(x_{12})=\deg(x_{21})=(0,1,0,0)\text{,}\\
    & \deg(x_{31})=(1,2,0,0), \quad \deg(x_{32})=(0,0,1,0), \quad \deg(z_1)=(1,0,1,0)
  \end{align*}
  and the irrelevant ideal
  \begin{equation*}
    Z_{Y_2''}=\Vd(x_{01},x_{02}) \cup \Vd(x_{01},x_{32}) \cup \Vd(x_{02},x_{31}) \cup \Vd(x_{31},z_1) \cup \Vd(x_{32},z_1)
    \cup \Vd(x_{11},x_{12},x_{21}).
  \end{equation*}

  Now let $Y_2' = Y_2''$. Then $X_2 \subset Y_2'$ is given in Cox coordinates
  by the homogeneous equation
  \begin{equation*}
    x_{11}x_{12}z_1-x_{21}^2z_1-x_{31}x_{32}=0,
  \end{equation*}
  and we have $\rho^*(-K_{X_2})=(\frac 3 2,3,\frac 5 2,2)$.

\subsection{Type III.19}

A Cox ring for $X_3$ is given by
\begin{align*}
  \Rm(X_3) = \Qd[x_{01}, x_{02}, x_{11}, x_{12}, x_{21}, x_{31}, x_{32}]/
  (x_{11}x_{12}-x_{21}^2-x_{31}x_{32})
\end{align*}
with $\Pic X_3 \cong \Z^3$ where
\begin{align*}
  &\deg(x_{01}) = (0,1,0), \quad \deg(x_{02}) = (0,0,1),\\
  &\deg(x_{11}) = \deg(x_{12}) = \deg(x_{21}) = (1,0,0),\\
  &\deg(x_{31}) = (1,-1,1), \quad \deg(x_{32}) = (1,1,-1).
\end{align*}
The anticanonical class is $-K_{X_3}=(3,1,1)$. A universal torsor over $X_3$ is
  \begin{equation*}
  \Tm_3 = \Spec\Rm(X_3) \setminus Z_{X_3}\text{,}
  \end{equation*}
where
\begin{align*}
  Z_{X_3} &= \Vd(x_{11},x_{12},x_{21}) \cup \Vd(x_{01}, x_{02}) \cup \Vd(x_{01}, x_{32}) \cup \Vd(x_{02}, x_{31}) \text{.}
\end{align*}
Its ambient toric variety $Y_3^\circ$ has the rays
\begin{equation*}
  (1, -1, 0, 0),\
  (-1, 1, 0, 0),\
  (-1, -1, -1, -1),\
  (0, 0, 0, 1),\
  (0, 0, 1, 0),\
  (1, 0, 0, 0),\
  (0, 1, 0, 0);
\end{equation*}
we have
\begin{equation*}
  Z_{Y_3^\circ} = \Vd(x_{11},x_{12},x_{21}) \cup \Vd(x_{01}, x_{02}) \cup \Vd(x_{01}, x_{32}) \cup \Vd(x_{02}, x_{31}) \text{.}
\end{equation*}

Its standard small completion $Y_3$ has two singular maximal cones
corresponding to $x_{0i},x_{11},x_{12},x_{21}$, but their intersection is
smooth, hence $Y_3$ has two isolated singularities; we have
\begin{equation*}
  Z_{Y_3} = \Vd(x_{11},x_{12},x_{21},x_{31}) \cup  \Vd(x_{11},x_{12},x_{21},x_{32}) \cup \Vd(x_{01}, x_{02}) \cup \Vd(x_{01}, x_{32}) \cup \Vd(x_{02}, x_{31}) \text{.}
\end{equation*}
Blowing up the singularities adds the
rays $(-1,0,0,0)$, $(0,-1,0,0)$ (half of the sum of the corresponding four
rays), giving $Y_3''$.  The Cox ring of $Y_3''$ has two extra generators
$z_1,z_2$, with degrees
\begin{align*}
  &\deg(x_{01}) = (0,0,0,0,1), \quad \deg(x_{02}) = (0,0,0,1,0),\\
  &\deg(x_{11}) = \deg(x_{12}) = \deg(x_{21}) = (0,0,1,0,0),\\
  &\deg(x_{31}) = (0,1,1,1,0), \quad \deg(x_{32}) = (1,0,1,0,1),\\
  &\deg(z_1)=(0,1,0,0,1), \quad \deg(z_2) = (1,0,0,1,0).
\end{align*}
We have
\begin{align*}
  Z_{Y_3''}={}
  &\Vd(z_2, z_1)\cup \Vd(z_2, x_{32}) \cup \Vd(z_1, x_{32}) \cup \Vd(z_2, x_{31})
    \cup \Vd(z_1, x_{31}) \cup \Vd(x_{32}, x_{21}, x_{12}, x_{11})\\
  & \cup \Vd(x_{31}, x_{21}, x_{12}, x_{11})
    \cup \Vd(z_2, x_{02}) \cup \Vd(x_{31}, x_{02})
    \cup \Vd(x_{21}, x_{12}, x_{11}, x_{02}) \\
  &  \cup \Vd(z_1, x_{01}) \cup \Vd(x_{32}, x_{01})
    \cup \Vd(x_{21}, x_{12}, x_{11}, x_{01}) \cup \Vd(x_{02}, x_{01}).
\end{align*}
Let $Y_3'=Y_3''$. Then
the equation for $X_3 \subset Y_3'$ is
\begin{equation*}
  x_{11}x_{12}z_1z_2-x_{21}^2z_1z_2-x_{31}x_{32}=0,
\end{equation*}
and we have $\rho^*(-K_{X_3}) = (2,2,3,3,3)$.

\section{Counting problems}\label{sec:counting_problems}

Applying the first part of this paper, we obtain the following counting problems, in
which $T_j$ is always the subset of $X_j(\Qd)$ where all Cox coordinates
are nonzero and, in case of $X_1$, where $x_{31}x_{33} \ne -\square$. For
simplicity, we write $N_j(B)$ for $N_{X_j(\Qd) \setminus T_j, H_j}(B)$ as
in the introduction, and we write $\{x, y\}$ to mean $x$ or $y$.

\begin{cor}\label{cor:counting_problems}
    {\rm (a)} We have
  \begin{align*}
    4 N_1(B) = \#\left\{\xv \in \Zd^6_{\ne 0} :
    \begin{aligned}
      &x_{11}x_{12}-x_{21}^2-x_{31}x_{32}x_{33}^3=0, \quad \max|\Pm_1(\xv)| \le B\\
      &\gcd(x_{11},x_{12},x_{21},x_{33})=\gcd(x_{31},x_{32})=1,\quad x_{31}x_{32}
      \ne -\square\\
    \end{aligned}
        \right\}\text{,}
  \end{align*}
  with
  \begin{equation*}
    \Pm_1(\xv) =\left\{
      \{x_{11},x_{12},x_{21}\}^2\{x_{31}, x_{32}\},
      \{x_{31}, x_{32}\}^3x_{33}^2
    \right\}\text{.}
  \end{equation*}
  \\
  {\rm (b)} We have
  \begin{align*}
    8 N_2(B) = \#\left\{\xv \in \Zd^7_{\ne 0} :
    \begin{aligned}
      &x_{11}x_{12}-x_{21}^2-x_{31}x_{32}=0, \quad \max|\Pm_2(\xv)| \le B\\
      &\gcd(x_{11},x_{12},x_{21})=\gcd(x_{01},x_{32})=\gcd(x_{02},x_{31})=\gcd(x_{01},x_{02})=1\\
    \end{aligned}
        \right\}\text{,}
  \end{align*}
  with
  \begin{equation*}
    \Pm_2(\xv) =\left\{
      \begin{aligned}
        &x_{01}^2\{x_{11},x_{12},x_{21}\}x_{31},\ 
        x_{02}^2\{x_{11},x_{12},x_{21}\}^3x_{32}, 
        \\
        &x_{01}x_{02}\{x_{11},x_{12},x_{21}\}^3,\ 
        x_{02}^2x_{31}^{3/2}x_{32}^{5/2},\ 
        x_{01}^2x_{31}^{3/2}x_{32}^{1/2}
      \end{aligned}
    \right\}\text{.}
  \end{equation*}
  \\
  {\rm (c)} We have
  \begin{align*}
    8 N_3(B) = \#\left\{\xv \in \Zd^7_{\ne 0} :
    \begin{aligned}
      &x_{11}x_{12}-x_{21}^2-x_{31}x_{32}=0, \quad \max|\Pm_3(\xv)| \le B\\
      &\gcd(x_{11},x_{12},x_{21})=\gcd(x_{01},x_{32})=\gcd(x_{02},x_{31})=\gcd(x_{01},x_{02})=1\\
    \end{aligned}
    \right\}\text{,}
  \end{align*}
  with
  \begin{equation*}
    \Pm_3(\xv) =\left\{
      \begin{aligned}
        &x_{01}^2\{x_{11},x_{12},x_{21}\}^2x_{31},\
        x_{02}^2\{x_{11},x_{12},x_{21}\}^2x_{32},\\
        &x_{01}x_{02}\{x_{11},x_{12},x_{21}\}^3,\
        x_{01}^2x_{31}^2x_{32},\
        x_{02}^2x_{31}x_{32}^2
      \end{aligned}
    \right\}\text{.}
  \end{equation*}
\end{cor}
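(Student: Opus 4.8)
The plan is to derive all three counting problems from Proposition~\ref{prop:countingproblem_abstract}, feeding in the geometric data assembled above. First I would verify that each $X_j$ fits the framework of Section~\ref{sec:desing} through Remark~\ref{rem:smfano1}: it is a smooth projective Fano threefold, so $-K_{X_j}$ is big and semiample, its Picard group is free of the stated rank, and its Cox ring is the listed polynomial ring modulo the single relation $\Phi_j$, which has integral coefficients. As ambient toric variety I take $Y_1' = Y_1$ (already smooth, so that $Y_1^\circ = Y_1 = Y_1''$ and no resolution is needed) and $Y_2' = Y_2''$, $Y_3' = Y_3''$, the toric resolutions described in Section~\ref{sec:geometry}; since each $Y_j''$ is projective over $\Spec\Zd$, assumption~\eqref{eq:proper_z} holds. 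The homogenized equations $\Phi_j'$ and the irrelevant ideals $Z_{Y_1}$, $Z_{Y_2''}$, $Z_{Y_3''}$, hence the primitive collections \eqref{eq:pcoll}, are precisely the ones recorded above.

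Next I would fix the metrization data of Section~\ref{subsec:metr}. For $X_1$ and $X_3$ the pulled-back anticanonical class is already integral, so one takes $l = 1$ and $L = \rho^*(-K_{X_j})$; for $X_2$ one has $\rho^*(-K_{X_2}) = (\tfrac32, 3, \tfrac52, 2) \notin \Zd^4$, so by Lemma~\ref{lemma:exL} one passes to $l = 2$ and $L = 2\rho^*(-K_{X_2}) = (3,6,5,4)$ — this is exactly the mechanism producing the half-integer exponents in $\Pm_2$, whose entries are the formal square roots of degree-$L$ monomials in $\Rm(Y_2'')$. In each case I would take $\Pm_j^l$ to be the (abbreviated) list of monic monomials of degree $L$ displayed in the statement and verify \eqref{eq:monicbpf} by the finite check that for every maximal cone $\sigma$ of $Y_j'$ some listed monomial involves only variables $x_\rho$ with $\rho \notin \sigma(1)$; one also notes that the degree-$L$ monomials omitted from $\Pm_j$ are geometric means of listed ones, so they affect neither basepoint-freeness nor $\max_F |F|_v$, and the count is unchanged whether one uses the short list or the full one. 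With these choices, Proposition~\ref{prop:countingproblem_abstract} gives, up to the factor $2^{-\rank\Pic Y_j'}$, the number of $\yv \in \Zd^{\Sigma_j'(1)}$ with $\Phi_j'(\yv) = 0$, $\max_{F \in \Pm_j^l}|F(\yv)|_\infty^{1/l} \le B$, $\pi'(\yv) \notin T_j$, and the primitive-collection coprimalities.

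It remains to put these counts in the stated form. The condition $\pi'(\yv) \notin T_j$ translates into the nonvanishing of all the original Cox coordinates $x_{ij}$, together with $x_{31}x_{32} \neq -\square$ in the case of $X_1$ (which involves only those coordinates); this yields the subscripts $\Zd^6_{\neq 0}$, $\Zd^7_{\neq 0}$. For $X_1$ nothing more is needed: $\Sigma_1'(1)$ is the set of six original rays, $\rank\Pic Y_1' = 2$, and one obtains (a). For $X_2$ and $X_3$ the auxiliary variables $z_i$ from the resolution must be eliminated. The torsor equations are $x_{11}x_{12}z_1 - x_{21}^2 z_1 - x_{31}x_{32} = 0$ and $x_{11}x_{12}z_1 z_2 - x_{21}^2 z_1 z_2 - x_{31}x_{32} = 0$. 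Together with the coprimalities $\gcd(z_i, x_{31}) = \gcd(z_i, x_{32}) = 1$ coming from the new primitive collections, these force $z_i \neq 0$ and then $|z_1| = 1$ (resp. $|z_1 z_2| = 1$), since $z_1$ (resp. $z_1 z_2$) divides $x_{31}x_{32}$ while being coprime to it. Hence the coprimalities involving $z_i$ become vacuous; multiplying the equation by $z_1$ (resp. $z_1 z_2$) and substituting $x_{31} \mapsto z_1 x_{31}$ (resp. $x_{31} \mapsto z_1 z_2 x_{31}$) absorbs the sign and, because $|z_i| = 1$, leaves all remaining coprimalities and all monomial heights unchanged; and the surviving primitive-collection conditions reduce, upon using the equation once more, to exactly $\gcd(x_{11},x_{12},x_{21}) = \gcd(x_{01},x_{32}) = \gcd(x_{02},x_{31}) = \gcd(x_{01},x_{02}) = 1$. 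Counting lifts, each solution of the reduced seven-variable system corresponds to exactly two (for $X_2$) respectively four (for $X_3$) admissible tuples $(z_i)$; combined with $\#\Sigma_j'(1) = 7 + 1$ respectively $7 + 2$, the normalization $2^{-\rank\Pic Y_j'}$ becomes $2^{-\rank\Pic X_j} = \tfrac18$, and one obtains (b) and (c).

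The main obstacle is the bookkeeping for (b) and (c): one has to make sure that the primitive collections coming from $Z_{Y_j''}$, the sign flip, the equivalence of the reduced coprimality system with the one in the statement (which genuinely uses the torsor equation), the behaviour of the fractional-exponent monomials under $z_i = \pm 1$, and the various powers of $2$ all fit together exactly. Checking \eqref{eq:monicbpf} for the short monomial lists $\Pm_j$ is routine but requires going through all maximal cones of each $Y_j'$.
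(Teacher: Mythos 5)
Your proposal is correct and follows essentially the same route as the paper: apply Proposition~\ref{prop:countingproblem_abstract} with $Y_1'=Y_1$ and $Y_j'=Y_j''$ for $j=2,3$ (with $l=1$ for $X_1,X_3$ and $l=2$ for $X_2$, which is indeed the source of the fractional exponents), then use the torsor equation and the coprimalities to force $z_1=\pm1$ (resp.\ $z_1,z_2=\pm1$), absorb the sign into the variables, simplify the remaining gcd conditions via the equation, and track the $2$-to-$1$ (resp.\ $4$-to-$1$) lift against the factor $2^{-\rank\Pic Y_j'}$ to land on $\tfrac18$. Your sign absorption via $x_{31}\mapsto z_1x_{31}$ (resp.\ $z_1z_2x_{31}$) differs only cosmetically from the paper's substitution of $(x_{11}z_1,x_{12}z_1,x_{21}z_1,x_{31}z_1)$, and your bookkeeping of the constants is consistent with the stated result.
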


\begin{proof}
  For $X_1$, we argue as in \cite{BBDG} since the ambient toric variety $Y_1$
  is regular.

  For $X_2$, we apply Proposition~\ref{prop:countingproblem_abstract}
  and obtain the counting problem
 \begin{align*}
    16 N_2(B) = \#\left\{(\xv,z_1) \in \Zd^8_{\ne 0} :
    \begin{aligned}
      &x_{11}x_{12}z_1-x_{21}^2z_1-x_{31}x_{32}=0, \quad \max|\Pm_2'(\xv,z_1)| \le B\\
      &\gcd(x_{11},x_{12},x_{21})=\gcd(x_{01},x_{32})=\gcd(x_{02},x_{31})=1\\
      &\gcd(x_{01},x_{02})=\gcd(x_{31},z_1)=\gcd(x_{32},z_1)=1\\
    \end{aligned}
    \right\}\text{}
  \end{align*}
  with
  \begin{equation*}
    \Pm_2'(\xv,z_1) =\left\{
      \begin{aligned}
        &x_{01}^2\{x_{11},x_{12},x_{21}\}x_{31}z_1^{1/2}, 
        x_{02}^2\{x_{11},x_{12},x_{21}\}^3x_{32}z_1^{3/2}, 
        x_{01}x_{02}\{x_{11},x_{12},x_{21}\}^3z_1^{3/2}, 
        \\
        &x_{02}^2 x_{31}^{3/2} x_{32}^{5/2},
        x_{01}^2 x_{31}^{3/2} x_{32}^{1/2}
      \end{aligned}
    \right\}\text{.}
  \end{equation*}
  But the equation together with $\gcd(x_{31}x_{32},z_1)=1$ implies
  $z_1= \pm 1$. After multiplying the equation with $z_1$, the substitution of
  $(x_{11}z_1, x_{12}z_1, x_{21}z_1, x_{31}z_1)$ by
  $(x_{11},x_{12},x_{21},x_{31})$ leads to our counting problem.

  For $X_3$, we similarly obtain
  \begin{align*}
    16 N_3(B) = \#\left\{(\xv,z_1,z_2) \in \Zd^9_{\ne 0} :
    \begin{aligned}
      &x_{11}x_{12}z_1z_2-x_{21}^2z_1z_2-x_{31}x_{32}=0, \quad \max|\Pm_3'(\xv,z_1)| \le B\\
      &\gcd(z_2, z_1)= \gcd(z_2, x_{32})=\gcd(z_1, x_{32})=\gcd(z_2,
      x_{31})=1\\  
      & \gcd(z_1, x_{31})=\gcd(x_{32}, x_{21}, x_{12}, x_{11})=1\\
      &\gcd(x_{31}, x_{21}, x_{12}, x_{11})=\gcd(z_2, x_{02})=\gcd(x_{31},
      x_{02})=1\\
      &\gcd(x_{21}, x_{12}, x_{11}, x_{02})=\gcd(z_1, x_{01})=\gcd(x_{32},
      x_{01})=1\\
      &\gcd(x_{21}, x_{12}, x_{11}, x_{01})=\gcd(x_{02}, x_{01})=1
    \end{aligned}
    \right\}\text{,}
  \end{align*}
  The height condition
  is given by the monomials
 \begin{equation*}
    \Pm_3'(\xv,z_1) =\left\{
      \begin{aligned}
        &x_{01}^2 \{x_{11},x_{12},x_{21}\}^2 x_{31} z_1 z_2^{2},\
        x_{02}^2 \{x_{11},x_{12},x_{21}\}^2 x_{32} z_1^{2} z_2 ,\\
        &x_{01} x_{02} \{x_{11},x_{12},x_{21}\}^3 z_1^{2} z_2^{2} ,\
        x_{02}^{2} x_{31} x_{32}^{2} z_1 ,\
        x_{01}^{2} x_{31}^{2} x_{32} z_2  \\
        &x_{01}^2 x_{31}^{2} x_{32} z_2 ,\
        x_{02}^2 x_{31} x_{32}^{2} z_1
      \end{aligned}
    \right\}\text{.}
  \end{equation*}
  In this counting problem, we observe that this equation
  together with $\gcd(z_1z_2,x_{31}x_{32})=1$ implies $z_1=\pm 1$ and $z_2=\pm
  1$. The torsor equation also allows us to simplify the coprimality
  conditions.
\end{proof}

\begin{remark}
  \label{rem:smfano2}
  The varieties $X_1$, $X_2$ and $X_3$ are as in
  Remark~\ref{rem:smfano1}, and in each case we have chosen $L$ as in
  the proof of Lemma~\ref{lemma:exL}. After having eliminated the
  additional variables in the proof of
  Corollary~\ref{cor:counting_problems}, we have obtained the same
  monomials $\Pm_1(\xv)$, $\Pm_2(\xv)$, $\Pm_3(\xv)$ as if we had
  directly applied \cite{BBDG} (disregarding that $Y$ is singular). In
  this case, \cite[Lemmas~3.8, 3.9, and 3.10]{BBDG} still apply, with
  the difference that the vertices of the polytopes are not
  necessarily integral. Moreover, \cite[Lemma~4.7]{BBDG} is also
  valid without the assumption that $Y$ is smooth.
\end{remark}

\section{Application: Proof of Theorem~\ref{thm1}}

\subsection{The analytic machinery}\label{sec:machinery}

Theorems 8.4, 9.2 and 10.1 in \cite{BBDG} provide an asymptotic formula for counting problems as in Corollary \ref{cor:counting_problems} under various assumptions and show in addition that the shape of the asymptotic formula  agrees with the Manin--Peyre prediction. We need a small variation of these results that we state in full detail for the reader's convenience.

Suppose that we are given a diophantine equation
 \begin{equation}\label{torsor}
 \sum_{i=1}^k  \prod_{j=1}^{J_i}   x_{ij}^{h_{ij}} = 0
 \end{equation}
 with certain $h_{ij} \in
\mathbb{N}$ 
  and height conditions 
  \begin{equation}\label{height1}
   \prod_{i=0}^{k} \prod_{j=1}^{J_i} |x_{ij}|^{\alpha^\nu_{ij}} \leq B \quad ( 1\le \nu \le M)
   \end{equation}
 for certain nonnegative exponents 
$\alpha^\nu_{ij}$ 
 whose variables are
  restricted by coprimality conditions 
  \begin{equation}\label{gcd}
  {\rm gcd}\{x_{ij}: (i,j)\in S_\rho\}  = 1 \quad (1\le \rho\le r)
  \end{equation}
for certain $S_{\rho} \subseteq \{(i,j) : i = 0, \ldots, k, j = 1, \ldots, J_i\}$, cf.\ \cite[(1.2) -- (1.4)]{BBDG}.  We write $J = J_0 + \ldots + J_k$. Let $N(B)$ denote the number of integral solutions to \eqref{torsor} with nonzero variables $\textbf{x}_j$ subject to \eqref{height1} and \eqref{gcd}.

 With these data, we define the following quantities. For $\textbf{g} \in \mathbb{N}^r$ write ${\bm \gamma} = (\gamma_{ij}) \in \mathbb{N}^J$, $\gamma_{ij} = \text{lcm}\{g_{\rho} \mid (i, j) \in S_{\rho}\}$ and $${\bm \gamma}^{\ast} = \Big( \prod_{j=1}^{J_i} \gamma_{ij}^{h_{ij}} \Big)_{1 \leq i \leq k} \in \mathbb{N}^k$$
 as in \cite[(8.11), (8.14)]{BBDG}.  
 
As in \cite[(5.1) -- (5.6)]{BBDG}, for $\textbf{b} \in \mathbb{N}^k$  define the (formal) singular series
$$  {\mathscr E}_{\mathbf b} = \sum_{q=1}^{\infty} \underset{a
    \bmod{q}}{\left.\sum \right.^{\ast}} \prod_{i=1}^k E_i(q,ab_i, \textbf{h}_i), \quad  E(q,a;{\mathbf h}) = q^{-n} \sum_{\substack{1\le x_j \le q\\ 1\le j\le n} }
  e\Big(\frac{ax_1^{h_1}x_2^{h_2}\cdots x_n^{h_n}}{q}\Big),
  $$
the (absolutely convergent) singular integral
$$ \mathscr{I}_{\mathbf{b}}(\mathbf{X}) = \langle\mathbf
  X_0\rangle\int_{-\infty}^\infty \prod_{i=1}^kI(b_i\beta,\mathbf X_i; \textbf{h}_i) \,\mathrm d\beta,\quad   I(\beta, {\mathbf Y};\mathbf h) = \int_{\mathscr Y} e(\beta
  y_1^{h_1}y_2^{h_2}\cdots y_n^{h_n})\,\mathrm d\mathbf y$$
 and the number   $\mathscr{N}_{\mathbf{b}}(\mathbf{X})$  of solutions
$\mathbf{x} \in \mathbb{Z}^J$ to \eqref{torsor} satisfying
$\frac{1}{2}X_{ij} \leq |x_{ij}| \leq X_{ij}$.

 As in \cite[(3.6), (7.1) -- (7.3)]{BBDG} define the block matrix
 \begin{equation*}
  \mathscr{A} = \left(
    \begin{matrix}
      \mathscr{A}_1 & \mathscr{A}_2\\
      \mathscr{A}_3 & \mathscr{A}_4
    \end{matrix}
  \right) \in \mathbb{R}^{(J+1) \times (M+k)}
\end{equation*}
where $  \mathscr{A}_1 = (\alpha_{ij}^{\nu}) \in \mathbb{R}_{\ge 0}^{J\times M}$ 
with $0 \leq i \leq k$, $1 \leq j \leq J_i$, $1 \leq \nu \leq M$, 
\begin{equation*}
  \mathscr{A}_2 = (e_{ij}^\mu) \in \mathbb{R}^{J\times k}
  \text{ with $e_{ij}^\mu=\begin{cases}\delta_{\mu=i}h_{ij}&\text{$i<k$, $\mu < k$,}
      \\-h_{kj}&\text{$i=k$, $\mu < k$,}\\
      -1&\text{$i<k$, $\mu = k$,}\\
      h_{kj}-1
      &\text{$i=k$, $\mu = k$,}
    \end{cases}$}
\end{equation*}
$\mathcal{A}_3 = (1, \ldots, 1) \in \mathbb{R}^{1 \times M}$ and $\mathcal{A}_4 = (0, \ldots, 0, -1) \in \mathbb{R}^{1\times k}$.  
Let $R = \text{rk}(\mathcal{A}_1)$ and $c_2 = J-R$ as in \cite[(7.5)]{BBDG}. Let $\mathscr{J}$ be a set of subsets of pairs $(i, j)$ with $0 \leq i \leq k$, $1 \leq j \leq J_i.$ 
For $H \geq 1$, some small fixed constant $0 < \lambda < 1$ and $\mathbf{b}, \mathbf{y} \in \mathbb{N}^{J}$  let
$N_{\mathbf{b}, \mathbf{y}}(B, H, \mathscr{J}
)$ be the number of solutions
$\mathbf{x} \in (\mathbb{Z}\setminus \{0\})^J$ satisfying the conditions
\begin{equation}\label{equation}
  \begin{split}
    &  \sum_{i=1}^k  \prod_{j=1}^{J_i}  (b_{ij} x_{ij})^{h_{ij}} = 0,\quad
    \prod_{i=0}^k \prod_{j=1}^{J_i} | y_{ij} x_{ij}|^{\alpha^\nu_{ij}} \leq B
    \quad (1 \leq \nu \leq N),
  \end{split}
\end{equation}
and at least one of the conditions
\begin{equation}\label{Hlambda}
  \begin{split}
    & \min_{ij} |x_{ij}| \leq H, \\
    & \min_{1 \leq i \leq k} \prod_{j =    1}^{J_i} |x_{ij}|^{h_{ij}} < \Bigl(\max_{1 \leq i \leq k} \prod_{j =   1}^{J_i} |2x_{ij}|^{h_{ij}}\Bigr)^{1-\lambda},\\
    & \min_{(ij) \in J}(|x_{ij}|) \leq \max_{(ij) \in J}(|2x_{ij}|) \max(|x_{ij}|)^{-\lambda}, \quad J 
    \in \mathscr{J}.
  \end{split}
\end{equation}
  Let $\mathscr{S}_{\textbf{y}}(B, H, \mathscr{J}
  )$ denote the set
of all $\mathbf{x} \in [1, \infty)^J$ that satisfy \eqref{Hlambda} and the $M$
inequalities in the second part of \eqref{equation}.

We choose a maximal linearly
independent set of $R$ rows $Z_{1}, \ldots, Z_{R}$ of the matrix
$(\mathscr{A}_1\, \mathscr{A}_2)$.  Let $Z_{R+1}, \ldots, Z_{J}$ be the
remaining rows of $(\mathscr{A}_1\, \mathscr{A}_2)$. As in \cite[(8.23), (8.24)]{BBDG} let
$\mathscr{B} = (b_{kl}) \in \mathbb{R}^{(J-R) \times R}$ be the unique matrix
with
\begin{equation*}
  \mathscr{B}  \left(\begin{smallmatrix}  Z_{1}  \\ \vdots \\   Z_{R} 
    \end{smallmatrix}\right) = \left(\begin{smallmatrix} Z_{R+1} \\ \vdots \\
      Z_{J} \end{smallmatrix}\right).
\end{equation*}
Under Hypothesis 2 below (cf.\ \eqref{rk}), the
last row $(\mathscr{A}_3\, \mathscr{A}_4)$ of $\mathscr{A}$ can be written as a linear
combination of $Z_1, \ldots, Z_R$, say
\begin{equation*}
 \sum_{\ell = 1}^R   b_{\ell} Z_{\ell} =  (\mathscr{A}_3\, \mathscr{A}_4).
\end{equation*}

Suppose these $R$ rows are indexed by
a set $I$ of pairs $(i, j)$ with $0 \leq i \leq k$, $1 \leq j \leq J_i$ with
$|I| = R$. As in \cite[(9.1)]{BBDG} let
\begin{equation*}
\Phi^{\ast}(\textbf{t}) =  \sum_{i = 1}^k\prod_{(i, j) \in I}  t_{ij}^{h_{ij}},
\end{equation*}
and let $\mathscr{F}$ be the affine $(R-1)$-dimensional hypersurface
$\Phi^{\ast}(\textbf{t}) = 0$ over $\mathbb{R}$. Let $\chi_I$ be the characteristic
function on the set
$$\prod_{(i, j) \in I} |t_{ij}|^{\alpha^\mu_{ij}} \leq 1, \quad 1 \leq \mu \leq N.$$
 and define the surface integral
  \begin{equation*}
    c_{\infty}    = 2^{J-R} \int_{\mathscr{F}} \frac{ \chi_I(\textbf{t}) }{ \|
      \nabla \Phi^{\ast}(\textbf{t})\|}\, {\rm d}\mathscr{F}\textbf{t}.
  \end{equation*}
Let
\begin{equation*}
  c_{\text{fin}}=   \prod_p \lim_{L \rightarrow \infty}\frac{1}{p^{L (J-1)}}
  \#\Bigg\{\textbf{x}  \bmod{ p^{L}} :
  \begin{array}{l}
    \displaystyle \sum_{i=1}^k \prod_{j=1}^{J_i}
    x_{ij}^{h_{ij}} \equiv 0 \bmod{p^{L}},\\
    (\{x_{ij} : (i, j) \in S_{\rho}\}, p) = 1 \text{ for } 1 \leq \rho \leq r
  \end{array}\Bigg\}
\end{equation*}
and
\begin{equation*}
  c^{\ast}  =   \text{vol}\Big\{ \textbf{r} \in [0, \infty]^{J-R}  :
  b_{\ell}  -\sum_{\iota=1}^{J-R}  r_{\iota}  b_{\iota \ell}   \geq 0
  \text{ for all }  1 \leq \ell \leq R \Big\} ,
  \end{equation*}
cf.\ \cite[(9.3), (8.36), (8.34)]{BBDG}.

Suppose that the following three hypotheses hold:\\

1)  \cite[Hypothesis 1]{BBDG} The singular series  $\mathscr{E}_{{\bm \gamma}^{\ast}}$ is absolutely
convergent and moreover
\begin{equation}\label{E}
    \mathscr{E}_{{\bm \gamma}^{\ast}} \ll  \gamma_1^{\beta_1} \gamma_2^{\beta_2}\cdots \gamma_k^{\beta_k}
  \end{equation}
  for some $\beta_1,\dots,\beta_k \le 1$. 
  Further, there exist finitely many $\bm\zeta_1 = (\zeta_{11}, \ldots, \zeta_{1k}), \ldots, \bm \zeta_t = (\zeta_{t1}, \ldots \zeta_{tk}) \in \mathbb R^{k}$ satisfying
  \begin{equation}\label{zeta1}
    \zeta_{\nu i} > 0  \text{ for all }  1 \leq i \leq k, \quad
    h_{ij} \zeta_{\nu i} < 1\text{ for all }i, j, \quad
    \sum_{i=1}^k \zeta_{\nu i} = 1
  \end{equation}
  for all $1 \leq \nu \leq t$,  real numbers $0 < \lambda \leq 1$, $\delta_1>0$ and $C\ge 0$ and a set $\mathscr{J}$  of subsets of pairs $(i, j)$ with $0 \leq i \leq k$, $1 \leq j \leq J_i,$   such that
  whenever $\mathbf{X} \in [1, \infty)^{J}$ obeys the conditions
  \begin{equation}\label{samesize}
  \begin{split}
   & \min_{0 \leq i \leq k} \mathbf X_i^{\mathbf h_i} \geq \bigl(\max_{1 \leq i
      \leq k} \mathbf X_i^{\mathbf h_i}\big)^{1-\lambda}, \\
      &   \min_{(ij) \in J}X_{ij} \geq \max_{(ij) \in J}X_{ij} \cdot \max(X_{ij})^{-\lambda}, \quad J 
    \in \mathscr{J},
      \end{split}
  \end{equation}
  then uniformly in $\mathbf b\in(\mathbb Z\setminus\{0\})^k$, one has 
  \begin{equation}\label{errorterm1}
    \mathscr{N}_{{\bm \gamma}^{\ast}}(\mathbf{X}) - \mathscr{E}_{{\bm \gamma}^{\ast}}
    \mathscr{I}_{{\bm \gamma}^{\ast}}(\mathbf{X})
    \ll  (\gamma_1 \cdots \gamma_k)^C (\min_{ij}X_{ij})^{-\delta_1} \sum_{\nu=1}^t  \prod_{i=0}^k
    \prod_{j=1}^{J_i} X_{ij}^{1 - h_{ij}\zeta_{\nu i} + \varepsilon}.
  \end{equation}

2) \cite[(7.4), (7.6), (8.5), (8.6), Hypothesis 2]{BBDG} Suppose that 
\begin{equation}\label{rk}
    \text{rk}(\mathscr{A}_1) = \text{rk}(\mathscr{A}).
  \end{equation}  
     For $\lambda$ and $\mathscr{J}$ as above suppose that there
  exist
  ${\bm \eta} = (\eta_{ij}) \in \mathbb{R}_{> 0}^J$, ${\bm \zeta}$ as\footnote{Note that in \cite[(7.11)]{BBDG}, \emph{any} $\bm \zeta$ does the job, contrary to the statement this has nothing to do with the ${\bm \zeta}$ in Hypothesis 5.1.} in \eqref{zeta1} and $ \delta_2,
  \delta_2^{\ast} > 0$ with the following properties:
 \begin{equation}\label{3}
   C_1({\bm \eta}) \colon  \quad 
   \sum_{(i, j) \in S_{\rho}}\eta_{ij} \geq 1+\delta_2 \quad \text{for all} \quad 1 \leq \rho \leq r, 
 \end{equation}
 \begin{equation}\label{2a}
   N_{\bm \gamma, \bm{\gamma} \cdot \mathbf{y}}(B, H,  \mathscr{J})
   \ll  B (\log B)^{c_2  -1+\varepsilon} (1+\log H)  {\bm \gamma}^{-{\bm  \eta}}
   \langle\mathbf{y}\rangle^{-\delta_2^{\ast}}
 \end{equation}
 and
 \begin{equation}\label{continuous}
\sum_{\nu=1}^t   \int_{\mathscr{S}_{\textbf{y}}(B, H, \mathscr{J})} \prod_{ij}
   x_{ij}^{-h_{ij}\zeta_{\nu i}}
   {\rm d}\mathbf{x} \ll B (\log B)^{c_2  -1+\varepsilon} (1+\log H)
   \langle \mathbf{y}\rangle^{-\delta_2^{\ast}}
 \end{equation} 
 for any $\varepsilon > 0$. In addition, suppose that there is some $\delta_4 >0$ with
\begin{equation}\label{1b}
  \sum_{(i,j) \in S_{\rho}} (1 - \beta_i h_{ij}) \geq  1 + \delta_4
  \,\,\, (1 \leq \rho \leq r)
  \quad \text{ and } \quad \beta_ih_{ij} \leq 1 \,\, (1 \leq i \leq k, 1 \leq
  j \leq J_i),
\end{equation}
 and
 \begin{equation}\label{J-cond}
  J_i \geq 2 \quad \text{whenever}\quad  \zeta_i \geq 1/2.
\end{equation}
For any vector ${\bm \zeta}$ satisfying \eqref{zeta1}, where we allow more generally
$\zeta_i \geq 0$, and for arbitrary $\zeta_0 > 0$, we also assume that the
system of $J+1$ linear equations
\begin{equation}\label{1a}
  \begin{split}
    \left(\begin{matrix} \mathscr{A}_1\\ \mathscr{A}_3\end{matrix}\right) {\bm
      \sigma} = \Big(1 - h_{01}\zeta_0, \ldots, 1 - h_{kJ_k}\zeta_k,
    1\Big)^{\top}
  \end{split}
\end{equation}
in $M$ variables has a solution ${\bm \sigma} \in \mathbb{R}_{>0}^{M}$.  \\

 3)  \cite[(9.2)]{BBDG} Assume that 
  one of the $k$ monomials in $\Phi^{\ast}$ consists of only one
    variable,  which has   exponent 1. \\
  
 Then 
 \begin{equation}\label{finalformula}
    N(B) \sim c^{\ast} c_{\text{fin}} c_{\infty} B (\log B)^{c_2}, \quad B \rightarrow \infty.
    \end{equation}
    
    \medskip
 
\textbf{Proof.}  This is   \cite[Theorem 8.4 \& Lemma 9.1]{BBDG}. The only changes are
\begin{itemize}
\item the bound \eqref{E} is only needed for $\textbf{b} =  {\bm \gamma}^{\ast}$, see the displays before (8.19) and (8.35), and the display (8.30) in \cite{BBDG};
\item the bound \eqref{2a} is only needed for $\textbf{b} = {\bm \gamma}$, see (8.12), and the last display in Section 8.2 in \cite{BBDG};
\item  in \eqref{errorterm1} we can afford a sum over finitely many values of $\bm \zeta$. Obviously this has no influence on the argument in \cite[Section 8.3]{BBDG};
\item we inserted some additional inequalities in \eqref{Hlambda} and the corresponding inequalities in \eqref{samesize} (in \cite[(5.11), (7.8)]{BBDG} we had $\mathscr{J} = \emptyset$). This has no impact on the argument in \cite[Section 8]{BBDG}. Only the set $\mathscr{R}_{\delta, \lambda}$ in \cite[Section 8.2]{BBDG} needs to be redefined as 
\[\mathscr{R}_{\delta, \lambda} = \Biggl\{\mathbf{X} = (\textbf{X}_1, \ldots,
\textbf{X}_{k}) \in [1, \infty)^J : \begin{array}{l} \min_{i, j} X_{ij} \geq \max
X_{ij}^{\delta}, \\ \min_{1 \leq i \leq k}   \textbf{X}_i^{\textbf{h}_i}  \geq
\big(\max_{1 \leq i \leq k}
\textbf{X}_i^{\textbf{h}_i}\big)^{1-\lambda}, \\  \min_{(ij) \in J}X_{ij} \geq \max_{(ij) \in J}X_{ij} \cdot \max(X_{ij})^{-\lambda}, \, J 
    \in \mathscr{J}\end{array}\Biggr\}.\]\\
\end{itemize}

The conditions under which \eqref{finalformula} holds, look very complicated, but much of this will be automatic in our applications. All counting problems given in Corollary \ref{cor:counting_problems} are of the form \eqref{torsor} -- \eqref{gcd}. In particular, for $X_1$ we have $r=2$ and 
$$S_1 = \{(1,1), (1,2), (2,1), (3,3))\}, \quad S_2 = \{(3,1), (3,2)\},$$
so for $\textbf{g} = (\eta, \xi) \in \mathbb{N}^2$ we have $$\bm \gamma = (\eta, \eta, \eta, \xi, \xi, \eta) \in \mathbb{N}^6, \quad \bm \gamma^{\ast} = (\eta^2, \eta^2, \eta^2 \xi^2) \in \mathbb{N}^3,$$
and hence $\mathscr{E}_{\bm \gamma^{\ast}} = \mathscr{E}_{\xi}$ in the notation of \eqref{defsing}. 

Hypothesis 1 with $\mathscr{J} = \emptyset$ along with \eqref{1b} and \eqref{J-cond} follows from \cite[Proposition 5.1]{BBDG} exactly as in the proof of \cite[Theorem 10.1]{BBDG} for $X_2, X_3$. For $X_1$ with its special torsor equation, we choose 
\begin{equation}\label{mathscrJ}
   \mathscr{J} = \big\{\{(1,1), (1,2), (2,1)\}, \{(3,1), (3,2)\}\big\}
   \end{equation}
    reflecting the first two inequalities in \eqref{good}. Then Hypothesis 1 follows from Proposition \ref{asymp-prop} and \eqref{sing-bound}, and 
 \eqref{1b} and \eqref{J-cond} are obvious.

 In all cases, \eqref{rk} and \eqref{1a} follow from Remark \ref{rem:smfano2}
 and Hypothesis 3 is satisfied by \eqref{eq:assumption_real_density}, which we
 verify by inspection of $\Sigma'(1) \supset \Sigma(1)$ in each case.

 Thus in order to prove Theorem \ref{thm1} it only remains to check \eqref{3}
 -- \eqref{continuous} and that the constant
 $c^{\ast}c_{\text{fin}} c_{\infty}$ in \eqref{finalformula} agrees with
 Peyre's prediction. For the latter, we apply
 Proposition~\ref{prop:measure_torsor_mod_p^l} (where transformations similar
 to those in Corollary~\ref{cor:counting_problems} will be necessary for
 $X_2$, $X_3$ since $Y_2$, $Y_3$ are singular) and Proposition~\ref{prop:real_density} with
 \eqref{eq:assumption_real_density} since we are in the situation of
 Remark~\ref{rem:smfano1}.

\subsection{The variety $X_1$}

We are given the equation \eqref{eq0} with $J = 6$ variables,  with $r=2$ coprimality conditions
$$(x_{11}, x_{12}, x_{21}, x_{33}) = (x_{31}, x_{32}) = 1$$
and with $N = 8$ height conditions given by the exponent matrix
$$\mathcal{A}_1 = (\alpha_{ij, \nu}) = \left(\begin{smallmatrix} 2& & & 2 & & & & &\\ & 2 & &  & 2& & & &\\   & &2 & & & 2& & &\\ 1& 1& 1&  & & &3 & &\\ &  & & 1 &1 & 1& & 3&\\   & & & & & & 2& 2& 
 \end{smallmatrix}\right) \in \mathbb{R}^{6\times 8}, \quad \mathcal{A}_2 = \left(\begin{smallmatrix} 1& &  -1 \\ 1&  &  -1 \\   & 2& -1  \\  -1& -1& &   \\ -1& -1 & &  \\  -2 & -2& 1&  
 \end{smallmatrix}\right) \in \mathbb{R}^{6\times 3}.$$
We are going to check \eqref{3} -- \eqref{continuous}. With $\mathscr{J}$ as
in \eqref{mathscrJ}, \eqref{2a} follows from Proposition \ref{prop-hyp} with
$\delta_2^{\ast} = 1/4 > 0$ and
${\bm \eta} = \frac{99}{100}(\frac 1 2,\frac 1 2,\frac 1 2,\frac 3 4,\frac 3
4,\frac 1 2)$
satisfying
\eqref{3}.  The continuous version \eqref{continuous} of \eqref{2a} for the
three $\bm \zeta$ values given in \eqref{zeta} proceeds now in exactly in the
same way as the proof of Proposition \ref{prop-hyp}.

Let $\sigma \in \Sigma_{\max}$ be the cone generated by the rays
corresponding to $x_{11}, x_{21}, x_{32}, x_{33}$, let $\rho_0$ be the
ray corresponding to $x_{11}$, and let $\rho_1$ be the ray
corresponding to $x_{31}$; then conditions
\eqref{eq:assumption_real_density} are satisfied. The resulting
leading constant is Peyre's constant by
Proposition~\ref{prop:measure_torsor_mod_p^l} and
Proposition~\ref{prop:real_density}.

\subsection{The variety $X_2$}\label{app1}

For $X_2$ and $X_3$, we choose $\mathscr{J} =  \emptyset$ as in \cite{BBDG} and apply \cite[Proposition 7.6]{BBDG} (as in \cite[Sections 11.4 and 12.4]{BBDG}) to check  \eqref{3} -- \eqref{continuous}. 

By  Corollary~\ref{cor:counting_problems}(b), we have $J=7$ torsor variables $x_{ij}$
with $0 \leq i \leq 3$
satisfying the equation
\begin{equation}\label{particular2}
x_{11} x_{12} + x_{21}^2 + x_{31}x_{32} = 0,
\end{equation}
after changing signs. We have
$r=4$ coprimality conditions  with
\begin{equation}\label{gcd2}
  S_1 = \{(1, 1), (1, 2), (2, 1)\}, \ S_2 = \{(0, 1), (3, 2)\}, \
  S_3 = \{(0, 2), (3, 1)\}, \ S_4 = \{(0, 1), (0, 2)\},
\end{equation}
We have $N=11$ height conditions with corresponding
exponent matrix
\begin{equation*}
\mathcal{A}_1 = \left(\begin{smallmatrix}
2&2 &2 & & & &1 &1 &1 & &2 \\
& & &2 &2 &2 &1 &1 &1 & 2& \\
1& & & 3& & & 3& & & & \\
& 1& & & 3& & & 3& & & \\
& & 1& & & 3& & & 3& & \\
1& 1& 1& &  &  & & & &3/2 &3/2 \\
& & & 1& 1& 1& & & &5/2 &1/2
\end{smallmatrix}\right) \in \mathbb{R}_{\geq 0}^{7
\times 11}, \quad
\mathcal{A}_2 =  \left(\begin{smallmatrix}
    & & -1\\
    & & -1\\
    1 & &-1\\
    1 & &-1\\
    -2& -2 & 1\\
     & 1 &-1\\
     & 1 &-1
  \end{smallmatrix}\right) \in \mathbb{R}^{7 \times 3}.
\end{equation*}

We choose ${\bm \zeta} = (\frac 1 3, \frac 1 3, \frac 1 3)$ satisfying \cite[(5.10),
(8.6)]{BBDG}, $\lambda = 1/25200$ as in \cite[(5.13)]{BBDG}, and
${\bm\tau}^{(2)} = (1,1,\frac 1 2,\frac 1 2,1,\frac 1 2,\frac 1 2)$ satisfying
\cite[(7.18) using the notation (7.13)]{BBDG}. Using a computer algebra system and the notation \cite[(7.30) -- (7.32)]{BBDG}, we  confirm the conditions  
$C_2({\bm\tau}^{(2)}), C_2((1-h_{ij}/3)_{ij})$ from \cite[Proposition 7.6]{BBDG}. With $c_2=2$ we obtain \begin{displaymath}
\begin{split}
\dim(\mathscr{H} \cap \mathscr{P}) &= 2, \\
\dim(\mathscr{H} \cap \mathscr{P}_{ij}) &=
\begin{cases}
  1, &(i,j)=(0,2),(3,1),(3,2),\\
  0, &\text{otherwise},
\end{cases}
\\
\dim(\mathscr{H} \cap\mathscr{P}(1/25200, \pi)) &= 0
\end{split}
\end{displaymath}
for both vectors $(1 - h_{ij}/3)_{ij}$ and ${\bm\tau}^{(2)}$, confirming the two remaining conditions $C_3({\bm\tau}^{(2)}), C_3((1-h_{ij}/3)_{ij})$. Thus \cite[Proposition 7.6]{BBDG} is available and yields the conditions \eqref{3} -- \eqref{continuous}. 

Let $\sigma \in \Sigma_{\max}$ be the cone generated by the rays
corresponding to $x_{11}, x_{21}, x_{31}, x_{32}$, let $\rho_0$ be the
ray corresponding to $x_{11}$, and let $\rho_1$ be the ray
corresponding to $x_{02}$; then conditions
\eqref{eq:assumption_real_density} are satisfied. The correct shape of
the leading constant now follows from
Proposition~\ref{prop:measure_torsor_mod_p^l} and
Proposition~\ref{prop:real_density}.

More precisely, for the $p$-adic densities,
we have $c_{\text{fin}} = \prod_p c_p$ as in
Section~\ref{sec:machinery} with
\begin{equation*}
  c_p = \lim_{L \to \infty} \frac{1}{p^{6L}} \#\Bigg\{\textbf{x}  \bmod{ p^{L}} :
  \begin{array}{l}
    x_{11}x_{12}+x_{21}^2+x_{31}x_{32} \equiv 0 \bmod{p^{L}},\\
    (\{x_{ij} : (i, j) \in S_{\rho}\}, p) = 1 \text{ for } 1 \leq \rho \leq 4
  \end{array}\Bigg\}
\end{equation*}
We note that
\begin{equation*}
  c_p = \left(1-\frac 1 p\right)^{-1}\lim_{L \to \infty} \frac{1}{p^{7L}} \#\Bigg\{(\textbf{x},z_1)  \bmod{ p^{L}} :
  \begin{array}{l}
    x_{11}x_{12}z_1-x_{21}^2z_1-x_{31}x_{32} \equiv 0 \bmod{p^{L}},\\
    (\{x_{ij} : (i, j) \in S_{\rho}\}, p) = 1 \text{ for } 1 \leq \rho \leq 4,\\ (x_{31}x_{32},z_1,p)=1
  \end{array}\Bigg\}
\end{equation*}
because we have the $(\#(\Zd/p^L\Zd)^\times : 1)$-surjection
\begin{equation*}
  (x_{01},\dots,x_{31},x_{32},z_1)\mapsto
  (x_{01},x_{02},x_{11},-x_{12},x_{21},x_{31},x_{32}z_1^{-1})
\end{equation*}
between the two sets since the final coprimality condition and the congruence
imply $z_1 \in (\Zd/p^L\Zd)^\times$. By
Proposition~\ref{prop:measure_torsor_mod_p^l}, this shows that
$c_p=(1-p^{-1})^{\rank \Pic X} \mu_p(X(\Qd_p))$, as expected.

\subsection{The variety $X_3$}

This is very similar to the treatment of $X_2$. We have the same torsor
variables, the same torsor equation \eqref{particular2}, and the same
coprimality conditions \eqref{gcd2}.
We have $N=11$ height conditions   with corresponding exponent matrix
\begin{equation*}
\mathcal{A}_1 = \left(\begin{smallmatrix}
 & & & &1&1&1&2&2&2&2\\
2&2&2&2&1&1&1& & & & \\
 & & &2& & &3& & & &2\\
 & &2& & &3& & & &2& \\
 &2& & &3& & & &2& & \\
1& & & & & & &2&1&1&1\\
2&1&1&1& & & &1& & & 
\end{smallmatrix}\right) \in \mathbb{R}_{\geq 0}^{7 \times 11}
\end{equation*}
and the same $\mathcal{A}_2$ as before.

We work with the same
${\bm \zeta}, \lambda, {\bm\tau}^{(2)},{\bm\beta},\delta_4$. Everything is
identical except \cite[(7.35)]{BBDG} because of the different
$\mathcal{A}_1$. We confirm $C_2({\bm\tau}^{(2)}), C_2((1-h_{ij}/3)_{ij})$ and
compute $c_2=2$ and
\begin{displaymath}
\begin{split}
\dim(\mathscr{H} \cap \mathscr{P}) &= 2, \\
\dim(\mathscr{H} \cap \mathscr{P}_{ij}) &=
\begin{cases}
  1, &(i,j)=(0,1),(0,2),(3,1),(3,2),\\
  0, &\text{otherwise},
\end{cases}
\\
\dim(\mathscr{H} \cap\mathscr{P}(1/25200, \pi)) &= 0
\end{split}
\end{displaymath}
for the vector $(1 - h_{ij}/3)_{ij}$, and the same for the vector
${\bm\tau}^{(2)}$.

For the leading constant, let $\sigma \in \Sigma_{\max}$ be the cone
generated by the rays corresponding to
$x_{11}, x_{21}, x_{31}, x_{32}$, let $\rho_0$ be the ray
corresponding to $x_{11}$, and let $\rho_1$ be the ray corresponding
to $x_{02}$; then conditions \eqref{eq:assumption_real_density} are
satisfied.

\end{document}